\numberwithin{equation}{section}
\numberwithin{figure}{section}
\theoremstyle{plain}
\newtheorem{theorem}{Theorem}[section] 
\newtheorem{thm}[theorem]{Theorem}
\newtheorem{lem}[theorem]{Lemma}
\newtheorem{prop}[theorem]{Proposition} 
\newtheorem{cor}[theorem]{Corollary}
\newtheorem{notation}[theorem]{Notation}
\theoremstyle{remark} 
\newtheorem{assumption}[theorem]{Assumption}
\newtheorem{rem}[theorem]{Remark}
\newtheorem{defn}[theorem]{Definition}
\newtheorem*{notation*}{Notation}
\begin{document}

\begin{frontmatter}
\title{Asymptotic stability and cut-off phenomenon for the underdamped Langevin dynamics}
\runtitle{Stability and cut-off for underdamped Langevin dynamics}

\begin{aug}
\author[A]{\fnms{Seungwoo}~\snm{Lee}\ead[label=e1]{swlee017@snu.ac.kr}},
\author[B]{\fnms{Mouad}~\snm{Ramil}\ead[label=e2]{mouad.ramil@inria.fr}}
\and
\author[C]{\fnms{Insuk}~\snm{Seo}\ead[label=e3]{insuk.seo@snu.ac.kr}}
\address[A]{Department of Mathematical Sciences, Seoul National University\printead[presep={,\ }]{e1}}

\address[B]{IRMAR (Université de Rennes), Inria Rennes and Research Institute of Mathematics, Seoul National University \printead[presep={,\ }]{e2}}

\address[C]{Department of Mathematical Sciences and Research Institute of Mathematics, Seoul National University \printead[presep={,\ }]{e3}}

\end{aug}

\begin{abstract}
In this article, we provide a detailed analysis of the long-time behavior of the underdamped Langevin dynamics.
We first provide a necessary condition guaranteeing that the zero-noise dynamical system converges to its unique attractor. We also observed that this condition is sharp for a large class of linear models. We then prove the so-called cut-off phenomenon in the small-noise regime under this condition. This result provides the precise asymptotics of the mixing time of the process and of the distance between the distribution of the process and its stationary measure.  The main difficulty of this work
relies on the degeneracy of its infinitesimal generator which is not elliptic, thus requiring a new set of methods.
\end{abstract}

\begin{keyword}[class=MSC]
\kwd[Primary ]{37A25}
\kwd{60G10}
\kwd[; secondary ]{82C31}
\end{keyword}
 
\begin{keyword}
\kwd{Underdamped Langevin process}
\kwd{Kinetic stochastic process}
\kwd{Cut-off phenomenon}
\kwd{Small temperature asymptotics}
\kwd{Asymptotic stability}
\end{keyword}

\end{frontmatter}

\section{Introduction}
\subsection{Underdamped Langevin dynamics}

We start by introducing the underdamped Langevin dynamics which are
the main concern of the current article. In statistical physics the
evolution of a molecular system at a fixed temperature $T$ consisting
of $N$ particles can be modeled by the underdamped Langevin dynamics
$((q_{t},\,p_{t}))_{t\ge 0}$  in $\mathbb{R}^{d}\times\mathbb{R}^{d}$, $d=3N$,
where $q_{t}\in\mathbb{R}^{d}$ and $p_{t}\in\mathbb{R}^{d}$ denote
the vectors of positions and momenta of the particles. The process
$((q_{t},\,p_{t}))_{t\ge 0}$ satisfies a system of stochastic differential equations
of the form, see~\cite[Section 1.2.1]{LelRouSto10},
\begin{equation}
\left\{ \begin{aligned} & \mathrm{d}q_{t}=M^{-1}p_{t}\mathrm{d}t,\\
 & \mathrm{d}p_{t}=F(q_{t})\mathrm{d}t-\gamma M^{-1}p_{t}\mathrm{d}t+\sqrt{2\gamma\beta^{-1}}\mathrm{d}B_{t}\;,
\end{aligned}
\right.\label{eq:Langevin_intro}
\end{equation}
where $M\in\mathbb{R}^{d\times d}$ is the diagonal positive definite mass matrix comprising the respective mass of each particle, $F:\mathbb{R}^{d}\to\mathbb{R}^{d}$
is the force acting on the particles, $\gamma>0$ is the friction
parameter, and $\beta^{-1}=k_{B}T$ with $k_{B}$ the Boltzmann constant
and $T$ the temperature of the system. Such dynamics can be used
to compute thermodynamic quantities of the process. 

Thermodynamic quantities are averages of given observables against
the stationary distribution of~\eqref{eq:Langevin_intro}. They capture
the characteristics of the system at equilibrium and offer numerous
applications in biology, chemistry and materials science. However,
the computation of such quantities poses a certain number of computational
challenges. In fact, direct numerical integration is in general very
difficult when the space dimension is large, or equivalently when the number of particles $N$ is large, when computing integrals against the stationary distribution.  This is why stochastic
approaches are generally preferred and consist in computing the time-average
of the observable. Using the ergodicity of~\eqref{eq:Langevin_intro},
such time averages shall converge to the expected thermodynamic limit. For a recount on the statistical error made by the associated Monte-Carlo method we refer to~\cite[Section 2.3.1]{LelRouSto10} for instance. However, the associated speed of convergence relies heavily on how fast the process~\eqref{eq:Langevin_intro}
converges to its stationary distribution. Therefore, one area of research
is to understand the speed of convergence to equilibrium
in the low temperature regime, i.e. $\beta^{-1}$ goes to $0$. In
the current article, we completely solve this problem by verifying
the so-called cut-off phenomenon for the underdamped Langevin
dynamics under a suitable and reasonable condition on the vector field
$F$.

\subsection{Thermalization of the underdamped Langevin dynamics}

Although the proof is exactly similar for any positive diagonal matrix $M$, to simplify the notations we set $M=\mathbb{I}_{d}$ where $\mathbb{I}_d$ denotes the $d\times d$ identity matrix and $\epsilon=\gamma \beta^{-1}>0$ so that we can rewrite \eqref{eq:Langevin_intro} as
\begin{equation}
\begin{cases}
\mathrm{d}q_{t}^{\epsilon}=p_{t}^{\epsilon}\mathrm{d}t,\\
\mathrm{d}p_{t}^{\epsilon}=-F(q_{t}^{\epsilon})\mathrm{d}t-\gamma p_{t}^{\epsilon}\mathrm{d}t+\sqrt{2\epsilon}\mathrm{d}B_{t}\;.
\end{cases}\label{eq:uld}
\end{equation} 
Then, we are interested in the behavior of the $2d$-dimensional process
$(X_{t}^{\epsilon})_{t\ge0}=((q_{t}^{\epsilon},\,p_{t}^{\epsilon}))_{t\ge0}$ in the low-temperature
regime, i.e., the regime where $\epsilon$ tends to $0$. We first
provide a necessary condition (cf. Assumption  \ref{ass:main})
on the vector field $F$ under which the zero-temperature dynamics,
i.e., the dynamics \eqref{eq:uld} with $\epsilon=0$ converges, as
$t\rightarrow\infty,$ to a unique stable equilibrium of the dynamics.
As we can observe from the discussion below, this condition is very
close to the necessary and sufficient condition in the sense that
for a large-class of linear models our condition is indeed necessary
and sufficient.

Under this condition, we demonstrate in  Proposition \ref{prop:unique stat distrib} that
the process $(X_{t}^{\epsilon})_{t\geq0}$ is positive recurrent and therefore
admits a unique stationary probability measure $\mu_{\epsilon}$.

Then, by the standard ergodicity argument, the distribution of the 
process $(X_{t}^{\epsilon})_{t\geq0}$, starting from any fixed point in $\mathbb{R}^{d}\times\mathbb{R}^{d}$,
converges in total-variation distance to $\mu_{\epsilon}$, as $t\rightarrow\infty$, for any fixed $\epsilon>0$. This phenomenon is commonly referred to as \textit{thermalization}. The precise quantitative analysis of this convergence is the main
problem confronted in the current article. In fact, we prove the so-called 
\textit{cut-off phenomenon} which will be explained in 
the next subsection, which provides a quantitatively precise estimate of this convergence in the low-temperature regime where $\epsilon$ is close to zero.

\subsection{Cut-off phenomenon}

The convergence to equilibrium for stochastic diffusion processes
sometimes exhibits a brutal decrease around a cut-off time where the
total-variation distance between the law of the process and its equilibrium
goes from a value close to $1$ to a value close to $0$. This fact is known as \textit{cut-off phenomenon} in some card shuffling models.

\begin{figure}
\includegraphics[scale=0.28]{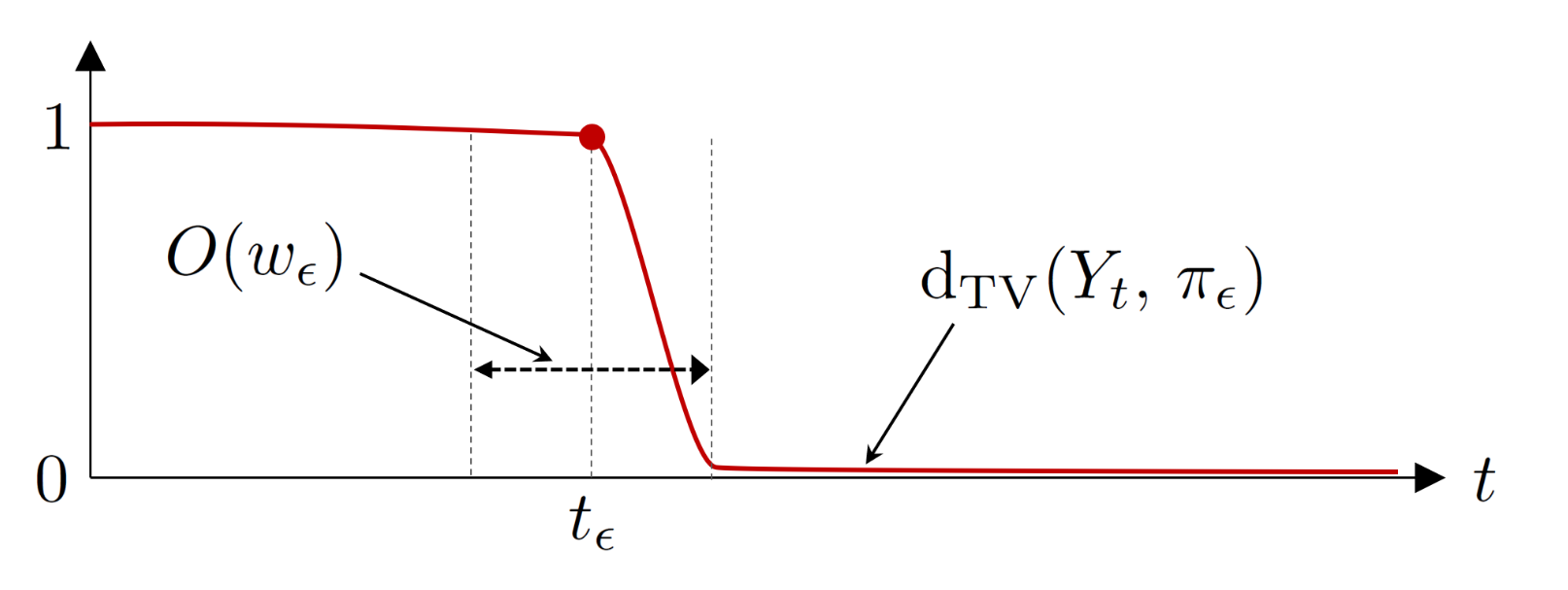} \caption{\label{fig1.1}Illustration of the cut-off phenomenon: the red line is the graph of the function $d(t)=\textrm{d}_\textup{TV} (Y_t^\epsilon, \, \pi_\epsilon)$ which abruptly drops from values close to $1$ to values close to $0$ at $t=t_\epsilon$ in a window of size $w_\epsilon$.}

\end{figure}
\begin{defn}[Total variation distance]\label{tvd}
For two probability measures $\pi_{1}$ and $\pi_{2}$ on $\mathbb{R}^{2d}$
(with the measure space consisting of Lebesgue measurable sets), we
define the total variation distance between $\pi_{1}$ and $\pi_{2}$
as
\[
\textup{d}_{\textup{TV}}(\pi_{1},\,\pi_{2})=\sup_{A}\left|\pi_{1}(A)-\pi_{2}(A)\right|\;,
\]
where the supremum is taken over all Lebesgue measurable sets in $\mathbb{R}^{2d}$. When
the law of a random vector $Z_{i}\in\mathbb{R}^{2d}$ is $\pi_{i}$
($i=1,\,2)$, we can substitute $\pi_{i}$ above with $Z_{i}$. For
instance, $\textup{d}_{\textup{TV}}(Z_{1},\,\pi_{2})$ and $\textup{d}_{\textup{TV}}(Z_{1},\,Z_{2})$
also represent $\textup{d}_{\textup{TV}}(\pi_{1},\,\pi_{2})$.
\end{defn}

\begin{defn}[Cut-off phenomenon]
\label{def:cut-off}For $\epsilon>0$, let $(Y_{t}^{\epsilon})_{t\geq0}$ be a Markov
process with unique stationary probability measure $\pi_{\epsilon}$. 
Then, we say that the process $(Y_{t}^{\epsilon})_{t\geq0}$ exhibits
the cut-off at mixing time $t^{\epsilon}$ on a window $w^{\epsilon}$
satisfying
\[
\lim_{\epsilon\rightarrow0}t^{\epsilon}=\infty\quad\text{and}\quad\lim_{\epsilon\rightarrow0}\frac{w^{\epsilon}}{t^{\epsilon}}=0\;,
\]
if we have
\begin{equation}
\begin{cases}
\lim_{c\rightarrow+\infty}\limsup_{\epsilon\rightarrow0}\mathrm{d}_{\mathrm{TV}}\left(Y_{t^{\epsilon}+cw^{\epsilon}}^{\epsilon},\,\pi_{\epsilon}\right)=0\;,\\
\lim_{c\rightarrow-\infty}\liminf_{\epsilon\rightarrow0}\mathrm{d}_{\mathrm{TV}}\left(Y_{t^{\epsilon}+cw^{\epsilon}}^{\epsilon},\,\pi_{\epsilon}\right)=1\;.
\end{cases}\label{eq:cut-off0}
\end{equation}
We refer to Figure \ref{fig1.1} for an illustration of the cut-off
phenomenon. The cut-off phenomenon implies that, the distribution of the process $(Y_t 
 ^\epsilon)_{t\ge 0 }$ converges to its stationary measure abruptly at the mixing time $t_\epsilon$ within a window of size $w_\epsilon \ll t_\epsilon$.

\begin{rem} 
In some contexts, this phenomenon is referred to as the \textit{window} cut-off to distinguish it from the \textit{profile} cut-off introduced below. However, in this article, we will simply refer to it as the cut-off phenomenon, as there is no risk of confusion within our scope.
\end{rem}
\begin{rem} 
In this definition, referring to $t^\epsilon$ as the mixing time may be considered a slight abuse of notation, since the mixing time $\widehat{t}^\epsilon$ of the process $Y_t^\epsilon$ is more commonly defined as  
\[
\widehat{t}^\epsilon = \inf \{ t : d_{\textrm{TV}}(Y_t^\epsilon, \pi_\epsilon) = 1/4 \}.
\]
However, this abuse of notation causes no issue, as it is immediate to verify that the cutoff phenomenon implies that $\lim_{\epsilon \to 0} \widehat{t}^\epsilon/t^\epsilon = 1$.
\end{rem}

Moreover, we say that there is a \textit{profile cut-off
}\textit{\emph{or}}\textit{ profile thermalization} with the profile
$G:\mathbb{R}\rightarrow[0,\,1]$ which is a decreasing function with
$G(-\infty)=1$ and $G(+\infty)=0$ if we have
\begin{equation}
\lim_{\epsilon\rightarrow0}\mathrm{d}_{\mathrm{TV}}\left(Y_{t^{\epsilon}+cw^{\epsilon}}^{\epsilon},\,\pi_{\epsilon}\right)=G(c)\label{eq:prof-cut-off}
\end{equation}
for all $c\in\mathbb{R}$. Of course, the profile cut-off is a stronger
notion than cut-off since \eqref{eq:prof-cut-off} immediately implies \eqref{eq:cut-off0}.
\end{defn}

\begin{rem}
Although we explain the cut-off phenomenon in the regime $\epsilon\rightarrow0$
to match with the main result of the current article, the other models
can exhibit the cut-off phenomenon in a different regime, e.g., $N\rightarrow\infty$
where $N$ is the number of sites in a spin system.
\end{rem}

In the modern study of the mixing behavior of the statistical phyics
systems, the cut-off phenomenon has been verified for various stochastic
systems. For instance, the cut-off phenomenon has been widely verified
for high-temperature spin systems such as the Glauber dynamics of
the Curie-Weiss type mean-field model, \cite{levin2010glauber,cuff2,yang2023cut-off},
the Glauber dynamics of the Ising/Potts model on lattices \cite{LS_Ising1,LS_Ising2,LS_Ising3,LS_Ising4},
the Swandsen-Wang dynamics of the Ising/Potts model \cite{NamSly},
and the FK-dynamics of the random-cluster model \cite{ganguly2020information}.
The cut-off phenomenon is also widely observed in the interacting
particle systems such as the zero-range processes \cite{merle2019cut-off},
the exclusion processes \cite{lacoin2016cut-off,elboim2022mixing,salez2023universality},
and the east process \cite{ganguly2015cut-off}. It is also demonstrated
that the cut-off phenomenon is universally observed for the random
walks on nicely expanding graphs, see \cite{lubetzky2010cut-off,lubetzky2011explicit,lubetzky2016cut-off}.
We also refer to the monograph \cite{LPW} for the introduction to
this topic.

\subsection{Cut-off phenomenon for the overdamped Langevin dynamics}
In \cite{cut-off_overdamped}, the cut-off phenomenon for the \textit{overdamped}
Langevin dynamics $(\overline{q}_{t}^{\epsilon})_{t\geq0}$ in $\mathbb{R}^{d}$
defined by
\begin{equation}
\mathrm{d}\overline{q}_{t}^{\epsilon}=-F(\overline{q}_{t}^{\epsilon})\mathrm{d}t+\sqrt{2\epsilon}\mathrm{d}B_{t}\;,\label{e:old}
\end{equation}
where $F\in\mathcal{C}^{3}(\mathbb{R}^{d},\,\mathbb{R}^{d})$ and
where $(B_{t})_{t\ge0}$ denotes the $d$-dimensional Brownian motion
has been established. In this work, it has been assumed that $0\in\mathbb{R}^d$ is the unique equilibrium point, thus $F(0)=0$, and that there
exist constants $\delta,C,\eta>0$ such that for all $q,q'\in\mathbb{R}^{d}$,
\begin{equation}\label{condov}
  \langle q,\mathrm{D}F(q')q\rangle \geq\delta|q|^{2}\;\;\;\text{and} \;\;\; |F(q)|\leq C\mathrm{e}^{\eta|q|^{2}}\;.
\end{equation}
where $DF : \mathbb{R}^d \rightarrow \mathbb{R}^{d\times d}$ denotes the Jacobian matrix of $F$ on each point (cf. Notation \ref{not21}).

The first condition implies that for all $q,\,q'\in\mathbb{R}^d$,
\begin{equation}\label{eq:variation F overdamped}
    \langle F(q')-F(q),\,q'-q\rangle\geq\delta|q-q'|^2\;.
\end{equation}
In particular, if we denote by $(\overline{q}_{t})_{t\geq0}$ the zero-noise dynamics associated to~\eqref{e:old} by setting $\epsilon=0$, one deduces from the inequality above that for all $t\geq0$, 
\begin{equation}\label{eq:glob exp stab overd}
    |\overline{q}_{t}|^2\leq|\overline{q}_{0}|^2\mathrm{e}^{-2\delta t}\;,
\end{equation}
which we refer to here as the global exponential stability of the zero-noise dynamics. Furthermore, using Ito's formula along with~\eqref{eq:variation F overdamped}, see~\cite[Section 3.1]{cut-off_overdamped} for the exact computations, one is also able to derive a control on the distance between $\overline{q}_{t}^{\epsilon}$ and $\overline{q}_{t}$. Namely, for all $t\geq0$,
$$\mathbb{E}\left[\left|\overline{q}_{t}^{\epsilon}-\overline{q}_{t}\right|^2\right]\leq\frac{d\epsilon}{2\delta}\,,$$
which can be seen as a zero-order approximation in $\epsilon$ of the perturbed system. A first order approximation of the perturbed system is also derived in~\cite[Section 3.2]{cut-off_overdamped} relying in particular on~\eqref{eq:variation F overdamped} and the construction of an appropriate Gaussian process $(\overline{y_t})_{t\geq0}$, which is independent of $\epsilon$, and such that for all $\theta\in(0,1)$,
\begin{equation}\label{eq:first order approx Langevin}
    \sup_{t\in[0,1/\epsilon^\theta]}\mathbb{E}\left[\left|\overline{q}_{t}^{\epsilon}-\overline{q}_{t}-\sqrt{2\epsilon}\,\overline{y_t}\right|^2\right]\underset{\epsilon\rightarrow0}{\longrightarrow}0\,.
\end{equation}
The above first-order approximation is a crucial element of the proofs as it allows us to focus on the speed of convergence to equilibrium for a Gaussian variable for which the total variation distance can be explicited in order to study the convergence to equilibrium for $(\overline{q}_{t}^{\epsilon})_{t\geq0}$. 

All in all, the estimates above yield the proofs of the cut-off phenomenon in~\cite{cut-off_overdamped} for the process~\eqref{e:old}. Namely, under the conditions~\eqref{condov}, the cut-off for the process $(\overline{q}_{t}^{\epsilon})_{t\geq0}$
starting from any fixed point is proven in \cite{cut-off_overdamped}
where the mixing time is of order $\log\frac{1}{\epsilon}$ and the
window is of order $1$. The mixing time is computed explicitly in
\cite[Lemma 2.1]{cut-off_overdamped}. The authors also provide in~\cite[Corollary 2.9]{cut-off_overdamped}
a necessary and sufficient condition under which the profile thermalization
holds.

Our main purpose in this work is to extend the existence of such cut-off
phenomenon to the underdamped Langevin dynamics~\eqref{eq:uld}. Fundamental difficulties appear when considering the underdamped Langevin dynamics which are due to the non-ellipticity of its infinitesimal generator since the diffusion matrix in~\eqref{eq:uld} is not invertible. We briefly mention below the challenges we faced and the new methods we developed to overcome them.

The first difficulty concerns the asymptotic stability of the zero-noise 
dynamics $(\epsilon=0)$ which is a crucial element of the proofs in~\cite{cut-off_overdamped} for the overdamped setting~\eqref{e:old}. In this case, the asymptotic stability is a trivial consequence of the condition~\eqref{condov} which ensures the global exponential stability stated in~\eqref{eq:glob exp stab overd}. However, to the best of our knowledge, there was no criterion available in the literature to ensure the asymptotic stability of the zero-noise underdamped Langevin dynamics when $F$ is not in gradient form (i.e., $F=\nabla U$). 

In Section~\ref{sec4}, we are able to provide a sharp criterion on the vector field $F$ which ensures the global exponential stability property for the zero-noise dynamics. The proof is based on considering a suitable Lyapunov function, which is defined as a polynomial perturbation of the Hamiltonian function. Some works have also been developed in the literature recently to provide suitable contraction by modifying the metric, see for instance in~\cite{schuh2024global}. However, we illustrate here the sharpness of our criterion by showing that it is a necessary and sufficient condition when $F(q)=\mathbb{M}q$
with $\mathbb{M}$ being a normal matrix (i.e. $\mathbb{M}\mathbb{M}^{\dagger}=\mathbb{M}^{\dagger}\mathbb{M}$, where $\mathbb{M}^{\dagger}$ denotes the transpose matrix of $\mathbb{M}$; cf. Notation \ref{not21}). It is important to notice that, unlike the overdamped setting, the condition~\eqref{condov} does not guarantee the asymptotic stability of the zero-noise underdamped Langevin dynamics, even in the linear case. This illustrates the difference of nature between both processes. Additionally, the criterion on $F$ also allows us to obtain in Section~\ref{sec8} relevant upper-bounds on the tails of the stationary probability distribution $\mu_\epsilon$, when $\epsilon$ tends to zero, which is also essential in the proofs.

Another requirement of the proofs is to obtain a control on the difference between the underdamped Langevin dynamics and its first-order approximation which is given by $Z_t:=X_t+\sqrt{2\epsilon}\,Y_t$ where $(X_t)_{t\geq0}$ is the zero-noise underdamped Langevin dynamics and $(Y_t)_{t\geq0}$ is a suitable centered Gaussian process. The objective is to deduce similar control established in~\eqref{eq:first order approx Langevin} for the overdamped setting. However, the proof developed in~\cite[Section 3.2]{cut-off_overdamped} relies on the condition~\eqref{condov} which the drift coefficient in~\eqref{eq:uld} cannot satisfy. This constitutes the second important difficulty of this work and we propose in Section~\ref{sec5} an alternative scheme of proof. It relies on the global exponential stability property, obtained before under suitable assumptions on $F$, which guarantees that the zero-noise dynamics arrives in a neighborhood of the origin in a finite time. Subsequently, we construct an appropriate metric in a neighborhood of the origin under which the drift coefficient of the first-order approximation $(Z_t)_{t\geq0}$ satisfies~\eqref{condov}. The computations are more tedious in this setting and they require in particular the use of a quadratic Gronwall inequality (Lemma~\ref{lem:quadratic gronwall}).

The third difficulty arises in Section~\ref{sec6} when studying the long-time behavior of the Gaussian process $(Y_t)_{t\geq0}$ given above, which is necessary for the study of the speed of convergence to equilibrium of $(X^\epsilon_t)_{t\geq0}$. This requires to study the long-time convergence of the covariance matrix $\Sigma_t$ of $Y_t$. In the overdamped setting, it is shown that $\Sigma_t\underset{t\rightarrow\infty}{\longrightarrow}\Sigma$ and the limiting matrix $\Sigma$ is defined as the unique solution to the following matrix Lyapunov equation
\begin{equation}
\textup{D}F(0)\Sigma+\Sigma\,\textup{D}F(0)^{\dagger}=\mathbb{J}\;,\label{eq:ov_lya}
\end{equation}
where $\mathbb{J}=\mathbb{I}_{d}$. Given the positive definiteness of $\mathbb{J}$ in this case, it is well-known that there exists a unique solution $\Sigma$ to this matrix equation. Moreover, $\Sigma$ is symmetric and positive definite. However, in the underdamped setting, the matrix $\mathbb{J}$ obtained is not invertible. This first issue requires us to extend the existence and uniqueness of a solution to~\eqref{eq:ov_lya} to the non-invertible underdamped setting by using the inherent dependency between position and velocity coordinates.

Define now $\Delta_t=\Sigma_t-\Sigma$. It remains then to show that $\Delta_t\underset{t\rightarrow\infty}{\longrightarrow}\mathbb{O}_{2d}$, which is done in the overdamped setting by studying the time evolution of its Frobenius norm defined as follows
\begin{equation}\label{eq:frob norm}
    t\geq0\to\sqrt{\textup{tr}(\Delta_t^2)}\;.
\end{equation}
Using the positive definiteness of the diffusion matrix in the overdamped setting, the authors show in~\cite[Appendix C]{cut-off_overdamped} that the function~\eqref{eq:frob norm} converges to zero exponentially fast in time. Given the non-invertibility of the diffusion matrix in the underdamped setting, such proof is not expected to work. However, by a smart modification of the metric through the construction of a symmetric positive definite matrix $\Gamma$ solution to a given matrix Lyapunov equation, we are able to show that the modified Frobenius norm $t\geq0\to\textup{tr}(\Delta_t\Gamma\Delta_t\Gamma)$ converges to zero exponentially fast in time as well. One can then easily conclude the proof using the positive definiteness of $\Gamma$. 

All in all, the challenges faced in this work were fundamental as well as technical. They required a deeper understanding
of the underdamped Langevin dynamics as well as the development of new methods to circumvent these obstacles. We believe they open the door to further understanding of the underdamped Langevin dynamics as well as the study of the cutoff phenomenon for more general non-elliptic diffusion processes.

\section{Model and Main Results}

\subsection{Stability of the underdamped Langevin dynamics}

\subsubsection*{Zero-noise dynamics}
In this article, we investigate the thermalization of the underdamped Langevin dynamics
$(X_{t}^{\epsilon})_{t\ge0}=(q_{t}^{\epsilon},\,p_{t}^{\epsilon}){}_{t\geq0}$
in $\mathbb{R}^{d}\times\mathbb{R}^{d}$ defined in \eqref{eq:uld}. To that end, we first have to understand the behaviour of the associated
zero-noise dynamics by setting $\epsilon=0$ in~\eqref{eq:uld}.
More precisely, the zero-noise dynamics\footnote{There is a collision of this notation with \eqref{eq:Langevin_intro},
but we from now on assume that $(q_{t},\,p_{t})_{t\geq0}$ always represents the
solution to \eqref{eq:uld0}.} $(X_{t})_{t\ge0}=(q_{t},\,p_{t})_{t\geq0}$ then satisfies the following
ordinary differential equation
\begin{equation}
\begin{cases}
\mathrm{d}q_{t}=p_{t}\mathrm{d}t,\\
\mathrm{d}p_{t}=-F(q_{t})\mathrm{d}t-\gamma p_{t}\mathrm{d}t\;.
\end{cases}\label{eq:uld0}
\end{equation}
We assume here that $F(0)=0$ so that $(0,\,0)\in\mathbb{R}^{d}\times\mathbb{R}^{d}$
is an equilibrium point of the deterministic dynamics $(X_{t})_{t\geq0}$.
In particular, we are interested in the situation when the dynamics
$(X_{t})_{t\geq0}$ admits the unique stable equilibrium $(0,\,0)$.
Additionally, we would like to ensure the \emph{global asymptotic
stability} of $(X_{t})_{t\geq0}$ meaning that starting from any initial
point $X_{0}=x\in\mathbb{R}^{2d}$,
\begin{equation}
\lim_{t\rightarrow\infty} X_{t}=(0,0) \in \mathbb{R}^{2d} \;.\label{eq:asymptotic stability}
\end{equation}
In the case when the force field $F$ is linear (i.e., there exists a matrix $\mathbb{M}$ such that $F(q)=\mathbb{M}q$ for all $q\in\mathbb{R}^d$), then several conditions were developed in the literature to ensure global asymptotic stability. We refer for instance to the well-known Kalman rank condition~\cite[Chapter 2, p. 35]{kalman1969topics}. Nonetheless, our objective here is to provide sharp conditions for general force fields $F$, which need not be linear or in gradient form, such that global asymptotic stability holds.

The condition on $F$ established in this work ensures the \emph{global exponential stability} of the dynamical system $(X_{t})_{t\geq0}$
meaning that there exists $\lambda>0$ such that starting from
any point $X_{0}=x\in\mathbb{R}^{2d}$, there exists a constant $C(x)$ such that
\begin{equation}
|X_{t}|^{2}\le C(x)e^{-\lambda t}\;\;\;\text{for all\;}t\ge0\;.\label{eq:stab}
\end{equation}
Additionally, we provide an explicit expression of $C(x)$ ensuring that it is bounded in any compact subset of $\mathbb{R}^{2d}.$

Note that the validity of such contraction is not immediate for dynamics of the form (\ref{eq:uld0}) since the time-derivative of $|X_{t}|^{2}$ is given by
\begin{equation}
\frac{\textup{d}}{\textup{d}t}|X_{t}|^{2}=2\left[q_{t}p_{t}+p_{t}(-F(q_{t})-\gamma p_{t})\right]\label{eq:L^2X_t}
\end{equation}
which has no reason to be negative for all $(q_{t},p_{t})\in\mathbb{R}^{2d}$.
In fact, as one can observe from an example of the flow diagram of
$(X_{t})_{t\geq0}$ given in Figure \ref{fig:flow}, the $L^{2}$-norm $|X_{t}|^{2}$
may increase at certain times $t_{0}$ since the trajectory of $(X_{t})_{t\geq0}$ is an elliptical rotation around the stable equilibria. This feature makes the asymptotic stability
analysis of $(X_{t})_{t\geq0}$ a non-trivial question. In particular, previous
studies such as \cite{cut-off_overdamped} on the overdamped model~\eqref{e:old}
assumed $F$ to satisfy certain conditions (see~\eqref{condov})
ensuring that the zero-noise dynamics satisfies
\begin{equation}
\frac{\textup{d}}{\textup{d}t}|X_{t}|^{2}\le-\lambda|X_{t}|^{2}\;\;\;\text{for some }\lambda>0\;,\label{eq::L^2decay}
\end{equation}
and thus global exponential stability. However, we notice from (\ref{eq:L^2X_t})
that there is no condition on $F$ which could possibly ensure (\ref{eq::L^2decay})
for the underdamped Langevin dynamics as well. 

We finally remark that, in the case where the dynamical system $(X_{t})_{t\geq0}$
admits multiple stable equilibria, the underdamped Langevin dynamics
$(X_{t}^{\epsilon})_{t\geq0}$ exhibits metastability among these
stable equilibria. This question was successfully studied in a consecutive work~\cite{LRS1} by the authors of the current article.

\begin{figure}\label{fig:flow}
\includegraphics[scale=0.27]{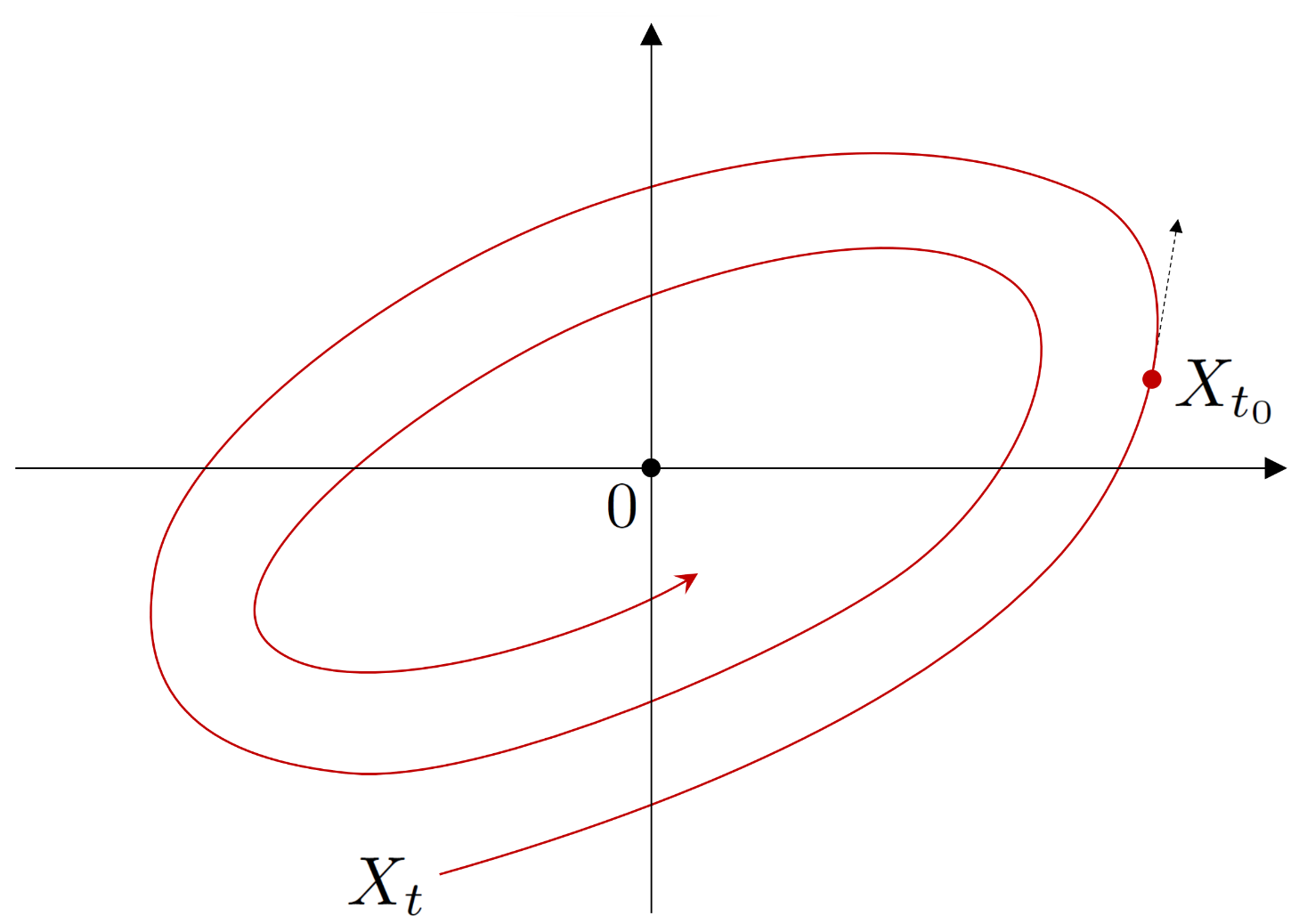} \caption{Trajectory of $(X_{t})_{t\ge0}$:
the $L^{2}$-norm $|X_{t}|^{2}$ may increase at a certain time $t_{0}$ since the trajectory converges to the origin by elliptically rotating.}
\end{figure}

\subsubsection*{Brief introduction to the main results}

Our first main result provides a sufficient condition (Assumption
\ref{ass:main}) under which the zero-noise dynamical system $(X_{t})_{t\geq0}$
is globally exponentially stable. This result is stated in Theorem
\ref{thm:stability} of Section \ref{sec2.2}. We believe that this
is the first non-trivial criterion ensuring global asymptotic stability
of the underdamped system $(X_{t})_{t\geq0}$ when the vector field
$F$ is not linear or cannot be written as the gradient of a given function. Furthermore,
in the linear case ($F(q)=\mathbb{M}q$ for some constant matrix
$\mathbb{M}$), we shall prove in Section~\ref{subsec:stab of linear models} that the condition given in
Assumption \ref{ass:main} is necessary and sufficient for a large class of linear models.

In Section \ref{sec2.3}, we study the low-temperature behavior of
the underdamped Langevin dynamics $(X_{t}^{\epsilon})_{t\geq0}$,
i.e. when $\epsilon$ is close to zero, under the exponential stability
property obtained in Theorem \ref{thm:stability}. In particular,
this allows us to obtain an asymptotic analysis of the invariant distribution
of $(X_{t}^{\epsilon})_{t\geq0}$ (Theorem \ref{thm:asymstat}) and
the sharp characterization of the speed of convergence to equilibrium for the underdamped Langevin dynamics, thus highlighting the cut-off phenomenon in Theorem \ref{thm:cut-off}.

\begin{notation}
\label{not21} We shall use the following notations throughout this
article.
\begin{enumerate}
\item We denote by $\mathbb{M}^{\dagger}$ (resp. $\mathbb{M}^*$) the transpose (resp. conjugate transpose) matrix of a square complex-valued matrix $\mathbb{M}$.
\item For a function $H:\mathbb{R}^n \rightarrow \mathbb{R}^m$, we denote by $DH(h) \in \mathbb{R}^{m\times n}$ its Jacobian matrix at a point $h\in \mathbb{R}^n$.
\item Any appearance of $x$ denotes an element of $\mathbb{R}^{2d}$. We
shall denote implicitly by $q\in\mathbb{R}^{d}$ its position vector
and $p\in\mathbb{\mathbb{R}}^{d}$ its velocity vector such that $x=(q,\,p)$.
\item We denote by $(X_{t}^{\epsilon}(x))_{t\ge0}=((q_{t}^{\epsilon}(x),\,p_{t}^{\epsilon}(x)))_{t\ge0}$
(resp. $(X_{t}(x))_{t\ge0}=((q_{t}(x),\,p_{t}(x)))_{t\ge0}$) the process $(X_{t}^{\epsilon})_{t\geq0}$
satisfying~\eqref{eq:uld} (resp. $(X_{t})_{t\geq0}$ satisfying~\ref{eq:uld0})
at time $t$ and starting from $x\in\mathbb{R}^{2d}$, i.e., $X_{0}^{\epsilon}(x)=x$
(resp. $X_{0}(x)=x$) almost-surely. We will not stress the starting point and merely
write $X_{t}^{\epsilon}$ and $X_{t}$ when we do not need to emphasize
the starting point.
\end{enumerate}
\end{notation}

\subsection{\label{sec2.2}Asymptotic stability analysis of zero-noise dynamics}

To state our first main result, we introduce the following assumption
on $F$ which guarantees the global exponential stability of $(X_{t})_{t\geq0}$
(\ref{eq:stab}). \begin{assumption} \label{ass:main}The vector
field $F\in\mathcal{C}^{3}(\mathbb{R}^{d},\mathbb{R}^{d})$ appearing
in (\ref{eq:uld}) and (\ref{eq:uld0}) can be decomposed into
\begin{equation}
F=\nabla U+\ell\;,\label{e:decF}
\end{equation}
for some non-negative function $U\in\mathcal{C}^{4}(\mathbb{R}^{d},\mathbb{R})$
and vector field $\ell\in\mathcal{C}^{3}(\mathbb{R}^{d},\mathbb{R}^{d})$ satisfying, for some constants $\alpha\in(0,\,\infty)$,
$\beta\in(0,\,\gamma)$,
\begin{equation}
\left\langle F(q),\,q\right\rangle \geq\alpha\left(|q|^{2}+U(q)\right)+\frac{|\ell(q)|^{2}}{\beta^{2}}\;\;\;\text{for all }q\in\mathbb{R}^{d}\;.\label{eq:conF}
\end{equation}
\end{assumption}
Under the assumption above, the following global exponential stability
of the deterministic dynamics $(X_{t})_{t\geq0}$ described by (\ref{eq:uld0})
can be stated.
\begin{thm}
\label{thm:stability} Under Assumption \ref{ass:main}, there exist
constants $\kappa,\lambda>0$ independent of $x\in\mathbb{R}^{2d}$
such that (cf. Notation \ref{not21})
\begin{equation}
|X_{t}(x)|^{2}\leq\kappa\left[|x|^{2}+U(q)\right]\mathrm{e}^{-\lambda t}\;\;\;\text{for all }t\ge0\;.\label{eq:conv_uld0}
\end{equation}
\end{thm}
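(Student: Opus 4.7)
The strategy is to construct a Lyapunov function $V\colon\mathbb{R}^{2d}\to\mathbb{R}_{\geq 0}$, equivalent to $(q,p)\mapsto |q|^2+|p|^2+U(q)$, satisfying $\tfrac{d}{dt}V(X_t)\leq -\lambda V(X_t)$ along the deterministic flow \eqref{eq:uld0}. Once such a $V$ is in hand, Gronwall's inequality and the two-sided control of $V$ immediately yield \eqref{eq:conv_uld0}. The naive mechanical energy $E(q,p)=U(q)+\tfrac{1}{2}|p|^2$ is insufficient, because, as noted in~\eqref{eq:L^2X_t}, the flow spirals elliptically around the origin and the dissipation $-\gamma|p|^2$ provides no direct control on the position. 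The classical remedy is to add a cross-term $\theta\langle q,p\rangle$ and a compensating quadratic in $|q|^2$, so I would work with
\begin{equation*}
V(q,p) \;=\; U(q) + \tfrac{1}{2}|p|^2 + \theta\langle q,p\rangle + \tfrac{\theta\gamma}{2}|q|^2,
\end{equation*}
for a parameter $\theta\in(0,\gamma/2)$ to be tuned. The coefficient $\tfrac{1}{2}$ in front of $|p|^2$ is chosen so that the $\pm\langle p_t,\nabla U(q_t)\rangle$ contributions cancel upon differentiation; the coefficient $\tfrac{\theta\gamma}{2}$ in front of $|q|^2$ is chosen so that the two $\langle q_t,p_t\rangle$ terms generated by differentiation cancel each other.

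The argument then runs in three steps. (i) Verify that $V$ is equivalent to $|q|^2+|p|^2+U(q)$: the upper bound is straightforward by Young's inequality, while the lower bound uses a more balanced Young estimate on $\theta\langle q,p\rangle$ and leaves a strictly positive coefficient $\theta(\gamma/2-\theta)$ in front of $|q|^2$, which is where $\theta<\gamma/2$ enters, together with $U\geq 0$. (ii) Compute $\tfrac{d}{dt}V(X_t)$; after the two cancellations it reduces to
\begin{equation*}
\tfrac{d}{dt}V(X_t) \;=\; -\langle p_t,\ell(q_t)\rangle + (\theta-\gamma)|p_t|^2 - \theta\langle q_t,F(q_t)\rangle.
\end{equation*}
(iii) Bound $-\theta\langle q_t,F(q_t)\rangle\leq -\theta\alpha(|q_t|^2+U(q_t))-\theta|\ell(q_t)|^2/\beta^2$ using Assumption~\ref{ass:main}, and $-\langle p_t,\ell(q_t)\rangle\leq \tfrac{\beta'}{2}|p_t|^2+\tfrac{1}{2\beta'}|\ell(q_t)|^2$ by Young's inequality with a free parameter $\beta'>0$. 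The coefficients of $|p_t|^2$ and of $|\ell(q_t)|^2$ can then be made simultaneously non-positive provided $\beta^2/(2\theta)\leq \beta'<2(\gamma-\theta)$, an interval that is non-empty precisely when $\beta^2<4\theta(\gamma-\theta)$. Since $4\theta(\gamma-\theta)\to\gamma^2$ as $\theta\uparrow\gamma/2$, and $\gamma^2>\beta^2$ by Assumption~\ref{ass:main}, a valid choice of $(\theta,\beta')$ exists, and one ends with
\begin{equation*}
\tfrac{d}{dt}V(X_t) \;\leq\; -c_1|p_t|^2 - \theta\alpha(|q_t|^2+U(q_t)) \;\leq\; -\lambda V(X_t)
\end{equation*}
for some $c_1,\lambda>0$, completing the proof after integration.

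The main obstacle is the construction of $V$ itself. The cross-term $\theta\langle q,p\rangle$ is indispensable for generating $|q|^2$-decay, yet is sign-indefinite: it must be small enough to preserve the coercivity of $V$ while being large enough to extract a useful $|q|^2$-dissipation through Assumption~\ref{ass:main}. Both requirements reach their limit at $\theta=\gamma/2$ and $\beta=\gamma$, and the strict inequality $\beta<\gamma$ imposed in Assumption~\ref{ass:main} is precisely the sharp threshold under which the two are simultaneously achievable. A secondary subtlety is tracking that the final constant $\kappa$ in \eqref{eq:conv_uld0} truly depends on the initial data only through $|x|^2+U(q)$ and not through other nonlinear functionals of $x$; this follows from the explicit form of $V$ and its equivalence with $|q|^2+|p|^2+U(q)$, which makes all constants in the argument uniform in $x\in\mathbb{R}^{2d}$.
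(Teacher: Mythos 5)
Your proposal is correct and follows essentially the same route as the paper: the paper's Lyapunov function $H(x)=\frac{1}{2}|p|^{2}+\frac{\gamma-\lambda}{2}\langle q,p\rangle+\frac{(\gamma-\lambda)^{2}}{4}|q|^{2}+U(q)$ is exactly your $V$ up to the precise coefficient of $|q|^{2}$ and the choice of the Young parameter (the paper fixes it equal to $\gamma$ and absorbs the slack into the smallness conditions on $\lambda$, whereas you keep $\beta'$ free and pass from $-c_{1}|p|^{2}-\theta\alpha(|q|^{2}+U)$ to $-\lambda V$ afterwards). The key mechanism — the cross term $\langle q,p\rangle$, the $|\ell|^{2}/\beta^{2}$ margin in Assumption \ref{ass:main} with $\beta<\gamma$ closing the Young-inequality budget, the two-sided comparison of $V$ with $|x|^{2}$ and $|x|^{2}+U(q)$, and Gronwall — is identical to Lemmas \ref{lem_Hquad} and \ref{lem_dH}.
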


The proof of this theorem is based on the construction of a suitable
Lyapunov function and shall be carried out in Section \ref{sec4}.
\begin{rem}
\label{rem:stability}The followings are explanations on Assumption
\ref{ass:main} and Theorem \ref{thm:stability}.
\begin{enumerate}
\item \emph{Necessity of Assumption \ref{ass:main}.} In view of Theorem
\ref{thm:linear case} below, Assumption \ref{ass:main} (which might
seem to be technical at first glance) is in fact not only natural
but also necessary in a large class of models to get global asymptotic
stability in the sense of~\eqref{eq:asymptotic stability}.
\item \emph{Meaning of condition \ref{eq:conF}. }The condition (\ref{eq:conF})
implies that $\ell$ should not be too large, and hence in view of
(\ref{e:decF}), the force $F$ should be sufficiently close to a
gradient force attracting towards the stable equilibrium. We note
that, in the overdamped model, as explained in \cite[condition (C) and display (2.2)]{cut-off_overdamped},
it suffices to have $F(q)\cdot q\geq\alpha|q|^{2}$ for all $q\in\mathbb{R}^{d}$
for some positive constant $\alpha$ in order to ensure global asymptotic
stability. However, in the underdamped model, if $|\ell(q)|$ is too
large so that the condition (\ref{eq:conF}) is violated, then such
stability fails as will be shown in Section~\ref{sec3}.
\item \emph{Decomposition \ref{e:decF} in the overdamped model.} The decomposition
of the form (\ref{e:decF}) is standard, e.g. \cite{Freidlin-Wentzell,Lee-Seo1,Lee-Seo2,LPeu-Michel}
in the overdamped model. In this literature, it is also often assumed
that this decomposition satisfies the orthogonality condition $\nabla U\cdot\ell\equiv0$,
which we did not need to assume in this work.
\item \emph{Equilibria of the dynamical system $(X_{t})_{t\geq0}$ under Assumption \ref{ass:main}.
}Note first that, since adding a constant to $U$ does not change
$\nabla U$ in (\ref{e:decF}), the non-negativity assumption on $U$
amounts to assuming that $U$ is bounded from below. Additionally,
taking $q=0$ in (\ref{eq:conF}), we obtain $U(0)=0$ and $\ell(0)=0$.
The former implies that $0$ is a global minimum of $U$ (by the non-negativity
assumption), and hence $\nabla U(0)=0$. Therefore, $(0,0)$ is an
equilibrium point of $(X_{t})_{t\geq0}$. On the other hand, suppose that $x_{0}=(q_{0},\,p_{0})$
is an equilibrium point of the dynamics (\ref{eq:uld0}), we should
have $p_{0}=F(q_{0})=0$. Inserting $q=q_{0}$ into (\ref{eq:conF}),
we get $|q_{0}|^{2}=0$ (since $U$ is non-negative) and thus $q_{0}=0$.
Therefore, condition (\ref{eq:conF}) implies that\emph{ $(0,\,0)$
is the unique equilibrium point of the dynamics (\ref{eq:uld0}).
}
\item \emph{Quadratically growing $U$. }When $U$ grows at most quadratically,
i.e., there exists $C>0$ such that
\begin{equation}
|U(q)|\le C|q|^{2}\;\;\;\text{for all }q\in\mathbb{R}^{d}\;,\label{e:quad}
\end{equation}
we notice that the condition (\ref{eq:conF}) is equivalent to the existence of $\alpha>0$ and $\beta\in(0,\gamma)$ such that
\begin{equation}
\left\langle F(q),\,q\right\rangle \geq\alpha|q|^{2}+\frac{|\ell(q)|^{2}}{\beta^{2}}\;\;\;\text{for all }q\in\mathbb{R}^{d}\;,\label{e:conF-1}
\end{equation}
and moreover we can replace (\ref{eq:conv_uld0}) by
\begin{equation}
|X_{t}(x)|^{2}\leq\kappa|x|^{2}\mathrm{e}^{-\lambda t}\;\;\;\text{for all }t\ge0\;.\label{eq:conv_uld0-1}
\end{equation}
\item \emph{Regularity assumption on $F$. }Although we assumed that $F\in\mathcal{C}^{3}(\mathbb{R}^{d},\,\mathbb{R}^{d})$,
a careful reading of the proofs reveals that the conditions $F\in\mathcal{C}^{2}(\mathbb{R}^{d},\,\mathbb{R}^{d})$
and $F$ is $\mathcal{C}^{3}$ in a neighborhood of the origin are
indeed sufficient. The regularity of $U$ and $\ell$ imposed in Assumption
\ref{ass:main} can be weakened accordingly.
\end{enumerate}
\end{rem}

\subsubsection*{Linear models and necessity of Assumption \ref{ass:main}}

The theorem below argues that Assumption \ref{ass:main} is a necessary
and sufficient condition for a large class of linear forces $F$.
\begin{thm}
\label{thm:linear case}Suppose that $F(q)=\mathbb{M}q$ with $\mathbb{M}$
being a normal matrix, i.e. $\mathbb{M}\mathbb{M}^{\dagger}=\mathbb{M}^{\dagger}\mathbb{M}$.
Then, the dynamics (\ref{eq:uld0}) is asymptotically stable
if and only if $F$ satisfies Assumption \ref{ass:main}.
\end{thm}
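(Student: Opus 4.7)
My plan is to use the spectral structure of $\mathbb{M}$ to reduce both sides of the equivalence to a block-by-block computation, and then to observe that the resulting conditions coincide.

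Since $\mathbb{M}\in\mathbb{R}^{d\times d}$ is normal, the real spectral theorem yields an orthonormal basis of $\mathbb{R}^{d}$ in which $\mathbb{M}$ is block-diagonal: each block is either a $1\times 1$ block carrying a real eigenvalue of $\mathbb{M}$, or a $2\times 2$ block of the form $\left(\begin{smallmatrix}a_{i} & -b_{i}\\ b_{i} & a_{i}\end{smallmatrix}\right)$ corresponding to a pair of complex conjugate eigenvalues $a_{i}\pm i b_{i}$. Let $V_{1},\dots,V_{r}$ denote the associated $\mathbb{M}$-invariant subspaces. Splitting $q_{t}$ and $p_{t}$ along these subspaces, the ODE~\eqref{eq:uld0} decouples into independent second-order systems, one per block, so global asymptotic stability is equivalent to asymptotic stability of every block.

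For a block corresponding to an eigenvalue $\lambda$ of $\mathbb{M}$ (treated as a complex number), the characteristic exponents are the roots of $z^{2}+\gamma z+\lambda=0$, namely $z_{\pm}=(-\gamma\pm\sqrt{\gamma^{2}-4\lambda})/2$ with the principal square root. A direct computation---writing $\gamma^{2}-4\lambda=r e^{i\theta}$ in polar form and isolating $\mathrm{Re}(\sqrt{\gamma^{2}-4\lambda})=\sqrt{r}\cos(\theta/2)$---shows that both roots lie in the open left half-plane if and only if
\[
\mathrm{Re}(\lambda)>0\quad\text{and}\quad(\mathrm{Im}\,\lambda)^{2}<\gamma^{2}\,\mathrm{Re}(\lambda)\,.
\]
This Hurwitz-type criterion is the main technical input of the argument: for complex coefficients the standard real Routh--Hurwitz rules do not apply, and the inequality above has to be obtained by hand. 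Combining it over all blocks, the dynamics~\eqref{eq:uld0} is asymptotically stable if and only if every eigenvalue of $\mathbb{M}$ satisfies the display above.

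The ``if'' direction of the theorem is then immediate from Theorem~\ref{thm:stability}. For the ``only if'' direction, assume the spectral condition above holds on every block and define the quadratic form $U(q):=\tfrac{1}{2}\sum_{i}a_{i}\,|P_{V_{i}}q|^{2}$, where $P_{V_{i}}$ is the orthogonal projection onto $V_{i}$ and $a_{i}:=\mathrm{Re}(\lambda_{i})>0$, and set $\ell(q):=\mathbb{M}q-\nabla U(q)$. On each $V_{i}$ the matrix $\mathbb{M}|_{V_{i}}-a_{i}\,\mathrm{Id}$ is antisymmetric (either $0$ on a $1$D block, or $\left(\begin{smallmatrix}0 & -b_{i}\\ b_{i} & 0\end{smallmatrix}\right)$ on a $2$D block), so $\langle\ell(q),q\rangle=0$ and $|\ell(q)|^{2}=\sum_{i}b_{i}^{2}\,|P_{V_{i}}q|^{2}$. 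The target inequality~\eqref{eq:conF} then reduces to the finite family of block-wise inequalities $a_{i}\ge\alpha(1+a_{i}/2)+b_{i}^{2}/\beta^{2}$. Since $b_{i}^{2}<\gamma^{2}a_{i}$ strictly for every $i$ and there are only finitely many blocks, I can first pick $\beta\in(0,\gamma)$ close enough to $\gamma$ so that $b_{i}^{2}/\beta^{2}<a_{i}$ uniformly in $i$, and then choose $\alpha>0$ small enough to absorb the remaining slack. This produces a decomposition satisfying Assumption~\ref{ass:main}; the only real difficulty in the whole argument lies in establishing the spectral criterion above.
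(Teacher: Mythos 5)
Your proof is correct, and it reaches the paper's conclusion by a somewhat different organization of the same underlying ideas. The paper first establishes a stability characterization valid for \emph{arbitrary} linear forces: Lemma \ref{lem:ev} (via a block-determinant identity and the map $h(\lambda)=-\lambda^{2}+\gamma\lambda$) gives the spectral criterion $\mathrm{Sp}(\mathbb{M})\subset\{a+ib:a>0,\ \gamma^{2}a>b^{2}\}$, Lemma \ref{lem:evrel} and Theorem \ref{thm:linchar} rephrase it through the matrix $\mathbb{K}_{\mathbb{M}}$, and Propositions \ref{prop:suff1} and \ref{prop:suff1_normal} specialize to normal $\mathbb{M}$ via complex unitary diagonalization, yielding positive definiteness of $\gamma^{2}\mathbb{M}^{\textup{s}}+(\mathbb{M}^{\textup{a}})^{2}$; only then does the paper set $U(q)=\frac{1}{2}\langle\mathbb{M}^{\textup{s}}q,q\rangle$, $\ell(q)=\mathbb{M}^{\textup{a}}q$, choose $\beta$ close to $\gamma$, and invoke Remark \ref{rem:stability}-(5) to drop the $U$-term in \eqref{eq:conF}. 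You instead exploit normality from the start through the real canonical form (orthogonal block-diagonalization into $1\times1$ and $2\times2$ rotation-scaling blocks), derive the criterion $(\mathrm{Im}\,\lambda)^{2}<\gamma^{2}\,\mathrm{Re}\,\lambda$ by a direct root analysis of $z^{2}+\gamma z+\lambda=0$ (this is precisely the content of the paper's Lemma \ref{lem:ev}, obtained by a different computation, so it is not a new ingredient relative to the paper), and verify \eqref{eq:conF} blockwise including the $U$-term by taking $\beta$ close to $\gamma$ and then $\alpha$ small over the finitely many blocks. Note that your $U$ and $\ell$ coincide with the paper's symmetric/skew-symmetric split, since in your orthonormal basis $\mathbb{M}^{\textup{s}}$ acts as $a_{i}\,\mathrm{Id}$ on each block; and, as in the paper, the ``if'' direction rests entirely on Theorem \ref{thm:stability}. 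Your route is more elementary and self-contained for the normal case, whereas the paper's detour through $\mathbb{K}_{\mathbb{M}}$ buys the characterization of Theorem \ref{thm:linchar} for general, possibly non-normal $\mathbb{M}$, which is of independent interest.
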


The proof of this theorem will be given in Section \ref{sec3}. In
particular, this theorem is a consequence of Theorem \ref{thm:linchar}
which completely characterizes the necessary and sufficient condition
on $\mathbb{M}$ under which the dynamics (\ref{eq:uld0}) is globally
asymptotically stable. Notice that in the linear case global asymptotic
stability is equivalent to asymptotic stability starting from an arbitrary point.

\subsection{\label{sec2.3}Thermalization of the underdamped Langevin dynamics}

Let us assume that $F$ satisfies Assumption \ref{ass:main} so that
the zero-noise dynamics exhibits the global exponential stability
towards the origin stated in our first result Theorem \ref{thm:stability}.
We shall prove in Section \ref{sec4.4} that the underdamped
process $(X_{t}^{\epsilon})_{t\geq0}$ admits in fact a unique stationary
distribution $\mu^{\epsilon}$.

\begin{prop}\label{prop:unique stat distrib}
The process $(X_{t}^{\epsilon})_{t\geq0}$ admits a unique stationary
distribution $\mu^{\epsilon}$.
\end{prop}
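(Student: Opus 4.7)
The plan is to establish existence of $\mu^\epsilon$ via a Lyapunov drift argument (Foster--Hasminskii), and then upgrade existence to uniqueness by combining H\"ormander-type hypoellipticity with a controllability argument. Throughout, the starting point is the Lyapunov function $W$ already produced in the proof of Theorem~\ref{thm:stability}, which we expect to be of the form $W(q,p) = \tfrac{1}{2}|p|^{2} + U(q) + \eta\,\langle q,p\rangle + c\,|q|^{2}$ for suitably small $\eta>0$ and large $c$, so that $W$ is coercive with $W(x) \asymp |x|^{2}+U(q)+1$, and $\tfrac{d}{dt}W(X_{t}) \le -\lambda W(X_{t})$ along the zero-noise flow.

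For existence, I would denote by $\mathcal{L}^\epsilon$ the infinitesimal generator
\[
\mathcal{L}^\epsilon = p\cdot\nabla_{q} - F(q)\cdot\nabla_{p} - \gamma\, p\cdot\nabla_{p} + \epsilon\,\Delta_{p}.
\]
Applied to $W$, the first three terms reproduce exactly the deterministic time derivative controlled by Theorem~\ref{thm:stability} and therefore sum to at most $-\lambda W$, while the diffusive correction contributes only $\epsilon\,\Delta_{p}W$, which is a bounded constant since $W$ is quadratic in $p$. This yields a geometric drift inequality $\mathcal{L}^\epsilon W \le -\lambda W + K_\epsilon$, hence $\sup_{t\ge 0}\mathbb{E}[W(X_{t}^\epsilon(x))] < \infty$. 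Coercivity of $W$ then makes the family of empirical time-averages $\tfrac{1}{T}\int_{0}^{T} P_{t}^\epsilon(x,\cdot)\,dt$ tight, and the Krylov--Bogoliubov procedure produces at least one stationary probability measure.

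For uniqueness, I would verify H\"ormander's bracket condition on $\mathcal{L}^\epsilon$. Writing the drift vector field $Y_{0} = p\cdot\nabla_{q} - (F(q)+\gamma p)\cdot\nabla_{p}$ and the diffusion vector fields $Y_{i} = \partial_{p_{i}}$ for $1\le i \le d$, a direct computation gives $[Y_{i},Y_{0}] = \partial_{q_{i}} - \gamma\,\partial_{p_{i}}$, so that $\{Y_{i},\,[Y_{i},Y_{0}]\}_{i=1}^{d}$ already spans $T_{x}\mathbb{R}^{2d}$ at every point. Consequently $P_{t}^\epsilon(x,\cdot)$ admits a smooth density for every $t>0$. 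Controllability of the associated deterministic control system is elementary, since for any smooth curve $(q_{t},p_{t})$ with $\dot q_{t}=p_{t}$ the required control is $u_{t} = \dot p_{t} + F(q_{t}) + \gamma p_{t}$; the Stroock--Varadhan support theorem then guarantees that this density is strictly positive on all of $\mathbb{R}^{2d}$. Strong Feller and irreducibility now both hold, and a classical framework (Harris' theorem, or equivalently Hasminskii's uniqueness criterion combined with the above drift condition) delivers uniqueness of $\mu^\epsilon$.

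The main obstacle is the interplay between the degeneracy of the diffusion and the non-gradient structure of $F$: one cannot exploit the explicit Gibbs form $\mu^\epsilon \propto \exp(-H/\epsilon)$ that is standard in the conservative case, and ellipticity is unavailable for density bounds. The drift inequality must therefore be extracted from Assumption~\ref{ass:main} in the same delicate way as in Theorem~\ref{thm:stability}, using the decomposition $F=\nabla U+\ell$ and the strict inequality $\beta<\gamma$ to absorb the non-conservative component $\ell$ against the friction.
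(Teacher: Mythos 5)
Your proposal is correct, and its skeleton coincides with the paper's: the drift inequality $\mathcal{L}^{\epsilon}H\le-\lambda H+\epsilon d$ for the modified energy $H$ (your guessed form of the Lyapunov function is exactly the $H$ of Definition \ref{def:H}, and the drift bound is the content of Lemma \ref{lem_dH} and display (\ref{eq:dH_uld})), the resulting uniform moment bounds (Proposition \ref{prop:mom_est}) giving tightness and existence, and irreducibility obtained by exhibiting the explicit control $u_{t}=\dot p_{t}+F(q_{t})+\gamma p_{t}$ and invoking the Stroock--Varadhan support theorem, which is precisely the paper's Lemma \ref{lem:irreducibility}. The only genuine divergence is how uniqueness is closed: you verify the parabolic H\"ormander bracket condition (your computation $[Y_{i},Y_{0}]=\partial_{q_{i}}-\gamma\partial_{p_{i}}$ is correct, so one bracket suffices) to get smooth densities and the strong Feller property, and then apply the Doob--Khasminskii/Harris framework, whereas the paper simply feeds the Lyapunov function, moment estimates and irreducibility into the criteria of \cite{zhang2013new}, which are tailored to degenerate diffusions and bypass an explicit strong Feller verification. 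Your route is more self-contained and classical at the cost of the extra hypoellipticity step; the paper's is shorter by citation. One cosmetic caveat: the support theorem plus controllability gives topological irreducibility ($P_{t}^{\epsilon}(x,\mathcal{A})>0$ for every open $\mathcal{A}$), which together with strong Feller is all you need; strict positivity of the density everywhere, as you assert, is not immediate from these two facts alone, but it is also not required for the uniqueness conclusion.
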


Our second result investigates the
thermalization of the process $(X_{t}^{\epsilon})_{t\geq0}$
in the regime $\epsilon\rightarrow0$ towards its stationary measure $\mu_\epsilon$. Our concern is to precisely describe the underlying cut-off phenomenon involved when $\epsilon$ goes to zero.

The convergence to equilibrium is studied under the total variation distance (for the precise definition, we refer to Definition \ref{tvd}).

\subsubsection*{Stationary measure}

An important difficulty arising in the study of $\textup{d}_{\textup{TV}}(X_{t}^{\epsilon},\,\mu_{\epsilon})$
is that the stationary measure $\mu^{\epsilon}$ cannot be given explicitly
when the force $F$ does not write as a gradient function. However,
for the purpose of this work, we are able to compute an asymptotic
behavior of $\mu^{\epsilon}$ when $\epsilon$ goes to zero. In fact,
we show in Theorem \ref{thm:asymstat} that the probability measure
$\mu^{\epsilon}$ behaves in the low-temperature regime ($\epsilon\rightarrow0$)
as the law of a given centered Gaussian variable. Such approximation
is known for the overdamped model from \cite{Barrera_inv,cut-off_overdamped},
but as far as the author's knowledge, this constitutes the first result
for the underdamped model.

To explain this result, we define two $2d\times2d$ matrices $\mathbb{A}$
and $\mathbb{J}$ by
\begin{equation}
\mathbb{A}=\begin{pmatrix}\mathbb{O}_{d} & \mathbb{I}_{d}\\
-\textup{D}F(0) & -\gamma\mathbb{I}_{d}
\end{pmatrix}\;\;\;\;\text{and}\;\;\;\;\mathbb{J}=\begin{pmatrix}\mathbb{O}_{d} & \mathbb{O}_{d}\\
\mathbb{O}_{d} & \mathbb{I}_{d}
\end{pmatrix}\;,\label{eq:J}
\end{equation}
where $\textup{D}F$ represents the Jacobian of $F$ at $(0,0)$, and where $\mathbb{O}_{d}$
and $\mathbb{I}_{d}$ denote the zero matrix of size $d\times d$
and the $d$-dimensional identity matrix, respectively. Then, by Lemma
\ref{lem:riccati} with $(\mathbb{U},\,\mathbb{W})=(\mathbb{A}^{\dagger},\,\mathbb{J})$
(cf. the paragraph after the proof of Lemma \ref{lem:riccati}), there
exists a unique $2d\times2d$ matrix $\Sigma$ satisfying
\begin{equation}
\mathbb{A}\Sigma+\Sigma\mathbb{A}^{\dagger}=-\mathbb{J}\;.\label{eq:Sigma}
\end{equation}
Moreover, by the same lemma the matrix $\Sigma$ is symmetric positive
definite. We show then that we can approximate $\mu^{\epsilon}$ by
the law of the $2d$-dimensional centered Gaussian variable with covariance
matrix $2\epsilon\Sigma$.
\begin{thm}
\label{thm:asymstat}Suppose that Assumption \ref{ass:main} holds.
Then, the matrix $\Sigma$ defined in (\ref{eq:Sigma}) is such that
\[
\lim_{\epsilon\rightarrow0}\mathrm{d}_{\mathrm{TV}}\left(\mathcal{N}(0,2\epsilon\Sigma),\,\mu^{\epsilon}\right)=0\;.
\]
\end{thm}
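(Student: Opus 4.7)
The approach is a rescaling argument followed by a hypoelliptic-smoothing comparison. Since total variation is invariant under the bijective change of variables $x\mapsto x/\sqrt{\epsilon}$, denoting by $\tilde\mu^\epsilon$ the pushforward of $\mu^\epsilon$ under this map, the statement reduces to
\[
\lim_{\epsilon\to 0}\mathrm{d}_{\mathrm{TV}}\bigl(\tilde\mu^\epsilon,\,\mathcal{N}(0,2\Sigma)\bigr)=0.
\]
The rescaled process $\tilde X_t^\epsilon:=X_t^\epsilon/\sqrt{\epsilon}$ satisfies an SDE whose drift $-F(\sqrt{\epsilon}\,\tilde q)/\sqrt{\epsilon}$ converges on compact sets to $-\mathrm{D}F(0)\,\tilde q$ and whose noise is the fixed (no longer small) Brownian motion; its stationary measure is $\tilde\mu^\epsilon$, while the limiting dynamics is the hypoelliptic Ornstein--Uhlenbeck process with generator $\mathcal{L}^0 f(x)=\langle\mathbb{A}x,\nabla f(x)\rangle+\Delta_p f(x)$, whose unique invariant measure is $\mathcal{N}(0,2\Sigma)$ by~\eqref{eq:Sigma}.

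First I would establish uniform tightness of $\{\tilde\mu^\epsilon\}_{\epsilon>0}$. Using the Lyapunov function $V$ constructed in the proof of Theorem~\ref{thm:stability} (which satisfies $\mathcal{L}^0 V\leq-\lambda V$ along the deterministic flow) together with the fact that $\mathcal{L}^\epsilon-\mathcal{L}^0=\epsilon\Delta_p$ and that $V$ has bounded second derivatives in $p$, one obtains $\mathcal{L}^\epsilon V\leq -\lambda V+C\epsilon$. Integrating against $\mu^\epsilon$ yields $\int V\,\mathrm{d}\mu^\epsilon=O(\epsilon)$, a Gaussian-scale concentration of $\mu^\epsilon$ at the origin which, after rescaling, is a uniform-in-$\epsilon$ tail bound on $\tilde\mu^\epsilon$. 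By passing to the limit in $\int\mathcal{L}^\epsilon f\,\mathrm{d}\mu^\epsilon=0$ with $f\in C_c^\infty(\mathbb{R}^{2d})$ and using the $\mathcal{C}^3$-regularity of $F$ to bound the drift error by $O(\sqrt{\epsilon})$ on the support of $f$, every weak limit point of $\tilde\mu^\epsilon$ must be invariant for the limit OU. That OU is hypoelliptic (the commutators of $\mathbb{A}$ with the constant momentum-direction diffusion fields fill in the position directions) and $\mathbb{A}$ is Hurwitz by Theorem~\ref{thm:stability}, so its unique invariant measure is $\mathcal{N}(0,2\Sigma)$. Hence $\tilde\mu^\epsilon\rightharpoonup\mathcal{N}(0,2\Sigma)$.

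The hard part is upgrading this weak convergence to total variation despite the degeneracy of the noise, which excludes a direct use of the elliptic arguments from \cite{Barrera_inv,cut-off_overdamped}. My plan is to fix $t_0>0$ and use the two invariances $\tilde\mu^\epsilon\tilde P_{t_0}^\epsilon=\tilde\mu^\epsilon$ and $\mathcal{N}(0,2\Sigma)P_{t_0}^0=\mathcal{N}(0,2\Sigma)$ to write
\[
\tilde\mu^\epsilon-\mathcal{N}(0,2\Sigma)=\tilde\mu^\epsilon\bigl(\tilde P_{t_0}^\epsilon-P_{t_0}^0\bigr)+\bigl(\tilde\mu^\epsilon-\mathcal{N}(0,2\Sigma)\bigr)P_{t_0}^0.
\]
The second summand tends to $0$ in TV because $P_{t_0}^0$ is an explicit Gaussian kernel with positive smooth density (the law of the linear OU at time $t_0$ is Gaussian with invertible covariance obtained by integrating the matrix Riccati/Lyapunov equation on $[0,t_0]$), and integration against a smooth bounded density converts the weak convergence of Step~2 into TV convergence after a dominated-convergence argument controlled by the uniform tail bound. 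The first summand is handled by a pathwise coupling between $\tilde X^\epsilon$ and the limit OU driven by the same Brownian motion: Grönwall combined with $|F(\sqrt{\epsilon}\,\tilde q)-\sqrt{\epsilon}\,\mathrm{D}F(0)\tilde q|\leq C\epsilon|\tilde q|^2$ yields $|\tilde X_{t_0}^\epsilon-Y_{t_0}|=O(\sqrt{\epsilon})$ on $\{\sup_{t\leq t_0}|\tilde X_t^\epsilon|\leq R\}$, and cutting off at $R=(\log 1/\epsilon)^{1/2}$ absorbs the complementary event thanks to the tail bound from Step~1; this pathwise difference is converted into a TV bound by using once more the smoothness of the Gaussian density of $P_{t_0}^0$.

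The main obstacle I anticipate is this final TV conversion, which crucially relies on the explicit Gaussian form of the limit semigroup rather than on general hypoelliptic density estimates; these would be significantly harder to use quantitatively. A careful implementation will require a refined control of the tail of $\tilde\mu^\epsilon$ and a quantitative bound on the density of the Gaussian kernel $P_{t_0}^0$ at scale $\sqrt{\epsilon}$.
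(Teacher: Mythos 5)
Your route is genuinely different from the paper's: the paper never rescales the stationary measure, but instead runs the dynamics from a fixed point up to time $\epsilon^{-\theta}$, approximates it in total variation by the Gaussian process $Z^{\epsilon}_t$ (Proposition \ref{prop:tvm1}, built on the Girsanov--Pinsker bound of Lemma \ref{lem:pinsker}), uses the convergence $\Sigma_t(x)\to\Sigma$ and a coupling over $\mu^{\epsilon}$ (Lemmas \ref{lem:control tail mu_epsilon}, \ref{lem:tvZ_long}, \ref{lem:roughmix}) to show the process forgets its initial condition, and concludes by the triangle inequality. Your plan (rescale by $\sqrt{\epsilon}$, identify weak limit points of $\widetilde{\mu}^{\epsilon}$ through the stationary Fokker--Planck equation of the hypoelliptic Ornstein--Uhlenbeck limit, then upgrade to total variation via the decomposition $\widetilde{\mu}^{\epsilon}-\mathcal{N}(0,2\Sigma)=\widetilde{\mu}^{\epsilon}(\widetilde{P}^{\epsilon}_{t_0}-P^{0}_{t_0})+(\widetilde{\mu}^{\epsilon}-\mathcal{N}(0,2\Sigma))P^{0}_{t_0}$) is attractive, and the second summand is handled correctly: weak convergence plus the bounded continuous Gaussian kernel $P^0_{t_0}(x,\cdot)=\mathcal{N}(e^{\mathbb{A}t_0}x,2\Sigma_{t_0})$, with $\Sigma_{t_0}$ invertible by the same controllability argument as in Lemma \ref{lem:riccati}, gives pointwise convergence of the convolved densities and hence $L^1$ (Scheff\'e) convergence.

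The genuine gap is in the first summand. A pathwise (Gr\"onwall) coupling gives only a Wasserstein-type bound $|\widetilde{X}^{\epsilon}_{t_0}(x)-Y_{t_0}(x)|=O(\sqrt{\epsilon})$ on a good event, and smoothness of the limiting Gaussian kernel does \emph{not} convert this into a bound on $\mathrm{d}_{\mathrm{TV}}\bigl(\mathrm{Law}(\widetilde{X}^{\epsilon}_{t_0}(x)),\,P^{0}_{t_0}(x,\cdot)\bigr)$: only one of the two laws is Gaussian, the other is the law of a nonlinear hypoelliptic diffusion whose density you do not control quantitatively, and total variation is not dominated by Wasserstein distance. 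The regularization trick only works when the \emph{same} Gaussian kernel acts on both measures; if you stop the coupling at time $t_0-s$ and let $P^{0}_{s}$ act on both marginals, you are left with the residual term $\delta_x\widetilde{P}^{\epsilon}_{t_0-s}(\widetilde{P}^{\epsilon}_{s}-P^{0}_{s})$, which is again a total-variation difference between the nonlinear and linear kernels, so the argument is circular as stated. The missing ingredient is a change-of-measure estimate: both SDEs have the same constant degenerate diffusion and their drifts differ only in the $p$-component by $|F(\sqrt{\epsilon}\,\widetilde{q})/\sqrt{\epsilon}-\mathrm{D}F(0)\widetilde{q}|\le C\sqrt{\epsilon}\,|\widetilde{q}|^{2}$ (locally near $0$), so Girsanov plus Pinsker bounds the path-space total variation by $C\bigl(\epsilon\int_0^{t_0}\mathbb{E}|\widetilde{q}_s|^{4}\mathrm{d}s\bigr)^{1/2}$, which vanishes for fixed $t_0$ once you have a fourth-moment bound on $\widetilde{\mu}^{\epsilon}$ (available from Proposition \ref{prop:mom_est} and stationarity, as in Lemma \ref{lem:control tail mu_epsilon}); this is precisely the role Lemma \ref{lem:pinsker} and Propositions \ref{prop:tv1}--\ref{prop:tvm1} play in the paper. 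Two smaller points also need care: the identity $\int\mathcal{L}^{\epsilon}H\,\mathrm{d}\mu^{\epsilon}=0$ requires a localization/truncation argument (the paper avoids it via the truncated stationarity argument in Lemma \ref{lem:control tail mu_epsilon}), and passing from ``infinitesimally invariant for $\mathcal{L}^{0}$'' to ``equals $\mathcal{N}(0,2\Sigma)$'' needs hypoellipticity of the adjoint and uniqueness of the invariant measure, plus a maximal inequality to control $\sup_{t\le t_0}|\widetilde{X}^{\epsilon}_t|$ on the Gr\"onwall event.
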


The proof of this Theorem is given in Section \ref{sec8}. The appearance
of this theorem is similar to \cite[Proposition 3.6]{cut-off_overdamped}
for the overdamped Langevin dynamics. However, in our case, $\Sigma$
is solution to a matrix Lyapunov equation (\ref{eq:Sigma}) \emph{where
the right-hand side matrix $\mathbb{J}$ is not invertible} and therefore~\cite[Theorem 1 p. 443]{lancaster1985theory} which was crucially used in the
proof of \cite[Proposition 3.6]{cut-off_overdamped} is not applicable.
\begin{rem}[Gradient model]
It was shown in \cite[Proposition 2.1]{Monmarche-Ramil} that the
Gibbs measure
\[
\mu_{\epsilon}(\mathrm{d}x)=\mu_{\epsilon}(x)\mathrm{d}x:=\frac{1}{Z_{\epsilon}}\exp\left\{ -\frac{\gamma}{\epsilon}V(x)\right\} \mathrm{d}x \;,
\]
where $V(x)=\frac{1}{2}|p|^{2}+U(q)$ and $Z_{\epsilon}$ is the normalizing
constant, is a stationary probability distribution if and only if $F=\nabla U$ for
some $U\in\mathcal{C}^{3}(\mathbb{R}^{d},\,\mathbb{R})$. In this
case, by the global asymptotic stability (Theorem \ref{thm:stability})
and Remark \ref{rem:stability}-(4), the potential function $U$ achieves
its global minimum at $0$. Let us assume without loss of generality
that $U(0)=0$ (since adding a constant to $U$ does not affect the
dynamical system). Then, by the Laplace asymptotics, we can derive
\[
Z_{\epsilon}\simeq\frac{(2\pi\epsilon/\gamma)^{d}}{\sqrt{\det\mathbb{H}}}\;,
\]
where $\mathbb{H}=\nabla^{2}U(0)$ denotes the Hessian of $U$ at
$0$. Thus, by taking the second order Taylor approximation of $U$
at $0$ (note that $U(0)=0$ and $\nabla U(0)=0$), we obtain
\begin{align*}
\mu_{\epsilon}(x)=\frac{1}{Z_{\epsilon}}\textup{e}^{-\frac{\gamma}{\epsilon}V(x)} & \simeq\;\frac{\sqrt{\det\mathbb{H}}}{(2\pi\epsilon/\gamma)^{d}}\exp\left\{ -\frac{\gamma}{2\epsilon}\left(|p|^{2}+\left\langle q,\,\mathbb{H}q\right\rangle \right)\right\} \\
 & =\;\frac{1}{\sqrt{(2\pi\epsilon)^{2d}\det\Sigma}}\exp\left\{ -\frac{1}{2\epsilon}\left(\left\langle x,\,\widehat{\Sigma}^{-1}x\right\rangle \right)\right\}\;,
\end{align*}
where $\widehat{\Sigma}$ is the $2d\times2d$ matrix defined by
\begin{equation}
\widehat{\Sigma}=\frac{1}{\gamma}\begin{pmatrix}\mathbb{H}^{-1} & \mathbb{O}_{d}\\
\mathbb{O}_{d} & \mathbb{I}_{d}
\end{pmatrix}\;.\label{eq:covgrad}
\end{equation}
Therefore, we can conclude that $\mu_{\epsilon}$ is close to the
centered Gaussian distribution with covariance matrix $\epsilon\widehat{\Sigma}$
when $\epsilon$ is close to zero. By a direct computation (note that
in this case $\mathrm{D}F(0)=\mathbb{H})$, we can check that $\widehat{\Sigma}/2$
is indeed the solution to the equation (\ref{eq:Sigma}), and hence
$\widehat{\Sigma}=2\Sigma$.
\end{rem}

\subsubsection*{Cut-off phenomenon}

To quantify the total variation distance $\textup{d}_{\textup{TV}}(X_{t}^{\epsilon},\,\mu_{\epsilon})$
in a precise manner when $\epsilon\rightarrow0$, we first have to state the technical lemma below.
We remark that this lemma is the underdamped version of \cite[Lemma 2.1]{cut-off_overdamped}.
Although the proof is very similar to that of \cite[Lemma 2.1]{cut-off_overdamped} which is
based on \cite[Lemmata B.1 and B.2]{cut-off_overdamped} and the Hartman-Grobman
theorem, we will provide its proof in Section \ref{sec:lem_asym_uld}
in order to explain the nature of the different constants appearing
in the statement of the lemma in the underdamped setting.
\begin{lem}
\label{lem:asym_uld} Under Assumption \ref{ass:main}, for each $x\in\mathbb{R}^{2d}\setminus\{0\}$,
there exist constants
\[
\begin{cases}
\eta=\eta(x)>0,\;\nu=\nu(x)\ge0,\;\tau=\tau(x)\ge0\;,\\
m=m(x)\in\{1,\ldots,2d\}\;,\\
\theta_{1}=\theta_{1}(x),\ldots,\theta_{m}=\theta_{m}(x)\in[0,2\pi)\;,
\end{cases}
\]
and linearly independent complex vectors $v_{1}=v_{1}(x),\ldots,v_{m}=v_{m}(x)\in\mathbb{C}^{2d}$
such that
\begin{equation}
\lim_{t\rightarrow\infty}\left|\frac{\mathrm{e}^{\eta t}}{t^{\nu}}X_{t+\tau}(x)-\sum_{k=1}^{m}\mathrm{e}^{i\theta_{k}t}v_{k}\right|=0\;.\label{eq:approx1}
\end{equation}
\end{lem}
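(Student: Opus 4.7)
The plan is to reduce the asymptotic analysis of the nonlinear flow $X_t(x)$ to a spectral analysis of its linearisation at the origin, governed by the matrix $\mathbb{A}$ defined in~\eqref{eq:J}. The argument parallels the overdamped case of \cite[Lemma~2.1]{cut-off_overdamped}, adapted to the hypoelliptic block structure of $\mathbb{A}$.

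For the first step, Theorem~\ref{thm:stability} gives that $X_t(x) \to 0$ exponentially fast, and in particular every eigenvalue of $\mathbb{A}$ has strictly negative real part, so the origin is a hyperbolic attractor of~\eqref{eq:uld0}. Choose $\tau = \tau(x) \geq 0$ large enough that $X_\tau(x)$ lies in a small neighbourhood of $0$. Writing the drift of \eqref{eq:uld0} as $b(y) = \mathbb{A} y + g(y)$ with $g(y) = O(|y|^2)$ and applying Duhamel's formula yields
\[
X_{t+\tau}(x) = \mathrm{e}^{t\mathbb{A}}\, X_\tau(x) + \int_0^t \mathrm{e}^{(t-s)\mathbb{A}}\, g(X_{s+\tau}(x))\,\mathrm{d}s.
\]
Using $|X_s| \leq C\mathrm{e}^{-\lambda s}$ (so $|g(X_s)| = O(\mathrm{e}^{-2\lambda s})$), I would invoke the Hartman--Grobman theorem (or, equivalently, run a fixed-point argument projected onto each generalised eigenspace of $\mathbb{A}$) to produce a ``linear asymptote'' $Y = Y(x) \in \mathbb{R}^{2d}$ for which
\[
X_{t+\tau}(x) = \mathrm{e}^{t\mathbb{A}}\, Y + r(t), \qquad |r(t)| = o\bigl(t^\nu \mathrm{e}^{-\eta t}\bigr) \text{ as } t\to\infty,
\]
with $\eta,\nu$ to be identified in the next step. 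The key point is that $g\circ X_s$ decays at twice the rate of $X_s$, so its cumulative contribution is strictly smaller than the slowest linear mode surviving in $X_{t+\tau}(x)$.

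For the second step, I would analyse $\mathrm{e}^{t\mathbb{A}} Y$ via the Jordan decomposition of $\mathbb{A}$. Writing $\mathbb{A}|_{E_j} = \lambda_j\,\mathbb{I}_{E_j} + N_j$ on each generalised eigenspace $E_j$ (with $N_j$ nilpotent of index $k_j$) and letting $\mathbb{P}_j$ be the corresponding projection, one has
\[
\mathrm{e}^{t\mathbb{A}} Y = \sum_j \mathrm{e}^{\lambda_j t}\sum_{k=0}^{k_j-1}\frac{t^k}{k!}\,N_j^k\,\mathbb{P}_j Y.
\]
Set $-\eta := \max\{\mathrm{Re}(\lambda_j) : N_j^k\,\mathbb{P}_j Y \neq 0 \text{ for some }k\}$ (strictly negative by hyperbolicity), and let $\nu$ be the maximal such $k$ among indices $j$ with $\mathrm{Re}(\lambda_j) = -\eta$. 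Enumerate the eigenvalues of $\mathbb{A}$ with real part $-\eta$ contributing nontrivially at nilpotent order $\nu$ as $-\eta+i\theta_1,\ldots,-\eta+i\theta_m$, and define $v_k := \frac{1}{\nu!}\,N_{j_k}^\nu\,\mathbb{P}_{j_k}Y$. All other terms in the double sum have either a strictly more negative $\mathrm{Re}(\lambda_j)$ or a strictly smaller power of $t$, so
\[
\frac{\mathrm{e}^{\eta t}}{t^\nu}\,\mathrm{e}^{t\mathbb{A}} Y - \sum_{k=1}^m \mathrm{e}^{i\theta_k t}\, v_k \longrightarrow 0 \quad \text{as } t\to\infty.
\]
Combined with the estimate on $r(t)$ from the first step (observe that $\mathrm{e}^{\eta t}t^{-\nu}|r(t)| \to 0$ by construction), this yields~\eqref{eq:approx1}. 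Linear independence of $v_1,\ldots,v_m$ is immediate since they lie in distinct generalised eigenspaces of $\mathbb{A}$, and $m \leq 2d$ since $\mathbb{A}$ has at most $2d$ distinct eigenvalues.

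The main obstacle is the first step: one must pass from the nonlinear flow to its linearisation with asymptotic precision better than the leading order $t^\nu \mathrm{e}^{-\eta t}$. The classical Hartman--Grobman theorem only gives a $C^0$-conjugacy, which cannot preserve the asymptotic shape of the trajectory; one must therefore either invoke a smoothness refinement (Sternberg's or Hartman's $C^1$-linearisation theorems at hyperbolic sinks), or construct $Y$ directly by an eigenspace-by-eigenspace fixed-point argument, exploiting that $g\circ X_s$ decays strictly faster than any linear mode and can therefore perturb the slowest surviving mode only at lower order. The specifically underdamped aspect of the problem enters only through the block structure of $\mathbb{A}$ in~\eqref{eq:J}, whose eigenvalues are the roots of $\lambda^2 + \gamma\lambda + \mu = 0$ for each eigenvalue $\mu$ of $\mathrm{D}F(0)$; these may be genuinely complex, which accounts for the oscillatory factors $\mathrm{e}^{i\theta_k t}$ in~\eqref{eq:approx1}.
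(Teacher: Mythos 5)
Your skeleton coincides with the paper's: shift time by $\tau(x)$ so the trajectory enters a neighbourhood of the hyperbolic sink, transfer to the linear flow $e^{t\mathbb{A}}$, and read off $\eta,\nu,\theta_k,v_k$ from the Jordan decomposition (your second step is exactly the paper's Putzer/Jordan computation, and the linear-independence and $m\le 2d$ remarks are fine). The genuine gap is the transfer step, which you flag as the "main obstacle" but do not close, and the one quantitative claim you make there is not valid as stated. The assertion $|r(t)|=o(t^{\nu}e^{-\eta t})$ because "$g\circ X_s$ decays at twice the rate of $X_s$" fails: a priori Theorem \ref{thm:stability} only yields decay at the Lyapunov rate (some $e^{-\lambda t/2}$ with $\lambda$ from Definition \ref{def:H}), which bears no relation to $\eta(x)$; and even after bootstrapping Duhamel to the spectral rate, the quadratic term is only $O(e^{-2(\eta_{\min}-\delta)s})$, where $\eta_{\min}$ denotes the smallest real part of the eigenvalues of $-\mathbb{A}$. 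This does not beat $t^{\nu}e^{-\eta t}$ precisely when $x$ points in a special direction with $\eta(x)\ge 2\eta_{\min}$ --- a case the lemma must cover, since the constants are allowed to depend on $x$ (cf.\ Remark \ref{rem:const}). In that resonant regime the slow components of your asymptote $Y$ vanish and the error term can dominate the purported leading term; rescuing the eigenspace-by-eigenspace fixed-point route would require iterating the scheme with successively improved decay rates and ruling out super-exponential decay of a nonzero solution, none of which appears in the sketch.

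The paper avoids this entirely: after the linear Jordan analysis it does not argue through Duhamel at all, but cites \cite[Lemma B.2]{cut-off_overdamped}, which justifies replacing $e^{\mathbb{A}t}x$ by $X_t(x)$ in the asymptotics via the (sufficiently smooth) linearisation available at a hyperbolic sink --- i.e.\ exactly the refinement of Hartman--Grobman you mention as your fallback --- and then defines $\tau(x)$ as the entrance time into the linearisation neighbourhood. So your fallback option is the correct route, but as written the proposal proves neither it nor the direct Duhamel variant, and the estimate it does assert is false in the degenerate directions.
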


The constants appearing in the previous lemma are determined by the
matrix $\textup{D}F(0)$ and the starting location $x$ in a complex
manner. Roughly speaking, $\eta$ is the smallest positive real part
of the eigenvalues of the matrix $-\mathbb{A}$ defined in (\ref{eq:J})
and $\nu$ is the size of the Jordan block associated with the eigenvalue
with smallest positive real part minus $1$. These constants depend
on $x$ since some eigenvalues must not be considered when $x$ is
of special direction. For more detail, we refer to the proof in Section
\ref{sec:lem_asym_uld} and Remark \ref{rem:const}.

We are now ready to provide our result on the thermalization of the
underdamped Langevin process $(X_{t}^{\epsilon})_{t\geq0}$~\eqref{eq:uld}.
We shall from now on suppose the following control on the jacobian
$\textup{D}F$.

\begin{assumption} \label{ass:DF}There exist constants
$C,\,\rho>0$ such that
\[
|\textup{D}F(q)|\leq C\mathrm{e}^{\rho|q|^{2}}\;\;\;\text{for all }q\in\mathbb{R}^{d}\;.
\]
\end{assumption}

The second main result is the following theorem.
\begin{thm}
\label{thm:cut-off}Let Assumptions \ref{ass:main} and \ref{ass:DF}
hold. For any $x\in\mathbb{R}^{2d}\setminus\{0\}$, recall the constants
$\eta(x),\,\nu(x)$ and $\tau(x)$ from Lemma \ref{lem:asym_uld},
and define the mixing time as
\begin{equation}
t_{\mathrm{mix}}^{\epsilon}(x)=\frac{1}{2\eta(x)}\log\left(\frac{1}{2\epsilon}\right)+\frac{\nu(x)}{\eta(x)}\log\log\left(\frac{1}{2\epsilon}\right)+\tau(x)\;.\label{eq:mixingtime}
\end{equation}
Then, we have
\begin{align}
 & \lim_{w\rightarrow\infty}\limsup_{\epsilon\rightarrow0}\mathrm{d}_{\mathrm{TV}}\left(X_{t_{\mathrm{mix}}^{\epsilon}(x)+w}^{\epsilon}(x),\,\mu_{\epsilon}\right)=0\;\text{ and}\nonumber \\
 & \lim_{w\rightarrow\infty}\liminf_{\epsilon\rightarrow0}\mathrm{d}_{\mathrm{TV}}\left(X_{t_{\mathrm{mix}}^{\epsilon}(x)-w}^{\epsilon}(x),\,\mu_{\epsilon}\right)=1\;.\label{eq:cut-off}
\end{align}
\end{thm}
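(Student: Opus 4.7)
The plan follows the Gaussian-coupling scheme developed for the overdamped case in \cite{cut-off_overdamped}, adapted to handle the hypoellipticity of the underdamped generator. The key idea is to approximate the law of $X_t^\epsilon(x)$ by an explicit Gaussian whose mean is the deterministic trajectory $X_t(x)$ and whose covariance converges exponentially to $2\epsilon\Sigma$; Theorem~\ref{thm:asymstat} then reduces $\mathrm{d}_{\mathrm{TV}}(X_t^\epsilon(x),\mu_\epsilon)$ to a total-variation distance between two Gaussians with common covariance, for which an explicit closed form is available.

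Concretely, I would define $G_t^\epsilon(x)=X_t(x)+\sqrt{2\epsilon}\,\mathcal{Y}_t^\epsilon(x)$, where $\mathcal{Y}_t^\epsilon$ is the centered Gaussian solution of the linear SDE obtained by freezing $F$ at its first-order Taylor expansion along $X_s(x)$ and driven by the same Brownian motion as $X_t^\epsilon(x)$. The covariance $\Sigma_t(x)$ of $\mathcal{Y}_t^\epsilon(x)$ tends to $\Sigma$ exponentially fast, since $\mathbb{A}$ is Hurwitz (by Theorem~\ref{thm:stability}) and $\mathrm{D}F(q_s(x))\to\mathrm{D}F(0)$ exponentially. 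Because the drift discrepancy between $X^\epsilon$ and $G^\epsilon$ is supported on the momentum coordinate, where the diffusion lives, Girsanov's theorem applies; Pinsker's inequality yields
\begin{equation*}
\mathrm{d}_{\mathrm{TV}}(X_t^\epsilon(x),G_t^\epsilon(x))^2 \;\leq\; \frac{1}{4\epsilon}\,\mathbb{E}\!\int_0^t\!\bigl|F(q_s^\epsilon)-F(q_s(x))-\mathrm{D}F(q_s(x))(q_s^\epsilon-q_s(x))\bigr|^2\mathrm{d}s.
\end{equation*}
Setting $\mathcal{R}_s^\epsilon:=(q_s^\epsilon-q_s(x))/\sqrt{\epsilon}$, a Taylor expansion makes the integrand $O(\epsilon^2|\mathcal{R}_s^\epsilon|^4)$, and uniform fourth-moment bounds on $\mathcal{R}_s^\epsilon$---obtained from the Lyapunov structure of Theorem~\ref{thm:stability} combined with the control of $\mathrm{D}F$ from Assumption~\ref{ass:DF}---reduce the right-hand side to $O(\epsilon\cdot t)$, which vanishes at $t=t_{\mathrm{mix}}^\epsilon(x)=O(\log(1/\epsilon))$.

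Combining this estimate with Theorem~\ref{thm:asymstat} and the exponential decay $\Sigma_t(x)\to\Sigma$, and using the standard closed form for the TV distance between two Gaussians with common covariance, we arrive at
\begin{equation*}
\mathrm{d}_{\mathrm{TV}}(X_t^\epsilon(x),\mu_\epsilon)=2\,\Phi\!\left(\frac{|\Sigma^{-1/2}X_t(x)|}{2\sqrt{2\epsilon}}\right)-1+o(1),
\end{equation*}
where $\Phi$ is the standard normal c.d.f. Substituting $t=t_{\mathrm{mix}}^\epsilon(x)\pm w$ into Lemma~\ref{lem:asym_uld} and simplifying using (\ref{eq:mixingtime}) yields, with $s_\epsilon=t-\tau(x)$ and an explicit constant $C(x)>0$,
\begin{equation*}
\frac{|\Sigma^{-1/2}X_t(x)|}{\sqrt{2\epsilon}}=C(x)\,\mathrm{e}^{\mp\eta(x)w}\,\Bigl|\Sigma^{-1/2}\!\sum_{k=1}^m\!\mathrm{e}^{i\theta_k s_\epsilon}v_k\Bigr|(1+o(1)).
\end{equation*}
For the upper sign and $w\to+\infty$, the prefactor $\mathrm{e}^{-\eta(x)w}$ forces the argument of $\Phi$ to $0$ uniformly in $\epsilon$, so $\mathrm{d}_{\mathrm{TV}}\to 0$. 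For the lower sign, the almost-periodic function $s\mapsto|\Sigma^{-1/2}\sum_k\mathrm{e}^{i\theta_k s}v_k|$ is bounded below by $\sqrt{m\,\lambda_{\min}(V^*V)}>0$ (where the columns of $V$ are the $\Sigma^{-1/2}v_k$) thanks to linear independence of the $v_k$, so $\mathrm{e}^{\eta(x)w}$ drives the argument of $\Phi$ to $+\infty$ and $\mathrm{d}_{\mathrm{TV}}\to 1$.

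The principal technical challenge is the Girsanov--Pinsker bound: one must control the linearization error of $F$ along the deterministic trajectory up to the logarithmically long time $t_{\mathrm{mix}}^\epsilon(x)$, which demands sharp fourth-moment bounds on $\mathcal{R}_s^\epsilon$ in a hypoelliptic setting where no noise acts directly on the position coordinate. The decisive input is the Lyapunov structure from Theorem~\ref{thm:stability}, which guarantees exponential decay of both the trajectory and the variance of the linearized fluctuation. A subsidiary point, addressed above, is the oscillatory behaviour of $|\sum_k\mathrm{e}^{i\theta_k s}v_k|$; this would preclude a \emph{profile} cut-off, but it is compatible with cut-off in the sense of~(\ref{eq:cut-off}) thanks to the uniform positive lower bound coming from linear independence of the $v_k$.
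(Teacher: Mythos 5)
Your overall architecture (Gaussian approximation of $X^\epsilon_t$, convergence of the covariance $\Sigma_t(x)\to\Sigma$, Theorem \ref{thm:asymstat} to replace $\mu_\epsilon$, then the explicit Gaussian TV formula evaluated along Lemma \ref{lem:asym_uld}) is the same as the paper's, and your final limiting argument in $w$ (upper bound via the boundedness of the almost-periodic sum, lower bound via $\sigma_{\min}$ of the matrix with columns $\Sigma^{-1/2}v_k$) is sound. The genuine gap is the central step: you bound $\mathrm{d}_{\mathrm{TV}}(X^\epsilon_t(x),G^\epsilon_t(x))$ by a single Girsanov--Pinsker estimate over the whole interval $[0,t_{\mathrm{mix}}^\epsilon(x)]$ and assert that ``uniform fourth-moment bounds on $\mathcal{R}^\epsilon_s$'' follow from the Lyapunov structure of Theorem \ref{thm:stability} together with Assumption \ref{ass:DF}. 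Theorem \ref{thm:stability} controls only the deterministic flow; it gives no contraction for the difference $X^\epsilon_s-X_s$, and a naive Gronwall bound produces constants of order $\mathrm{e}^{Cs}$, which over times $s\sim\log(1/\epsilon)$ destroy the $O(\epsilon t)$ estimate. Obtaining moment bounds for the rescaled deviation that are uniform over $[0,1/\epsilon^\theta]$ is precisely the hardest part of the paper (Proposition \ref{prop:main_apprx} together with Lemmas \ref{lem:mom_YT(x)}--\ref{lem:mom_Yt_sup}): it requires the matrix $\Gamma$ of \eqref{eq:ricattiGamma}, the waiting time $T(x)$ after which the linearization is dissipative, the quadratic Gronwall inequality (Lemma \ref{lem:quadratic gronwall}), and truncation on the events $\mathcal{A}_\epsilon,\mathcal{B}_\epsilon$. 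Treating this as routine is not a proof.

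Moreover, even granting those moment bounds, your estimate of the Pinsker integrand fails as stated: Assumption \ref{ass:DF} only bounds $\mathrm{D}F$, and allows growth $\mathrm{e}^{\rho|q|^2}$, so the Taylor remainder cannot be dominated by a constant times $|q^\epsilon_s-q_s|^2$; after Cauchy--Schwarz one needs exponential moments $\mathbb{E}[\mathrm{e}^{c\rho|X^\epsilon_s|^2}]$, and by Corollary \ref{cor_expmom} these are finite only when $H$ at the (possibly decayed) starting point is below a threshold of order $1/\rho$ --- which is false for a general fixed $x$ and small $s$. This is exactly why the paper does \emph{not} run Girsanov from time $0$: it restarts the process at time $t^\epsilon-\delta^\epsilon$ with $\delta^\epsilon=\epsilon^\sigma$, where the process sits near the origin with high probability (Proposition \ref{prop:tv1}), applies the Pinsker bound only on that short window, and then must compare the Gaussian laws issued from the two nearby random starting points (Proposition \ref{prop:tv2}); that comparison in turn needs the small-time behaviour of the degenerate covariance, i.e.\ the third-order expansion $\widehat\Sigma_t$ and the bound $\Vert\Sigma_t(u)^{-1/2}\Vert\lesssim t^{-3/2}$ of Lemmas \ref{lem_shor_Sigma0}--\ref{lem:short_Sigma} and Lemma \ref{lem:tvZ_short}. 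Both the restart-and-short-window device and the short-time covariance analysis are essential ingredients in the hypoelliptic setting and are absent from your proposal, so the argument as written does not close.
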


The previous theorem asserts that there is a cut-off phenomenon at
the mixing time $t_{\mathrm{mix}}^{\epsilon}(x)$ with window of size
$O(1)$. Under a condition detailed below, we have a stronger cut-off
notion called profile cut-off. The definition of the profile cut-off is mentioned on \eqref{eq:prof-cut-off}.  
\begin{thm}
\label{thm:profilecut-off}Let Assumptions \ref{ass:main} and \ref{ass:DF}
hold and let $x\in\mathbb{R}^{2d}\setminus\{0\}$. Recall the mixing
time $t_{\mathrm{mix}}^{\epsilon}(x)$ from (\ref{eq:mixingtime})
and the constants $\theta_{k}(x)$'s and vectors $v_{k}(x)$'s from Lemma
\ref{lem:asym_uld}. Then, the underdamped Langevin dynamics starting
at $x\in\mathbb{R}^{2d}$ exhibits a profile cut-off at the mixing
time $t_{\mathrm{mix}}^{\epsilon}(x)$ and window $O(1)$ if and only
if the limit
\begin{equation}
r:=\lim_{t\rightarrow\infty}\left|\Sigma^{-1/2}\sum_{k=1}^{m}\mathrm{e}^{i\theta_{k}t}v_{k}\right|\label{eq:limcon}
\end{equation}
exists. Moreover, in this case $r>0$, the profile cut-off function is given by {
\[
\Lambda(w)=2\int_{0}^{\sqrt{2}(1/2\eta)^{\nu}e^{-w\eta}}\frac{1}{\sqrt{2\pi}}e^{-t^{2}/2}dt\;.
\]
}
\end{thm}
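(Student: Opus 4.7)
The overall strategy is to approximate the distribution of $X_t^\epsilon(x)$ by a Gaussian centered at the deterministic trajectory $X_t(x)$ with covariance $2\epsilon\Sigma$, combine this with the Gaussian approximation $\mu_\epsilon\approx\mathcal{N}(0,2\epsilon\Sigma)$ from Theorem~\ref{thm:asymstat}, and then invoke the closed-form expression for the total variation distance between two Gaussians sharing a covariance matrix. Substituting $t=t_{\mathrm{mix}}^\epsilon(x)+w$ and inserting the explicit asymptotics of $X_t(x)$ from Lemma~\ref{lem:asym_uld} reduces the profile cut-off question to the existence of the limit $r$ in~\eqref{eq:limcon}.

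For the Gaussian approximation, set $Y_t^\epsilon:=(X_t^\epsilon(x)-X_t(x))/\sqrt{2\epsilon}$. An It\^o expansion shows that $Y_t^\epsilon$ satisfies an SDE with linearized drift $\mathbb{A}(X_t(x))$ and diffusion coefficient $\mathbb{J}^{1/2}$, plus a nonlinear remainder, where $\mathbb{A}(y)$ denotes the Jacobian of the drift of~\eqref{eq:uld} at $y$ (so $\mathbb{A}(0)=\mathbb{A}$ as in~\eqref{eq:J}). Using Assumption~\ref{ass:DF} to control $\mathrm{D}F$ along the trajectory together with the exponential decay of $X_t(x)$ from Theorem~\ref{thm:stability}, this remainder stays of size $O(\sqrt{\epsilon})$ uniformly on the horizon $[0,t_{\mathrm{mix}}^\epsilon(x)+O(1)]$. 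The covariance matrix $C_t^\epsilon$ of the leading Gaussian part satisfies a matrix Lyapunov ODE driven by $\mathbb{J}$, and by the stability of $\mathbb{A}$ together with $X_t(x)\to 0$, it converges as $t\to\infty$ to the unique positive-definite solution $\Sigma$ of~\eqref{eq:Sigma}. These two facts yield $\mathrm{d}_{\mathrm{TV}}(X_t^\epsilon(x),\mathcal{N}(X_t(x),2\epsilon\Sigma))\to 0$ uniformly for $w$ in any compact subset of $\mathbb{R}$.

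Combining with Theorem~\ref{thm:asymstat} via the triangle inequality, the problem reduces to computing $\mathrm{d}_{\mathrm{TV}}(\mathcal{N}(X_t(x),2\epsilon\Sigma),\mathcal{N}(0,2\epsilon\Sigma))$. Using the standard identity
\begin{equation*}
\mathrm{d}_{\mathrm{TV}}(\mathcal{N}(\mu,C),\mathcal{N}(0,C))=2\int_{0}^{|C^{-1/2}\mu|/2}\tfrac{1}{\sqrt{2\pi}}\,e^{-s^{2}/2}\,\mathrm{d}s
\end{equation*}
with $C=2\epsilon\Sigma$ and $\mu=X_t(x)$, the upper integration limit becomes $|\Sigma^{-1/2}X_t(x)|/(2\sqrt{2\epsilon})$. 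Setting $t=t_{\mathrm{mix}}^\epsilon(x)+w$ and using the explicit definition~\eqref{eq:mixingtime}, Lemma~\ref{lem:asym_uld} gives that the prefactor $t^{\nu}/e^{\eta(t-\tau)}$ is asymptotic to $\sqrt{2\epsilon}\,(1/2\eta)^{\nu}e^{-\eta w}$, so the upper limit tends to $\tfrac{1}{2}(1/2\eta)^{\nu}e^{-\eta w}\bigl|\Sigma^{-1/2}\sum_{k}e^{i\theta_{k}(t-\tau)}v_{k}\bigr|$ as $\epsilon\to 0$. Profile cut-off therefore holds if and only if this quantity admits a limit for every fixed $w$, which is precisely the existence of $r$. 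Positivity $r>0$ is automatic: were $r=0$, injectivity of $\Sigma^{-1/2}$ would force $X_{t+\tau}(x)e^{\eta t}/t^{\nu}\to 0$, contradicting the fact that $\eta$ and $\nu$ were selected in Lemma~\ref{lem:asym_uld} to capture the exact exponential and polynomial decay rates. Conversely, if $r$ fails to exist, two subsequences $\epsilon_{n},\epsilon_{n}'\to 0$ can be chosen so that $t_{\mathrm{mix}}^{\epsilon_{n}}(x)-\tau$ and $t_{\mathrm{mix}}^{\epsilon_{n}'}(x)-\tau$ select distinct accumulation points of the almost-periodic function $s\mapsto|\Sigma^{-1/2}\sum_{k}e^{i\theta_{k}s}v_{k}|$, producing different limiting TV-distances at the same $w$. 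The hard part is the Gaussian approximation over the diverging logarithmic horizon in the hypoelliptic regime: since the diffusion matrix $\mathbb{J}$ is degenerate, standard elliptic smoothing is unavailable, and uniform control of $Y_t^\epsilon$ and its covariance must exploit the detailed structure of $\mathbb{A}$ together with the contraction from Theorem~\ref{thm:stability}.
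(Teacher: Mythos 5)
Your overall route is the same as the paper's: approximate $X^{\epsilon}_{t}(x)$ in total variation by a Gaussian centered at the deterministic trajectory, use Theorem \ref{thm:asymstat} to replace $\mu^{\epsilon}$ by $\mathcal{N}(0,2\epsilon\Sigma)$, apply the exact formula for the total variation distance between Gaussians with a common covariance (Lemmas \ref{lem:cut-off App 1} and \ref{lem:cut-off App 2}), and insert the asymptotics of Lemma \ref{lem:asym_uld} at $t=t^{\epsilon}_{\mathrm{mix}}(x)+w$; this is exactly the reduction the paper performs through Theorem \ref{thm:tv} and the quantities \eqref{eq:def_v(t)}--\eqref{eq:def_D(t)}, and your endgame (the limit of the integration boundary $\tfrac12(1/2\eta)^{\nu}e^{-\eta w}\,\vert\Sigma^{-1/2}\sum_{k}e^{i\theta_{k}(t-\tau)}v_{k}\vert$, the equivalence with the existence of $r$, and $r>0$) is sound. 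Incidentally, your computation gives the upper limit $\tfrac{r}{2}(1/2\eta)^{\nu}e^{-\eta w}$, i.e.\ a profile depending on $r$, which is what the displayed intermediate quantities actually produce; the constant $\sqrt{2}$ in the paper's statement corresponds to a particular normalization.

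The genuine gap is the sentence ``These two facts yield $\mathrm{d}_{\mathrm{TV}}(X^{\epsilon}_{t}(x),\mathcal{N}(X_{t}(x),2\epsilon\Sigma))\to0$.'' An $O(\sqrt{\epsilon})$ moment bound on the nonlinear remainder together with convergence of the covariance of the linear part does \emph{not} imply convergence in total variation: after the natural rescaling by $\sqrt{2\epsilon}$ the discrepancy $W^{\epsilon}_{t}=(X^{\epsilon}_{t}-Z^{\epsilon}_{t})/\sqrt{2\epsilon}$ is only small in $L^{p}$, and since the noise acts only on the momentum coordinates one cannot trade an $L^{p}$-small error for a TV-small error by elliptic smoothing. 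This conversion is precisely the content of Sections \ref{sec5}--\ref{sec7}: the paper first proves the moment estimate (Proposition \ref{prop:main_apprx}), and then passes to total variation by splitting $X^{\epsilon}_{t^{\epsilon}}=X^{\epsilon}_{\delta^{\epsilon}}(X^{\epsilon}_{t^{\epsilon}-\delta^{\epsilon}})$ over a mesoscopic window $\delta^{\epsilon}=\epsilon^{\sigma}$, comparing $X^{\epsilon}_{\delta^{\epsilon}}(u)$ with $Z^{\epsilon}_{\delta^{\epsilon}}(u)$ via Girsanov and Pinsker (Lemma \ref{lem:pinsker}, Proposition \ref{prop:tv1}, which is where Assumption \ref{ass:DF} and the exponential moment bound of Corollary \ref{cor_expmom} enter), and then comparing the Gaussian flow started from the two random endpoints (Proposition \ref{prop:tv2}), which requires the small-time behavior of the degenerate covariance, $\Vert\Sigma_{t}(x)^{-1/2}\Vert\le C t^{-3/2}$ (Lemmas \ref{lem_shor_Sigma0}, \ref{lem:short_Sigma}, \ref{lem:tvZ_short}); the hypoelliptic $t^{-3/2}$ (instead of $t^{-1/2}$) is what dictates the constraints \eqref{eq:consig}--\eqref{eq:condition nu} on $\sigma$ and $\nu$, and Lemma \ref{lem:long_Sigma} supplies the long-time covariance convergence you invoke. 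You explicitly flag this as ``the hard part'' but provide no mechanism for it, so as written the proposal assumes exactly the step that constitutes the paper's main technical work; the profile-cut-off conclusion itself, granted Theorem \ref{thm:tv} and Theorem \ref{thm:asymstat}, is correct and matches the paper.
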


We note that the limit (\ref{eq:limcon}) exists if all the eigenvalues
of matrix $-\mathbb{A}$ defined in (\ref{eq:J}) with smallest real part
are real. In particular, if all the eigenvalues of $\mathbb{A}$ are
real, then the limit (\ref{eq:limcon}) exists.

The proof of Theorems \ref{thm:cut-off} and \ref{thm:profilecut-off}
will be given in Section \ref{sec7} based on the analyses carried
out in Sections \ref{sec4}-\ref{sec6}. In particular, these theorems
are direct consequences of Theorem \ref{thm:tv}.

\section{\label{sec3}Linear Models}

In this section, we analyze the case when the force $F$ is linear,
i.e. $F(q)=\mathbb{M}q$. We shall see in this section that Assumption
\ref{ass:main} is indeed a sharp, i.e., necessary and sufficient
condition ensuring asymptotic stability for a large class of
linear models. \begin{notation*} The following notation will be used
in the current section.
\begin{itemize}
\item Writing a complex number as $a+ib$ implicitly assumes that $a,\,b\in\mathbb{R}$.
Similarly, writing a complex vector as $u+iv$ implicitly assumes
that $u$ and $v$ are real vectors.
\item For a square matrix $\mathbb{M}$, we denote by $\mathrm{Sp}(\mathbb{M})$
the set of (possibly complex) eigenvalues of $\mathbb{M}$.
\item For a square matrix $\mathbb{M}$, we denote by $\mathbb{\mathbb{M}^{\textup{s}}}$
and $\mathbb{\mathbb{M}^{\textup{a}}}$ the symmetric, and skew-symmetric
part of matrix $\mathbb{M}$, respectively, i.e.,
\begin{equation}
\mathbb{M}^{\textup{s}}=\frac{1}{2}(\mathbb{M}+\mathbb{M}^{\dagger})\;\;\;\text{and}\;\;\;\mathbb{M}^{\textup{a}}=\frac{1}{2}(\mathbb{M}-\mathbb{M}^{\dagger})\label{eq:sadef}
\end{equation}
so that $\mathbb{M}=\mathbb{M}^{\textup{s}}+\mathbb{M}^{\textup{a}}$.
\end{itemize}
\end{notation*}

\subsection{Elementary lemmata }

We start with some elementary lemmata from linear algebra.
\begin{lem}
\label{lem:ev}For each $d\times d$ matrix $\mathbb{M}$ and $\gamma>0$
(which is the constant appearing in (\ref{eq:uld})), define
\begin{equation}
\mathbb{T}_{\mathbb{M}}=\begin{pmatrix}\mathbb{O}_{d} & -\mathbb{I}_{d}\\
\mathbb{M} & \gamma\mathbb{I}_{d}
\end{pmatrix}\label{eq:T_M}\;.
\end{equation}
Then, all the eigenvalues of $\mathbb{T}_{\mathbb{M}}$ have positive
real part if and only if
\[
\mathrm{Sp}(\mathbb{M})\subset\{a+ib:a>0\text{ and }\gamma^{2}a>b^{2}\}\;.
\]
\end{lem}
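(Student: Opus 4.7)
The plan is to reduce the spectral problem for the $2d \times 2d$ matrix $\mathbb{T}_{\mathbb{M}}$ to a scalar problem about a quadratic polynomial, and then characterize by direct computation when that quadratic has both roots in the open right half-plane.

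\textbf{Step 1 (Factoring the characteristic polynomial).} First I would compute the characteristic polynomial of $\mathbb{T}_{\mathbb{M}}$ using the block-determinant formula. Writing
\[
\lambda \mathbb{I}_{2d} - \mathbb{T}_{\mathbb{M}} = \begin{pmatrix} \lambda \mathbb{I}_{d} & \mathbb{I}_{d} \\ -\mathbb{M} & (\lambda - \gamma)\mathbb{I}_{d} \end{pmatrix},
\]
the lower two blocks commute (the lower-right is a scalar multiple of $\mathbb{I}_d$), so the Schur-type identity yields $\det(\lambda \mathbb{I}_{2d} - \mathbb{T}_{\mathbb{M}}) = \det\bigl(\lambda(\lambda - \gamma)\mathbb{I}_d + \mathbb{M}\bigr) = \det\bigl(\mathbb{M} - \mu(\lambda)\mathbb{I}_d\bigr)$, where $\mu(\lambda) := \gamma\lambda - \lambda^{2}$. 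Thus $\lambda \in \mathrm{Sp}(\mathbb{T}_{\mathbb{M}})$ if and only if $\mu(\lambda) \in \mathrm{Sp}(\mathbb{M})$; equivalently, the eigenvalues of $\mathbb{T}_{\mathbb{M}}$ are precisely the roots of $\lambda^{2} - \gamma\lambda + \mu = 0$ as $\mu$ ranges over $\mathrm{Sp}(\mathbb{M})$, with matching algebraic multiplicities.

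\textbf{Step 2 (Scalar right-half-plane criterion).} By Step~1, the condition ``every eigenvalue of $\mathbb{T}_{\mathbb{M}}$ has strictly positive real part'' is equivalent to: for every $\mu = a + ib \in \mathrm{Sp}(\mathbb{M})$, both roots $\lambda_{\pm}$ of $\lambda^{2} - \gamma\lambda + \mu = 0$ have strictly positive real part. I would next exploit Vieta's formulas: $\lambda_{+} + \lambda_{-} = \gamma > 0$ and $\lambda_{+}\lambda_{-} = a + ib$. Writing $\lambda_{\pm} = x_{\pm} + iy_{\pm}$, these read $x_{+} + x_{-} = \gamma$ and $y_{-} = -y_{+}$, together with
\[
x_{+}x_{-} + y_{+}^{2} = a, \qquad y_{+}(x_{-} - x_{+}) = b.
\]
Since $x_{+} + x_{-} = \gamma > 0$, the pair $(x_{+}, x_{-})$ is componentwise positive if and only if $x_{+}x_{-} > 0$, i.e.\ if and only if $y_{+}^{2} < a$.

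\textbf{Step 3 (Computing $y_{+}^{2}$).} Eliminating $x_{\pm}$ from the two Vieta relations gives the biquadratic $4y_{+}^{4} + (\gamma^{2} - 4a)\,y_{+}^{2} - b^{2} = 0$, which has a unique non-negative root
\[
y_{+}^{2} = \frac{(4a - \gamma^{2}) + \sqrt{(\gamma^{2} - 4a)^{2} + 16 b^{2}}}{8}.
\]
The required inequality $y_{+}^{2} < a$ is equivalent to $\sqrt{(\gamma^{2} - 4a)^{2} + 16 b^{2}} < \gamma^{2} + 4a$. The right-hand side must then be positive, and after squaring (all terms nonnegative) and simplifying, this reduces exactly to $\gamma^{2} a > b^{2}$; together with the positivity of $\gamma^{2} + 4a$, this forces $a > 0$. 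Conversely, if $a > 0$ and $\gamma^{2} a > b^{2}$, then $\gamma^{2} + 4a > 0$ and the squared inequality holds, so $y_{+}^{2} < a$ and both real parts are positive. Finally, I would double-check the degenerate case $b = 0$, where either $y_{+} = 0$ (so positivity of both roots reduces to $a > 0$) or the two roots are real and the same analysis applies.

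The main obstacle is Step~3: keeping the signs straight in the squaring step, verifying that the squaring is reversible (which hinges on $\gamma^{2} + 4a > 0$), and treating the boundary case $b = 0$ cleanly. Everything else is a routine block-matrix determinant calculation plus Vieta's formulas.
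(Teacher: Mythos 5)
Your proposal is correct, and its first half coincides exactly with the paper's: the same block-determinant identity reduces the spectrum of $\mathbb{T}_{\mathbb{M}}$ to the roots of $\lambda^{2}-\gamma\lambda+\mu=0$ for $\mu\in\mathrm{Sp}(\mathbb{M})$. Where you diverge is in the scalar step. The paper observes that the two roots of $h(\lambda)=-\lambda^{2}+\gamma\lambda=\mu$ sum to $\gamma$, so positivity of all real parts is equivalent to all real parts lying in $(0,\gamma)$, and then invokes (with a ``one can check'') that $h$ is a bijection between the strip $\{0<\mathrm{Re}(z)<\gamma\}$ and the parabolic region $\{\mathrm{Im}(z)^{2}<\gamma^{2}\mathrm{Re}(z)\}$. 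You instead carry out this verification explicitly via Vieta's formulas, the biquadratic $4y_{+}^{4}+(\gamma^{2}-4a)y_{+}^{2}-b^{2}=0$, and a reversible squaring; your computations are right ($x_{+}x_{-}=a-y_{+}^{2}$, and $y_{+}^{2}<a$ is indeed equivalent to $\gamma^{2}a>b^{2}$, which forces $a>0$ since $\gamma^{2}a>b^{2}\ge0$). So your argument is essentially a self-contained, more computational substitute for the mapping property the paper leaves unproved. Two cosmetic points to tidy up: the ``unique non-negative root'' claim for the biquadratic fails precisely when $b=0$ and $4a>\gamma^{2}$ (the roots in $y_{+}^{2}$ are then $0$ and $a-\gamma^{2}/4$, both non-negative, though your closed formula still selects the correct value), and your sketched $b=0$ dichotomy conflates the two alternatives ($y_{+}=0$ \emph{is} the case of real roots; the other case is complex-conjugate roots with real part $\gamma/2$, where positivity is automatic and $a>\gamma^{2}/4>0$); in either subcase the criterion reduces to $a>0$, consistent with $\gamma^{2}a>b^{2}=0$, so the statement survives, but the case analysis should be stated correctly.
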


\begin{rem}
    This ensures that the spectrum of $\mathbb{T}_{\mathbb{M}}$ shall be included in the following red parabolic region of the complex eigenspace.
    \begin{center}
    \begin{figure}[h]
        \centering
        \begin{tikzpicture}[scale=0.8]
          \draw[->] (-2.5, 0) -- (4.7, 0) node[right] {$\mathrm{Re}$};
  \draw[->] (0, -2) -- (0, 2.2) node[above] {$\mathrm{Im}$};
  \fill[scale=0.5, red, pattern=north east lines, pattern color = red, domain=-3:3, smooth, variable=\y]  plot ({\y*\y}, {\y}); 
  

        \end{tikzpicture}
    \end{figure}
\end{center}
\end{rem}

\begin{proof}
\noindent We note that, for $d\times d$ matrices $\mathbb{B}_{11},\,\mathbb{B}_{12},\,\mathbb{B}_{21}$ and
$\mathbb{B}_{22}$ such that $\mathbb{B}_{11}\mathbb{B}_{21}=\mathbb{B}_{21}\mathbb{B}_{11}$,
we have the following determinant formula for the block matrix:
\[
\det\begin{pmatrix}\mathbb{B}_{11} & \mathbb{B}_{12}\\
\mathbb{B}_{21} & \mathbb{B}_{22}
\end{pmatrix}=\det\left(\mathbb{B}_{11}\mathbb{B}_{22}-\mathbb{B}_{12}\mathbb{B}_{21}\right)\;.
\]
Using this formula, we can deduce that, for any $t\in\mathbb{R},$
\[
\det\left(\mathbb{T}_{\mathbb{M}}-t\mathbb{I}_{2d}\right)=\det\left(\mathbb{M}-t(\gamma-t)\mathbb{I}_{d}\right)\;.
\]
From this expression, we can deduce that $\lambda\in\mathbb{C}$ is
an eigenvalue of $\mathbb{\mathbb{T}_{M}}$ if and only if $h(\lambda)=-\lambda^{2}+\gamma\lambda$
is an eigenvalue of $\mathbb{M}$. One can check $h:\mathbb{C}\rightarrow\mathbb{C}$
is a bijection between $\{z\in\mathbb{C}:\gamma>\text{Re}(z)>0\}$
and $\{z\in\mathbb{C}:\textrm{Im}(z)^{2}<\gamma^{2}\text{\textrm{Re}}(z)\}$.
Since sum of two roots of $h(\lambda)=c$ is $\gamma$, all the eigenvalues
of $\mathbf{\mathbb{T}_{\mathbb{M}}}$ have positive real part if
and only if all of the real parts of eigenvalues of $\mathbf{\mathbb{T}_{\mathbb{M}}}$ are
positive and less then $\gamma$.
\end{proof}
\begin{lem}
\label{lem:evrel}For a $d\times d$ matrix $\mathbb{M}$, define
the symmetric matrix $\mathbb{K}_{\mathbb{M}}$ by
\begin{equation}
\mathbb{K}_{\mathbb{M}}=\gamma^{2}\mathbb{M}^{s}+(\mathbb{M}^{\textup{a}})^{2}+\frac{1}{2}(\mathbb{M}^{\textup{a}}\mathbb{M}^{s}-\mathbb{M}^{s}\mathbb{M}^{\textup{a}})\;.\label{eq:K-1}
\end{equation}
Denote by $w=u+iv$ an unit eigenvector (i.e., $|u|^{2}+|v|^{2}=1$)
of $\mathbb{M}$ associated with the eigenvalue $z=a+ib$. Then,
\begin{enumerate}
\item it holds that
\[
\left\langle \mathbb{K}_{\mathbb{M}}w,\,w\right\rangle _{\mathbb{C}}=\langle\mathbb{K}_{\mathbb{M}}u,u\rangle+\langle\mathbb{K}_{\mathbb{M}}v,v\rangle=\gamma^{2}a-b^{2}\;,
\]
{where $\langle\,,\,\rangle_{\mathbb{C}}$ is the Hermitian scalar
product in $\mathbb{C}^{d}$.}
\item It holds that
\[
\langle(\mathbb{M}^{\textup{a}}\mathbb{M}^{s}-\mathbb{M}^{s}\mathbb{M}^{\textup{a}})w,\,w\rangle_{\mathbb{C}}=\langle(\mathbb{M}^{\textup{a}}\mathbb{M}^{s}-\mathbb{M}^{s}\mathbb{M}^{\textup{a}})u,\,u\rangle+\langle(\mathbb{M}^{\textup{a}}\mathbb{M}^{s}-\mathbb{M}^{s}\mathbb{M}^{\textup{a}})v,\,v\rangle\geq0\;.
\]
\end{enumerate}
\end{lem}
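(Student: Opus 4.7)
The plan is to deal with both claims in tandem by exploiting a single structural observation: every block appearing in $\mathbb{K}_{\mathbb{M}}$ is real symmetric. Indeed, $\mathbb{M}^{\textup{s}}$ is symmetric by construction, $(\mathbb{M}^{\textup{a}})^{2} = -\mathbb{M}^{\textup{a}}(\mathbb{M}^{\textup{a}})^{\dagger}$ is symmetric because $(\mathbb{M}^{\textup{a}})^{\dagger}=-\mathbb{M}^{\textup{a}}$, and the commutator $\mathbb{M}^{\textup{a}}\mathbb{M}^{\textup{s}}-\mathbb{M}^{\textup{s}}\mathbb{M}^{\textup{a}}$ is symmetric by the same token. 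Writing $w=u+iv$ and expanding $\langle\mathbb{K}_{\mathbb{M}}w,w\rangle_{\mathbb{C}}$, the mixed terms take the form $i(\langle\mathbb{K}_{\mathbb{M}}v,u\rangle-\langle\mathbb{K}_{\mathbb{M}}u,v\rangle)$, which vanish by the symmetry of $\mathbb{K}_{\mathbb{M}}$; this immediately yields the first equality in both (1) and (2).

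The core of the argument lies in proving the identity $\langle\mathbb{K}_{\mathbb{M}}w,w\rangle_{\mathbb{C}}=\gamma^{2}a-b^{2}$. I shall treat the three summands of $\mathbb{K}_{\mathbb{M}}$ separately and reduce each Hermitian form to the scalars $a$, $b^{2}$ together with the auxiliary quantity $|\mathbb{M}^{\textup{s}}w|^{2}$. The key identity I will use repeatedly is $\mathbb{M}^{\textup{a}}w=\mathbb{M}w-\mathbb{M}^{\textup{s}}w=zw-\mathbb{M}^{\textup{s}}w$. Self-adjointness of $\mathbb{M}^{\textup{s}}$ gives $\langle\mathbb{M}^{\textup{s}}w,w\rangle_{\mathbb{C}}=\mathrm{Re}(z)=a$. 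For the second summand, $\langle(\mathbb{M}^{\textup{a}})^{2}w,w\rangle_{\mathbb{C}}=-|\mathbb{M}^{\textup{a}}w|^{2}$ by anti-self-adjointness of $\mathbb{M}^{\textup{a}}$; substituting the key identity and expanding $|zw-\mathbb{M}^{\textup{s}}w|^{2}$ with $|z|^{2}=a^{2}+b^{2}$ converts this into $a^{2}-b^{2}-|\mathbb{M}^{\textup{s}}w|^{2}$. The same substitution applied to each half of the commutator yields $\tfrac{1}{2}\langle(\mathbb{M}^{\textup{a}}\mathbb{M}^{\textup{s}}-\mathbb{M}^{\textup{s}}\mathbb{M}^{\textup{a}})w,w\rangle_{\mathbb{C}}=|\mathbb{M}^{\textup{s}}w|^{2}-a^{2}$. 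Adding the three pieces, the terms $|\mathbb{M}^{\textup{s}}w|^{2}$ and $a^{2}$ cancel exactly, leaving $\gamma^{2}a-b^{2}$ as claimed.

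For (2), the same calculation just performed on the commutator shows $\langle(\mathbb{M}^{\textup{a}}\mathbb{M}^{\textup{s}}-\mathbb{M}^{\textup{s}}\mathbb{M}^{\textup{a}})w,w\rangle_{\mathbb{C}}=2(|\mathbb{M}^{\textup{s}}w|^{2}-a^{2})$. Since $\mathbb{M}^{\textup{s}}$ is self-adjoint, the Cauchy--Schwarz inequality yields
\[
a^{2}=|\langle\mathbb{M}^{\textup{s}}w,w\rangle_{\mathbb{C}}|^{2}\leq|\mathbb{M}^{\textup{s}}w|^{2}\,|w|^{2}=|\mathbb{M}^{\textup{s}}w|^{2},
\]
where the last equality uses $|w|^{2}=|u|^{2}+|v|^{2}=1$. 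This delivers the non-negativity asserted in (2).

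There is no serious obstacle; the argument is essentially algebraic. The only slightly non-obvious point is recognising that the specific coefficient $1/2$ in front of the commutator in the definition of $\mathbb{K}_{\mathbb{M}}$ is precisely what triggers the cancellation of $|\mathbb{M}^{\textup{s}}w|^{2}-a^{2}$ in (1). Everything else amounts to bookkeeping with the eigenrelation $\mathbb{M}w=zw$ and the symmetry/anti-symmetry involutions under $(\cdot)^{\dagger}$, once the substitution $\mathbb{M}^{\textup{a}}w=zw-\mathbb{M}^{\textup{s}}w$ is in place.
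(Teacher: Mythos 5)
Your proof is correct. It differs from the paper's mainly in language and organization: the paper splits the eigenrelation $\mathbb{M}w=zw$ into its real and imaginary parts, derives the scalar identities $\langle\mathbb{M}^{\textup{s}}u,u\rangle+\langle\mathbb{M}^{\textup{s}}v,v\rangle=a$ and $2\langle\mathbb{M}^{\textup{a}}u,v\rangle=-b$, and then assembles the quadratic form of $\mathbb{K}_{\mathbb{M}}$ (and, separately, the commutator estimate) from real dot-product manipulations, invoking Cauchy--Schwarz in the real form $\bigl(\langle\mathbb{M}^{\textup{s}}u,u\rangle+\langle\mathbb{M}^{\textup{s}}v,v\rangle\bigr)^{2}\le|\mathbb{M}^{\textup{s}}u|^{2}+|\mathbb{M}^{\textup{s}}v|^{2}$. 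You instead stay in $\mathbb{C}^{d}$ and use the Hermitian form directly: the substitution $\mathbb{M}^{\textup{a}}w=zw-\mathbb{M}^{\textup{s}}w$ reduces each summand of $\mathbb{K}_{\mathbb{M}}$ to $a$, $b^{2}$ and the single auxiliary scalar $|\mathbb{M}^{\textup{s}}w|^{2}$, and the identity $\tfrac12\langle(\mathbb{M}^{\textup{a}}\mathbb{M}^{\textup{s}}-\mathbb{M}^{\textup{s}}\mathbb{M}^{\textup{a}})w,w\rangle_{\mathbb{C}}=|\mathbb{M}^{\textup{s}}w|^{2}-a^{2}$ yields part (1) by explicit cancellation and part (2) by one application of the Hermitian Cauchy--Schwarz inequality with $|w|=1$. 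This buys a more unified and arguably more transparent argument (both parts follow from one intermediate identity, and the role of the coefficient $1/2$ is visible), while the paper's real-coordinate version has the advantage of making the first equalities in (1) and (2) (the passage from the Hermitian form to the sum of real forms) completely mechanical; in your write-up that step is justified, correctly, by the real symmetry of each block of $\mathbb{K}_{\mathbb{M}}$, which kills the imaginary cross terms. Both arguments are elementary and of comparable length, and your computations check out, including the expansion $|\mathbb{M}^{\textup{a}}w|^{2}=b^{2}-a^{2}+|\mathbb{M}^{\textup{s}}w|^{2}$ and the use of $|w|^{2}=|u|^{2}+|v|^{2}=1$ where it is needed.
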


\begin{proof}
By looking at the real and imaginary parts of the equation $\mathbb{M}w=zw$
with $\mathbb{M}=\mathbb{M}^{\textup{s}}+\mathbb{M}^{\textup{a}}$,
$w=u+iv$, and $z=a+ib$, respectively, we get
\begin{align}
 & (\mathbb{M}^{\textup{s}}+\mathbb{M}^{\textup{a}})u=au-bv\;,\label{eq:ide1}\\
 & (\mathbb{M}^{\textup{s}}+\mathbb{M}^{\textup{a}})v=av+bu\;.\label{eq:ide2}
\end{align}
Since $\mathbb{M}^{\textup{a}}$ is skew-symmetric, $[u\cdot$(\ref{eq:ide1})$+v\cdot$(\ref{eq:ide2})$]$
and $[v\cdot$(\ref{eq:ide1})$-u\cdot$(\ref{eq:ide2})$]$ respectively
yield that
\begin{align}
 & \langle\mathbb{M}^{\textup{s}}u,\,u\rangle+\langle\mathbb{M}^{\textup{s}}v,\,v\rangle=a\;\;\;\text{and\;\;\;}2\langle\mathbb{\mathbb{M}^{\textup{a}}}u,\,v\rangle=-b\;,\label{eq:ide3}
\end{align}
where we used the fact that $|u|^{2}+|v|^{2}=1$. Next, $[(\mathbb{\mathbb{M}^{\textup{a}}}$$\times$(\ref{eq:ide1})$)\cdot u+(\mathbb{M}^{\textup{a}}\times$(\ref{eq:ide2})$)\cdot v]$
yields that
\begin{align*}
\langle\mathbb{M}^{\textup{a}}(\mathbb{M}^{\textup{s}}+\mathbb{M}^{\textup{a}})u,\,u\rangle+\langle\mathbb{\mathbb{M}^{\textup{a}}}(\mathbb{M}^{\textup{s}}+\mathbb{\mathbb{M}^{\textup{a}}})v,\,v\rangle & =-b\langle\mathbb{M}^{\textup{a}}v,\,u\rangle+b\langle\mathbb{M}^{\textup{a}}u,\,v\rangle=-b^{2}\;,
\end{align*}
where the last equality follows from the second identity of (\ref{eq:ide3})
and the skew-symmetry of $\mathbb{M}^{\textup{a}}$. Combining this
with the first identity of (\ref{eq:ide3}), we get
\[
\gamma^{2}a-b^{2}=\gamma^{2}(\langle\mathbb{M}^{\textup{s}}u,\,u\rangle+\langle\mathbb{M}^{\textup{s}}v,\,v\rangle)+\langle\mathbb{M}^{\textup{a}}(\mathbb{M}^{\textup{s}}+\mathbb{\mathbb{M}^{\textup{a}}})u,\,u\rangle+\langle\mathbb{M}^{\textup{a}}(\mathbb{M}^{\textup{s}}+\mathbb{M}^{\textup{a}})v,\,v\rangle\;.
\]
Recalling the definition of $\mathbb{K}_{\mathbb{M}}$ completes the
proof of part (1).

Next we turn to part (2). By the symmetry of $\mathbb{M}^{\textup{s}}$
and skew-symmetry of $\mathbb{M}^{\textup{a}}$, it suffices to prove
that
\begin{equation}
\langle\mathbb{\mathbb{M}^{\textup{s}}\mathbb{M}^{\textup{a}}}u,\,u\rangle+\langle\mathbb{\mathbb{M}^{\textup{s}}}\mathbb{M}^{\textup{a}}v,\,v\rangle\leq0\;.\label{eq:assa1.5}
\end{equation}
Observe that $[(\mathbb{M}^{\textup{s}}$$\times$(\ref{eq:ide1})$)\cdot u+(\mathbb{M}^{\textup{s}}\times$(\ref{eq:ide2})$)\cdot v]$
yields that
\begin{align}
\langle\mathbb{M}^{\textup{s}}(\mathbb{M}^{\textup{s}}+\mathbb{M}^{\textup{a}})u,\,u\rangle+\langle\mathbb{M}^{\textup{s}}(\mathbb{M}^{\textup{s}}+\mathbb{M}^{\textup{a}})v,\,v\rangle & =a\langle\mathbb{M}^{\textup{s}}u,\,u\rangle+a\langle\mathbb{M}^{\textup{s}}v,\,v\rangle\;.\label{eq:assa2}
\end{align}
By the first identity of (\ref{eq:ide3}) and the Cauchy-Schwarz inequality
(with the fact that $|u|^{2}+|v|^{2}=1$), we get
\begin{align*}
a\langle\mathbb{M}^{\textup{s}}u,\,u\rangle+a\langle\mathbb{M}^{\textup{s}}v,\,v\rangle 
&=\left(\langle\mathbb{M}^{\textup{s}}u,\,u\rangle+\langle\mathbb{M}^{\textup{s}}v,\,v\rangle\right)^{2} \\
&\le|\mathbb{\mathbb{M}^{\textup{s}}}u|^{2}+|\mathbb{M}^{\textup{s}}v|^{2}=\langle(\mathbb{M}^{\textup{s}})^{2}u,\,u\rangle+\langle(\mathbb{M}^{\textup{s}})^{2}v,\,v\rangle\;.
\end{align*}
Injecting this to (\ref{eq:assa2}) completes the proof of (\ref{eq:assa1.5}).
\end{proof}

\subsection{Asymptotic stability of linear models}\label{subsec:stab of linear models}

In this subsection, we suppose that the force $F$ appearing in (\ref{eq:uld0})
is given by
\begin{equation}
F(q)=\mathbb{M}q\;\;\;;\;q\in\mathbb{R}^{d}\label{eq:linearforce}
\end{equation}
for some constant $d\times d$ matrix $\mathbb{M}$. Here, we completely
characterize the asymptotic stability of the linear dynamical system
(\ref{eq:uld0}).
\begin{thm}[Asymptotic stability: linear case]
\label{thm:linchar}For the linear model with a force $F$ given
by (\ref{eq:linearforce}), the process $(X_{t})_{t\geq0}$ is asymptotically
stable if and only if one of the following conditions holds:
\begin{enumerate} 
\item $\mathrm{Sp}(\mathbb{M})\subset\{a+ib:a>0\text{ and }\gamma^{2}a>b^{2}\}$.
\item  $\mathbb{K}_{\mathbb{M}}$ is positive definite on the
subspace of $\mathbb{C}^{d}$ spanned by eigenvectors of $\mathbb{M}$.
\end{enumerate}
In particular, if $\mathbb{K}_{\mathbb{M}}$ is positive definite,
then the process $(X_{t})_{t\geq0}$ is asymptotically stable.
\end{thm}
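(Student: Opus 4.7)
The plan is to reduce the stability of $(X_{t})_{t\ge 0}$ to a spectral condition on a single matrix and then use the two lemmata above to translate that condition into conditions (1) and (2). Since $F(q)=\mathbb{M}q$, the ODE \eqref{eq:uld0} becomes the linear system $\dot X_{t}=-\mathbb{T}_{\mathbb{M}}X_{t}$ with $\mathbb{T}_{\mathbb{M}}$ as in \eqref{eq:T_M}, and for a linear system local and global asymptotic stability at the origin automatically coincide. Standard linear ODE theory therefore tells us, as a first step, that $(X_{t})_{t\ge 0}$ is asymptotically stable if and only if every eigenvalue of $\mathbb{T}_{\mathbb{M}}$ has strictly positive real part. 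This is the core reduction that drives everything else.

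For the equivalence with condition (1), I would apply Lemma \ref{lem:ev} verbatim: it states exactly that the spectrum of $\mathbb{T}_{\mathbb{M}}$ lies in the open right half-plane if and only if $\mathrm{Sp}(\mathbb{M})\subset\{a+ib:a>0,\ \gamma^{2}a>b^{2}\}$. To upgrade this to the equivalence with condition (2), the key tool is Lemma \ref{lem:evrel}(1), which provides the sharp identity $\langle\mathbb{K}_{\mathbb{M}}w,w\rangle_{\mathbb{C}}=\gamma^{2}a-b^{2}$ for every unit eigenvector $w$ of $\mathbb{M}$ associated with $a+ib$. Thus positivity of $\mathbb{K}_{\mathbb{M}}$ on the eigenvectors of $\mathbb{M}$ is exactly the pointwise spectral inequality $\gamma^{2}a-b^{2}>0$ on $\mathrm{Sp}(\mathbb{M})$; moreover $\gamma^{2}a\ge b^{2}\ge 0$ with $\gamma^{2}>0$ automatically forces $a>0$, recovering also the first inequality in (1). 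For the concluding ``in particular'' clause, if $\mathbb{K}_{\mathbb{M}}$ is positive definite on the whole of $\mathbb{R}^{d}$ (equivalently on $\mathbb{C}^{d}$), then it is a fortiori positive on the subspace spanned by eigenvectors of $\mathbb{M}$, so (2) holds and asymptotic stability follows from the chain of equivalences just established.

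The main obstacle in this plan is the final identification step, namely recognising that $\mathbb{K}_{\mathbb{M}}$ is engineered precisely so that its Hermitian form, evaluated at an eigenvector of $\mathbb{M}$, reproduces the exact quantity $\gamma^{2}a-b^{2}$ that governs stability through Lemma \ref{lem:ev}; without this sharp matching of expressions one would be forced to analyse $\mathbb{K}_{\mathbb{M}}$ on general linear combinations of eigenvectors, which does not reduce transparently to the spectral condition. The other two steps (linear ODE spectral criterion and a direct invocation of Lemma \ref{lem:ev}) are essentially bookkeeping. Once the Lemma \ref{lem:evrel}(1) identification is in place, the three steps close very cleanly and no ellipticity or further ad hoc Lyapunov construction is needed for this linear setting.
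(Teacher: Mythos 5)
Your proposal follows the paper's proof essentially verbatim: rewrite the dynamics as the linear system $\frac{\mathrm{d}X_t}{\mathrm{d}t}=-\mathbb{T}_{\mathbb{M}}X_t$, use the standard spectral criterion so that asymptotic stability is equivalent to $\mathbb{T}_{\mathbb{M}}$ having only eigenvalues with positive real part, invoke Lemma \ref{lem:ev} to get condition (1), and invoke Lemma \ref{lem:evrel}-(1) to translate this into condition (2), with the ``in particular'' clause following a fortiori. The one step you flag as delicate---passing between positivity of the Hermitian form of $\mathbb{K}_{\mathbb{M}}$ at the eigenvectors and positive definiteness on the subspace they span---is handled in exactly the same (implicit) way in the paper, so your argument is the paper's argument.
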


\begin{proof}
When the force $F$ is given by (\ref{eq:linearforce}), the ODE (\ref{eq:uld0})
describing the dynamics $(X_{t})_{t\ge0}$ can be written as
\begin{equation}
\frac{\mathrm{d}X_{t}}{\mathrm{d}t}=-\mathbb{T}_{\mathbb{M}}X_{t}\label{eqX_t}\;,
\end{equation}
where $\mathbb{T}_{\mathbb{M}}$ is the matrix given in (\ref{eq:T_M}).
It is well-known that the dynamical
system of the form (\ref{eqX_t}) is asymptotically stable if and
only if $\mathbb{T}_{\mathbb{M}}$ admits only eigenvalues with positive
real part. Thus, by Lemma \ref{lem:ev}, the dynamics $(X_{t})_{t\geq0}$
is asymptotically stable if and only if
\[
\mathrm{Sp}(\mathbb{M})\subset\{a+ib:a>0\text{ and }\gamma^{2}a>b^{2}\}\;.
\]
By Lemma \ref{lem:evrel}-(1), this is equivalent to the positive
definiteness of $\mathbb{K}_{\mathbb{M}}$ on the subspace of $\mathbb{C}^{d}$
spanned by eigenvectors of $\mathbb{M}$ and the proof is completed.
\end{proof}
\begin{rem}
It is well-known that in the linear model (\ref{eqX_t}), the process
$(X_{t})_{t\geq0}$ is even exponentially stable in the sense that
for all $t\geq0$,
\[
|X_{t}(x)|^{2}\le C|x|^{2}e^{-\lambda t}
\]
for some constants $C,\,\lambda>0$.
\end{rem}

We shall see below that the asymptotic stability of ($X_{t})_{t\geq0}$
can be obtained by looking only at the eigenvalues of $\gamma^{2}\mathbb{M}^{\textup{s}}+(\mathbb{M}^{\textup{a}})^{2}$
instead of $\mathbb{K}_{\mathbb{M}}$.
\begin{prop}
\label{prop:suff1} For the linear model with force $F$ given by
(\ref{eq:linearforce}), the process $(X_{t})_{t\geq0}$ is asymptotically
stable if the $d\times d$ symmetric matrix $\gamma^{2}\mathbb{M}^{\textup{s}}+(\mathbb{M}^{\textup{a}})^{2}$
is positive definite.
\end{prop}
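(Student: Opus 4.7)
The plan is to reduce the proposition to verifying condition (1) of Theorem \ref{thm:linchar}, namely that $\mathrm{Sp}(\mathbb{M}) \subset \{a+ib : a > 0,\ \gamma^2 a > b^2\}$, since this already characterizes asymptotic stability for the linear model. This turns the question into a purely spectral check for each eigenvalue of $\mathbb{M}$, and Lemma \ref{lem:evrel} is precisely the tool that translates positivity statements about symmetric parts of $\mathbb{M}$ into spectral inequalities.

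First I would fix an eigenvalue $z = a+ib$ of $\mathbb{M}$ with a unit eigenvector $w = u + iv$ (so $|u|^2 + |v|^2 = 1$), and establish the strict inequality $\gamma^2 a > b^2$. By Lemma \ref{lem:evrel}-(1),
\[
\gamma^2 a - b^2 \;=\; \langle \mathbb{K}_{\mathbb{M}} u, u\rangle + \langle \mathbb{K}_{\mathbb{M}} v, v\rangle\;,
\]
and splitting $\mathbb{K}_{\mathbb{M}} = [\gamma^2\mathbb{M}^{\mathrm{s}} + (\mathbb{M}^{\mathrm{a}})^2] + \tfrac{1}{2}(\mathbb{M}^{\mathrm{a}}\mathbb{M}^{\mathrm{s}} - \mathbb{M}^{\mathrm{s}}\mathbb{M}^{\mathrm{a}})$ according to the definition \eqref{eq:K-1}, the positive-definiteness assumption makes the first bracket strictly positive (since at least one of $u, v$ is non-zero), while Lemma \ref{lem:evrel}-(2) makes the second bracket non-negative. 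Thus $\gamma^2 a - b^2 > 0$.

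The remaining subtlety, which I expect to be the main obstacle, is extracting $a > 0$ from the same hypothesis, since positive definiteness of $\gamma^2\mathbb{M}^{\mathrm{s}} + (\mathbb{M}^{\mathrm{a}})^2$ does not a priori give any sign on $\mathbb{M}^{\mathrm{s}}$ alone. Here the key observation is that for any skew-symmetric $\mathbb{M}^{\mathrm{a}}$, one has $\langle (\mathbb{M}^{\mathrm{a}})^2 x, x\rangle = -|\mathbb{M}^{\mathrm{a}} x|^2 \le 0$, so $(\mathbb{M}^{\mathrm{a}})^2$ is negative semi-definite. Therefore
\[
\gamma^2 \mathbb{M}^{\mathrm{s}} \;=\; \bigl[\gamma^2\mathbb{M}^{\mathrm{s}} + (\mathbb{M}^{\mathrm{a}})^2\bigr] - (\mathbb{M}^{\mathrm{a}})^2
\]
is the sum of a positive-definite and a positive semi-definite matrix, hence positive definite, and $\mathbb{M}^{\mathrm{s}}$ itself is positive definite. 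Invoking the first identity in \eqref{eq:ide3} from the proof of Lemma \ref{lem:evrel} yields $a = \langle \mathbb{M}^{\mathrm{s}} u, u\rangle + \langle \mathbb{M}^{\mathrm{s}} v, v\rangle > 0$.

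Combining the two strict inequalities over all eigenvalues of $\mathbb{M}$ shows that condition (1) of Theorem \ref{thm:linchar} holds, and the asymptotic stability of $(X_t)_{t\geq 0}$ follows immediately. Note that this route sidesteps the need to verify condition (2), i.e.\ positive definiteness of $\mathbb{K}_{\mathbb{M}}$ itself on the span of eigenvectors of $\mathbb{M}$, which would be awkward because $\mathbb{K}_{\mathbb{M}}$ contains the non-symmetric commutator piece $\tfrac{1}{2}(\mathbb{M}^{\mathrm{a}}\mathbb{M}^{\mathrm{s}} - \mathbb{M}^{\mathrm{s}}\mathbb{M}^{\mathrm{a}})$ whose non-negativity is only guaranteed on eigenvectors, not on arbitrary linear combinations of them.
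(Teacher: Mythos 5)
Your proof is correct and takes essentially the same route as the paper: the paper combines the positive-definiteness hypothesis with Lemma \ref{lem:evrel}-(2) to verify condition (2) of Theorem \ref{thm:linchar}, while you merely inline the equivalence of conditions (1) and (2) by invoking Lemma \ref{lem:evrel}-(1) directly at each eigenvector. The only remark is that your separate argument for $a>0$ (deducing positive definiteness of $\mathbb{M}^{\textup{s}}$ from negative semi-definiteness of $(\mathbb{M}^{\textup{a}})^{2}$) is correct but redundant, since $\gamma^{2}a>b^{2}\ge0$ already forces $a>0$.
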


\begin{proof}
Suppose that $\gamma^{2}\mathbb{M}^{\textup{s}}+(\mathbb{M}^{\textup{a}})^{2}$
is positive definite. Then, by Lemma \ref{lem:evrel}-(2), the matrix
$\mathbb{K}_{\mathbb{M}}$ is positive definite on the subspace of
$\mathbb{C}^{d}$ spanned by eigenvectors of $\mathbb{M}$. Hence,
the proof is completed by Theorem \ref{thm:linchar}.
\end{proof}
We show in the next proposition that the condition obtained in the
proposition above is actually a necessary and sufficient condition
when $\mathbb{M}$ is a normal matrix, i.e., satisfies $\mathbb{M}\mathbb{M}^{\dagger}=\mathbb{M}^{\dagger}\mathbb{M}$.
\begin{prop}
\label{prop:suff1_normal}Assume that the force $F$ is given by (\ref{eq:linearforce})
where $\mathbb{M}$ is a normal matrix. Then, the process $(X_{t})_{t\geq0}$
is asymptotically stable if and only if $\gamma^{2}\mathbb{M}^{\textup{s}}+(\mathbb{M}^{\textup{a}})^{2}$
is positive definite.
\end{prop}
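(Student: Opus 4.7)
The sufficiency direction is already covered by Proposition \ref{prop:suff1} (which imposes no normality hypothesis), so the plan is to establish the necessity: assuming that $(X_{t})_{t\geq 0}$ is asymptotically stable and that $\mathbb{M}$ is normal, I will show that $\gamma^{2}\mathbb{M}^{\textup{s}}+(\mathbb{M}^{\textup{a}})^{2}$ is positive definite on all of $\mathbb{R}^{d}$ (equivalently on $\mathbb{C}^{d}$).

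The first step is to exploit normality on the algebraic side. The identity $\mathbb{M}\mathbb{M}^{\dagger}=\mathbb{M}^{\dagger}\mathbb{M}$ is equivalent to $[\mathbb{M}^{\textup{s}},\mathbb{M}^{\textup{a}}]=0$, so the cross term in the definition \eqref{eq:K-1} of $\mathbb{K}_{\mathbb{M}}$ vanishes and we have the clean identification
\[
\mathbb{K}_{\mathbb{M}}=\gamma^{2}\mathbb{M}^{\textup{s}}+(\mathbb{M}^{\textup{a}})^{2}\;.
\]
This reduces the proposition to showing that asymptotic stability is equivalent to positive definiteness of $\mathbb{K}_{\mathbb{M}}$ itself.

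The second step is to combine this with the spectral consequence of normality: a normal matrix is unitarily diagonalisable, so its eigenvectors span the whole space $\mathbb{C}^{d}$. Applying Theorem \ref{thm:linchar} with condition (2), asymptotic stability of $(X_{t})_{t\geq 0}$ is equivalent to positive definiteness of $\mathbb{K}_{\mathbb{M}}$ on the subspace of $\mathbb{C}^{d}$ spanned by eigenvectors of $\mathbb{M}$, which in the normal case is all of $\mathbb{C}^{d}$. Combined with the identification from the first step, this yields exactly positive definiteness of $\gamma^{2}\mathbb{M}^{\textup{s}}+(\mathbb{M}^{\textup{a}})^{2}$, proving the ``only if'' direction. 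The ``if'' direction is then nothing but Proposition \ref{prop:suff1} quoted verbatim.

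There is no serious obstacle here: everything is a one-line consequence of the two structural facts supplied by normality (commuting symmetric/skew parts, and a complete set of eigenvectors) together with the already established characterisation in Theorem \ref{thm:linchar}. The only point that requires a line of care is checking that the quadratic form $\mathbb{K}_{\mathbb{M}}$ being positive definite on the real space $\mathbb{R}^{d}$ is equivalent to its Hermitian extension being positive definite on $\mathbb{C}^{d}$; this follows because $\mathbb{K}_{\mathbb{M}}$ is a real symmetric matrix, so the two notions of positive definiteness coincide via $\langle\mathbb{K}_{\mathbb{M}}w,w\rangle_{\mathbb{C}}=\langle\mathbb{K}_{\mathbb{M}}u,u\rangle+\langle\mathbb{K}_{\mathbb{M}}v,v\rangle$ for $w=u+iv$.
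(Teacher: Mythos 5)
Your proposal is correct, and it reaches the conclusion by a slightly different route than the paper. For the necessity direction, the paper applies condition (1) of Theorem \ref{thm:linchar} (the spectral inclusion $\mathrm{Sp}(\mathbb{M})\subset\{a+ib:a>0,\ \gamma^{2}a>b^{2}\}$), writes $\mathbb{M}=\mathbb{U}\mathbb{D}\mathbb{U}^{*}$ with $\mathbb{U}$ unitary, and computes explicitly that $\gamma^{2}\mathbb{M}^{\textup{s}}+(\mathbb{M}^{\textup{a}})^{2}=\mathbb{U}\widetilde{\mathbb{D}}\mathbb{U}^{*}$ with $\widetilde{\mathbb{D}}=\textup{diag}(\gamma^{2}a_{j}-b_{j}^{2})$, so positive definiteness drops out of the diagonalization. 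You instead observe that normality is equivalent to $[\mathbb{M}^{\textup{s}},\mathbb{M}^{\textup{a}}]=0$, so the cross term in $\mathbb{K}_{\mathbb{M}}$ vanishes and $\mathbb{K}_{\mathbb{M}}=\gamma^{2}\mathbb{M}^{\textup{s}}+(\mathbb{M}^{\textup{a}})^{2}$, then invoke condition (2) of Theorem \ref{thm:linchar} together with the fact that the eigenvectors of a normal matrix span $\mathbb{C}^{d}$; your closing remark that positive definiteness of the real symmetric matrix $\mathbb{K}_{\mathbb{M}}$ over $\mathbb{C}^{d}$ and over $\mathbb{R}^{d}$ coincide is the right point to check and is handled correctly. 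In effect you are running, in the converse direction, the remark that follows this proposition in the paper. The trade-off: your argument is purely algebraic and avoids the explicit unitary computation, but it leans entirely on the stated equivalence of conditions (1) and (2) in Theorem \ref{thm:linchar} (which the paper gets from Lemma \ref{lem:evrel}-(1), i.e.\ positivity of the form on eigenvectors), whereas the paper's diagonalization makes the positive definiteness self-contained from the spectral condition alone — and in fact shows that in the normal case $\mathbb{K}_{\mathbb{M}}$ is diagonal in the same unitary eigenbasis of $\mathbb{M}$, so the two routes meet at the same linear-algebra fact.
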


\begin{proof}
By Proposition \ref{prop:suff1}, it suffices to prove that when $(X_{t})_{t\geq0}$
is asymptotically stable, the matrix $\gamma^{2}\mathbb{M}^{\textup{s}}+(\mathbb{M}^{\textup{a}})^{2}$
is positive definite. Suppose that $(X_{t})_{t\geq0}$ is asymptotically
stable. Since a normal matrix is unitarily similar to a diagonal matrix,
we can write $\mathbb{M}=\mathbb{U}\mathbb{D}\mathbb{U}^{*}$ where
$\mathbb{U}$ is a unitary matrix and
\[
\mathbb{D}=\textup{diag}\left(a_{1}+ib_{1},\,\,\dots,\,a_{d}+ib_{d}\right)
\]
is a diagonal matrix. Then, we can write
\begin{equation}
\gamma^{2}\mathbb{M}^{\textup{s}}+(\mathbb{M}^{\textup{a}})^{2}=\mathbb{U}\widetilde{\mathbb{D}}\mathbb{U^{*}}\label{eq:repmsma}\;,
\end{equation}
where the diagonal matrix $\widetilde{\mathbb{D}}$ is given by
\[
\widetilde{\mathbb{D}}=\textup{diag}\left(\gamma^{2}a_{1}-b_{1}^{2},\,\,\dots,\,\gamma^{2}a_{d}-b_{d}^{2}\right)\;.
\]
Hence, since we assumed that $(X_{t})_{t\geq0}$ is asymptotically stable,
by Theorem \ref{thm:linchar}, the matrix $\widetilde{\mathbb{D}}$
is positive definite matrix. By (\ref{eq:repmsma}), the proof is
completed.
\end{proof}
\begin{rem}
The previous proposition also implies that, for the linear model with
force $F$ given by (\ref{eq:linearforce}) with a normal matrix $\mathbb{M}$,
the process $(X_{t})_{t\geq0}$ is asymptotically stable if and only
if the matrix $\mathbb{K}_{\mathbb{M}}$ is positive definite, since
we have $\mathbb{\mathbb{M}^{\textup{s}}\mathbb{M}^{\textup{a}}}=\mathbb{M}^{\textup{a}}\mathbb{M}^{\textup{s}}$
when $\mathbb{M}$ is normal.
\end{rem}

We are now able to prove Theorem \ref{thm:linear case}. Note that
we shall prove Theorem \ref{thm:stability} in the next section and
we shall assume below that Theorem \ref{thm:stability} holds.
\begin{proof}[Proof of Theorem \ref{thm:linear case}]
Assume that $F(q)=\mathbb{M}q$ with a normal matrix $\mathbb{M}$,
and that the dynamics (\ref{eq:uld0}) is asymptotically stable. Define
$U:\mathbb{R}^{d}\rightarrow\mathbb{R}$ and $\ell:\mathbb{R}^{d}\rightarrow\mathbb{R}^{d}$
as
\[
U(q)=\frac{1}{2}\left\langle \mathbb{M}^{\textup{s}}q,\,q\right\rangle \;\;\;\text{and}\;\;\;\ell(q)=\mathbb{M}^{\textup{a}}q\;.
\]
Then, since $\nabla U(q)=\mathbb{M}^{\textup{s}}q$, we have $F(q)=\nabla U(q)+\ell(q)$.
Moreover, by Proposition \ref{prop:suff1_normal}, the matrix $\mathbb{M}^{\textup{s}}+\frac{1}{\gamma^{2}}(\mathbb{M}^{\textup{a}})^{2}$
is positive definite. Hence, we can find $\beta\in(0,\,\gamma)$ which
is close enough to $\gamma$ so that
\[
\mathbb{M}^{\textup{s}}+\frac{1}{\beta^{2}}(\mathbb{M}^{\textup{a}})^{2}\ge\alpha\mathbb{I}_{d}
\]
for some $\alpha>0$. Then, we have
\[
F(q)\cdot q-\frac{|\ell(q)|^{2}}{\beta^{2}}=\left\langle \mathbb{M}q,\,q\right\rangle -\frac{|\mathbb{M}^{\textup{a}}q|^{2}}{\beta^{2}}=\left\langle \left(\mathbb{M}^{\textup{s}}+\frac{1}{\beta^{2}}(\mathbb{M}^{\textup{a}})^{2}\right)q,\,q\right\rangle \ge\alpha|q|^{2}\;.
\]
Since in this case $U$ is at most of quadratic growth in the sense
of (\ref{e:quad}) one can easily deduce by Remark \ref{rem:stability}-(5)
that Assumption \ref{ass:main} is satisfied. Since the other direction
concerning the implication of global asymptotic stability from Assumption
\ref{ass:main}, is the content of Theorem \ref{thm:stability} we
can conclude this proof.
\end{proof}

\section{\label{sec4}Lyapunov Function }

In the remainder of this article, we shall always assume that Assumption
\ref{ass:main} holds (while we do not assume Assumption \ref{ass:DF}
until further notice). In this section, we construct a Lyapunov function
which decays along the trajectory of $(X_{t})_{t\geq0}$. In particular,
we are able to prove the global exponential stability of $(X_{t})_{t\geq0}$
stated in Theorem \ref{thm:stability} and also provide moment estimates
for the underdamped Langevin dynamics $(X_{t}^{\epsilon})_{t\geq0}$
in Sections \ref{sec42} and \ref{sec43}.

\subsection{Lyapunov function }

In this section, we provide a Lyaponov function $H:\mathbb{R}^{2d}\rightarrow\mathbb{R}$
with respect to the dynamical system (\ref{eq:uld0}). In the underdamped
model, finding such a function is not a trivial task since there is
no global contraction in general even for the energy function
\begin{equation}
\widehat{H}(x)=\frac{1}{2}|p|^{2}+U(q)\label{eq:defhatH}
\end{equation}
as the time-derivative
\[
\frac{\textrm{d}\widehat{H}}{\mathrm{d}t}(X_{t})=-\gamma|p_{t}|^{2}-\left\langle p_{t},\,\ell(q_{t})\right\rangle
\]
has no reason to be negative for all $(q_{t},p_{t})\in\mathbb{R}^{2d}$.
Our idea is to cleverly modify the function $\widehat{H}$ so as to
obtain a Lyapunov function.
\begin{defn}[Lyapunov function $H$]
\label{def:H}Recall constants $\alpha,\,\beta>0$ from Assumption
\ref{ass:main}. Let us take a constant $\lambda>0$ small enough
so that these three conditions
\begin{equation}
\frac{\lambda(\gamma-\lambda)}{2}\leq\alpha\;,\qquad\frac{2\lambda}{\gamma-\lambda}\leq\alpha\;,\;\;\;\;\text{and}\qquad\beta^{2}\leq\gamma(\gamma-\lambda)\label{eq:cond_lambda}
\end{equation}
are simultaneously satisfied, where the last condition can be satisfied
since $\beta\in(0,\,\gamma)$. We define the Lyapunov function $H:\mathbb{R}^{2d}\rightarrow\mathbb{R}$
as follows
\begin{equation}
H(x):=\frac{1}{2}|p|^{2}+\frac{\gamma-\lambda}{2}\left\langle q,\,p\right\rangle +\frac{(\gamma-\lambda)^{2}}{4}|q|^{2}+U(q)\;.\label{eq:defH}
\end{equation}
Notice that the function $H$ is obtained by modifying the quadratic
part of $\widehat{H}$ defined in (\ref{eq:defhatH}).

From now on the constant $\lambda$ shall always refer to the constant
defined in~\eqref{eq:cond_lambda}. The next result follows immediately
from the definition of $H$.
\end{defn}

\begin{lem}
\label{lem_Hquad}There exist constants $\kappa_{0}>0$ such that
\[
\frac{1}{\kappa_{0}}|x|^{2}\le H(x)\le\kappa_{0}|x|^{2}+U(q)\;\;\;\text{for all }x=(q,p)\in\mathbb{R}^{2d}\;.
\]
In addition, if $U$ is at most of quadratic growth in the sense of
(\ref{e:quad}), we can remove $U(q)$ term at the right-hand side
and thus the function $H$ is comparable to the quadratic function.
\end{lem}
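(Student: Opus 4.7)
The plan is to observe that $H(x) - U(q)$ is a positive definite quadratic form in the full vector $x=(q,p)\in \mathbb{R}^{2d}$, and then exploit the non-negativity of $U$ from Assumption \ref{ass:main} to conclude both inequalities separately.

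First, I would rewrite the quadratic part of $H$ by completing the square in $p$. The identity
\[
\frac{1}{2}|p|^{2}+\frac{\gamma-\lambda}{2}\langle q,p\rangle+\frac{(\gamma-\lambda)^{2}}{4}|q|^{2}=\frac{1}{2}\left|p+\frac{\gamma-\lambda}{2}q\right|^{2}+\frac{(\gamma-\lambda)^{2}}{8}|q|^{2}
\]
makes it visibly the sum of two squares, and in particular a positive definite quadratic form on $\mathbb{R}^{2d}$ since $\gamma>\lambda>0$ by the choice of $\lambda$ in \eqref{eq:cond_lambda}. Equivalently, the associated $2\times 2$ block matrix has determinant $(\gamma-\lambda)^{2}/16>0$ and positive trace, so its smallest eigenvalue is some $c_{0}>0$ depending only on $\gamma$ and $\lambda$.

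For the lower bound, since $U\geq 0$ (Assumption \ref{ass:main}), I drop the $U(q)$ contribution and use the positive definiteness above to get $H(x)\geq c_{0}|x|^{2}$, which yields the desired left inequality with $\kappa_{0}\geq 1/c_{0}$. For the upper bound, I bound the cross term by Cauchy--Schwarz, $|\langle q,p\rangle|\leq \tfrac{1}{2}(|q|^{2}+|p|^{2})$, so there is a constant $C_{0}=C_{0}(\gamma,\lambda)$ with
\[
\frac{1}{2}|p|^{2}+\frac{\gamma-\lambda}{2}\langle q,p\rangle+\frac{(\gamma-\lambda)^{2}}{4}|q|^{2}\leq C_{0}|x|^{2}\;,
\]
and adding back $U(q)$ gives $H(x)\leq C_{0}|x|^{2}+U(q)$. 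Taking $\kappa_{0}=\max(1/c_{0},C_{0})$ proves both inequalities simultaneously.

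For the final assertion, if $U$ has the quadratic-growth bound $|U(q)|\leq C|q|^{2}$ of \eqref{e:quad}, then $U(q)\leq C|q|^{2}\leq C|x|^{2}$, so the $U(q)$ term on the right-hand side can be absorbed into the quadratic term by replacing $\kappa_{0}$ with $\kappa_{0}+C$. There is no real obstacle here beyond bookkeeping of constants; the key structural input is the choice of coefficient $\tfrac{(\gamma-\lambda)^{2}}{4}$ of $|q|^{2}$ in the definition of $H$, which is exactly what makes the cross term $\tfrac{\gamma-\lambda}{2}\langle q,p\rangle$ absorbable and keeps the quadratic form positive definite.
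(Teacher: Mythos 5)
Your proof is correct and follows essentially the same route as the paper: both arguments drop the non-negative $U(q)$ for the lower bound and control the quadratic part of $H$ by elementary means (your completion of the square versus the paper's explicit two-sided Young-type bound on the cross term), with the quadratic-growth case handled by absorbing $U(q)\le C|q|^{2}\le C|x|^{2}$ into the constant.
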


\begin{proof}
Since we assumed that $U\ge0$, it suffices to observe that
\[
\frac{1}{6}|p|^{2}+\frac{(\gamma-\lambda)^{2}}{16}|q|^{2}\le\frac{1}{2}|p|^{2}+\frac{\gamma-\lambda}{2}\left\langle q,\,p\right\rangle +\frac{(\gamma-\lambda)^{2}}{4}|q|^{2}\le\frac{3}{4}|p|^{2}+\frac{\gamma^{2}}{2}|q|^{2}\;.
\]
\end{proof}
Next we shall show that $H$ is a Lyapunov function.
\begin{lem}
\label{lem_dH}It holds that
\begin{equation}
\frac{\mathrm{d}}{\mathrm{d}t}H(X_{t})\le-\lambda H(X_{t})\label{eq:dH_uld0}
\end{equation}
for all $t\ge0$, where $\lambda$ is the constant given in Definition
\ref{def:H}.
\end{lem}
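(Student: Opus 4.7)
The plan is to compute $\frac{d}{dt}H(X_{t})$ directly from the ODE \eqref{eq:uld0} and the definition \eqref{eq:defH}, then bound it term by term using Young's inequality and the coercivity condition~\eqref{eq:conF} of Assumption~\ref{ass:main}. The reason I expect this to work is that the three seemingly ad hoc inequalities in \eqref{eq:cond_lambda} look exactly calibrated to make the final four coefficients (of $|p|^{2}$, $|\ell(q)|^{2}$, $|q|^{2}$, and $U(q)$) non-positive.

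First I would differentiate: using $\dot q_{t}=p_{t}$ and $\dot p_{t}=-\nabla U(q_{t})-\ell(q_{t})-\gamma p_{t}$, the chain rule applied to each of the four pieces of $H$ yields the identity
\begin{equation*}
\frac{\mathrm{d}}{\mathrm{d}t}H(X_{t})=-\frac{\gamma+\lambda}{2}|p_{t}|^{2}-\frac{\gamma-\lambda}{2}\langle q_{t},F(q_{t})\rangle-\frac{\lambda(\gamma-\lambda)}{2}\langle q_{t},p_{t}\rangle-\langle p_{t},\ell(q_{t})\rangle.
\end{equation*}
The key simplification is that the cross terms $\langle p_{t},\nabla U(q_{t})\rangle$ coming from $\frac{d}{dt}U(q_{t})$ and from $\langle p_{t},\dot p_{t}\rangle$ cancel exactly, which is what makes this modified energy preferable to the bare $\widehat H$.

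Next I would subtract $-\lambda H(X_{t})$ from both sides. The $\langle q_{t},p_{t}\rangle$ contribution cancels automatically (since the coefficient $\tfrac{\lambda(\gamma-\lambda)}{2}$ matches the one produced by differentiating the cross term in $H$), so the inequality \eqref{eq:dH_uld0} reduces to showing
\begin{equation*}
-\langle p_{t},\ell(q_{t})\rangle-\tfrac{\gamma}{2}|p_{t}|^{2}-\tfrac{\gamma-\lambda}{2}\langle q_{t},F(q_{t})\rangle+\tfrac{\lambda(\gamma-\lambda)^{2}}{4}|q_{t}|^{2}+\lambda U(q_{t})\leq 0.
\end{equation*}
For the first term I apply Young's inequality with parameter $\gamma$, giving $-\langle p,\ell(q)\rangle\leq\tfrac{\gamma}{2}|p|^{2}+\tfrac{1}{2\gamma}|\ell(q)|^{2}$, which absorbs the $|p_{t}|^{2}$ term entirely. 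For the $\langle q_{t},F(q_{t})\rangle$ term I invoke the coercivity assumption \eqref{eq:conF}, yielding a lower bound of $\tfrac{(\gamma-\lambda)\alpha}{2}(|q|^{2}+U(q))+\tfrac{(\gamma-\lambda)}{2\beta^{2}}|\ell(q)|^{2}$.

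Collecting coefficients, the remaining inequality becomes a sum of three non-positivity statements, one for $|\ell(q_{t})|^{2}$, one for $|q_{t}|^{2}$, and one for $U(q_{t})$. These correspond to $\tfrac{1}{2\gamma}\leq\tfrac{\gamma-\lambda}{2\beta^{2}}$, $\tfrac{\lambda(\gamma-\lambda)}{2}\leq\alpha$, and $\lambda\leq\tfrac{(\gamma-\lambda)\alpha}{2}$, which are precisely the three conditions defining $\lambda$ in \eqref{eq:cond_lambda}. This is the only nontrivial step conceptually, and there is no real obstacle: the calibration of the constants in Definition~\ref{def:H} was designed exactly so that this matches. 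Hence the bound \eqref{eq:dH_uld0} follows directly.
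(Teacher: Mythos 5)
Your proposal is correct and follows essentially the same route as the paper's proof: a direct computation of $\frac{\mathrm{d}}{\mathrm{d}t}H(X_{t})+\lambda H(X_{t})$ (with the $\langle q_{t},p_{t}\rangle$ terms cancelling), the Young-type inequality $-\langle p,\ell(q)\rangle\le\frac{\gamma}{2}|p|^{2}+\frac{1}{2\gamma}|\ell(q)|^{2}$, the coercivity condition \eqref{eq:conF}, and then matching the resulting coefficients of $|q|^{2}$, $U(q)$, and $|\ell(q)|^{2}$ against the three conditions in \eqref{eq:cond_lambda}. No gaps; the coefficient bookkeeping checks out.
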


\begin{proof}
By a direct computation, we can deduce that $-\frac{\mathrm{d}}{\mathrm{d}t}H(X_{t})-\lambda H(X_{t})$
is equal to
\begin{equation}
\left\langle p_{t},\,\ell(q_{t})\right\rangle +\frac{\gamma}{2}|p_{t}|^{2}+\frac{\gamma-\lambda}{2}\left\langle F(q_{t}),\,q_{t}\right\rangle -\lambda\frac{(\gamma-\lambda)^{2}}{4}|q|^{2}-\lambda U(q)\;.\label{dH1}
\end{equation}
By Assumption \ref{ass:main} and an elementary inequality
\[
\left\langle p_{t},\,\ell(q_{t})\right\rangle +\frac{\gamma}{2}|p_{t}|^{2}+\frac{1}{2\gamma}|\ell(q_{t})|^{2}\ge0\;,
\]
we can notice that (\ref{dH1}) is bounded from below by
\[
\frac{\gamma-\lambda}{2}\left[\left(\alpha-\frac{\lambda(\gamma-\lambda)}{2}\right)|q_{t}|^{2}+\left(\alpha-\frac{2\lambda}{\gamma-\lambda}\right)U(q_{t})+\left(\frac{1}{\beta^{2}}-\frac{1}{\gamma(\gamma-\lambda)}\right)|\ell(q_{t})|^{2}\right]
\]
which is non-negative by (\ref{eq:cond_lambda}). This completes the
proof.
\end{proof}

\subsection{\label{sec42}Global exponential stability of zero-noise dynamics}

The proof of the global exponential stability of the process $(X_{t})_{t\ge0}$
now follows immediately from the construction of the above Lyapunov
function $H$.
\begin{proof}[Proof of Theorem \ref{thm:stability}]
By Lemma \ref{lem_dH}, we have
\[
\frac{\mathrm{d}(\mathrm{e}^{\lambda t}H(X_{t}))}{\mathrm{d}t}=\mathrm{e}^{\lambda t}\left(\frac{\mathrm{d}H(X_{t})}{\mathrm{d}t}+\lambda H(X_{t})\right)\le0
\]
for all $t\ge0$. Thus, we get
\begin{equation}
\mathrm{e}^{\lambda t}H(X_{t}(x))\le H(x)\;.\label{eq:HXt}
\end{equation}
Hence, the proof is completed by recalling Lemma \ref{lem_Hquad}.
\end{proof}
We next discuss two consequences of Theorem \ref{thm:stability} which
will be used later. Define $G:\mathbb{R}^{2d}\rightarrow\mathbb{R}^{2d}$ as
\begin{equation}
G(x)=(-p,\,F(q)+\gamma p)\label{eq:G(x)}\;,
\end{equation}
so that we can rewrite the ODE (\ref{eq:uld0}) satisfied by $(X_{t})_{t\geq0}$
as follows:
\begin{equation}
\frac{\textup{d}X_{t}}{\mathrm{d}t}=-G(X_{t})\;.\label{eq:uld0_2}
\end{equation}
The following is the first direct consequence of Theorem \ref{thm:stability}
\begin{cor}
\label{cor:positiveev} All the complex eigenvalues of the matrix
$\textup{D}F(0)$ have positive real part. In particular, $\textup{D}F(0)$
is invertible.
\end{cor}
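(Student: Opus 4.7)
The plan is to apply Theorem \ref{thm:stability} to the linearization of the zero-noise dynamics at the origin, and then to invoke Lemma \ref{lem:ev}. Write $\mathbb{M}:=\textup{D}F(0)$, $\mathbb{H}:=\nabla^{2}U(0)$ and $\mathbb{L}:=\textup{D}\ell(0)$, so that $\mathbb{M}=\mathbb{H}+\mathbb{L}$. Recall from Remark \ref{rem:stability}(4) that $U(0)=\nabla U(0)=0$ and $\ell(0)=0$, and that $0$ is a minimum of $U$, so $\mathbb{H}$ is positive semi-definite. Plugging $q=tu$ for a unit vector $u\in\mathbb{R}^{d}$ into~\eqref{eq:conF}, dividing by $t^{2}$, and sending $t\downarrow 0$, the Taylor expansions $F(tu)/t\to \mathbb{M}u$, $\ell(tu)/t\to\mathbb{L}u$ and $U(tu)/t^{2}\to\tfrac{1}{2}\langle\mathbb{H}u,u\rangle$ yield
\begin{equation*}
\langle\mathbb{M}u,\,u\rangle\;\geq\;\alpha\Bigl(1+\tfrac{1}{2}\langle\mathbb{H}u,\,u\rangle\Bigr)+\frac{|\mathbb{L}u|^{2}}{\beta^{2}}\;,
\end{equation*}
and by bilinearity this rescales to $\langle\mathbb{M}q,\,q\rangle\geq\alpha(|q|^{2}+U_{\mathrm{lin}}(q))+|\mathbb{L}q|^{2}/\beta^{2}$ for every $q\in\mathbb{R}^{d}$, where $U_{\mathrm{lin}}(q):=\tfrac{1}{2}\langle\mathbb{H}q,\,q\rangle$.

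I next introduce the linear force $F_{\mathrm{lin}}(q):=\mathbb{M}q$ together with the natural decomposition $F_{\mathrm{lin}}=\nabla U_{\mathrm{lin}}+\ell_{\mathrm{lin}}$, where $\ell_{\mathrm{lin}}(q):=\mathbb{L}q$. Since $U_{\mathrm{lin}}\geq 0$, the inequality above is exactly condition~\eqref{eq:conF} for this decomposition, so Assumption \ref{ass:main} is satisfied by $F_{\mathrm{lin}}$ with the same constants $\alpha,\beta$ (and the required smoothness is trivial). Because $U_{\mathrm{lin}}$ grows at most quadratically, Theorem \ref{thm:stability} combined with Remark \ref{rem:stability}(5) supplies constants $\kappa,\lambda'>0$ for which the associated linear flow $Y_{t}(y)=\mathrm{e}^{\mathbb{A}t}y$ obeys $|Y_{t}(y)|^{2}\leq\kappa|y|^{2}\mathrm{e}^{-\lambda't}$ for every $y\in\mathbb{R}^{2d}$ and $t\geq 0$.

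Finally, testing this bound against a (possibly complex) eigenvector $v\in\mathbb{C}^{2d}$ of $\mathbb{A}$ with eigenvalue $\mu\in\mathbb{C}$, by applying the estimate separately to the real and imaginary parts of $v$, gives $\mathrm{e}^{2\mathrm{Re}(\mu)t}|v|^{2}\leq\kappa|v|^{2}\mathrm{e}^{-\lambda't}$, whence $\mathrm{Re}(\mu)\leq-\lambda'/2<0$. Consequently every eigenvalue of $-\mathbb{A}=\mathbb{T}_{\mathbb{M}}$ has strictly positive real part, and Lemma \ref{lem:ev} then forces $\mathrm{Sp}(\mathbb{M})\subset\{a+ib:a>0\text{ and }\gamma^{2}a>b^{2}\}$; in particular $0\notin\mathrm{Sp}(\textup{D}F(0))$, which yields the invertibility claim. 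The only mildly delicate point of the argument is the passage to the limit in~\eqref{eq:conF}, which crucially exploits that $U$ vanishes to second order at $0$ so that $U(tu)/t^{2}$ admits a finite limit; once this infinitesimal inequality is in hand, Theorem \ref{thm:stability} bootstraps from the nonlinear dynamics to its linearization essentially for free.
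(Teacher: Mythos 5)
Your proof is correct, but it takes a genuinely different route from the paper. The paper's proof is two lines: it quotes the (converse-linearization) principle that stability of the equilibrium $(0,0)$ of the nonlinear flow forces all eigenvalues of the Jacobian $\textup{D}G(0,0)=\mathbb{T}_{\textup{D}F(0)}$ to have positive real part, and then applies Lemma \ref{lem:ev}. You instead never linearize the stability statement itself: you pass to the limit $t\downarrow0$ in condition \eqref{eq:conF} along rays $q=tu$ to show that the linearized force $F_{\mathrm{lin}}(q)=\textup{D}F(0)q$, with the decomposition $U_{\mathrm{lin}}(q)=\tfrac12\langle\nabla^{2}U(0)q,q\rangle$ and $\ell_{\mathrm{lin}}(q)=\textup{D}\ell(0)q$, satisfies Assumption \ref{ass:main} with the same $\alpha,\beta$ (using $\nabla^{2}U(0)\succeq0$ from Remark \ref{rem:stability}(4)), then invoke Theorem \ref{thm:stability} together with Remark \ref{rem:stability}(5) for this \emph{linear} model, read off $\mathrm{Re}\,\mu\le-\lambda'/2$ for every eigenvalue $\mu$ of $\mathbb{A}=-\mathbb{T}_{\textup{D}F(0)}$ from the exponential bound on $\mathrm{e}^{\mathbb{A}t}$, and finish with Lemma \ref{lem:ev}, exactly as the paper does at the last step. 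What your route buys is self-containedness: you avoid appealing to the "well-known" converse principle, which as stated in the paper (mere asymptotic stability implying a strict spectral condition) is imprecise — it is exponential stability, not asymptotic stability, that yields a Hurwitz linearization, as $\dot x=-x^{3}$ shows — whereas for a linear system exponential decay trivially pins down the spectrum. The price is the extra (but elementary and correctly executed) limiting computation showing Assumption \ref{ass:main} is inherited by the linearization; incidentally, this step mirrors, in reverse, the construction of $U$ and $\ell$ from $\mathbb{M}^{\textup{s}}$ and $\mathbb{M}^{\textup{a}}$ in the proof of Theorem \ref{thm:linear case}.
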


\begin{proof}
It is well-known that local asymptotic
stability around the equilibrium point $(0,0)$ of $(X_{t})_{t\geq0}$
implies that all the complex eigenvalues of the Jacobian matrix $\textup{D}G(0,\,0)$
have positive real part, where $G$ is the function defined in (\ref{eq:G(x)}).
Since $\textup{D}G(0,\,0)=\mathbb{T}_{\textup{D}F(0)}$ where the
matrix $\mathbb{T}_{\mathbb{M}}$ is the one defined in (\ref{eq:T_M}),
the proof follows from Lemma \ref{lem:ev}.
\end{proof}
\begin{notation*} \label{not_kappa}From now on, $\kappa$ and $\kappa_{0}$
always refer to the constants appearing in Theorem \ref{thm:stability}
and Lemma \ref{lem_Hquad}, respectively. \end{notation*}

For $r>0$, let us define $\mathcal{D}_{r},\,\mathcal{H}_{r}\subset\mathbb{R}^{2d}$
as follows
\begin{align}
\mathcal{D}_{r} & =\{x\in\mathbb{R}^{2d}:|x|\le r\}\;,\label{eq:def_DR}\\
\mathcal{H}_{r} & =\left\{ x\in\mathbb{R}^{2d}:|x|^{2}\le\kappa\left(r^{2}+\max_{y\in\mathcal{D}_{r}}U(y)\right)\right\} \;.\label{eq:def_HR}
\end{align}
Note that both $\mathcal{D}_{r}$ and $\mathcal{H}_{r}$ are $2d$-dimensional
closed balls. The following remark explains why we define $\mathcal{D}_{r}$
and $\mathcal{H}_{r}$.
\begin{rem}
\label{rem:Thmstat}By Theorem \ref{thm:stability}, if $x\in\mathcal{D}_{r}$,
then we have $X_{t}(x)\in\mathcal{H}_{r}$ for all $t\ge0$. In other
words, the trajectory $(X_{t}(x))_{t\ge0}$ is contained in the closed
ball $\mathcal{H}_{r}$.
\end{rem}

\begin{notation*} \label{not:constant}From this moment on, different
appearances of a constant may stand for different constants unless
otherwise mentioned as in Notation \ref{not_kappa}. For instance,
the constant $C(r)$ represents a constant depending on the parameter
$r$, but different appearances of $C(r)$ may refer to different
quantities. In addition, since we are interested in the regime $\epsilon\rightarrow0$,
all the statements below implicitly assume that $\epsilon$ is sufficiently
small compared to the other parameters. \end{notation*}

Let us now state the following Corollary which is a consequence of
Theorem \ref{thm:stability}.
\begin{cor}
\label{cor:contraction}For all $r>0$, there exists a constant $C(r)>0$
such that
\[
\left|X_{t}(u)-X_{t}(u')\right|\le e^{C(r)t}|u-u'|
\]
for all $u,\,u'\in\mathcal{D}_{r}$ and for all $t\ge0$.
\end{cor}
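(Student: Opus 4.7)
The plan is to apply a standard Grönwall-type argument to the difference of the two trajectories, exploiting the fact that by Remark \ref{rem:Thmstat}, both trajectories remain confined to the bounded set $\mathcal{H}_r$ for all positive times, and that on any bounded set the vector field $G$ is Lipschitz.

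More concretely, fix $u,u'\in\mathcal{D}_r$ and set $\Delta_t:=X_t(u)-X_t(u')$. From \eqref{eq:uld0_2}, we have
\[
\frac{\mathrm{d}\Delta_t}{\mathrm{d}t}=-\bigl(G(X_t(u))-G(X_t(u'))\bigr)\;.
\]
Since $F\in\mathcal{C}^{3}(\mathbb{R}^{d},\mathbb{R}^{d})$, the map $G$ defined in \eqref{eq:G(x)} is continuously differentiable, hence Lipschitz on any compact set. In particular, there exists a constant $L(r)>0$ such that $|G(y)-G(y')|\le L(r)\,|y-y'|$ for all $y,y'\in\mathcal{H}_r$. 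Now, since $u,u'\in\mathcal{D}_r$, Remark \ref{rem:Thmstat} ensures that $X_t(u),X_t(u')\in\mathcal{H}_r$ for every $t\ge 0$.

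Combining these ingredients, we estimate
\[
\frac{\mathrm{d}}{\mathrm{d}t}|\Delta_t|^2=-2\bigl\langle\Delta_t,\,G(X_t(u))-G(X_t(u'))\bigr\rangle\le 2L(r)\,|\Delta_t|^2\;.
\]
Grönwall's inequality then yields $|\Delta_t|^2\le e^{2L(r)t}|\Delta_0|^2$, which gives the desired bound with $C(r)=L(r)$.

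There is no real obstacle here: the key inputs are the smoothness of $F$ (hence of $G$) and the a priori confinement of the trajectories to $\mathcal{H}_r$, which is exactly what the Lyapunov analysis of Theorem \ref{thm:stability} provides through Remark \ref{rem:Thmstat}. The only mild point to verify is that the Lipschitz constant $L(r)$ can be taken uniform over all pairs of starting points in $\mathcal{D}_r$, but this is immediate since $\mathcal{H}_r$ depends only on $r$.
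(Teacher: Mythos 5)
Your proof is correct and follows essentially the same route as the paper: differentiate $|X_t(u)-X_t(u')|^2$, use Remark \ref{rem:Thmstat} to confine both trajectories to the (convex) ball $\mathcal{H}_r$, bound $G$ by its Lipschitz constant $\sup_{x\in\mathcal{H}_r}\Vert\mathrm{D}G(x)\Vert$ there, and conclude by Gr\"onwall. No gaps.
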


\begin{proof}
By (\ref{eq:uld0_2}), we have
\begin{equation}
\frac{\textup{d}}{\textup{d}t}\left|X_{t}(u)-X_{t}(u')\right|^{2}=2\left\langle X_{t}(u)-X_{t}(u'),\,G(X_{t}(u'))-G(X_{t}(u))\right\rangle \;.\label{eq:l2nom}
\end{equation}
By Remark \ref{rem:Thmstat} and the mean-value theorem, we have that
\[
\left|G(X_{t}(u))-G(X_{t}(u'))\right|\le C(r)\left|X_{t}(u)-X_{t}(u')\right|
\]
for all $u,\,u'\in\mathcal{D}_{r}$ where $C(r)=\sup_{x\in\mathcal{H}_{r}}\left\Vert \textup{D}G(x)\right\Vert $.
Hence, by (\ref{eq:l2nom}) and the Cauchy Schwarz inequality, we
get
\[
\frac{\textup{d}}{\textup{d}t}\left|X_{t}(u)-X_{t}(u')\right|^{2}\le C(r)\left|X_{t}(u)-X_{t}(u')\right|^{2}
\]
for all $u,\,u'\in\mathcal{D}_{r}$ which concludes the proof.
\end{proof}

\subsection{\label{sec43}Moment estimates}

Another consequence of the existence of the Lyapunov function $H$
is the control of the moments for the process $(X_{t}^{\epsilon})_{t\geq0}$.
\begin{prop}
\label{prop:mom_est}For all $n\geq0$, $x\in\mathbb{R}^{2d}$, and
$t\geq0$, it holds that
\begin{equation}
\mathbb{E}\left[|X_{t}^{\epsilon}(x)|^{2n}\right]\leq\kappa_{0}^{n}\omega_{n}\left(H(x)\mathrm{e}^{-\lambda t}+\frac{d\epsilon}{\lambda}\right)^{n},\label{eq:control moment n H(q,p)}
\end{equation}
where $\kappa_{0}$ is the constant appearing in Lemma \ref{lem_Hquad}
(cf. Notation \ref{not_kappa}), and the sequence $(\omega_{n})_{n\ge0}$
is defined by $\omega_{0}=\omega_{1}=1$ and
\[
\omega_{k}=\prod_{j=2}^{k}(d+2(j-1))\;\;\;\text{for }k\ge2\;.
\]
\end{prop}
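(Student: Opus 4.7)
The plan is to apply Itô's formula to powers of the Lyapunov function $H$ and derive a differential inequality which can be closed by induction on $n$. First I would compute $\mathcal{L}(H^n)$, where $\mathcal{L} = \mathcal{L}_0 + \epsilon \Delta_p$ is the infinitesimal generator of $(X_t^\epsilon)_{t \geq 0}$ and $\mathcal{L}_0$ is the generator of the zero-noise dynamics. Since the diffusion acts only through the momentum variable, the chain rule yields
\begin{equation*}
  \mathcal{L}(H^n) \;=\; n H^{n-1} \mathcal{L}H \,+\, \epsilon\, n(n-1)\, H^{n-2}\, |\nabla_p H|^2.
\end{equation*}
Two pointwise bounds then combine to give the key estimate. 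The first is $\mathcal{L}H \leq -\lambda H + \epsilon d$, obtained by combining Lemma \ref{lem_dH} (which yields $\mathcal{L}_0 H \leq -\lambda H$) with the direct computation $\Delta_p H = d$. The second is $|\nabla_p H|^2 \leq 2H$: indeed $\nabla_p H = p + \tfrac{\gamma - \lambda}{2} q$, and a short computation gives $2H - |\nabla_p H|^2 = \tfrac{(\gamma - \lambda)^2}{4}|q|^2 + 2U(q) \geq 0$ by the non-negativity of $U$. Combining these yields
\begin{equation*}
  \mathcal{L}(H^n) \;\leq\; -n\lambda\, H^n \,+\, \epsilon\, n\bigl(d + 2(n-1)\bigr)\, H^{n-1}.
\end{equation*}

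Next, via a standard localization argument (stopping at the exit time of a large ball and using Fatou/monotone convergence to pass to the limit, since Theorem \ref{thm:stability} and the growth assumptions on $F$ ensure non-explosion), I would take expectations to obtain, for $h_n(t) := \mathbb{E}[H(X_t^\epsilon(x))^n]$, the differential inequality
\begin{equation*}
  \tfrac{\mathrm{d}}{\mathrm{d}t} h_n(t) \;\leq\; -n\lambda\, h_n(t) \,+\, \epsilon\, n \bigl(d + 2(n-1)\bigr)\, h_{n-1}(t), \qquad t \geq 0,\; n \geq 1.
\end{equation*}

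The heart of the argument is an induction showing $h_n(t) \leq \omega_n\, a(t)^n$, where $a(t) := H(x) e^{-\lambda t} + \epsilon d / \lambda$. The base case $n=0$ is trivial and $n=1$ follows by direct Gronwall: $a$ solves $a' + \lambda a = \epsilon d$ with $a(0) \geq H(x) = h_1(0)$. For the induction step, let $\Phi_n(t) := \omega_n a(t)^n$; a direct differentiation gives $\Phi_n' + n\lambda \Phi_n = \epsilon n d\, \omega_n\, a^{n-1}$. Using the inductive hypothesis $h_{n-1} \leq \omega_{n-1} a^{n-1}$ together with the recursion $\omega_n = \omega_{n-1}(d + 2(n-1))$ (valid for $n \geq 2$) gives $h_n' + n\lambda h_n \leq \epsilon n \omega_n\, a^{n-1}$, whence
\begin{equation*}
  (h_n - \Phi_n)'(t) + n\lambda (h_n - \Phi_n)(t) \;\leq\; \epsilon n \omega_n (1 - d)\, a(t)^{n-1} \;\leq\; 0
\end{equation*}
since $d \geq 1$, and $(h_n - \Phi_n)(0) = H(x)^n - \omega_n (H(x) + \epsilon d / \lambda)^n \leq 0$ because $\omega_n \geq 1$. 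Gronwall's lemma then forces $h_n \leq \Phi_n$ on $[0, \infty)$.

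Finally, to convert the bound on $h_n$ into the claimed bound on $\mathbb{E}[|X_t^\epsilon|^{2n}]$, I would invoke the lower bound $|x|^2 \leq \kappa_0 H(x)$ from Lemma \ref{lem_Hquad}, which gives $|X_t^\epsilon|^{2n} \leq \kappa_0^n H(X_t^\epsilon)^n$. The main technical step is the computation yielding $\mathcal{L}(H^n) \leq -n\lambda H^n + \epsilon n(d + 2(n-1)) H^{n-1}$, as it is precisely the coefficient $d + 2(n-1)$ arising from the $|\nabla_p H|^2 \leq 2H$ bound that produces the correct factor $\omega_n$; the subsequent induction is a routine supersolution comparison.
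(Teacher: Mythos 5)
Your proposal is correct and follows essentially the same route as the paper: It\^o's formula applied to $H^n$, the two pointwise bounds $\mathcal{L}H\le-\lambda H+\epsilon d$ (from the computation behind Lemma \ref{lem_dH}) and $\tfrac12\big|p+\tfrac{\gamma-\lambda}{2}q\big|^2\le H$, the resulting recursion $h_n'\le-n\lambda h_n+\epsilon n(d+2(n-1))h_{n-1}$, and finally Lemma \ref{lem_Hquad} to pass from $H$ to $|x|^2$. The only difference is in closing the recursion: the paper proves the explicit binomial-sum bound of Lemma \ref{lem_seq} and then coarsens it, whereas you verify directly that $\omega_n\big(H(x)\mathrm{e}^{-\lambda t}+d\epsilon/\lambda\big)^n$ is a supersolution (using $d\ge1$ and $\omega_n\ge1$), a harmless and slightly shorter variant of the same induction.
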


First we need to analyze the following recurrence relation which allows
us to introduce the sequence $(\omega_{k})_{k=0}^{\infty}$ appearing
in Proposition \ref{prop:mom_est}.
\begin{lem}
\label{lem_seq}Let $(a_{n}(\cdot))_{n\in\mathbb{Z}^{+}}$ be a sequence
of continuous functions on $[0,\,\infty)$ such that $a_{0}\equiv1$
and
\begin{equation}
a_{n}'(t)\leq-n\lambda a_{n}(t)+n\epsilon(d+2(n-1))a_{n-1}(t)\label{eq:seq}
\end{equation}
for all $n\ge1$ and $t\ge0$. Then, for the sequence $(\omega_{n})_{n=0}^{\infty}$
defined in Proposition \ref{prop:mom_est}, we have that
\begin{equation}
a_{n}(t)\leq\omega_{n}\left[\sum_{k=1}^{n}\binom{n}{k}\frac{a_{k}(0)\mathrm{e}^{-k\lambda t}}{\omega_{k}}\left(\frac{\epsilon}{\lambda}\right)^{n-k}+d\left(\frac{\epsilon}{\lambda}\right)^{n}\right]\label{eq:seqbd}
\end{equation}
for all $n\in\mathbb{Z}^{+}$ and $t\ge0$.
\end{lem}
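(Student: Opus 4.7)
The plan is to prove the inequality (4.11) by induction on $n$, using the integrating factor $\mathrm{e}^{n\lambda t}$ to handle the recursive differential inequality (4.10).

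For the base case $n=1$, the inequality (4.10) reduces to $a_1'(t) \leq -\lambda a_1(t) + \epsilon d$. Multiplying by $\mathrm{e}^{\lambda t}$ and integrating from $0$ to $t$ gives
\[
a_1(t) \leq a_1(0)\mathrm{e}^{-\lambda t} + \frac{d\epsilon}{\lambda}\bigl(1 - \mathrm{e}^{-\lambda t}\bigr) \leq a_1(0)\mathrm{e}^{-\lambda t} + \frac{d\epsilon}{\lambda},
\]
which matches (4.11) since $\omega_1 = 1$ and $\binom{1}{1}=1$.

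For the inductive step, suppose (4.11) holds for $n-1$. Multiplying (4.10) by $\mathrm{e}^{n\lambda t}$ yields $(\mathrm{e}^{n\lambda t}a_n(t))' \leq n\epsilon(d+2(n-1))\mathrm{e}^{n\lambda t} a_{n-1}(t)$, so after integrating,
\[
a_n(t) \leq a_n(0)\mathrm{e}^{-n\lambda t} + n\epsilon(d+2(n-1))\int_0^t \mathrm{e}^{-n\lambda(t-s)} a_{n-1}(s)\,\mathrm{d}s.
\]
Now I substitute the inductive bound on $a_{n-1}(s)$ and evaluate term-by-term the integrals $\int_0^t \mathrm{e}^{-n\lambda(t-s)}\mathrm{e}^{-k\lambda s}\mathrm{d}s$. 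For $k<n$ this equals $\tfrac{\mathrm{e}^{-k\lambda t}-\mathrm{e}^{-n\lambda t}}{(n-k)\lambda} \leq \tfrac{\mathrm{e}^{-k\lambda t}}{(n-k)\lambda}$, and the constant term gives $\tfrac{1}{n\lambda}(1-\mathrm{e}^{-n\lambda t}) \leq \tfrac{1}{n\lambda}$. Each factor $\tfrac{1}{(n-k)\lambda}$ (resp.\ $\tfrac{1}{n\lambda}$) combines with the prefactor $n\epsilon(d+2(n-1))$ and an existing power of $\epsilon/\lambda$ to raise it by one, producing the desired $(\epsilon/\lambda)^{n-k}$ (resp.\ $(\epsilon/\lambda)^n$) structure.

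The key algebraic simplification is that the constants line up exactly. Using $\omega_n = \omega_{n-1}(d+2(n-1))$ absorbs the prefactor into $\omega_n$, and the combinatorial identity
\[
\binom{n-1}{k}\cdot\frac{n}{n-k} = \binom{n}{k}
\]
turns the binomial coefficients coming from the inductive hypothesis into the correct ones in (4.11). Finally, the leftover term $a_n(0)\mathrm{e}^{-n\lambda t}$ gets absorbed as the $k=n$ term in the sum since $\binom{n}{n}(\epsilon/\lambda)^{0}\cdot\tfrac{1}{\omega_n}\cdot\omega_n = 1$. I expect the main (minor) obstacle to be bookkeeping: tracking which power of $\epsilon/\lambda$ each term carries and verifying that the combinatorial identity above is applied correctly to reconcile $\binom{n-1}{k}$ with $\binom{n}{k}$.
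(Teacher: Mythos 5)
Your proposal is correct and follows essentially the same route as the paper's proof: induction on $n$ with the integrating factor $\mathrm{e}^{n\lambda t}$, insertion of the inductive bound into the Duhamel integral, and the bookkeeping via $\omega_n=(d+2(n-1))\omega_{n-1}$ together with the identity $\binom{n-1}{k}\frac{n}{n-k}=\binom{n}{k}$, with the $a_n(0)\mathrm{e}^{-n\lambda t}$ term absorbed as the $k=n$ summand. The only omission is the trivial $n=0$ case of the statement (immediate since $a_0\equiv1$), which the paper notes in one line.
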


\begin{proof}
We prove this lemma by induction on $n$. Note that (\ref{eq:seqbd})
is immediate for $n=0$. Now let us take $n=1$ in (\ref{eq:seq})
so that we obtain $a_{1}'(t)\leq-\lambda a_{1}(t)+\epsilon d$. By
considering the differential of $\textup{e}^{\lambda t}a_{1}(t)$,
we can readily get $a_{1}(t)\leq\textup{e}^{-\lambda t}a_{1}(0)+\frac{\epsilon d}{\lambda}$
and thus we obtain (\ref{eq:seqbd}) for $n=1$.

Now assume that $n\ge2$ and that $\eqref{eq:seqbd}$ is satisfied
at $n-1$. By considering the differential of $\textup{e}^{n\lambda t}a_{n}(t)$
and applying (\ref{eq:seq}), we get
\[
a_{n}(t)\leq\mathrm{e}^{-n\lambda t}\left[a_{n}(0)+n\epsilon(d+2(n-1))\int_{0}^{t}\mathrm{e}^{n\lambda s}a_{n-1}(s)\mathrm{d}s\right]\;.
\]
By the induction hypothesis and the definition of $\omega_{n}$, we
can bound the right-hand side from above by
\begin{align*}
 & \mathrm{e}^{-n\lambda t}a_{n}(0)+\mathrm{e}^{-n\lambda t}n\epsilon\omega_{n}\left[\sum_{k=1}^{n-1}\binom{n-1}{k}\frac{a_{k}(0)}{\omega_{k}}\left(\frac{\epsilon}{\lambda}\right)^{n-1-k}\frac{\textup{e}^{(n-k)\lambda t}-1}{(n-k)\lambda}+d\left(\frac{\epsilon}{\lambda}\right)^{n-1}\frac{\textup{e}^{n\lambda t}-1}{n\lambda}\right]\\
 & \qquad\leq\omega_{n}\left[\sum_{k=1}^{n}\binom{n}{k}\frac{a_{k}(0)\mathrm{e}^{-k\lambda t}}{\omega_{k}}\left(\frac{\epsilon}{\lambda}\right)^{n-k}+d\left(\frac{\epsilon}{\lambda}\right)^{n}\right]
\end{align*}
which concludes the proof.
\end{proof}
We are now able to prove Proposition \ref{prop:mom_est}.
\begin{proof}[Proof of Proposition \ref{prop:mom_est}]
By Ito's formula and a similar computation to Lemma \ref{lem_dH},
we have that, for all $t\ge0$,
\begin{equation}
\mathrm{d}H(X_{t}^{\epsilon})\le-\big[\lambda H(X_{t}^{\epsilon})-\epsilon d\big]dt+\sqrt{2\epsilon}\left\langle p_{t}^{\epsilon}+\frac{\gamma-\lambda}{2}q_{t}^{\epsilon},\,\mathrm{d}B_{t}\right\rangle \;.\label{eq:dH_uld}
\end{equation}
Therefore, again by Ito's formula, for $n\ge1$, we get
\begin{align}
\mathrm{d}H(X_{t}^{\epsilon})^{n}= \;& nH(X_{t}^{\epsilon})^{n-1}\mathrm{d}H(X_{t}^{\epsilon})+\frac{1}{2}\times n(n-1)H(X_{t}^{\epsilon})^{n-2}\times2\epsilon\left|p_{t}^{\epsilon}+\frac{\gamma-\lambda}{2}q_{t}^{\epsilon}\right|^{2}\mathrm{d}t\nonumber \\
\leq \; & H(X_{t}^{\epsilon})^{n-1}\left[\left(-n\lambda H(X_{t}^{\epsilon})+n\epsilon(d+2(n-1))\right)\textup{d}t+n\sqrt{2\epsilon}\left\langle p_{t}^{\epsilon}+\frac{\gamma-\lambda}{2}q_{t}^{\epsilon},\,\mathrm{d}B_{t}\right\rangle \right]\label{eq:bdH^n}\;,
\end{align}
where the last line used (\ref{eq:dH_uld}) and the elementary bound
\[
\frac{1}{2}\left|p+\frac{\gamma-\lambda}{2}q\right|^{2}\le H(x)\;.
\]
Now let us fix $x\in\mathbb{R}^{2d}$ and let
\[
a_{n}(t)=\mathbb{E}\left[H(X_{t}^{\epsilon}(x))^{n}\right]\;\;\;\text{for }n\in\mathbb{Z}^{+}\text{ and }t\ge0\;.
\]
Then, by the bound (\ref{eq:bdH^n}), the sequence $(a_{n}(\cdot))_{n\in\mathbb{Z}^{+}}$
satisfies the relation (\ref{eq:seq}), and therefore by Lemma \ref{lem_seq},
we get (since $a_{k}(0)=H(x)^{k}$)
\[
\mathbb{E}\left[H(X_{t}^{\epsilon}(x))^{n}\right]\leq\omega_{n}\left[\sum_{k=1}^{n}\binom{n}{k}\frac{H(x)^{k}\mathrm{e}^{-k\lambda t}}{\omega_{k}}\left(\frac{\epsilon}{\lambda}\right)^{n-k}+d\left(\frac{\epsilon}{\lambda}\right)^{n}\right]\;.
\]
Since $\omega_{k}\ge1$ for all $k\in\mathbb{Z}^{+}$, we have a rough
bound
\[
\sum_{k=1}^{n}\binom{n}{k}\frac{H(x)^{k}\mathrm{e}^{-k\lambda t}}{\omega_{k}}\left(\frac{\epsilon}{\lambda}\right)^{n-k}+d\left(\frac{\epsilon}{\lambda}\right)^{n}\le\left(H(x)\mathrm{e}^{-\lambda t}+\frac{d\epsilon}{\lambda}\right)^{n}
\]
which is sufficient to conclude the proof along with Lemma \ref{lem_Hquad}.
\end{proof}
The following exponential moment estimate is a direct consequence
of Proposition \ref{prop:mom_est} and will be used later.
\begin{cor}
\label{cor_expmom}For all $x\in\mathbb{R}^{2d}$, $t\geq0$, and
$a\in\left(0,\,\frac{1}{2(d+2)\kappa_{0}\left(H(x)\mathrm{e}^{-\lambda t}+\frac{d\epsilon}{\lambda}\right)}\right)$,
we have that
\[
\mathbb{E}\left[\mathrm{exp}\left(a|X_{t}^{\epsilon}(x)|^{2}\right)\right]<2\;.
\]
\end{cor}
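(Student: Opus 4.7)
\medskip

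\noindent\textbf{Proof proposal.} The plan is to reduce the exponential moment estimate to the polynomial moment bound of Proposition~\ref{prop:mom_est} via a Taylor expansion, and then control the resulting series by a direct ratio comparison. Concretely, I would first write, using monotone convergence (all summands are nonnegative),
\[
\mathbb{E}\left[\exp\left(a|X_{t}^{\epsilon}(x)|^{2}\right)\right]
=\sum_{n=0}^{\infty}\frac{a^{n}}{n!}\,\mathbb{E}\left[|X_{t}^{\epsilon}(x)|^{2n}\right]
\leq \sum_{n=0}^{\infty} T_{n},\qquad T_{n}:=\frac{(aB)^{n}\,\omega_{n}}{n!},
\]
where I set $B:=\kappa_{0}\bigl(H(x)\mathrm{e}^{-\lambda t}+\tfrac{d\epsilon}{\lambda}\bigr)$ so that the bound in Proposition~\ref{prop:mom_est} reads $\mathbb{E}[|X_{t}^{\epsilon}(x)|^{2n}]\leq B^{n}\omega_{n}$.

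Next I would analyze the sequence $(T_{n})$ via the ratio test, exploiting the explicit recursion $\omega_{n+1}=(d+2n)\,\omega_{n}$ valid for every $n\geq 1$, together with $\omega_{1}/\omega_{0}=1$. This yields, for all $n\geq 0$,
\[
\frac{T_{n+1}}{T_{n}}=\frac{aB}{n+1}\cdot\frac{\omega_{n+1}}{\omega_{n}}\leq \frac{aB\,(d+2n)}{n+1}\leq aB(d+2),
\]
where the last inequality follows from the elementary fact $(d+2n)\leq(d+2)(n+1)$ (which reduces to $0\leq dn+2$). Under the hypothesis $a<\tfrac{1}{2(d+2)\,B}$, this gives the strict bound $T_{n+1}/T_{n}<1/2$ for every $n\geq 0$, whence $T_{n}<(1/2)^{n}$ for $n\geq 1$ while $T_{0}=1$, and therefore
\[
\sum_{n=0}^{\infty}T_{n}<1+\sum_{n=1}^{\infty}\Bigl(\tfrac{1}{2}\Bigr)^{n}=2,
\]
which is exactly the required conclusion.

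The whole argument is essentially a bookkeeping exercise once the polynomial moment estimate is in hand, so I do not expect any genuine obstacle. The only step that requires a small amount of care is matching the constants: one must verify that the elementary inequality $(d+2n)/(n+1)\leq d+2$ is indeed what motivates the precise factor $2(d+2)$ appearing in the threshold for $a$, and that the strictness of the hypothesis $a<\tfrac{1}{2(d+2)B}$ is correctly propagated through the ratio bound to recover the strict inequality ``$<2$''. The factor $\kappa_{0}$ enters only through the definition of $B$ and is inherited directly from Proposition~\ref{prop:mom_est}.
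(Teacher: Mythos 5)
Your proposal is correct and follows essentially the same route as the paper: both expand the exponential, invoke Proposition~\ref{prop:mom_est}, bound $\omega_{n}/n!$ by $(d+2)^{n}$ (the paper via the product estimate $\omega_{n}\le\prod_{j=2}^{n}(d+2)(j-1)<(d+2)^{n}n!$, you via the equivalent ratio comparison $\omega_{n+1}/\omega_{n}=d+2n\le(d+2)(n+1)$), and conclude by summing a geometric series with ratio strictly below $1/2$. The constant matching you flag does work out exactly as you describe, so no further care is needed.
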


\begin{proof}
By Proposition \ref{prop:mom_est}, we get
\begin{align}
\mathbb{E}\left[\frac{a^{n}|X_{t}^{\epsilon}(x)|^{2n}}{n!}\right] & \leq a^{n}\frac{\kappa_{0}^{n}\omega_{n}}{n!}\left(H(x)\mathrm{e}^{-\lambda t}+\frac{d\epsilon}{\lambda}\right)^{n}\;.\label{eq:exp1}
\end{align}
Note that for $n\geq2$ we have the bound
\begin{equation}
\omega_{n}=\prod_{j=2}^{n}(d+2(j-1))\le\prod_{j=2}^{n}(d+2)(j-1)<(d+2)^{n}n!\label{eq:exp2}
\end{equation}
which also trivially holds for $n=0,\,1$. Combining (\ref{eq:exp1})
and (\ref{eq:exp2}), we can deduce that, for $0<a<\frac{1}{2(d+2)\kappa_{0}\left(H(x)\mathrm{e}^{-\lambda t}+\frac{d\epsilon}{\lambda}\right)}$,
\begin{align*}
\mathbb{E}\left[\mathrm{exp}\left(a|X_{t}^{\epsilon}(x)|^{2}\right)\right] & \le\sum_{n=0}^{\infty}a^{n}\frac{\kappa_{0}^{n}\omega_{n}}{n!}\left(H(x)\mathrm{e}^{-\lambda t}+\frac{d\epsilon}{\lambda}\right)^{n}\\
 & \le\sum_{n=0}^{\infty}\left\{ a\kappa_{0}(d+2)\left(H(x)\mathrm{e}^{-\lambda t}+\frac{d\epsilon}{\lambda}\right)\right\} ^{n}<\sum_{n=0}^{\infty}\left(\frac{1}{2}\right)^{n}=2\;.
\end{align*}
\end{proof}

\subsection{Uniqueness of stationary distribution\label{sec4.4}}
Let us conclude this section by showing that indeed the underdamped
process $(X_{t}^{\epsilon})_{t\geq0}$ admits a unique
stationary distribution $\mu_\epsilon$.
In order to do that let us first show the irreducibility of $(X_{t}^{\epsilon})_{t\geq0}$.
\begin{lem}\label{lem:irreducibility}
The process $(X_{t}^{\epsilon})_{t\geq0}$ is irreducible.
\end{lem}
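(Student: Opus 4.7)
The plan is to prove irreducibility via the Stroock--Varadhan support theorem, which identifies the topological support of the law of $X_t^\epsilon(x)$ with the closure of the set of endpoints of trajectories of the associated deterministic control system driven by smooth controls. Concretely, for $T > 0$ and a smooth control $u : [0, T] \to \mathbb{R}^d$, consider the controlled ODE
\[
\dot q(t) = p(t), \qquad \dot p(t) = -F(q(t)) - \gamma p(t) + \sqrt{2\epsilon}\, u(t),
\]
starting from $(q(0), p(0)) = x_0 \in \mathbb{R}^{2d}$. I would show that the endpoint map $u \mapsto (q(T), p(T))$ is surjective onto $\mathbb{R}^{2d}$, which gives irreducibility immediately: for every non-empty open set $A \subset \mathbb{R}^{2d}$, every $x_0 \in \mathbb{R}^{2d}$ and every $t > 0$, one has $\mathbb{P}(X_t^\epsilon(x_0) \in A) > 0$.

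The key step is to construct such a control steering an arbitrary $x_0 = (q_0, p_0)$ to an arbitrary target $x_1 = (q_1, p_1)$ in time $T > 0$. I would pick any smooth curve $\phi \in \mathcal{C}^\infty([0,T], \mathbb{R}^d)$ satisfying the four boundary conditions
\[
\phi(0) = q_0, \quad \dot\phi(0) = p_0, \quad \phi(T) = q_1, \quad \dot\phi(T) = p_1,
\]
for instance a cubic Hermite interpolant in each coordinate, and set $q(t) := \phi(t)$ and $p(t) := \dot\phi(t)$. The first equation of the ODE then holds automatically, while the second becomes a definition of the required control,
\[
u(t) := \frac{1}{\sqrt{2\epsilon}}\bigl(\ddot\phi(t) + F(\phi(t)) + \gamma \dot\phi(t)\bigr),
\]
which is continuous (indeed smooth) on $[0, T]$ by the regularity of $F$ assumed in Assumption \ref{ass:main}. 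By construction $(q(T), p(T)) = x_1$, so the endpoint map is surjective.

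Since $x_0, x_1, T$ are arbitrary, the support theorem yields irreducibility. I do not anticipate any serious obstacle: this is the hypoelliptic SDE counterpart of the classical observation that a second-order control system with a full vector-valued control on the velocity is globally controllable, and the argument only uses the $\mathcal{C}^3$ regularity of $F$ already at hand. The same controllability also underlies H\"ormander's condition for \eqref{eq:uld} (since $[\partial_{p_i}, p\cdot\partial_q - (F+\gamma p)\cdot\partial_p] = \partial_{q_i} - \gamma\partial_{p_i}$), which one could alternatively invoke to obtain a smooth positive transition density and thereby derive irreducibility directly.
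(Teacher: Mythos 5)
Your construction is the same as the paper's: you build a $\mathcal{C}^2$ interpolating curve $\phi$ matching position and velocity at both endpoints, read off the control $u=\frac{1}{\sqrt{2\epsilon}}(\ddot\phi+F(\phi)+\gamma\dot\phi)$ from the second equation, and conclude that any target is reachable, hence every open set is charged. The only difference is how the ``support'' step is implemented. You invoke the Stroock--Varadhan support theorem for the diffusion itself; the paper instead uses only the support theorem for Brownian motion together with an explicit Gronwall comparison: on the event $\{\sup_{t\in[0,T]}|B_t-f(t)|\le\alpha\}$ the solution stays within $\sqrt{2\epsilon}\,\alpha\,\mathrm{e}^{CT}$ of the controlled trajectory $\Phi$, and that event has positive probability. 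This hands-on version is what you would need anyway if you wanted to be careful: the classical Stroock--Varadhan theorem is stated for bounded (or globally Lipschitz) coefficients, whereas here Assumption \ref{ass:DF} only gives $|\mathrm{D}F(q)|\le C\mathrm{e}^{\rho|q|^2}$, so a direct citation requires either a localized version of the theorem (using non-explosion, which follows from the Lyapunov function) or exactly the paper's Gronwall argument, which confines the trajectory to a compact neighborhood of $\Phi([0,T])$ where the local Lipschitz bound applies. Note also that only the ``lower bound'' half of the support theorem is used, so the full theorem is more machinery than needed.

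One further caveat on your closing aside: H\"ormander's condition (your bracket computation is correct) yields smoothness of the transition density, but hypoellipticity alone does not give its strict positivity, so it does not ``derive irreducibility directly''; positivity of the density again requires a controllability/support argument of precisely the kind you carried out in the main part of your proposal. As an aside it does no harm, but it should not be presented as an independent route to the lemma.
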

\begin{proof}
It suffices to prove that for all $T>0$, $x\in\mathbb{R}^{2d}$ and any non-empty open set $\mathcal{A}\subset\mathbb{R}^{2d}$,
    $$\mathbb{P}(X^\epsilon_T(x)\in\mathcal{A})>0\;.$$
The proof is similar to~\cite[Lemma 3.4]{mattingly2002ergodicity} and   relies on classical tools from control theory. Let us fix $T>0$. It is enough to show the above property for any arbitrary $x\in\mathbb{R}^{2d}$ and any open ball $\mathcal{A}=\mathrm B(y,\delta)$ for arbitrary $y\in\mathbb{R}^{2d}$ and $\delta>0$.

First we shall define a path function $\phi\in\mathcal{C}^{2}([0,T],\mathbb{R}^d)$ such that
$$\left(\phi(0),\frac{\mathrm{d}\phi}{\mathrm{d}t}(0)\right)=x\;,\qquad\left(\phi(T),\frac{\mathrm{d}\phi}{\mathrm{d}t}(T)\right)=y\;,$$
which existence can be guaranteed by~\cite[Lemma 4.1]{kFP} for instance. Now define a function $f\in\mathcal{C}^1([0,T],\mathbb{R}^d)$ by $f(0)=0$ and for all $t\in[0,T]$,
$$\frac{\mathrm{d}^2\phi}{\mathrm{d}t^2}(t)+\gamma\frac{\mathrm{d}\phi}{\mathrm{d}t}(t)+F(\phi(t))=\sqrt{2\epsilon}\frac{\mathrm{d}f}{\mathrm{d}t}(t)\;.$$
Let $\Phi(t)=(\phi(t),\mathrm{d}\phi(t)/\mathrm{d}t)$. On the event $\{\sup_{t\in[0,T]}|B_t-f(t)|\leq\alpha\}$, by Gronwall's lemma, there exists a constant $C>0$ increasing with $\alpha$ such that
$$\sup_{t\in[0,T]}|X^\epsilon_t-\Phi(t)|\leq\sqrt{2\epsilon}\alpha\mathrm{e}^{CT}.$$
As a result, taking $\alpha$ small enough with respect to $\delta,T$ ensures that $\sup_{t\in[0,T]}|X^\epsilon_t-\Phi(t)|\leq \frac{\delta}{2}$, thus $X^\epsilon_T\in\mathcal{A}$. Therefore,
$$\mathbb{P}(X^\epsilon_T(x)\in\mathcal{A})\geq\mathbb{P}\big(\sup_{t\in[0,T]}|B_t-f(t)|\leq\alpha\big)>0$$
by the support theorem for the Brownian motions see~\cite[Theorem 4.20]{stroock1982lectures}.
\end{proof}

We are now able to prove Proposition~\ref{prop:unique stat distrib}.

\begin{proof}[Proof of Proposition \ref{prop:unique stat distrib}]
The proof relies on the criterias developed in~\cite{zhang2013new} ensuring the existence of a unique stationary distribution. Such criterias immediately follow from the existence of the Lyapunov function $H$ in Lemma~\ref{lem_dH}, the moment estimates in Proposition~\ref{prop:mom_est} in addition with the irreducibility of the process shown in Lemma~\ref{lem:irreducibility}. This concludes the proof.
\end{proof}

\section{\label{sec5}Approximation by a Gaussian Process}

In this section, we provide crucial estimates allowing us to analyze
the thermalization of the underdamped Langevin dynamics $(X_{t}^{\epsilon})_{t\geq0}$
in the regime $\epsilon\rightarrow0$. More precisely, we approximate
the process $(X_{t}^{\epsilon})_{t\geq0}$ by a Gaussian perturbation
of the deterministic process $(X_{t})_{t\geq0}$ (\ref{eq:uld0}).
This approximation will be carried out in terms of the $L^{p}$-distance
between the processes, and the total variation distance between the associated
laws. Namely Proposition \ref{prop:tvm1} is the key challenge in
the analysis of the total variation distance to the equilibrium measure
$\textup{d}_{\textup{TV}}(X_{t}^{\epsilon},\,\mu_{\epsilon})$.

We note that a similar approximation is also crucially used in the
investigation of the thermalization of the overdamped Langevin dynamics
in \cite[Section 3.2]{cut-off_overdamped}, but it turns out that the
proof of this approximation for the underdamped dynamics is much more
complicated since the dynamics is degenerate, and there is no global
contraction for the process $(X_{t}^{\epsilon})_{t\geq0}$. In \cite{cut-off_overdamped},
the strong coercivity assumption (H) was crucially used which implied
the strong local contraction in the zero-noise dynamics (cf. \cite[the second display of Section 3]{cut-off_overdamped}),
while we are not able to expect similar property in the underdamped
case.

\subsection{Heuristic explanation}

We start by explaining the heuristic argument leading to the main
Gaussian approximation, which idea is similar to the study of the
overdamped case \cite[Section 3.2]{cut-off_overdamped}. For $\epsilon>0$,
define the process $(Y_{t}^{\epsilon})_{t\ge0}$ in $\mathbb{R}^{d}$
as
\[
Y_{t}^{\epsilon}=\frac{X_{t}^{\epsilon}-X_{t}}{\sqrt{2\epsilon}}\;.
\]
Write $(\widetilde{B}_{t})_{t\ge0}$ a $2d$-dimensional process defined
by
\begin{equation}
\widetilde{B}_{t}=(0,\,B_{t})\in\mathbb{R}^{d}\times\mathbb{R}^{d}\label{eq:B_t_tilde}\;,
\end{equation}
where $(B_{t})_{t\geq0}$ is standard $d$-dimensional Brownian motion appearing
in (\ref{eq:uld}) so that we can write the SDE satisfied by $(X_{t}^{\epsilon})_{t\geq0}$
as (cf. (\ref{eq:G(x)}))
\begin{equation}
\textup{d}X_{t}^{\epsilon}=-G(X_{t}^{\epsilon})\textup{d}t+\sqrt{2\epsilon}\textup{d}\widetilde{B}_{t}\;.\label{eq:uld_2}
\end{equation}
Then, by (\ref{eq:uld0_2}) and (\ref{eq:uld_2}), we can write
\[
\textup{d}Y_{t}^{\epsilon}=-\frac{1}{\sqrt{2\epsilon}}\left[G(X_{t}^{\epsilon})-G(X_{t})\right]\text{d}t+\textup{d}\widetilde{B}_{t}\simeq-\textup{D}G(X_{t})Y_{t}^{\epsilon}\textup{d}t+\textup{d}\widetilde{B}_{t}\;.
\]
Note that the Jacobian $\textup{D}G(x)$ depends only on the position
vector $q$ of $x=(q,\,p)$ since
\begin{equation}
\mathbb{A}(q):=-\textup{D}G(x)=\begin{pmatrix}\mathbb{O}_{d} & \mathbb{I}_{d}\\
-\textup{D}F(q) & -\gamma\mathbb{I}_{d}
\end{pmatrix}\;.\label{eq:def A(q)}
\end{equation}
Additionally we expect that $(Y_{t}^{\epsilon})_{t\ge0}=(\frac{X_{t}^{\epsilon}-X_{t}}{\sqrt{2\epsilon}})_{t\ge0}$
is close to the Gaussian process $(Y_{t})_{t\ge0}$ defined by the
SDE
\begin{equation}
\textup{d}Y_{t}=\mathbb{A}(q_{t})Y_{t}\textup{d}t+\textup{d}\widetilde{B}_{t}\;,\label{eq:Y_t}
\end{equation}
with initial condition $Y_{0}=0$, when $\epsilon$ goes to zero.
With this observation in mind, we shall approximate the underdamped
Langevin dynamics $(X_{t}^{\epsilon})_{t\ge0}$ by the Gaussian process
$(Z_{t}^{\epsilon})_{t\ge0}$ defined by
\begin{equation}
Z_{t}^{\epsilon}=X_{t}+\sqrt{2\epsilon}Y_{t}\;.\label{eq:Z_t^eps}
\end{equation}
This approximation is formalized in more details in Proposition \ref{prop:main_apprx}
of the next subsection.
\begin{rem}
\label{rem:approx}The followings are remarks on the matrix $\mathbb{A}(q)$
and processes $(Y_{t})_{t\ge0},(Z_{t}^{\epsilon})_{t\ge0}$ defined
above.
\begin{enumerate}
\item We shall always couple the processes $(X_{t})_{t\ge0}$, $(X_{t}^{\epsilon})_{t\ge0},$$(Y_{t})_{t\ge0}$,
and $(Z_{t}^{\epsilon})_{t\ge0}$ naturally; i.e., we assume that
the processes $(X_{t})_{t\geq0},\,(X_{t}^{\epsilon})_{t\geq0}$ and
$(Z_{t}^{\epsilon})_{t\geq0}$ start together at some $x\in\mathbb{R}^{2d}$,
and that the processes $(X_{t}^{\epsilon})_{t\geq0}$ and $(Y_{t})_{t\geq0}$
share the same Brownian motion $(B_{t})_{t\geq0}$. When we would
like to stress that the starting point of these processes are $x$,
we denote them by $(X_{t}(x))_{t\geq0}$, $(X_{t}^{\epsilon}(x))_{t\geq0}$
and $(Z_{t}^{\epsilon}(x))_{t\geq0}$. Note that the process $(Y_{t}(x))_{t\geq0}$
does not start at $x$; instead it always starts at $0$. However,
the process depends on $x$ through the process $q_{t}=q_{t}(x)$
appearing in the equation (\ref{eq:Y_t}).
\item For the simplicity of notation, we use the abbreviation
\[
\mathbb{A}_{t}(x)=\mathbb{A}(q_{t}(x))
\]
and we shall even say $\mathbb{A}_{t}=\mathbb{A}_{t}(x)$ when there
is no risk of confusion regarding the starting point $x$.
\item We write $\mathbb{A}:=\mathbb{A}(0)$ for the convenience of notation.
Then, since $\mathbb{A}=-\textup{D}G(0)$, by the global exponential
stability established in Theorem \ref{thm:stability}, all the eigenvalues
of $\mathbb{A}$ have negative real part.
\item Since $\mathbb{A}(q)\rightarrow\mathbb{A}$ as $q\rightarrow0$ and
since $q_{t}(x)\rightarrow0$ as $t\rightarrow\infty$ by Theorem
\ref{thm:stability}, we have
\begin{equation}
\lim_{t\rightarrow\infty}\mathbb{A}_{t}(x)=\mathbb{A}\;\;\;\;\text{for all }x\in\mathbb{R}^{2d}\;.\label{eq:conv_A_t}
\end{equation}
\end{enumerate}
\end{rem}

\subsection{Gaussian approximation of the underdamped Langevin dynamics}

\begin{notation*} \label{not:T(x)}Recall the constant $\lambda$
and function $H(\cdot)$ from Definition \ref{def:H}, and the constant
$\kappa$ from Theorem \ref{thm:stability}. Then, define
\begin{equation}
T(x)=T^{\delta}(x)=\max\left\{ \frac{1}{\lambda}\log\frac{\kappa\left(|x|^{2}+U(q)\right)}{\delta^{2}},\,0\right\} \;\;\;\;;\;x=(q,p)\in\mathbb{R}^{2d}\label{eq:t(x)}\;,
\end{equation}
where $\delta>0$ is a sufficiently small constant which will be specified
later in Lemma \ref{lem:drift}. The definition of $T(x)$ will also
be explained therein. We also from now on fix a parameter $\theta\in(0,\,1/3)$,
and we shall always suppose that $\epsilon>0$ is small enough so
that $\frac{1}{\epsilon^{\theta}}>T(x)$, since we will focus on the
Gaussian approximation in the time interval $[T(x),\,1/\epsilon^{\theta}]$
in the next proposition. \end{notation*}

The following proposition is the underdamped version of \cite[Lemma 3.1]{cut-off_overdamped}
which provides a quantitative estimate on the error appearing in the
approximation of $(X_{t}^{\epsilon})_{t\ge0}$ by $(Z_{t}^{\epsilon})_{t\ge0}$ on the
time interval $[T(x),\,1/\epsilon^{\theta}]$.
\begin{prop}[Gaussian approximation]
\label{prop:main_apprx}For all $r>0$, $n\ge1$ and $\nu\in(0,1)$, there exists a
constant $\alpha>0$ (independent of $r,n$ and $\nu$)
and a constant $C(n,\,r)>0$ such that
\[
\sup_{x\in\mathcal{D}_{r}}\sup_{t\in[T(x),\,1/\epsilon^{\theta}]}\mathbb{E}\left[|X_{t}^{\epsilon}(x)-Z_{t}^{\epsilon}(x)|^{n}\right]\leq C(n,\,r)\epsilon^{n/2}\left[\frac{e^{-\alpha nt}}{\epsilon^{n\nu}}+\epsilon^{n(1-2\theta)}\right]
\]
for all small enough $\epsilon>0$.
\end{prop}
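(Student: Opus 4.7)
The plan is to analyze the discrepancy $R_t^\epsilon := X_t^\epsilon - Z_t^\epsilon$. Subtracting the SDE~\eqref{eq:uld_2} for $X_t^\epsilon$ from that for $Z_t^\epsilon$ (obtained by combining~\eqref{eq:uld0_2} with~\eqref{eq:Y_t}), the common Brownian driver cancels by the coupling of Remark~\ref{rem:approx}-(1). Writing $\Delta_t := X_t^\epsilon - X_t = R_t^\epsilon + \sqrt{2\epsilon}\,Y_t$, applying Taylor's formula to $G$ around $X_t$, and using the identity $\mathbb{A}(q_t) = -\textup{D}G(X_t)$, I obtain the random ODE
\[
\frac{\mathrm{d}R_t^\epsilon}{\mathrm{d}t} = \mathbb{A}(q_t)\,R_t^\epsilon + \mathcal{E}_t,\qquad R_0^\epsilon = 0,
\]
with $\mathcal{E}_t = -\tfrac{1}{2}\textup{D}^2 G(X_t)(\Delta_t,\Delta_t) + O(|\Delta_t|^3)$. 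Duhamel's formula then yields $R_t^\epsilon = \int_0^t \Phi_{s,t}\mathcal{E}_s\,\mathrm{d}s$, where $\Phi_{s,t}$ denotes the fundamental solution of $\dot u = \mathbb{A}(q_r)u$.

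I would then establish exponential decay for $\Phi_{s,t}$. By Corollary~\ref{cor:positiveev}, all eigenvalues of $\mathbb{A} := \mathbb{A}(0)$ have strictly negative real part; hence there exist a norm $|\cdot|_*$ on $\mathbb{R}^{2d}$ equivalent to the Euclidean one and $\alpha > 0$ for which $|e^{\mathbb{A}t}v|_* \leq e^{-\alpha t}|v|_*$. For $t \geq T(x)$, Theorem~\ref{thm:stability} forces $|X_t(x)| \leq \delta$, so $\mathbb{A}(q_t)$ lies in a small neighborhood of $\mathbb{A}$ and the contraction transfers to the time-varying flow, giving $|\Phi_{s,t}|_* \leq Ce^{-\alpha(t-s)}$ for $T(x) \leq s \leq t$. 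On the initial segment $[0,T(x)]$, whose length is uniformly bounded for $x \in \mathcal{D}_r$, a Grönwall-type argument in the spirit of Corollary~\ref{cor:contraction} gives the crude bound $|\Phi_{s,T(x)}|_* \leq C(r)$, so overall $|\Phi_{s,t}|_* \leq C(r)e^{-\alpha(t-s)}$ for all $0 \leq s \leq t$.

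To bound $\mathcal{E}_s$, I first obtain moment estimates for $\Delta_s$: a Lyapunov calculation analogous to Proposition~\ref{prop:mom_est} applied to the evolution of $\Delta_s$ yields $\mathbb{E}[|\Delta_s|^{2n}] \leq C(n,r)\epsilon^n$ uniformly in $s \in [0, 1/\epsilon^\theta]$. Since $F \in \mathcal{C}^3$ only gives local bounds on $\textup{D}^2 G$, I localize on the event $\Omega_\epsilon := \{\sup_{s \leq 1/\epsilon^\theta}|X_s^\epsilon| \leq M_\epsilon\}$ with $M_\epsilon$ tuned so that $\sup_{|y| \leq M_\epsilon}\|\textup{D}^2 G(y)\| \leq C\epsilon^{-2\theta}$; the exponential moment bound of Corollary~\ref{cor_expmom} combined with a Doob maximal inequality ensures $\mathbb{P}(\Omega_\epsilon^c)$ decays faster than any power of $\epsilon$, making the off-$\Omega_\epsilon$ contribution negligible. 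On $\Omega_\epsilon$, combining the moment bound with the localization yields $\mathbb{E}[|\mathcal{E}_s|^n\mathbf{1}_{\Omega_\epsilon}]^{1/n} \leq C(n,r)\epsilon^{1-2\theta}$.

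Inserting these estimates into Duhamel's formula via Minkowski's inequality and splitting the time integral at $T(x)$ produces the two terms in the stated bound: the contribution from $[0, T(x)]$, where the crude flow bound is then damped by contraction on $[T(x), t]$, yields the transient term $\epsilon^{n/2}e^{-\alpha nt}/\epsilon^{n\nu}$ (the factor $\epsilon^{-n\nu}$ absorbing any polynomial correction, with $\nu$ chosen arbitrarily small); the contribution from $[T(x), t]$, obtained by integrating the contracting kernel against the localized remainder and combined with a bootstrap exploiting the Gaussian nature of $Y_s$, yields the bulk term $\epsilon^{n(3/2-2\theta)}$. The main obstacle lies precisely in this bootstrap: because $\mathcal{E}_s$ depends on $R_s^\epsilon$ itself through $\Delta_s = \sqrt{2\epsilon}\,Y_s + R_s^\epsilon$, the estimate must be closed iteratively together with the localization on $\Omega_\epsilon$, and a refined analysis extracting an additional $\sqrt{\epsilon}$ factor from the quadratic Gaussian remainder is needed to obtain the sharp exponent $n(3/2-2\theta)$; tracking this interplay carefully across the two time regimes is the technically most delicate step.
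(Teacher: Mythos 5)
Your setup is sound and parallels the paper's: the error $X_t^\epsilon-Z_t^\epsilon$ (the paper works with $W_t^\epsilon=(X_t^\epsilon-Z_t^\epsilon)/\sqrt{2\epsilon}$) solves a random ODE whose linear part is $\mathbb{A}(q_t)$, contracting for $t\ge T(x)$ (the paper's Lemma \ref{lem:drift}, in the $\Gamma$-norm, plays the role of your bound on $\Phi_{s,t}$), driven by the Taylor remainder of $F$, and one localizes on an event of bounded trajectories. However, the proposal leaves the central difficulty unresolved. Your forcing $\mathcal{E}_s$ is quadratic in $\Delta_s=\sqrt{2\epsilon}\,Y_s+R_s^\epsilon$, hence quadratic in the unknown $R_s^\epsilon$ itself, so Duhamel plus Minkowski does not close: you get an integral inequality of the form $\|R_t\|_n\lesssim\int_0^t e^{-\alpha(t-s)}\bigl(\epsilon\,\|\,|Y_s|^2\|_n+\|R_s\|_{2n}^2\bigr)\mathrm{d}s$, which ordinary Gronwall cannot handle. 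This is exactly what the paper's quadratic Gronwall inequality (Lemma \ref{lem:quadratic gronwall}) is for; it must be applied \emph{pathwise}, which is why the paper introduces the event $\mathcal{B}_\epsilon$ (discriminant condition and smallness of $\langle W_{T(x)}^\epsilon,\Gamma W_{T(x)}^\epsilon\rangle$, with $M_\epsilon=\epsilon^{\nu-1}$ producing the $\epsilon^{-n\nu}$ transient), uses $\mathbb{E}[\sup_{[T(x),1/\epsilon^\theta]}|Y_s|^{4n}]\lesssim\epsilon^{-2n\theta}$ (Lemma \ref{lem:mom_Yt_sup}) to produce the bulk exponent $n(1-2\theta)$ in the $W$ variable, and then bounds $\mathbb{P}(\mathcal{A}_\epsilon^c),\mathbb{P}(\mathcal{B}_\epsilon^c)$ by arbitrarily high powers of $\epsilon$ (Steps 5--6). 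You acknowledge this as ``the technically most delicate step'' and defer it; but this step \emph{is} the proof, so the argument as written has a genuine gap.

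Two of your intermediate inputs are also not available as stated. First, the a priori bound $\mathbb{E}[|\Delta_s|^{2n}]\le C(n,r)\epsilon^n$ uniformly over $s\in[0,1/\epsilon^\theta]$ cannot be obtained by ``a Lyapunov calculation analogous to Proposition \ref{prop:mom_est} applied to $\Delta_s$'': the whole point stressed in the paper is that the underdamped dynamics has no contraction for differences of solutions, Gronwall on $[0,t]$ produces constants growing like $e^{Ct}$, and for $t\ge T(x)$ the linearized contraction again comes with a quadratic remainder in $\Delta_s$ --- so this claim is essentially equivalent to the proposition you are proving (indeed it follows from it together with Lemma \ref{lem:mom_Yt}, modulo the transient term), not an input to it. Second, your localization with $M_\epsilon$ tuned so that $\sup_{|y|\le M_\epsilon}\|\mathrm{D}^2G(y)\|\le C\epsilon^{-2\theta}$ is not licensed by the hypotheses: Assumption \ref{ass:DF} controls only $\mathrm{D}F$, not $\mathrm{D}^2F$, and Corollary \ref{cor_expmom} is a fixed-time exponential moment, not a supremum bound (and $\exp(a|X_t^\epsilon|^2)$ is not obviously a submartingale for Doob), so the claimed super-polynomial bound on $\mathbb{P}(\Omega_\epsilon^c)$ is unsubstantiated. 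The paper avoids both issues by keeping the localization radius fixed ($\mathcal{A}_\epsilon$, so $\mathrm{D}^2F$ is bounded by a constant $C(r)$ there) and settling for the polynomial bound $\mathbb{P}(\mathcal{A}_\epsilon^c)\le C(n,r)\epsilon^{4n(1-\theta)}$ via Burkholder--Davis--Gundy, which suffices after Cauchy--Schwarz. Finally, your claimed sharp bulk exponent $n(3/2-2\theta)$ is only asserted to follow from ``a refined analysis extracting an additional $\sqrt{\epsilon}$''; without the pathwise quadratic Gronwall mechanism (which yields $\alpha_\epsilon\lesssim\epsilon\sup_s|Y_s|^4$ and hence the extra factor) your Duhamel estimate only gives order $\epsilon^{n}$ (or $\epsilon^{n(1-2\theta)}$), which is weaker than the statement whenever $\theta<1/4$.
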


As we have mentioned earlier, the proof of this proposition is more
involved than its analogous in the overdamped case \cite[Lemma 3.1]{cut-off_overdamped}
since the underdamped Langevin dynamics does not have a strong global
contraction which is a crucial property assumed in \cite{cut-off_overdamped}.
We introduced $T(x)$ to resolve this issue. The proof of this proposition
will be given at the end of the current section.

\subsection{Matrix equations}

In this subsection, we provide a lemma allowing us to solve a certain
class of Lyapunov matrix equations which shall appear frequently throughout
this work. Note that in contrary to the overdamped case studied in~\cite{cut-off_overdamped},
the next lemma requires more work since we do not assume that the
matrix $\mathbb{W}$ appearing in~\eqref{eq:ricatti eqn} is positive
definite. This is related to the fact that the underdamped Langvin
dynamics is degenerate.
\begin{lem}
\label{lem:riccati}Let $\mathbb{W}$ be a $2d\times2d$ symmetric
non-negative matrix such that, for some $a>0$,
\begin{equation}
x\cdot\mathbb{W}x\geq a|p|^{2}\text{ \;\;\;\;for all }x=(q,\,p)\in\mathbb{R}^{d}\;.\label{eq:conric}
\end{equation}
Let $\mathbb{U}$ be a $2d\times2d$ matrix such that all the eigenvalues
of $\mathbb{U}$ have negative real part and moreover, when we write
$\mathbb{U}$ as block-form
\begin{equation}
\mathbb{U}=\begin{pmatrix}\mathbb{U}_{11} & \mathbb{U}_{12}\\
\mathbb{U}_{21} & \mathbb{U}_{22}
\end{pmatrix}\label{eq:blockU}\;,
\end{equation}
where all $\mathbb{U}_{ij}$'s are $d\times d$ matrices, the matrix
$\mathbb{U}_{21}$ is invertible. Then, there exists a unique $2d\times2d$
real matrix $\mathbb{X}$ such that
\begin{equation}
\mathbb{U}^{\dagger}\mathbb{X}+\mathbb{X}\mathbb{U}=-\mathbb{W}\;.\label{eq:ricatti eqn}
\end{equation}
Moreover, $\mathbb{\mathbb{X}}$ is symmetric positive definite.
\end{lem}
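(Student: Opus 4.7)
The plan is to follow the classical route for Lyapunov equations, adapting the positive-definiteness argument to handle the fact that $\mathbb{W}$ is only positive semidefinite.

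First I would establish existence and uniqueness by writing down the candidate solution explicitly:
\[
\mathbb{X} = \int_0^\infty e^{t\mathbb{U}^{\dagger}}\mathbb{W}\, e^{t\mathbb{U}}\,\mathrm{d}t\;.
\]
Since all eigenvalues of $\mathbb{U}$ (and hence of $\mathbb{U}^{\dagger}$) have negative real part, $\|e^{t\mathbb{U}}\|$ decays exponentially, so the integral converges absolutely. A direct computation using $\frac{\mathrm d}{\mathrm dt}\bigl(e^{t\mathbb{U}^{\dagger}}\mathbb{W}e^{t\mathbb{U}}\bigr) = \mathbb{U}^{\dagger}e^{t\mathbb{U}^{\dagger}}\mathbb{W}e^{t\mathbb{U}} + e^{t\mathbb{U}^{\dagger}}\mathbb{W}e^{t\mathbb{U}}\mathbb{U}$ and the fundamental theorem of calculus (the boundary term at $\infty$ vanishes) shows that this $\mathbb{X}$ solves \eqref{eq:ricatti eqn}. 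For uniqueness, if $\mathbb{X}_1,\mathbb{X}_2$ are two solutions, then $\mathbb{Y}:=\mathbb{X}_1-\mathbb{X}_2$ satisfies $\mathbb{U}^{\dagger}\mathbb{Y}+\mathbb{Y}\mathbb{U}=0$, which gives $\frac{\mathrm d}{\mathrm dt}(e^{t\mathbb{U}^{\dagger}}\mathbb{Y}e^{t\mathbb{U}}) = 0$. Thus $e^{t\mathbb{U}^{\dagger}}\mathbb{Y}e^{t\mathbb{U}} = \mathbb{Y}$ for all $t\ge 0$, and letting $t\to\infty$ forces $\mathbb{Y}=0$. Symmetry of $\mathbb{X}$ is immediate from the symmetry of $\mathbb{W}$ under transposition of the integrand.

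The main obstacle, and the only place where the block structure of $\mathbb{U}$ matters, is positive definiteness. For any $v\in\mathbb{R}^{2d}$,
\[
v^{\dagger}\mathbb{X}v = \int_0^\infty (e^{t\mathbb{U}}v)^{\dagger}\mathbb{W}\,(e^{t\mathbb{U}}v)\,\mathrm dt \geq 0
\]
since $\mathbb{W}$ is non-negative. Suppose for contradiction that $v^{\dagger}\mathbb{X}v = 0$ for some $v\ne 0$, and set $w(t):=e^{t\mathbb{U}}v = (w_1(t),w_2(t))$. By continuity of the integrand, $w(t)^{\dagger}\mathbb{W}w(t)=0$ for every $t\ge 0$. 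The coercivity assumption \eqref{eq:conric} then forces $|w_2(t)|^2 \le a^{-1}w(t)^{\dagger}\mathbb{W}w(t)=0$, so $w_2(t)\equiv 0$. Here is where the block structure enters: differentiating the identity $w_2(t)\equiv 0$ and using $\dot w(t)=\mathbb{U}w(t)$ with the block form \eqref{eq:blockU} gives
\[
0 = \dot w_2(t) = \mathbb{U}_{21}w_1(t) + \mathbb{U}_{22}w_2(t) = \mathbb{U}_{21}w_1(t)\;.
\]
Since $\mathbb{U}_{21}$ is invertible, we conclude $w_1(t)\equiv 0$, hence $w\equiv 0$, contradicting $v=w(0)\ne 0$. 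Thus $\mathbb{X}$ is positive definite, completing the proof.
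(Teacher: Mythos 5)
Your proof is correct and follows essentially the same route as the paper: the integral representation $\int_0^\infty e^{t\mathbb{U}^{\dagger}}\mathbb{W}e^{t\mathbb{U}}\,\mathrm{d}t$ for existence, uniqueness, and symmetry, and then the key positive-definiteness step via $w_2\equiv 0$, differentiation along the flow, and invertibility of $\mathbb{U}_{21}$, exactly as in the paper's argument. The only cosmetic difference is that you verify existence and uniqueness directly rather than citing the standard Lyapunov-equation results, which is fine.
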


\begin{proof}
Since $\mathbb{U}$ is stable i.e., all the eigenvalues of $\mathbb{U}$
have negative real part, by the assumption of the lemma, by \cite[Theorem 2 in p.414]{lancaster1985theory}
there exists a unique solution $\mathbb{X}$ to the matrix equation
(\ref{eq:ricatti eqn}) and moreover by \cite[Theorem 3 in p.414]{lancaster1985theory},
this unique solution $\mathbb{X}$ admits the representation
\begin{equation}
\mathbb{X}=\int_{0}^{\infty}\mathrm{e}^{\mathbb{U}^{\dagger}t}\mathbb{W}\mathrm{e}^{\mathbb{U}t}\,\mathrm{d}t\label{eq:repsol}\;.
\end{equation}
From this representation, the symmetry and non-negative definiteness
of $\mathbb{X}$ is immediate. It now remains to prove the positive
definiteness of $\mathbb{X}$ which follows from the conditions on
$\mathbb{W}$ and $\mathbb{U}$ given in the lemma. By the representation
(\ref{eq:repsol}), we can write for $x\in\mathbb{R}^{2d}$,
\begin{align}
\left\langle x,\,\mathbb{X}x\right\rangle  & =\int_{0}^{\infty}\left\langle \mathrm{e}^{\mathbb{U}t}x,\,\mathbb{W}\mathrm{e}^{\mathbb{U}t}x\right\rangle \mathrm{d}t\;.\label{eq:xXx}
\end{align}
Writing $\mathrm{e}^{\mathbb{U}t}x=(\widehat{q}_{t},\,\widehat{p}_{t})$
so that by (\ref{eq:blockU}), we have the following system of ODE
satisfied by $(\widehat{q}_{t},\,\widehat{p}_{t})_{t\geq0}$:
\begin{equation}
\begin{cases}
\frac{\textup{d}}{\textup{d}t}\widehat{q}_{t}=\mathbb{U}_{11}\widehat{q}_{t}+\mathbb{U}_{12}\widehat{p}_{t}\;,\\
\frac{\textup{d}}{\textup{d}t}\widehat{p}_{t}=\mathbb{U}_{21}\widehat{q}_{t}+\mathbb{U}_{22}\widehat{p}_{t}\;.
\end{cases}\label{eq:ODEx}
\end{equation}
Now we suppose that $\left\langle x,\,\mathbb{X}x\right\rangle =0$,
then in view of (\ref{eq:conric}) and (\ref{eq:xXx}), we have $\widehat{p}_{t}\equiv0$ and hence  by the second equation of (\ref{eq:ODEx}) and by the   invertibility of $\mathbb{U}_{21}$, we also 
have $\widehat{q}_{t}\equiv0$. 
Thus, we can conclude that $x=(\widehat{q}_{0},\,\widehat{p}_{0})=0$
the positive definiteness of $\mathbb{X}$ follows.
\end{proof}
By Theorem~\ref{thm:stability}, Lemma~\ref{lem:ev} and Corollary~\ref{cor:positiveev}; the
global exponential stability ensures necessarily that both $\mathbb{A}$
and $\mathbb{A^{\dagger}}$ satisfy all the requirements imposed to
$\mathbb{U}$ in the previous lemma. Hence, from now on, we denote
by $\Gamma$ the solution to equation (\ref{eq:ricatti eqn}) with
$(\mathbb{U},\,\mathbb{W}):=(\mathbb{A},\,\mathbb{I}_{2d})$. Then,
by Lemma \ref{lem:riccati}, $\Gamma$ is a positive definite symmetric
matrix satisfying
\begin{equation}
\mathbb{A}^{\dagger}\Gamma+\Gamma\mathbb{A}=-\mathbb{I}_{2d}\;.\label{eq:ricattiGamma}
\end{equation}
Let us take $\xi>0$ small enough so that
\begin{equation}
\xi|x|^{2}\le\left\langle x,\,\Gamma x\right\rangle \le\frac{1}{\xi}|x|^{2}\;\;\;\text{for all }x\in\mathbb{R}^{2d}\;.\label{eq:normGamma}
\end{equation}

\subsection{Preliminary moment estimates on $(Y_{t})_{t\ge0}$ }

In this section, we provide a control on the moments of $(Y_{t})_{t\ge0}$.
We start from observing that, by Ito's formula (cf. (\ref{eq:Y_t})),
we can write
\begin{equation}
\mathrm{d}\langle Y_{t},\Gamma Y_{t}\rangle=2\langle Y_{t},\,\Gamma\mathbb{A}(q_{t})Y_{t}\rangle\mathrm{d}t+2\langle\Gamma Y_{t},\mathrm{d}\widetilde{B}_{t}\rangle+M\mathrm{d}t\label{eq:ito Y_t-1}\;,
\end{equation}
where $\Gamma$ is the matrix defined through (\ref{eq:ricattiGamma})
and where $M=\sum_{i=d+1}^{2d}\Gamma_{ii}$. The following lemma provides
a control on the drift appearing in the right-hand side of the equation
above. In particular, the next lemma fully characterizes the constant
$\delta>0$ appearing in Notation \ref{not:T(x)} in the definition
of $T(x)=T^{\delta}(x)$ in (\ref{eq:t(x)}).
\begin{lem}
\label{lem:drift}There exists $\delta>0$ such that, for all $x\in\mathbb{R}^{2d}$
and $t\ge T(x)=T^{\delta}(x)$, we have
\[
\langle y,\,\Gamma\mathbb{A}_{t}(x)y\rangle\le-\frac{\xi}{4}\langle y,\,\Gamma y\rangle\;\;\;\text{for all }y\in\mathbb{R}^{2d}\;,
\]
where the constant $\xi$ is the one appearing in (\ref{eq:normGamma}).
\end{lem}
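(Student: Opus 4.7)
The plan is to first establish the estimate at the equilibrium $q=0$, then propagate it by a simple perturbation argument for $q$ in a small ball, and finally use the exponential stability of $(X_t)$ together with the definition of $T(x)$ to ensure that $q_t(x)$ has entered this ball for $t\geq T(x)$.

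First, I would evaluate the quadratic form at the equilibrium. By the defining equation \eqref{eq:ricattiGamma}, $\mathbb{A}^\dagger\Gamma+\Gamma\mathbb{A}=-\mathbb{I}_{2d}$, so for every $y\in\mathbb{R}^{2d}$,
\[
\langle y,\Gamma\mathbb{A}y\rangle=\tfrac{1}{2}\langle y,(\Gamma\mathbb{A}+\mathbb{A}^\dagger\Gamma)y\rangle=-\tfrac{1}{2}|y|^2\leq -\tfrac{\xi}{2}\langle y,\Gamma y\rangle,
\]
where the last inequality uses the right-hand side of \eqref{eq:normGamma}. Next, for $q\in\mathbb{R}^d$, I split
\[
\langle y,\Gamma\mathbb{A}(q)y\rangle=\langle y,\Gamma\mathbb{A}y\rangle+\langle y,\Gamma(\mathbb{A}(q)-\mathbb{A})y\rangle.
\]
Since $\mathbb{A}(q)-\mathbb{A}$ only differs from $0$ in the lower-left block $-(\mathrm{D}F(q)-\mathrm{D}F(0))$ and $F\in\mathcal{C}^2$, the map $q\mapsto\mathbb{A}(q)$ is continuous at the origin. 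Hence there exists $\delta>0$ such that
\[
\|\Gamma(\mathbb{A}(q)-\mathbb{A})\|\leq\tfrac{\xi^2}{4}\qquad\text{whenever }|q|\leq\delta.
\]
Using the left-hand side of \eqref{eq:normGamma} (which gives $|y|^2\leq\xi^{-1}\langle y,\Gamma y\rangle$), this yields
\[
\bigl|\langle y,\Gamma(\mathbb{A}(q)-\mathbb{A})y\rangle\bigr|\leq\tfrac{\xi^2}{4}|y|^2\leq\tfrac{\xi}{4}\langle y,\Gamma y\rangle,
\]
so combining with the first display gives $\langle y,\Gamma\mathbb{A}(q)y\rangle\leq -\tfrac{\xi}{4}\langle y,\Gamma y\rangle$ whenever $|q|\leq\delta$.

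It then remains to fix this particular $\delta$ as the constant appearing in the definition \eqref{eq:t(x)} of $T(x)$, and to check that $|q_t(x)|\leq\delta$ for $t\geq T(x)$. By Theorem~\ref{thm:stability}, $|q_t(x)|^2\leq|X_t(x)|^2\leq\kappa(|x|^2+U(q))\mathrm{e}^{-\lambda t}$, and the definition of $T(x)$ is precisely tailored so that $\kappa(|x|^2+U(q))\mathrm{e}^{-\lambda t}\leq\delta^2$ for all $t\geq T(x)$ (with the $\max$ with $0$ taking care of the case when $\kappa(|x|^2+U(q))\leq\delta^2$ already at $t=0$). Applying the estimate from the previous paragraph to $q=q_t(x)$ concludes the proof.

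The only mildly delicate point is making the constants line up: one must choose $\delta$ \emph{after} fixing $\xi$ and $\Gamma$ (both are intrinsic to $\mathbb{A}$), and only then use this $\delta$ to define $T(x)$; no circularity arises because Theorem~\ref{thm:stability} provides the decay independently of $\delta$. I do not expect any serious obstacle here, as the argument is essentially a compactness/continuity perturbation of the strict Lyapunov identity $\mathbb{A}^\dagger\Gamma+\Gamma\mathbb{A}=-\mathbb{I}_{2d}$.
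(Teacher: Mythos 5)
Your proof is correct and follows essentially the same route as the paper: perturb the Lyapunov identity $\mathbb{A}^{\dagger}\Gamma+\Gamma\mathbb{A}=-\mathbb{I}_{2d}$, choose $\delta$ by continuity of $q\mapsto\mathbb{A}(q)$ at the origin so the perturbation term is dominated (the paper bounds the symmetrized perturbation by $\tfrac12$, you bound $\Vert\Gamma(\mathbb{A}(q)-\mathbb{A})\Vert$ by $\tfrac{\xi^{2}}{4}$ — an immaterial difference), and then invoke Theorem \ref{thm:stability} with the definition of $T^{\delta}(x)$ to guarantee $|q_{t}(x)|\le\delta$ for $t\ge T(x)$. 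The constants line up correctly and there is no circularity, exactly as you note.
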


\begin{proof}
Since $\mathbb{A}(q)\underset{q\rightarrow0}{\longrightarrow}\mathbb{A}$,
there exists $\delta>0$ small enough such that, for all $|q|\leq\delta$,
\begin{equation}
\left\Vert (\mathbb{A}(q)-\mathbb{A})^{\dagger}\Gamma+\Gamma(\mathbb{A}(q)-\mathbb{A})\right\Vert \leq\frac{1}{2}\;.\label{eq:A(q)-A-1}
\end{equation}
By Theorem \ref{thm:stability} and definition (\ref{eq:t(x)}) of
$T(x)$, if $t\ge T(x)$, we have
\begin{equation}
|q_{t}(x)|^{2}\le|X_{t}(x)|^{2}\le\kappa\left(|x|^{2}+U(q)\right)\textup{e}^{-\lambda t}\le\delta^{2}\label{eq:qtbd}
\end{equation}
and thus we have, by (\ref{eq:A(q)-A-1}), 
\begin{equation}
\left\Vert (\mathbb{A}_{t}(x)-\mathbb{A})^{\dagger}\Gamma+\Gamma(\mathbb{A}_{t}(x)-\mathbb{A})\right\Vert =\left\Vert (\mathbb{A}(q_{t}(x))-\mathbb{A})^{\dagger}\Gamma+\Gamma(\mathbb{A}(q_{t}(x))-\mathbb{A})\right\Vert \leq\frac{1}{2}\;.\label{eq:A(qT(x))}
\end{equation}
Hence, for $t\geq T(x)$, by (\ref{eq:A(qT(x))})
and (\ref{eq:ricattiGamma}), we get
\begin{align*}
2\langle y,\,\Gamma\mathbb{A}_{t}(x)y\rangle & =\left\langle y,\,\left[\mathbb{A}_{t}(x)^{\dagger}\Gamma+\Gamma\mathbb{A}_{t}(x)\right]y\right\rangle \\
 & \le\left\langle y,\,\left[\mathbb{A}^{\dagger}\Gamma+\Gamma\mathbb{A}\right]y\right\rangle +\frac{1}{2}|y|^{2}=-\frac{1}{2}|y|^{2}\le-\frac{\xi}{2}\langle y,\,\Gamma y\rangle\;,
\end{align*}
where the last inequality comes from (\ref{eq:normGamma}).
\end{proof}
The next lemma provides a control on the moments of $(Y_{t}(x))_{t\ge0}$ in
the time-interval $[0,T(x)]$.
\begin{lem}
\label{lem:mom_YT(x)}For all $n\ge1$ and $r>0$, there exists a
constant $C(n,\,r)>0$ such that
\[
\sup_{x\in\mathcal{D}_{r}}\sup_{t\in[0,\,T(x)]}\mathbb{E}\left[\langle Y_{t}(x),\,\Gamma Y_{t}(x)\rangle^{n}\right]\le C(n,\,r)\;.
\]
\end{lem}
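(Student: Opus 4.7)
The plan is to use the fact that on $\mathcal{D}_r$ the interval $[0,T(x)]$ is uniformly bounded, and that on this interval $\mathbb{A}_t(x)$ is uniformly bounded, then apply It\^o's formula to $\langle Y_t,\Gamma Y_t\rangle^n$ and close the estimate by Gr\"onwall and induction on $n$.

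First I would observe the two uniform bounds available on $\mathcal{D}_r$. From the very definition~\eqref{eq:t(x)} of $T(x)$, the quantity $T(x)$ is increasing in $|x|^2+U(q)$, hence $\sup_{x\in\mathcal{D}_r}T(x)\le T(r):=\tfrac{1}{\lambda}\log\bigl(\kappa(r^2+\max_{\mathcal{D}_r}U)/\delta^2\bigr)$, a finite constant. Moreover, by Theorem~\ref{thm:stability} and Remark~\ref{rem:Thmstat}, for any $x\in\mathcal{D}_r$ the deterministic trajectory $X_t(x)$ remains in the compact set $\mathcal{H}_r$ for all $t\ge0$. Since $\mathbb{A}(q)$ depends continuously on $q$, this yields a constant $K(r)>0$ with $\|\mathbb{A}_t(x)\|\le K(r)$ for all $x\in\mathcal{D}_r$ and all $t\ge0$.

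Next I would apply It\^o's formula to $\varphi_n(t):=\langle Y_t,\Gamma Y_t\rangle^n$ starting from~\eqref{eq:ito Y_t-1}. Noting that $\widetilde B_t=(0,B_t)$ has a degenerate diffusion and writing $(\Gamma Y_t)_2$ for the last $d$ components of $\Gamma Y_t$, we obtain
\begin{align*}
\mathrm{d}\varphi_n(t) = \; & n\varphi_{n-1}(t)\bigl[2\langle Y_t,\Gamma\mathbb{A}_t(x)Y_t\rangle+M\bigr]\mathrm{d}t \\
& + 2n(n-1)\varphi_{n-2}(t)|(\Gamma Y_t)_2|^2\mathrm{d}t+2n\varphi_{n-1}(t)\langle(\Gamma Y_t)_2,\mathrm{d}B_t\rangle.
\end{align*}
Using $\|\mathbb{A}_t(x)\|\le K(r)$, $\|\Gamma\|<\infty$ and~\eqref{eq:normGamma}, there exist constants $A_n(r),B_n(r)>0$ such that, after taking expectations (the stochastic integral is a true martingale since on $[0,T(r)]$ all moments of $Y_t$ are finite by standard linear-SDE arguments), setting $f_n(t):=\mathbb{E}[\varphi_n(t)]$, we have
\begin{equation*}
f_n'(t)\le A_n(r)f_n(t)+B_n(r)f_{n-1}(t),\qquad f_n(0)=0\text{ for }n\ge1.
\end{equation*}

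The conclusion then follows by induction on $n$. For $n=1$ this gives $f_1(t)\le\tfrac{M}{A_1(r)}(\mathrm{e}^{A_1(r)T(r)}-1)$ on $[0,T(r)]$; for the induction step Gr\"onwall's inequality yields
$$f_n(t)\le B_n(r)\,\mathrm{e}^{A_n(r)T(r)}\int_0^{T(r)}f_{n-1}(s)\,\mathrm{d}s,$$
and the induction hypothesis supplies a uniform bound on the right-hand side depending only on $n$ and $r$. I do not see a genuine obstacle here: the reason this lemma is clean while the analogous Lemma~\ref{lem:drift} is delicate is precisely that we only need control on the bounded interval $[0,T(x)]$, so there is no need to exploit the contractive sign of $\Gamma\mathbb{A}_t(x)$ --- any crude upper bound on $\|\mathbb{A}_t(x)\|$ suffices once $T(x)$ is uniformly bounded on $\mathcal{D}_r$.
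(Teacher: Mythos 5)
Your proposal is correct and follows essentially the same route as the paper: uniform boundedness of $T(x)$ on $\mathcal{D}_r$, the uniform bound on $\|\mathbb{A}_t(x)\|$ coming from Remark \ref{rem:Thmstat} (the trajectory stays in $\mathcal{H}_r$), It\^o's formula for $\langle Y_t,\Gamma Y_t\rangle^n$, and induction on $n$ combined with Gr\"onwall on the bounded time interval. The only cosmetic difference is that you work with the differential inequality $f_n'\le A_nf_n+B_nf_{n-1}$ (absorbing the quadratic-variation term into the $f_{n-1}$ contribution) while the paper integrates first and applies Gr\"onwall to the integral inequality; this is immaterial.
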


\begin{proof}
By Remark \ref{rem:Thmstat}, for $x\in\mathcal{D}_{r}$, the entire
path $(X_{t}(x))_{t\ge0}$ is contained in the compact set $\mathcal{H}_{r}$,
we get
\[
\sup_{x\in\mathcal{D}_{r}}\sup_{t\ge0}\Vert\mathbb{A}(q_{t}(x))\Vert\le\sup_{x=(q,\,p)\in\mathcal{H}_{r}}\Vert\mathbb{A}(q)\Vert=C(r)\;.
\]
Thus, there exists a constant $C(r)>0$ such that, for all $x\in\mathcal{D}_{r}$
and $t\ge0$,
\begin{equation}
\langle\mathbb{A}(q_{t}(x))Y_{t}(x),\,\Gamma Y_{t}(x)\rangle\le C(r)\langle Y_{t}(x),\,\Gamma Y_{t}(x)\rangle\;.\label{eq:lbd}
\end{equation}

Now we prove the statement by induction. First consider
the case $n=1$. Fix $x\in\mathcal{D}_{r}$. Then, by (\ref{eq:ito Y_t-1})
and (\ref{eq:lbd}), we have
\[
\mathrm{d}\langle Y_{t}(x),\,\Gamma Y_{t}(x)\rangle\le C(r)\langle Y_{t}(x),\,\Gamma Y_{t}(x)\rangle\mathrm{d}t+2\langle\Gamma Y_{t}(x),\,\mathrm{d}\widetilde{B}_{t}\rangle+M\mathrm{d}t\;.
\]
Therefore, by the Gronwall inequality we get (since $Y_{0}(x)=0$)
\[
\mathbb{E}\left[\langle Y_{t}(x),\,\Gamma Y_{t}(x)\rangle\right]\leq Mte^{C(r)t}\;\;\;\;\text{for all }t\ge0\;.
\]
This proves the initial case of the induction since
\begin{equation}
\sup_{x\in\mathcal{D}_{r}}T(x)<\infty\;.\label{eq:bddT}
\end{equation}

Next we suppose that the statement holds for $1,\,\dots,\,n-1$ and
look at the statement for $n\ge2$. By Ito's formula (cf. (\ref{eq:ito Y_t-1}))
and (\ref{eq:lbd}), there exists a constant $C(r)>0$ such that
\begin{align}
\mathrm{d}\langle Y_{t}(x),\,\Gamma Y_{t}(x)\rangle^{n}\le & \; n\langle Y_{t}(x),\,\Gamma Y_{t}(x)\rangle^{n-1}\left[\left(C(r)\langle Y_{t}(x),\,\Gamma Y_{t}(x)\rangle+M\right)\mathrm{d}t+2\langle\Gamma Y_{t}(x),\,\mathrm{d}\widetilde{B}_{t}\rangle\right]\nonumber \\
 & +\frac{M}{2}n(n-1)\langle Y_{t}(x),\,\Gamma Y_{t}(x)\rangle^{n-2}\mathrm{d}t\;,\label{eq:y^n ito}
\end{align}
and therefore we get
\begin{align*}
 \mathbb{E}\left[\langle Y_{t}(x),\,\Gamma Y_{t}(x)\rangle^{n}\right]
   \le\; &  \int_{0}^{t}\bigg( nC(r)\mathbb{E}\left[\langle Y_{s}(x),\,\Gamma Y_{s}(x)\rangle^{n}\right]+nM\mathbb{E}\left[\langle Y_{s}(x),\,\Gamma Y_{s}(x)\rangle^{n-1}\right] \\
  &+\frac{n(n-1)}{2}M\mathbb{E}\left[\langle Y_{s}(x),\,\Gamma Y_{s}(x)\rangle^{n-2}\right]\bigg)\mathrm{d}s\;.
\end{align*}
By the induction hypothesis, there exist constants $C_{1}(n,\,r),\,C_{2}(n,\,r)>0$
such that for all $t\in[0,T(x)]$ and $x\in\mathcal{D}_{r}$,
$$\mathbb{E}\left[\langle Y_{t}(x),\,\Gamma Y_{t}(x)\rangle^{n}\right]\leq C_{1}(n,\,r)\int_{0}^{t}\mathbb{E}\left[\langle Y_{s}(x),\,\Gamma Y_{s}(x)\rangle^{n}\right]\mathrm{d}s+C_{2}(n,\,r)t\;.$$
Thus, the proof is again completed by the Gronwall inequality and
(\ref{eq:bddT}).
\end{proof}
\begin{lem}
\label{lem:mom_Yt}For all $n\ge1$ and $r>0$, there exists a constant
$C(n,\,r)>0$ such that
\[
\sup_{x\in\mathcal{D}_{r}}\sup_{t:t\ge T(x)}\mathbb{E}\left[|Y_{t}(x)|^{n}\right]\leq C(n,\,r)\;.
\]
\end{lem}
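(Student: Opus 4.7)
The plan is to prove the claim by induction on $n$, combining the uniform-in-$x$ control of the initial moment at the deterministic time $t=T(x)$ provided by Lemma~\ref{lem:mom_YT(x)} with an exponential contraction on $[T(x),\infty)$ coming from Lemma~\ref{lem:drift}. Since by (\ref{eq:normGamma}) we have $|y|^{2n}\leq\xi^{-n}\langle y,\Gamma y\rangle^{n}$, it suffices to bound $\mathbb{E}[\langle Y_t(x),\Gamma Y_t(x)\rangle^{n}]$ uniformly for $x\in\mathcal{D}_r$ and $t\geq T(x)$.

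First I would fix $n\ge 1$, $r>0$, $x\in\mathcal{D}_r$, and apply Ito's formula to the process $t\mapsto\langle Y_{t},\Gamma Y_{t}\rangle^{n}$. Starting from~\eqref{eq:ito Y_t-1} and using Lemma~\ref{lem:drift}, for $t\geq T(x)$ the drift of $\langle Y_t,\Gamma Y_t\rangle$ is bounded by $-\tfrac{\xi}{2}\langle Y_t,\Gamma Y_t\rangle + M$. The quadratic variation of $\langle Y_t,\Gamma Y_t\rangle$ (computed from the $2\langle\Gamma Y_t,\mathrm{d}\widetilde{B}_t\rangle$ term and the fact that $\widetilde{B}$ is degenerate in the $q$-direction) is bounded by $C\langle Y_t,\Gamma Y_t\rangle\,\mathrm{d}t$ for some constant $C$ depending only on $\Gamma$. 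Combining these as in~\eqref{eq:y^n ito}, one obtains, for $t\geq T(x)$,
\begin{align*}
\mathrm{d}\langle Y_t,\Gamma Y_t\rangle^{n}
\leq\;& n\langle Y_t,\Gamma Y_t\rangle^{n-1}\!\left[\Bigl(-\tfrac{\xi}{2}\langle Y_t,\Gamma Y_t\rangle+M\Bigr)\mathrm{d}t+2\langle\Gamma Y_t,\mathrm{d}\widetilde{B}_t\rangle\right]\\
&+C\,n(n-1)\,\langle Y_t,\Gamma Y_t\rangle^{n-1}\,\mathrm{d}t\,.
\end{align*}
Taking expectation and localizing the martingale term (standard since the lower moments we will control via induction ensure integrability), this yields the differential inequality
\[
\frac{\mathrm{d}}{\mathrm{d}t}\mathbb{E}\bigl[\langle Y_t,\Gamma Y_t\rangle^{n}\bigr]
\leq -\tfrac{n\xi}{2}\,\mathbb{E}\bigl[\langle Y_t,\Gamma Y_t\rangle^{n}\bigr]+C'_n\,\mathbb{E}\bigl[\langle Y_t,\Gamma Y_t\rangle^{n-1}\bigr]\,,
\]
valid for $t\geq T(x)$ and uniformly for $x\in\mathcal{D}_r$.

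Next I would run the induction. The base case $n=0$ is trivial. For the inductive step, the hypothesis provides $\sup_{x\in\mathcal{D}_r}\sup_{t\geq T(x)}\mathbb{E}[\langle Y_t,\Gamma Y_t\rangle^{n-1}]\leq C(n-1,r)$, so the right-hand side above becomes $-\tfrac{n\xi}{2}\mathbb{E}[\langle Y_t,\Gamma Y_t\rangle^{n}]+\widetilde{C}(n,r)$. A direct comparison (multiplying by $\mathrm{e}^{n\xi t/2}$ and integrating from $T(x)$ to $t$) gives
\[
\mathbb{E}\bigl[\langle Y_t,\Gamma Y_t\rangle^{n}\bigr]\leq\mathrm{e}^{-\frac{n\xi}{2}(t-T(x))}\mathbb{E}\bigl[\langle Y_{T(x)},\Gamma Y_{T(x)}\rangle^{n}\bigr]+\frac{2\widetilde{C}(n,r)}{n\xi}\,.
\]
The initial value $\mathbb{E}[\langle Y_{T(x)},\Gamma Y_{T(x)}\rangle^{n}]$ is bounded uniformly over $x\in\mathcal{D}_r$ by Lemma~\ref{lem:mom_YT(x)}, closing the induction. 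Finally, the conclusion for $\mathbb{E}[|Y_t(x)|^n]$ follows from the equivalence $|y|^{2}\leq\xi^{-1}\langle y,\Gamma y\rangle$ together with the Cauchy--Schwarz-type inequality $\mathbb{E}[|Y_t|^n]\leq\mathbb{E}[|Y_t|^{2\lceil n/2\rceil}]^{1/2}\cdot\mathbb{E}[1]^{1/2}$, or more directly by applying the above argument with $n$ replaced by $\lceil n/2\rceil$.

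The main subtlety is justifying that the stochastic integral is a true martingale when passing to expectations in the Ito formula at each inductive stage; this is handled by a standard localization argument using the inductive moment bound on $\langle Y_t,\Gamma Y_t\rangle^{n-1}$ together with the fact that $|\Gamma Y_t|^2\le C\langle Y_t,\Gamma Y_t\rangle$, and by noting that on the bounded initial interval $[0,T(x)]$ Lemma~\ref{lem:mom_YT(x)} already gives all the needed moments uniformly in $x\in\mathcal{D}_r$ thanks to $\sup_{x\in\mathcal{D}_r}T(x)<\infty$.
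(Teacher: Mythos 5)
Your proposal is correct and follows essentially the same route as the paper: induction on $n$, It\^o's formula applied to $\langle Y_t,\Gamma Y_t\rangle^{n}$, the drift bound of Lemma~\ref{lem:drift} for $t\ge T(x)$, a Gronwall-type comparison, and Lemma~\ref{lem:mom_YT(x)} (with $\sup_{x\in\mathcal{D}_r}T(x)<\infty$) to control the value at $t=T(x)$. Your extra remarks on localization of the stochastic integral and on deducing odd moments from even ones are minor refinements of details the paper leaves implicit, not a different argument.
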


\begin{proof}
By the positive definiteness of the matrix $\Gamma$, it suffices
to prove that
\begin{equation}
\sup_{x\in\mathcal{D}_{r}}\sup_{t:t\ge T(x)}\mathbb{E}\left[\langle Y_{t}(x),\,\Gamma Y_{t}(x)\rangle^{n}\right]\leq C(n,\,r)\;.\label{eq:pup0}
\end{equation}
We prove this bound by mathematical induction as well. First consider
the case $n=1$. By (\ref{eq:ito Y_t-1}) and Lemma \ref{lem:drift},
we have, for all $t\geq T(x)$,
\begin{align*}
\frac{\mathrm{d}\mathbb{E}\left[\langle Y_{t}(x),\,\Gamma Y_{t}(x)\rangle\right]}{\mathrm{d}t} & \leq-\frac{\xi}{2}\mathbb{E}\left[\langle Y_{t}(x),\,\Gamma Y_{t}(x)\rangle\right]+M\;.
\end{align*}
Consequently, for all $t\geq T(x)$, by the Gronwall inequality
\[
\mathbb{E}\left[\langle Y_{t}(x),\,\Gamma Y_{t}(x)\rangle\right]\leq e^{-\frac{\xi(t-T(x))}{2}}\mathbb{E}\left[\langle Y_{T(x)}(x),\,\Gamma Y_{T(x)}(x)\rangle\right]+\frac{2M}{\xi}
\]
and hence the proof of the case $n=1$ of (\ref{eq:pup0}) is completed
by Lemma \ref{lem:mom_YT(x)}.

Next we suppose that the statement holds for $1,\,\dots,\,n-1$ and
prove the statement for $n$. By (\ref{eq:y^n ito}) and Lemma \ref{lem:drift},
we have, for all $t\geq T(x)$,
\begin{align*}
\mathrm{d}\langle Y_{t}(x),\,\Gamma Y_{t}(x)\rangle^{n}\le\; & n\langle Y_{t}(x),\,\Gamma Y_{t}(x)\rangle^{n-1}\left[\left(-\frac{\xi}{2}\langle Y_{t}(x),\,\Gamma Y_{t}(x)\rangle+M\right)\mathrm{d}t+2\langle\Gamma Y_{t},\mathrm{d}\widetilde{B}_{t}\rangle\right]\\
 & +\frac{M}{2}n(n-1)\langle Y_{t}(x),\,\Gamma Y_{t}(x)\rangle^{n-2}\mathrm{d}t\;.
\end{align*}
Hence, by the induction hypothesis, we get for all $x\in\mathcal{D}_{r}$
and $t\geq T(x)$,
\[
\frac{\textup{d}\mathbb{E}\left[\langle Y_{t}(x),\,\Gamma Y_{t}(x)\rangle^{n}\right]}{\textup{d}t}\leq-\frac{\xi n}{2}\mathbb{E}\left[\langle Y_{t}(x),\,\Gamma Y_{t}(x)\rangle^{n}\right]+C(n,\,r)\;.
\]
Therefore, the proof is completed by the Gronwall
inequality and Lemma \ref{lem:mom_YT(x)}.
\end{proof}
We conclude this subsection with the control of the supremum of the
moments of $(Y_{t}(x))_{t\ge0}$ on the time interval $[T(x),1/\epsilon^{\theta}]$.
\begin{lem}
\label{lem:mom_Yt_sup}For all $r>0$ and $n\ge1$, there exists a
constant $C(n,\,r)>0$ such that
\[
\sup_{x\in\mathcal{D}_{r}}\mathbb{E}\left[\sup_{t\in[T(x),\,1/\epsilon^{\theta}]}|Y_{t}(x)|^{n}\right]\leq\frac{C(n,\,r)}{\epsilon^{n\theta/2}}
\]
for all small enough $\epsilon>0$, where $\theta$ is a constant
introduced in Notation \ref{not:T(x)}.
\end{lem}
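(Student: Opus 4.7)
The plan is to run the Ito argument of Lemma~\ref{lem:mom_Yt} once more, but this time estimating the supremum pathwise and then controlling the resulting martingale by the Burkholder--Davis--Gundy (BDG) inequality. Let me write $Z_{t}=\langle Y_{t}(x),\Gamma Y_{t}(x)\rangle$, so that by~\eqref{eq:ito Y_t-1} and Lemma~\ref{lem:drift}, on $\{t\ge T(x)\}$ we have
\[
\mathrm{d}Z_{t}\le\left(-\tfrac{\xi}{2}Z_{t}+M\right)\mathrm{d}t+\mathrm{d}N_{t}\;,\qquad\mathrm{d}N_{t}=2\langle\Gamma Y_{t},\mathrm{d}\widetilde{B}_{t}\rangle\;,
\]
and the quadratic variation satisfies $\mathrm{d}\langle N\rangle_{t}\le C\,Z_{t}\,\mathrm{d}t$ (since $|\Gamma Y_{t}|^{2}\le\|\Gamma\|\cdot Z_{t}$ by~\eqref{eq:normGamma}). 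Fixing any integer $k\ge1$ and applying Ito to $Z_{t}^{k}$, the drift becomes $-\tfrac{k\xi}{2}Z_{t}^{k}+C(k)Z_{t}^{k-1}$, which is uniformly bounded above by some constant $C(k)>0$ for all $Z_{t}\ge0$.

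Integrating from $T(x)$ to $t\in[T(x),1/\epsilon^{\theta}]$ and taking supremum gives
\[
\sup_{t\in[T(x),1/\epsilon^{\theta}]}Z_{t}^{k}\le Z_{T(x)}^{k}+\frac{C(k)}{\epsilon^{\theta}}+\sup_{t\in[T(x),1/\epsilon^{\theta}]}\left|\int_{T(x)}^{t}k Z_{s}^{k-1}\mathrm{d}N_{s}\right|\;.
\]
Applying BDG to the last term and then Jensen's inequality, we bound its expectation by
\[
C(k)\,\mathbb{E}\!\left[\left(\int_{T(x)}^{1/\epsilon^{\theta}}Z_{s}^{2k-1}\mathrm{d}s\right)^{1/2}\right]\le C(k)\left(\int_{T(x)}^{1/\epsilon^{\theta}}\mathbb{E}\!\left[Z_{s}^{2k-1}\right]\mathrm{d}s\right)^{1/2}\le\frac{C(k,r)}{\epsilon^{\theta/2}}
\]
uniformly in $x\in\mathcal{D}_{r}$, where the last step uses Lemma~\ref{lem:mom_Yt} to control the pointwise moments. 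Combining this with Lemma~\ref{lem:mom_YT(x)} to control $\mathbb{E}[Z_{T(x)}^{k}]$, we conclude
\[
\sup_{x\in\mathcal{D}_{r}}\mathbb{E}\!\left[\sup_{t\in[T(x),1/\epsilon^{\theta}]}Z_{t}^{k}\right]\le\frac{C(k,r)}{\epsilon^{\theta}}\;.
\]

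Finally, to derive the stated bound it suffices to note that $|Y_{t}|^{n}\le\xi^{-n/2}Z_{t}^{n/2}$, so that by the above display with $k=\lceil n/2\rceil$ (and Jensen for odd $n$), one gets $\mathbb{E}[\sup_{t}|Y_{t}|^{n}]\le C(n,r)/\epsilon^{\theta/2}$, which is indeed at most $C(n,r)/\epsilon^{n\theta/2}$ for all $n\ge1$ once $\epsilon$ is small. The main obstacle is the proper BDG bookkeeping — one must pair the Ito correction $\tfrac{n(n-1)}{2}Z_{t}^{n-2}\mathrm{d}\langle Z\rangle_{t}\le C(n)Z_{t}^{n-1}\mathrm{d}t$ with the dissipation $-\tfrac{n\xi}{2}Z_{t}^{n}$ from Lemma~\ref{lem:drift} so that the drift integrand in the pathwise bound stays uniformly bounded; without the contraction property of Lemma~\ref{lem:drift}, one would only obtain an exponentially growing bound on the supremum.
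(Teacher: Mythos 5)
Your argument is correct and uses the same toolbox as the paper's proof (the Ito identity \eqref{eq:ito Y_t-1} for $\langle Y_t,\Gamma Y_t\rangle$, the dissipation of Lemma \ref{lem:drift} after time $T(x)$, the Burkholder--Davis--Gundy inequality, and Lemmas \ref{lem:mom_YT(x)} and \ref{lem:mom_Yt}), but the bookkeeping is genuinely different. The paper simply discards the nonpositive drift for $t\ge T(x)$, keeps the linear-in-$t$ term $Mt$, raises the resulting pathwise bound to the $n$-th power, and applies BDG at exponent $n$ followed by H\"older in time, which yields $\mathbb{E}\bigl[\sup_t\langle Y_t,\Gamma Y_t\rangle^n\bigr]\le C(n,r)\,\epsilon^{-n\theta}$. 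You instead apply Ito to the $k$-th power $\langle Y_t,\Gamma Y_t\rangle^{k}$, use Lemma \ref{lem:drift} to absorb both the Ito correction and the constant $M$ into the dissipative term (so the drift of the $k$-th power is uniformly bounded, as $\sup_{z\ge0}(-az^{k}+bz^{k-1})<\infty$), and then apply BDG at exponent $1$ to the weighted local martingale $\int k\langle Y_s,\Gamma Y_s\rangle^{k-1}\mathrm{d}N_s$, followed by Jensen and the pointwise moments of Lemma \ref{lem:mom_Yt}; this buys the sharper intermediate estimate $\mathbb{E}\bigl[\sup_t\langle Y_t,\Gamma Y_t\rangle^{k}\bigr]\le C(k,r)\,\epsilon^{-\theta}$ (the dominant contribution now being the bounded drift integrated over a window of length $\epsilon^{-\theta}$), at the harmless price of invoking moments of order $4k-2$ from Lemma \ref{lem:mom_Yt}. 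One small correction in your final step: for $n\ge2$ your argument gives $\mathbb{E}\bigl[\sup_t|Y_t|^{n}\bigr]\le C(n,r)\,\epsilon^{-\theta}$, not $\epsilon^{-\theta/2}$; the rate $\epsilon^{-\theta/2}$ only comes out for $n=1$ via Jensen. This is immaterial, since $\epsilon^{-\theta}\le\epsilon^{-n\theta/2}$ for $n\ge2$ and small $\epsilon$, so the stated bound follows in every case exactly as you indicate.
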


\begin{proof}
By (\ref{eq:ito Y_t-1}), for all $t\geq T(x)$, we have
\[
\langle Y_{t}(x),\,\Gamma Y_{t}(x)\rangle\leq\langle Y_{T(x)}(x),\,\Gamma Y_{T(x)}(x)\rangle+2\int_{T(x)}^{t}\langle\Gamma Y_{s}(x),\,\textup{d}\widetilde{B}_{s}\rangle+Mt
\]
and therefore by Lemma \ref{lem:mom_YT(x)}, we get 
\begin{align}
&\mathbb{E}\left[\sup_{t\in[T(x),\,1/\epsilon^{\theta}]}\langle Y_{t}(x),\,\Gamma Y_{t}(x)\rangle^{n}\right] \nonumber \\
&\leq3^{n-1}\left(C(n,r)+2^{n}\mathbb{E}\left[\sup_{t\in[T(x),\,1/\epsilon^{\theta}]}\left|\int_{T(x)}^{t}\langle\Gamma Y_{r}(x),\mathrm{\,d}\widetilde{B}_{r}\rangle\right|^{n}\right]+\left(\frac{M}{\epsilon^{\theta}}\right)^{n}\right)\;.\label{eq:bdx}
\end{align}
By the Burkholder-Davis-Gundy inequality, it holds that
\begin{equation}
\mathbb{E}\left[\sup_{t\in[T(x),\,1/\epsilon^{\theta}]}\left|\int_{T(x)}^{t}\langle\Gamma Y_{r}(x),\,\mathrm{d}\widetilde{B}_{r}\rangle\right|^{n}\right]\le C(n)\,\mathbb{E}\left[\left(\int_{T(x)}^{1/\epsilon^{\theta}}|\Gamma Y_{r}(x)|^{2}\mathrm{d}r\right)^{n/2}\right]\;.\label{bdx1}
\end{equation}
By the trivial bound $|\Gamma Y_{r}|^{2}\le ||\Gamma||^2|Y_{r}|^{2}$ and H\"older's inequality, the right-hand
side of the previous display is bounded from above by
\begin{equation}
C(n)\left(\frac{1}{\epsilon^{\theta}}\right)^{n/2-1}\mathbb{E}\left[\left(\int_{T(x)}^{1/\epsilon^{\theta}}|Y_{r}(x)|^{n}\mathrm{d}r\right)\right]\le C(n)\left(\frac{1}{\epsilon^{\theta}}\right)^{n/2}\label{bdx2}\;,
\end{equation}
where the last inequality follows from Lemma \ref{lem:mom_Yt}. Hence,
by (\ref{bdx1}) and (\ref{bdx2}), we have

\[
\mathbb{E}\left[\sup_{t\in[T(x),\,1/\epsilon^{\theta}]}\left|\int_{T(x)}^{t}\langle\Gamma Y_{r}(x),\,\mathrm{d}\widetilde{B}_{r}\rangle\right|^{n}\right]\le C(n,r)\left(\frac{1}{\epsilon^{\theta}}\right)^{n/2}\;.
\]
Inserting this to (\ref{eq:bdx}) and recalling (\ref{eq:normGamma}),
we get
\[
\mathbb{E}\left[\sup_{t\in[T(x),\,1/\epsilon^{\theta}]}|Y_{t}(x)|^{2n}\right]\leq\frac{C(n,r)}{\epsilon^{n\theta}}\;,
\]
which concludes the proof.
\end{proof}

\subsection{Quadratic Gronwall inequality}

In this section we provide a proof of a quadratic Gronwall inequality which shall be crucial in the proof of Proposition~\ref{prop:main_apprx}.

 \begin{lem}\label{lem:quadratic gronwall}
Suppose that a  non-negative function $u\in\mathcal{C}^1(\mathbb{R}_+,\mathbb{R}_+)$ satisfies
\begin{equation}\label{eq:edo u}
    \frac{\mathrm{d}u(t)}{\mathrm{d}t}\leq a-bu(t)+cu(t)^2
\end{equation}
for all $t\geq0$, where $a,b,c > 0$ are constants satisfying  $\delta:=b^2-4ac>0$. Denote by
$$\alpha=\frac{b-\sqrt{\delta}}{2c}\;,\quad\text{}\quad\beta=\frac{b+\sqrt{\delta}}{2c}$$
two solutions of quadratic equation $a-bx+cx^2=0$. Let $M>1$ and suppose that
\begin{equation}\label{eq:assumption u_0}
    u(0)\leq\frac{M\alpha+\beta}{M+1}\;.
\end{equation}
Then, for all $t\geq0$, we have
$$u(t)\leq\alpha+\frac{\sqrt{\delta}}{Mc}\mathrm{e}^{-\sqrt{\delta} t}\;.$$
\end{lem}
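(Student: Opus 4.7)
The plan is to linearise the scalar Riccati-type inequality (\ref{eq:edo u}) via the M\"obius-type change of variable
\[
\psi(t) \,:=\, \frac{u(t) - \alpha}{\beta - u(t)},
\]
which is well defined as long as $u(t) < \beta$. Since $a - bu + cu^{2} = c(u-\alpha)(u-\beta)$, differentiating $\psi$ and injecting (\ref{eq:edo u}) yields, on any interval where $u < \beta$,
\[
\psi'(t) \,=\, \frac{(\beta - \alpha)\,u'(t)}{(\beta - u(t))^{2}} \,\leq\, -c(\beta - \alpha)\,\psi(t) \,=\, -\sqrt{\delta}\,\psi(t),
\]
where the last identity uses $c(\beta - \alpha) = \sqrt{\delta}$. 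Note that this estimate is sign-free in $\psi$: we are multiplying $u'$ by the \emph{positive} factor $(\beta - \alpha)/(\beta - u)^{2}$, so the bound holds whether $\psi$ is positive or negative. The standard linear Gronwall argument (examining $(\psi\,\mathrm{e}^{\sqrt{\delta}\,t})' \leq 0$) then produces $\psi(t) \leq \psi(0)\,\mathrm{e}^{-\sqrt{\delta}\,t}$, and a direct manipulation of assumption (\ref{eq:assumption u_0}) gives the crucial bound $\psi(0) \leq 1/M$.

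The second step is to promote this into a global-in-time statement, which forces us to rule out $u$ ever reaching $\beta$. Suppose for contradiction that $t_{0} := \inf\{t \geq 0 : u(t) = \beta\}$ is finite; by continuity $u(t_{0}) = \beta$ and $u < \beta$ on $[0, t_{0})$, so the previous step applies there and yields $\psi(t) \leq \mathrm{e}^{-\sqrt{\delta}\,t}/M < 1$. Unfolding the definition of $\psi$ and solving for $u(t)$, this rewrites as
\[
u(t) \,\leq\, \frac{M\alpha + \beta\,\mathrm{e}^{-\sqrt{\delta}\,t}}{M + \mathrm{e}^{-\sqrt{\delta}\,t}}, \qquad t \in [0, t_{0}),
\]
which is a convex combination of $\alpha$ and $\beta$ with weight on $\beta$ strictly less than $1$. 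Letting $t \uparrow t_{0}$ therefore forces $u(t_{0}) < \beta$, contradicting the definition of $t_{0}$. Hence $u(t) < \beta$ for all $t \geq 0$ and the displayed inequality is valid globally.

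The announced bound is then obtained by a trivial rearrangement: subtracting $\alpha$ from both sides and using $\beta - \alpha = \sqrt{\delta}/c$ gives
\[
u(t) - \alpha \,\leq\, \frac{(\beta - \alpha)\,\mathrm{e}^{-\sqrt{\delta}\,t}}{M + \mathrm{e}^{-\sqrt{\delta}\,t}} \,\leq\, \frac{\beta - \alpha}{M}\,\mathrm{e}^{-\sqrt{\delta}\,t} \,=\, \frac{\sqrt{\delta}}{Mc}\,\mathrm{e}^{-\sqrt{\delta}\,t},
\]
as claimed. The only mildly delicate point is the bootstrap confining $u$ to $(-\infty, \beta)$ for all time: the inequality (\ref{eq:edo u}) alone permits $u$ to blow up in finite time when $u(0) > \beta$, and the quantitative hypothesis (\ref{eq:assumption u_0}) is tailored precisely so that $\psi(0) \leq 1/M < 1$, which is what keeps the trajectory trapped strictly below $\beta$.
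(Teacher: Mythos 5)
Your proof is correct, and it takes a genuinely different route from the paper. The paper argues by a case analysis on the position of $u(0)$ relative to $\alpha$: for $u(0)<\alpha$ and $\alpha<u(0)\leq\frac{M\alpha+\beta}{M+1}$ it divides the differential inequality by $c(u-\alpha)(u-\beta)$ and integrates (separation of variables), which forces tracking the sign of that product and leads to two separate contradiction/continuation arguments, while the boundary case $u(0)=\alpha$ needs an extra $\eta$-perturbation of $a$ followed by a limit $\eta\to0$. Your M\"obius substitution $\psi=\frac{u-\alpha}{\beta-u}$ linearises the Riccati inequality into $\psi'\leq-\sqrt{\delta}\,\psi$, and since you only multiply $u'$ by the positive factor $(\beta-\alpha)/(\beta-u)^2$, the resulting linear Gronwall step is sign-insensitive; this collapses all three of the paper's cases (including $u(0)=\alpha$ and $u(0)<\alpha$, where $\psi(0)\leq0\leq1/M$ trivially) into one argument. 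Your continuation step correctly rules out $u$ ever reaching $\beta$: the hypothesis $u(0)\leq\frac{M\alpha+\beta}{M+1}$ gives $\psi(0)\leq1/M$ by monotonicity of $u\mapsto\frac{u-\alpha}{\beta-u}$ on $(-\infty,\beta)$, the bound $u(t)\leq\frac{M\alpha+\beta\mathrm{e}^{-\sqrt{\delta}t}}{M+\mathrm{e}^{-\sqrt{\delta}t}}$ stays strictly below $\beta$ up to any putative first hitting time $t_0$, and the constants check out since $c(\beta-\alpha)=\sqrt{\delta}$ and $\beta-\alpha=\sqrt{\delta}/c$. What the paper's route buys is only that it stays with the most elementary explicit integration of the scalar ODE; your linearisation is shorter, avoids the sign bookkeeping and the perturbation trick, and yields the same (in fact the sharper intermediate) bound.
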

\begin{proof} The proof is divided into several cases.
\smallskip

\noindent \textbf{{[}Case 1: $u(0)<\alpha${]} } In this case, we claim that $u(t)<\alpha$ for all $t\ge 0$. Suppose not so that we can define  $$T_\alpha=\inf\{t>0:u(t)=\alpha\}<\infty\;.$$
Observe that 
$$
 a-bu(t)+cu(t)^2 = c(u(t)-\alpha)(u(t)-\beta)>0$$
for all $t\in[0,T_\alpha)$. Hence, 
It follows from~\eqref{eq:edo u} that, for all $t\in[0,T_\alpha)$,
$$\frac{1}{c(u(t)-\alpha)(u(t)-\beta)}\, \frac{\mathrm{d}u(t)}{\mathrm{d}t} \leq1\;.$$
 Integrating over $[0,t]\subset [0,T_\alpha)$ ensures that
$$\log\left(\frac{\beta-u(t)}{\alpha-u(t)}\right)\leq\log\left(\frac{\beta-u(0)}{\alpha-u(0)}\right)+\sqrt{\delta} t\;.$$
As a result, for all $t\in[0,T_\alpha)$,

$$u(t)\leq\alpha-\frac{\sqrt{\delta}}{c}\frac{1}{\left(\frac{\beta-u(0)}{\alpha-u(0)}\right)\mathrm{e}^{\sqrt{\delta} t}-1}\;.$$
Now we get a contradiction by letting $t\nearrow T_\alpha<\infty$ which yields
$$\alpha\leq\alpha-\frac{\sqrt{\delta}}{c}\frac{1}{\left(\frac{\beta-u(0)}{\alpha-u(0)}\right)\mathrm{e}^{\sqrt{\delta} T_\alpha}-1}<\alpha\;.$$ 

\smallskip
\noindent \textbf{{[}Case 2: $u(0)=\alpha${]} }In this case, let us take $\eta>0$  small enough so that $\delta_\eta:=b^2-4(a+\eta)c>0$. By~\eqref{eq:edo u}, for all $t\geq0$,
$$\frac{\mathrm{d}u(t)}{\mathrm{d}t}\leq a+\eta-bu(t)+cu(t)^2\;.$$
Then
$$u(0)=\alpha=\frac{b-\sqrt{\delta}}{2c}<\frac{b-\sqrt{\delta_\eta}}{2c}\;.$$
As a result, the same reasoning as in Case 1 ensures that for all $t\geq0$ and $\eta>0$,
$$u(t)\leq\frac{b-\sqrt{\delta_\eta}}{2c}\;.$$
Taking $\eta\rightarrow0$ yields that
$u(t)\leq\alpha$ for all $t\geq0$.

\smallskip
\noindent \textbf{{[}Case 3: $u(0)>\alpha${]} }Let us now finally consider the case
$\alpha<u(0)\leq \frac{M\alpha+\beta}{M+1}$ for a constant $M>1$. Note here that, since $\beta>\alpha$, we have
$$\alpha<\frac{\beta+M\alpha}{M+1}<\beta$$
and hence we have $u(0)\in(\alpha,\beta)$. Let us now define
$$T_\beta=\inf\{t>0:u(t)=\beta\}\;.$$
Then, for all $t\in[0,T_\alpha\land T_\beta)$, we have 
$$
 a-bu(t)+cu(t)^2 = c(u(t)-\alpha)(u(t)-\beta)<0$$
 and therefore, integrating the equation as in Case 1, we get 
$$\log\left(\frac{\beta-u(t)}{u(t)-\alpha}\right)\geq\log\left(\frac{\beta-u(0)}{u(0)-\alpha}\right)+\sqrt{\delta} t\;.$$
As a result, for all $t\in[0,T_\alpha\land T_\beta)$,
\begin{equation}\label{qgw1}
u(t)\leq\alpha+\frac{\sqrt{\delta}}{c}\frac{1}{\left(\frac{\beta-u(0)}{u(0)-\alpha}\right)\mathrm{e}^{\sqrt{\delta} t}+1}
\leq\alpha+\frac{\sqrt{\delta}}{Mc}\mathrm{e}^{-\sqrt{\delta} t}\;,
\end{equation}
where the second inequality follows from \eqref{eq:assumption u_0}. Since $\alpha+\frac{\sqrt{\delta}}{Mc}=\alpha+\frac{\beta-\alpha}{M}<\beta$
the previous bound itself implies that $T_\beta=\infty$, and hence \eqref{qgw1} holds for $t\in [0,\,T_\alpha)$. Since we have $u(t)\le \alpha$ for all $t\in [T_\alpha,\,\infty)$ by Case 2, we are done. 
\end{proof}

\begin{rem}
Unlike to the ordinary Gronwall's inequality, the function $u(t)$ may blow-up when we start from $u(0)>\beta$. 
\end{rem}

\subsection{Proof of Proposition \ref{prop:main_apprx}}

We are now ready to prove Proposition \ref{prop:main_apprx} which
is the most computational part of the current article.
\begin{proof}[Proof of Proposition \ref{prop:main_apprx}]
The proof is divided into several steps. Let us fix $r>0$ and $x\in\mathcal{D}_{r}$.
We shall omit the dependency on $x$ in this proof; for instance,
we write $X_{t}^{\epsilon}$ and $\mathbb{A}_{t}$ instead of $X_{t}^{\epsilon}(x)$
and $\mathbb{A}_{t}(x)$, respectively.

\smallskip{}
\noindent \textbf{{[}Step 1{]} }Preliminary analysis

\noindent Define the process $(W_{t}^{\epsilon})_{t\ge0}$ as
\begin{equation}
W_{t}^{\epsilon}:=\frac{X_{t}^{\epsilon}-Z_{t}^{\epsilon}}{\sqrt{2\epsilon}}=\frac{X_{t}^{\epsilon}-X_{t}}{\sqrt{2\epsilon}}-Y_{t}\;.\label{eq:wte}
\end{equation}
In view of (\ref{eq:normGamma}), it suffices to show that, for all
$t\in[T(x),\,\frac{1}{\epsilon^{\theta}}]$,
\begin{equation}
\mathbb{E}\left[\langle W_{t}^{\epsilon},\,\Gamma W_{t}^{\epsilon}\rangle^{n}\right]\leq C(n,\,r)\left[\frac{e^{-c_{1}nt}}{\epsilon^{n\nu}}+\epsilon^{n(1-2\theta)}\right]\;.\label{eq:obj0}
\end{equation}

By (\ref{eq:uld}), (\ref{eq:uld0}), and (\ref{eq:Y_t}) (since $(X_{t}^{\epsilon})_{t\geq0}$
and $(Y_{t})_{t\geq0}$ share the same Brownian motion), we can write
\[
\frac{\mathrm{d}\langle W_{t}^{\epsilon},\,\Gamma W_{t}^{\epsilon}\rangle}{\mathrm{d}t}=2\langle W_{t}^{\epsilon},\,\Gamma\mathbb{A}_{t}W_{t}^{\epsilon}\rangle-2\langle W_{t}^{\epsilon},\,\Gamma V_{t}^{\epsilon}\rangle\;,
\]
where
\begin{equation}
V_{t}^{\epsilon}=\frac{1}{\sqrt{2\epsilon}}\begin{pmatrix}0\\
F(q_{t}^{\epsilon})-F(q_{t})-\textup{D}F(q_{t})(q_{t}^{\epsilon}-q_{t})
\end{pmatrix}\;.\label{eq:V_t^eps}
\end{equation}
Hence, by Lemma \ref{lem:drift} and the positive-definiteness of
$\Gamma$, we have, for $t\ge T(x)$,
\begin{align}
\frac{\mathrm{d}\langle W_{t}^{\epsilon},\,\Gamma W_{t}^{\epsilon}\rangle}{\mathrm{d}t} & \leq-\frac{\xi}{2}\langle W_{t}^{\epsilon},\,\Gamma W_{t}^{\epsilon}\rangle-2\langle W_{t}^{\epsilon},\,\Gamma V_{t}^{\epsilon}\rangle\le-c\langle W_{t}^{\epsilon},\,\Gamma W_{t}^{\epsilon}\rangle+C|V_{t}^{\epsilon}|^{2}\label{eq:dW1}
\end{align}
for small enough constant $c>0$ and large enough constant $C>0$.\smallskip{}

\noindent \textbf{{[}Step 2{]} }Control of $|V_{t}^{\epsilon}|^{2}$

\noindent In view of (\ref{eq:dW1}), the next step is to control
$|V_{t}^{\epsilon}|^{2}$ . To that end, define the following event
(cf. Notation \ref{not_kappa})
\begin{equation}
\mathcal{A}_{\epsilon}=\mathcal{A}_{\epsilon}(x):=\left\{ \sup_{s\in[0,\,1/\epsilon^{\theta}]}|X_{s}^{\epsilon}|^{2}\leq\kappa_{0}H(x)+1\right\} \;.\label{eq:A_eps}
\end{equation}
Write
\[
I_{\epsilon}:=[T(x),\,1/\epsilon^{\theta}]
\]
and fix $t\in I_{\epsilon}$. As observed in (\ref{eq:qtbd}), we
have $|q_{t}|\leq\delta$. On the other hand, under the event $\mathcal{A}_{\epsilon}$,
we have $|q_{t}^{\epsilon}|\leq\sqrt{\kappa_{0}H(x)+1}$. Therefore,
by the mean-value theorem (since $F\in\mathcal{C}^{2}(\mathbb{R}^{d},\mathbb{R}^{d})$)
and by (\ref{eq:V_t^eps}), we have
\begin{equation}
|V_{t}^{\epsilon}|\leq\frac{C(r)}{\epsilon^{1/2}}|q_{t}^{\epsilon}-q_{t}|^{2}\label{eq:bdV}
\end{equation}
under the event $\mathcal{A}_{\epsilon}$, where
\[
C(r)=~\sup_{x\in\mathcal{D}_{r}}\,\sup_{z\in\mathbb{R}^{d}:\,|z|\leq\delta+\sqrt{\kappa_{0}H(x)+1}}|D^{2}F(z)|\;.
\]
By definition of $W_{t}^{\epsilon}$ (cf (\ref{eq:wte})), we have
\[
|q_{t}^{\epsilon}-q_{t}|^{2}\le|X_{t}^{\epsilon}-X_{t}|^{2}=2\epsilon|Y_{t}+W_{t}^{\epsilon}|^{2}\le4\epsilon\left[|Y_{t}|^{2}+|W_{t}^{\epsilon}|^{2}\right]\;.
\]
Combining this with (\ref{eq:bdV}) and applying (\ref{eq:normGamma}),
we get
\begin{align}
|V_{t}^{\epsilon}| & \leq C(r)\epsilon^{1/2}\left(\langle W_{t}^{\epsilon},\,\Gamma W_{t}^{\epsilon}\rangle+|Y_{t}|^{2}\right)\;.\label{eq:bdstep2}
\end{align}
We remind that this inequality holds under the event $\mathcal{A}_{\epsilon}$
and for $t\in I_{\epsilon}$.

\smallskip{}
\noindent \textbf{{[}Step 3{]} }Control of $\langle W_{t}^{\epsilon},\,\Gamma W_{t}^{\epsilon}\rangle$

\noindent In this step, we always put ourself under the event $\mathcal{A}_{\epsilon}$
and our aim is to provide a control on $\langle W_{t}^{\epsilon},\,\Gamma W_{t}^{\epsilon}\rangle$.
By (\ref{eq:dW1}) and (\ref{eq:bdstep2}), we get, for $t\in I_{\epsilon}$,
\begin{align*}
\frac{\mathrm{d}\langle W_{t}^{\epsilon},\,\Gamma W_{t}^{\epsilon}\rangle}{\mathrm{d}t} & \leq-c_0\langle W_{t}^{\epsilon},\,\Gamma W_{t}^{\epsilon}\rangle+C_0(r)\epsilon\left[\langle W_{t}^{\epsilon},\,\Gamma W_{t}^{\epsilon}\rangle^{2}+|Y_{t}|^{4}\right]\;
\end{align*}
for some constants $c_0$, $C_0(r)>0$. In the remainder of the proof, the constant $c_0$ and $C_0(r)$ always refer to the constants appearing in this bound. 

Let us now apply the quadratic Gronwall inequality stated in Lemma~\ref{lem:quadratic gronwall} to the function $t\geq0\mapsto\langle W_{t+T_x}^{\epsilon},\,\Gamma W_{t+T_x}^{\epsilon}\rangle$. In order to do that, write
\begin{equation}\label{del_epsc}
\delta_\epsilon:=c_0^2-4C_0(r)^2\epsilon^2\sup_{s\in I_{\epsilon}}|Y_{s}|^{4}\;.
\end{equation}
Then, we define as in Lemma~\ref{lem:quadratic gronwall}, 
$$\alpha_\epsilon=\frac{c_0-\sqrt{\delta_\epsilon}}{2C_0(r)\epsilon}\;,\quad\text{}\quad\beta_\epsilon=\frac{c_0+\sqrt{\delta_\epsilon}}{2C_0(r)\epsilon}\;.$$
Note that we shall later condition on the event $\delta_\epsilon>0$ and therefore we can think of $\alpha_\epsilon$ and $\beta_\epsilon$ as real numbers. 

Let us now take $\nu\in(0,1)$ and define, for $\epsilon$ small enough, 
$$M_\epsilon:=\epsilon^{\nu-1}>1\;.$$ 
It follows then from Lemma~\ref{lem:quadratic gronwall} that under the
event $\mathcal{B}_{\epsilon}$ defined by
\begin{equation}
\mathcal{B}_{\epsilon}=\mathcal{B}_{\epsilon}(x):=\left\{\delta_\epsilon>\frac{c_0^2}{2},\langle W_{T(x)}^{\epsilon},\,\Gamma W_{T(x)}^{\epsilon}\rangle\leq\frac{\beta_\epsilon+M_\epsilon\alpha_\epsilon}{M_\epsilon+1}\right\} \;,\label{eq:perov_ass}
\end{equation}
one has that, for all $t\in I_{\epsilon}$,
\begin{align*}
\langle W_{t}^{\epsilon},\,\Gamma W_{t}^{\epsilon}\rangle & \leq\alpha_\epsilon+\frac{\sqrt{\delta_\epsilon}}{M_\epsilon C_0(r)\epsilon}\mathrm{e}^{-\sqrt{\delta_\epsilon} (t-T_x)} \leq\alpha_\epsilon+\frac{c_0\mathrm{e}^{c_0 T_x/\sqrt{2}}}{\epsilon^\nu C_0(r)}\mathrm{e}^{-c_0t/\sqrt{2}}\;,
\end{align*}
where the last inequality follows from the inequality $c_0^2/2\leq\delta_\epsilon\leq c_0^2$. Therefore, we get 
\begin{align*}
\langle W_{t}^{\epsilon},\,\Gamma W_{t}^{\epsilon}\rangle^{n} & \leq2^{n-1}\left(\alpha_\epsilon^n+C(n,r)\frac{\mathrm{e}^{-nc_0t/\sqrt{2}}}{\epsilon^{n\nu}}\right)\;.
\end{align*}
Moreover, by an elementary inequality $1-\sqrt{1-x}\le x$ for $x\in [0,\,1]$, 
\begin{align*}
\alpha_\epsilon^n&=\frac{c^n}{(2C_0(r)\epsilon)^n}\left(1-\sqrt{1-\frac{4C_0(r)^2\epsilon^2\sup_{s\in I_{\epsilon}}|Y_{s}|^{4}}{c_0^2}}\right)^n\\
&\leq\frac{c_0^n}{(2C_0(r)\epsilon)^n}\left(\frac{4C_0(r)^2\epsilon^2\sup_{s\in I_{\epsilon}}|Y_{s}|^{4}}{c_0^2}\right)^n
\leq C(n,r)\epsilon^n\sup_{s\in I_{\epsilon}}|Y_{s}|^{4n}\;.
\end{align*}
Consequently, for $t\in I_{\epsilon}$, by Lemma \ref{lem:mom_Yt_sup},
we can conclude that
\begin{equation}
\mathbb{E}\left[\langle W_{t}^{\epsilon},\,\Gamma W_{t}^{\epsilon}\rangle^{n}\mathbf{1}_{\mathcal{A}_{\epsilon}\cap\mathcal{B}_{\epsilon}}\right]\leq C(n,r)\left(\epsilon^{n(1-2\theta)}+\frac{\mathrm{e}^{-nc_0t/\sqrt{2}}}{\epsilon^{n\nu}}\right)\;.\label{eq:bdWt_AB}
\end{equation}

This allows us to control $\langle W_{t}^{\epsilon},\,\Gamma W_{t}^{\epsilon}\rangle^{n}$
in (\ref{eq:obj0}) on the event $\mathcal{A}_{\epsilon}\cap\mathcal{B}_{\epsilon}$.
It remains now to provide a control on the event $\mathcal{A}_{\epsilon}^{c}\cup\mathcal{B}_{\epsilon}^{c}$.

\smallskip{}
\noindent \textbf{{[}Step 4{]} }Control of $\langle W_{t}^{\epsilon},\,\Gamma W_{t}^{\epsilon}\rangle$
on the event $\mathcal{A}_{\epsilon}^{c}\cup\mathcal{B}_{\epsilon}^{c}$.

\noindent By the Cauchy-Schwarz inequality and (\ref{eq:normGamma}),
we have
\begin{equation}
\mathbb{E}\left[\langle W_{t}^{\epsilon},\,\Gamma W_{t}^{\epsilon}\rangle^{n}\mathbf{1}_{\mathcal{A}_{\epsilon}^{c}\cup\mathcal{B}_{\epsilon}^{c}}\right]^{2}\leq C(n)\mathbb{E}\left[|W_{t}^{\epsilon}|^{4n}\right]\mathbb{P}(\mathcal{A}_{\epsilon}^{c}\cup\mathcal{B}_{\epsilon}^{c})\;.\label{eq:bdWt_AB-c}
\end{equation}
By Proposition \ref{prop:mom_est}, Theorem \ref{thm:stability},
and Lemma \ref{lem:mom_Yt}, for $t\in I_{\epsilon}$ and $x\in\mathcal{D}_{r}$,
\[
\mathbb{E}\left[|W_{t}^{\epsilon}|^{4n}\right]\leq\frac{3^{4n-1}}{(2\epsilon)^{2n}}\left(\mathbb{E}\left[|X_{t}^{\epsilon}|^{4n}\right]+|X_{t}|^{4n}+(2\epsilon)^{2n}\mathbb{E}\left[|Y_{t}|^{4n}\right]\right)\le\frac{C(n,\,r)}{\epsilon^{2n}}\;,
\]
and inserting this to (\ref{eq:bdWt_AB-c}) yields
\[
\mathbb{E}\left[\langle W_{t}^{\epsilon},\,\Gamma W_{t}^{\epsilon}\rangle^{n}\mathbf{1}_{\mathcal{A}_{\epsilon}^{c}\cup\mathcal{B}_{\epsilon}^{c}}\right]\leq\frac{C(n,\,r)}{\epsilon^{n}}\mathbb{P}(\mathcal{A}_{\epsilon}^{c}\cup\mathcal{B}_{\epsilon}^{c})^{1/2}\;.
\]
Therefore, it suffices to prove that 
\begin{equation}
\mathbb{P}(\mathcal{A}_{\epsilon}^{c})\le C(n,\,r)\epsilon^{4n(1-\theta)}\;\;\;\text{and}\;\;\;\mathbb{P}(\mathcal{B}_{\epsilon}^{c})\le C(n,\,r)\epsilon^{4n(1-\theta)}\label{eq:obj1}
\end{equation}
to complete the proof of (\ref{eq:obj0}). 

\smallskip{}
\noindent \textbf{{[}Step 5{]} }Estimate of $\mathbb{P}(\mathcal{A}_{\epsilon}^{c})$

\noindent By Ito's formula and similar computation   carried
out in the proof of Proposition \ref{prop:mom_est}, we get
\[
H(X_{t}^{\epsilon})\leq H(x)+\sqrt{2\epsilon}\int_{0}^{t}\left\langle p_{s}^{\epsilon}+\frac{\gamma-\lambda}{2}q_{s}^{\epsilon},\,\mathrm{d}B_{s}\right\rangle +d\epsilon t\;.
\]
Thus, by Lemma \ref{lem_Hquad}, for $t\in[0,1/\epsilon^{\theta}]$,
we have
\[
|X_{t}^{\epsilon}|^{2}\leq\kappa_{0}H(x)+\kappa_{0}\sqrt{2\epsilon}\int_{0}^{t}\left\langle p_{s}^{\epsilon}+\frac{\gamma-\lambda}{2}q_{s}^{\epsilon},\,\mathrm{d}B_{s}\right\rangle +\kappa_{0}d\epsilon^{1-\theta}\;.
\]
Therefore, by definition (\ref{eq:A_eps}), for $\epsilon$ small
enough such that $\kappa_{0}d\epsilon^{1-\theta}\leq\frac{1}{2}$,
we have
\[
\mathbb{P}(\mathcal{A}_{\epsilon}^{c})\leq\mathbb{P}\left(\kappa_{0}\sqrt{2\epsilon}\sup_{t\in[0,1/\epsilon^{\theta}]}\left|\int_{0}^{t}\left\langle p_{s}^{\epsilon}+\frac{\gamma-\lambda}{2}q_{s}^{\epsilon},\,\mathrm{d}B_{s}\right\rangle \right|>\frac{1}{2}\right)\;.
\]
By the Markov inequality, the Burkholder-Davis-Gundy inequality and
the H\"older inequality, we can bound the probability at the right-hand
side by
\begin{align*}
 & C(n)\epsilon^{4n}\,\mathbb{E}\left[\sup_{t\in[0,1/\epsilon^{\theta}]}\left|\int_{0}^{t}\left\langle p_{s}^{\epsilon}+\frac{\gamma-\lambda}{2}q_{s}^{\epsilon},\,\mathrm{d}B_{s}\right\rangle \right|^{8n}\right]\\
 & \leq C(n)\epsilon^{4n}\,\mathbb{E}\left[\left(\int_{0}^{1/\epsilon^{\theta}}\left|p_{s}^{\epsilon}+\frac{\gamma-\lambda}{2}q_{s}^{\epsilon}\right|^{2}\mathrm{d}s\right)^{4n}\right]\\
 & \leq C(n)\epsilon^{4n}\,\mathbb{E}\left[\left(\int_{0}^{1/\epsilon^{\theta}}\left|p_{s}^{\epsilon}+\frac{\gamma-\lambda}{2}q_{s}^{\epsilon}\right|^{8n}\mathrm{d}s\right)\left(\frac{1}{\epsilon^{\theta}}\right)^{4n-1}\right]\;.
\end{align*}
Since $|p_{s}^{\epsilon}+\frac{\gamma-\lambda}{2}q_{s}^{\epsilon}|^{2}\leq C|X_{s}^{\epsilon}|^{2}$
for some large enough constant $C>0$, we can conclude from the previous
bound and Proposition \ref{prop:mom_est} that
\begin{equation}
\mathbb{P}(\mathcal{A}_{\epsilon}^{c})\le C(n,\,r)\epsilon^{4n(1-\theta)}\;.\label{eq:obj1-4}
\end{equation}
This completes the proof of the first inequality in (\ref{eq:obj1}).

\smallskip{}
\noindent \textbf{{[}Step 6{]} }Estimate of $\mathbb{P}(\mathcal{B}_{\epsilon}^{c})$

\noindent In view of the definition (\ref{eq:perov_ass}) of $\mathcal{B}_{\epsilon}$
and (\ref{eq:normGamma}), it suffices to prove that, for all $c(r)>0$,
there exists a constant $C(n,\,r)>0$ such that
\begin{equation}
\mathbb{P}\left[\delta_\epsilon\leq\frac{c_0^2}{2}\right]\le C(n,\,r)\epsilon^{4n(1-\theta)}\;\;\;\;\text{and\;\;\;\;}\mathbb{P}\left[\langle W_{T(x)}^{\epsilon},\,\Gamma W_{T(x)}^{\epsilon}\rangle>\frac{\beta_\epsilon+M_\epsilon\alpha_\epsilon}{M_\epsilon+1}\right]\le C(n,\,r)\epsilon^{4n(1-\theta)}\;\label{eq:obj2}
\end{equation}
By definition \eqref{del_epsc} of $\delta_\epsilon$ and by  Lemma~\ref{lem:mom_Yt_sup}, we have
\begin{align*}
\mathbb{P}\left[\delta_\epsilon\leq\frac{c_0^2}{2}\right]&\leq\mathbb{P}\left[\epsilon^2\sup_{s\in I_{\epsilon}}|Y_{s}|^{4}\geq\frac{c_0^2}{8C_0(r)^2}\right]\\
&\leq C(n,r)\epsilon^{4n}\mathbb{E}\left[\sup_{s\in I_{\epsilon}}|Y_{s}|^{8n}\right]\\
&\leq C(n,r)\epsilon^{4n(1-\theta)}.
\end{align*}

Regarding the second estimate in \eqref{eq:obj2}, by (\ref{eq:obj1-4}) it is enough to control
$$\mathbb{P}\left[\left\{\langle W_{T(x)}^{\epsilon},\,\Gamma W_{T(x)}^{\epsilon}\rangle>\frac{\beta_\epsilon+M_\epsilon\alpha_\epsilon}{M_\epsilon+1}\right\}\cap \mathcal{A}_\epsilon\right].$$
Moreover, since $\beta_\epsilon\geq\frac{c}{2C(r)\epsilon}$ and using the Markov inequality,
\begin{align}
&\mathbb{P}\left[\left\{\langle W_{T(x)}^{\epsilon},\,\Gamma W_{T(x)}^{\epsilon}\rangle>\frac{\beta_\epsilon+M_\epsilon\alpha_\epsilon}{M_\epsilon+1}\right\}\cap \mathcal{A}_\epsilon\right]\nonumber\\
&\le\mathbb{P}\left[\left\{\langle W_{T(x)}^{\epsilon},\,\Gamma W_{T(x)}^{\epsilon}\rangle>\frac{c_0}{2C_0(r)\epsilon(M_\epsilon+1)}\right\}\cap \mathcal{A}_\epsilon\right]\nonumber\\
&\le C(N,r)(\epsilon M_\epsilon+\epsilon)^N\mathbb{E}\left[\langle W_{T(x)}^{\epsilon},\,\Gamma W_{T(x)}^{\epsilon}\rangle^N\mathbf{1}_{\mathcal{A}_\epsilon}\right],\label{eq:upperbound markov ineq}
\end{align}
where we are free to take $N\geq1$ large enough such that $\nu N\geq 4n(1-\theta)$.

Next step is to control the expectation appearing in the right-hand
side of the above inequality. By (\ref{eq:normGamma}) and by the
definition (\ref{eq:wte}) of $W_{t}^{\epsilon}$, we have
\begin{equation}
\langle W_{T(x)}^{\epsilon},\,\Gamma W_{T(x)}^{\epsilon}\rangle^{N}\leq C(N)\left[\frac{1}{\epsilon^{N}}|X_{T(x)}^{\epsilon}-X_{T(x)}|^{2N}+|Y_{T(x)}|^{2N}\right]\;.\label{eq:bdWXW}
\end{equation}
By definition (\ref{eq:A_eps}) of $\mathcal{A}_{\epsilon}$ and Theorem
\ref{thm:stability}, we can find a large enough $R(r)>0$ such that,
for all $x\in\mathcal{D}_{r}$ we have
\begin{equation}
|X_{t}^{\epsilon}|,\,|X_{t}|\le R(r)\;\;\;\text{for all }t\in[0,\,T(x)]\label{eq:bdXX}
\end{equation}
under $\mathcal{A}_{\epsilon}$. Hence, by (\ref{eq:uld0_2}), (\ref{eq:uld_2}) and the standard argument
based on Gronwall's lemma along with (\ref{eq:bdXX}), we obtain
\begin{equation}
|X_{T(x)}^{\epsilon}-X_{T(x)}|^{2N}\le C(N,\,r)\epsilon^{N}\sup_{s\in[0,\,T(x)]}|B_{s}|^{2N}\label{eq:estA}
\end{equation}
under $\mathcal{A}^{\epsilon}$. Inserting this to (\ref{eq:bdWXW})
and applying Lemma \ref{lem:mom_Yt}, we get
\begin{equation}
\mathbb{E}\left[\langle W_{T(x)}^{\epsilon},\,\Gamma W_{T(x)}^{\epsilon}\rangle^{N}\mathbf{1}_{\mathcal{A}_{\epsilon}}\right]\le C(N,\,r)\;.\label{eq:eWZW}
\end{equation}
Reinjecting into~\eqref{eq:upperbound markov ineq} ensures that
$$\mathbb{P}\left[\left\{\langle W_{T(x)}^{\epsilon},\,\Gamma W_{T(x)}^{\epsilon}\rangle>\frac{\beta_\epsilon+M_\epsilon\alpha_\epsilon}{M_\epsilon+1}\right\}\cap \mathcal{A}_\epsilon\right]\leq C(N,r)\epsilon^{\nu N}\;.$$
This completes the proof since we can take $N$ so that $\nu N\geq 4n(1-\theta)$.
\end{proof}
\section{\label{sec6}Covariance Matrix of the Gaussian approximation }

Denote by $\Sigma_{t}(x)$ the covariance matrix of the Gaussian random
vector $(Y_{t}(x))_{t\ge0}$ (cf. (\ref{eq:Y_t})). In this section, we investigate
the behavior of $\Sigma_{t}(x)$ in the regime $t\rightarrow\infty$
and $t\rightarrow0$. In the regime $t\rightarrow\infty$, we prove
that $\Sigma_{t}(x)$ converges to $\Sigma$ defined in (\ref{eq:Sigma}).
On the other hand, it is clear that $\Sigma_{t}(x)\rightarrow\mathbb{O}_{2d}$
as $t\rightarrow0$. We investigate the speed of these two convergences.
Using this convergence analysis we will be able to provide estimates
on the total variation distance $\textup{d}_{\textup{TV}}(Z_{t}^{\epsilon}(x),\,Z_{t}^{\epsilon}(y))$
for $x,\,y\in\mathbb{R}^{2d}$ when $t\rightarrow0$ or $t\rightarrow\infty$.

\subsection{Covariance matrix of $(Y_{t})_{t\ge0}$}

We first show that the covariance matrix $\Sigma_{t}(x)$ satisfies
an ODE. Using this ODE we will then provide short and long-time asymptotics
of $\Sigma_{t}(x)$.
\begin{lem}
\label{lem:Sigma_t}The covariance matrix $\Sigma_{t}(x)$ satisfies
\[
\frac{\mathrm{d}\Sigma_{t}(x)}{\mathrm{d}t}=\mathbb{J}+\mathbb{A}_{t}(x)\Sigma_{t}(x)+\Sigma_{t}(x)\mathbb{A}_{t}(x)^{\dagger}
\]
where $\mathbb{J}$ is the matrix defined in (\ref{eq:J}).
\end{lem}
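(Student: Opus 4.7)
The plan is to exploit the fact that, for fixed starting point $x$, the process $(q_t(x))_{t\geq 0}$ is deterministic, so the coefficient matrix $\mathbb{A}_t(x) = \mathbb{A}(q_t(x))$ in the SDE~\eqref{eq:Y_t} is a (deterministic) time-dependent matrix. Consequently $(Y_t(x))_{t\geq 0}$ is a linear SDE driven by $\widetilde{B}_t$ with zero initial condition, hence it is a centered Gaussian process, and $\Sigma_t(x) = \mathbb{E}[Y_t(x) Y_t(x)^{\dagger}]$.

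The computation I would carry out is a direct application of Itô's formula to the matrix-valued process $Y_t Y_t^{\dagger}$. Writing out
\[
\mathrm{d}(Y_t Y_t^{\dagger}) \;=\; (\mathrm{d}Y_t)\, Y_t^{\dagger} \;+\; Y_t \,(\mathrm{d}Y_t)^{\dagger} \;+\; \mathrm{d}\langle Y, Y^{\dagger}\rangle_t,
\]
and substituting $\mathrm{d}Y_t = \mathbb{A}_t(x) Y_t \,\mathrm{d}t + \mathrm{d}\widetilde{B}_t$ gives the drift contribution $\bigl[\mathbb{A}_t(x) Y_t Y_t^{\dagger} + Y_t Y_t^{\dagger} \mathbb{A}_t(x)^{\dagger}\bigr]\mathrm{d}t$ together with martingale pieces $\mathrm{d}\widetilde{B}_t Y_t^{\dagger} + Y_t \,\mathrm{d}\widetilde{B}_t^{\dagger}$. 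The quadratic covariation term is determined by the structure $\widetilde{B}_t = (0,B_t)$ from~\eqref{eq:B_t_tilde}, which immediately gives $\mathrm{d}\langle \widetilde{B},\widetilde{B}^{\dagger}\rangle_t = \mathbb{J}\,\mathrm{d}t$ where $\mathbb{J}$ is the block matrix defined in~\eqref{eq:J}.

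Taking expectations on both sides kills the two stochastic integrals (after a standard localization argument justified by the linear growth of $\mathbb{A}_t(x)$ on the compact trajectory $(q_t(x))_{t\geq 0}$, together with the moment bounds of Lemma~\ref{lem:mom_Yt} that ensure $\mathbb{E}[|Y_t|^2]<\infty$ locally in $t$). This yields the announced ODE
\[
\frac{\mathrm{d}\Sigma_t(x)}{\mathrm{d}t} \;=\; \mathbb{A}_t(x)\,\Sigma_t(x) \;+\; \Sigma_t(x)\,\mathbb{A}_t(x)^{\dagger} \;+\; \mathbb{J}.
\]

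There is no real obstacle here: the only subtle point is the martingale-vanishing step, for which the moment estimates already established in Section~\ref{sec5} are more than enough. The rest is an entry-by-entry Itô computation that I would condense into one display.
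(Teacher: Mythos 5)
Your argument is correct, and it takes a different route from the paper. The paper writes the solution of \eqref{eq:Y_t} explicitly by variation of constants, $Y_{t}(x)=\Pi_{t}^{-1}\int_{0}^{t}\Pi_{s}\,\mathrm{d}\widetilde{B}_{s}$ with $\dot{\Pi}_{t}=-\Pi_{t}\mathbb{A}_{t}(x)$, reads off the covariance as $\Sigma_{t}(x)=\Pi_{t}^{-1}\bigl(\int_{0}^{t}\Pi_{s}\mathbb{J}\Pi_{s}^{\dagger}\mathrm{d}s\bigr)(\Pi_{t}^{-1})^{\dagger}$ via the It\^o isometry, and then differentiates this deterministic expression; no martingale or integrability issue ever arises, and one gets the explicit representation of $\Sigma_{t}$ as a by-product. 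You instead apply It\^o's formula to $Y_{t}Y_{t}^{\dagger}$, identify the quadratic-variation contribution $\mathrm{d}\langle\widetilde{B},\widetilde{B}^{\dagger}\rangle_{t}=\mathbb{J}\,\mathrm{d}t$ from \eqref{eq:B_t_tilde}, and take expectations; this is more direct and avoids introducing $\Pi_{t}$ and its inverse, at the cost of the martingale-vanishing step. That step is indeed harmless here: $\mathbb{A}_{t}(x)$ is deterministic and uniformly bounded along the trajectory (Remark \ref{rem:Thmstat}), so $Y_{t}(x)$ is a centered Gaussian process with locally bounded second moments (alternatively Lemmas \ref{lem:mom_YT(x)} and \ref{lem:mom_Yt}, which are established before this point, cover $[0,T(x)]$ and $[T(x),\infty)$ respectively), and the stochastic integrals are genuine martingales. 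Also note your identification $\Sigma_{t}(x)=\mathbb{E}[Y_{t}Y_{t}^{\dagger}]$ tacitly uses that $Y_{t}$ is centered, which follows since the mean solves $\dot{m}_{t}=\mathbb{A}_{t}(x)m_{t}$ with $m_{0}=0$; this is worth one line but is not a gap.
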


\begin{proof}
Let us denote by $(\Pi_{t})_{t\geq0}=(\Pi_{t}(x))_{t\geq0}$ the solution
of the matrix differential equation
\begin{equation}
\dot{\Pi}_{t}=-\Pi_{t}\mathbb{A}_{t}(x)\text{\;\;\;\;for }t\ge0\label{eq:Pi_eq}
\end{equation}
with initial condition $\Pi_{0}=\mathbb{I}_{2d}$ so that we can write
the process $(Y_{t}(x))_{t\geq0}$ as follows
\[
Y_{t}(x)=\Pi_{t}^{-1}\int_{0}^{t}\Pi_{s}\mathrm{d}\widetilde{B}_{s}\;\;\;;\;t\ge0\;,
\]
where $(\widetilde{B}_{t})_{t\geq0}$ is the process defined in (\ref{eq:B_t_tilde}).
In particular the covariance matrix $\Sigma_{t}(x)$ is given by:
\begin{equation}
\Sigma_{t}(x)=\Pi_{t}^{-1}\left(\int_{0}^{t}\Pi_{s}\mathbb{J}\Pi_{s}^{\dagger}\mathrm{d}s\right)(\Pi_{t}^{-1})^{\dagger}\;.\label{eq:Sig_t}
\end{equation}
The remaining computation is routine. By (\ref{eq:Pi_eq}), we have
\[
\frac{\mathrm{d}\Pi_{t}^{-1}}{\mathrm{d}t}=-\Pi_{t}^{-1}\frac{\mathrm{d}\Pi_{t}}{\mathrm{d}t}\Pi_{t}^{-1}=\mathbb{A}_{t}(x)\Pi_{t}^{-1}
\]
and thus a direct computation of (\ref{eq:Sig_t}) yields
\begin{align*}
\frac{\mathrm{d}\Sigma_{t}(x)}{\mathrm{d}t}=\; & \mathbb{A}_{t}(x)\Pi_{t}^{-1}\left(\int_{0}^{t}\Pi_{s}\mathbb{J}\Pi_{s}^{\dagger}\mathrm{d}s\right)(\Pi_{t}^{-1})^{\dagger}
+\Pi_{t}^{-1}\Pi_{t}\mathbb{J}\Pi_{t}^{\dagger}(\Pi_{t}^{-1})^{\dagger}\\
&+\Pi_{t}^{-1}\left(\int_{0}^{t}\Pi_{s}\mathbb{J}\Pi_{s}^{\dagger}\mathrm{d}s\right)
(\mathbb{A}_{t}(x)\Pi_{t}^{-1})^{\dagger}\\
 =\;& \mathbb{A}_{t}(x)\Sigma_{t}(x)+\mathbb{J}+\Sigma_{t}(x)\mathbb{A}_{t}^{\dagger}(x)\;.
\end{align*}
\end{proof}

\subsection{Long-time behavior of the covariance matrix}

If the matrix $\Sigma_{t}(x)$ converges to some matrix $\Sigma_{\infty}$,
then, by Lemma \ref{lem:Sigma_t} and (\ref{eq:conv_A_t}), we expect
that the matrix $\Sigma_{\infty}$ satisfies $\mathbb{A}\Sigma_{\infty}+\Sigma_{\infty}\mathbb{A}^{\dagger}=-\mathbb{J}$.
Thus, by (\ref{eq:Sigma}) we should have $\Sigma_{\infty}=\Sigma$.
This heuristic argument will be rigorously and quantitatively justfied
in Lemma \ref{lem:long_Sigma} below. Before stating and proving the
lemma, we recall several well-known facts from linear algebra.
\begin{enumerate}
\item \emph{Frobenius norm.} The Frobenius norm of a square matrix $\mathbb{M}$
is defined by
\[
\Vert\mathbb{M}\Vert_{\textup{F}}=\textup{tr}(\mathbb{M}\mathbb{M}^{\dagger})^{1/2}\;.
\]
It is well-known that $\Vert\mathbb{M}\Vert\le\Vert\mathbb{M}\Vert_{\textup{F}}$
and the Frobenius norm is submultiplicative, i.e., for all square
matrices $\mathbb{M}$ and $\mathbb{M}'$ of same size, it holds that
\begin{equation}
\Vert\mathbb{M}\mathbb{M}'\Vert_{\textup{F}}\le\Vert\mathbb{M}\Vert_{\textup{F}}\Vert\mathbb{M}'\Vert_{\textup{F}}\;.\label{eq:submul}
\end{equation}
\item We can  infer from \emph{Von Neumann's trace inequality} that,
if $\mathbb{S}$ is a positive semi-definite symmetric matrix, and
$\mathbb{T}$ is a diagonalizable matrix of same size, then we have
\begin{equation}
\textup{tr}(\mathbb{S}\mathbb{T})\le\textup{tr}(\mathbb{S})\Vert\mathbb{T}\Vert\;.\label{eq:VNT1}
\end{equation}
If in addition $\mathbb{T}$ is a positive definite symmetric matrix
whose smallest eigenvalue is $c>0$, then we have
\begin{equation}
\textup{tr}(\mathbb{S}\mathbb{T})\ge c\,\textup{tr}(\mathbb{S})\label{eq:VNT2}\;.
\end{equation}
\end{enumerate}
Now we state the main result for the long-time behavior of the covariance
matrix $\Sigma_{t}(x)$.
\begin{lem}
\label{lem:long_Sigma}For all $r>0$, there exist constants $C(r)>0$ and $\alpha(r)>0$
such that
\begin{equation}
\sup_{x\in\mathcal{D}_{r}}\left\Vert \Sigma_{t}(x)-\Sigma\right\Vert _{\textup{F}}\le C(r)e^{-\alpha(r) t}\;\;\;\;\text{for all }t\ge0\;.\label{eq:convSigma}
\end{equation}
\end{lem}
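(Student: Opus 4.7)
The plan is to write $R_t(x):=\Sigma_t(x)-\Sigma$ and derive a linear matrix ODE with a time-dependent coefficient and a decaying forcing term. Subtracting~(\ref{eq:Sigma}) from the ODE in Lemma~\ref{lem:Sigma_t} and rearranging yields
$$\frac{\mathrm{d}R_t}{\mathrm{d}t}=\mathbb{A}_t(x)\,R_t+R_t\,\mathbb{A}_t(x)^{\dagger}+\eta_t(x),\qquad\eta_t(x):=(\mathbb{A}_t(x)-\mathbb{A})\Sigma+\Sigma(\mathbb{A}_t(x)-\mathbb{A})^{\dagger}.$$
Using the explicit block form~(\ref{eq:def A(q)}), the $\mathcal{C}^2$ regularity of $F$ near $0$ and Theorem~\ref{thm:stability}, I would first establish the uniform control
$$\sup_{x\in\mathcal{D}_r}\|\mathbb{A}_t(x)-\mathbb{A}\|\leq C(r)\,\mathrm{e}^{-\lambda t/2}\qquad\text{for }t\geq T_0(r),$$
where $T_0(r)$ is chosen large enough that $q_t(x)$ lies in a fixed neighbourhood of $0$ on which $\mathrm{D}F$ is Lipschitz. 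In particular, $\|\eta_t(x)\|_{\mathrm{F}}\leq C(r)\mathrm{e}^{-\lambda t/2}$ on this range.

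Next I would pass to the vectorised equation: setting $w_t:=\mathrm{vec}(R_t(x))$,
$$\dot w_t=L_t(x)\,w_t+\mathrm{vec}(\eta_t(x)),\qquad L_t(x):=\mathbb{I}_{2d}\otimes\mathbb{A}_t(x)+\mathbb{A}_t(x)\otimes\mathbb{I}_{2d}.$$
The limit $L:=\mathbb{I}_{2d}\otimes\mathbb{A}+\mathbb{A}\otimes\mathbb{I}_{2d}$ has spectrum $\{\mu+\mu':\mu,\mu'\in\mathrm{Sp}(\mathbb{A})\}$, which lies in the open left half-plane by Corollary~\ref{cor:positiveev} combined with Lemma~\ref{lem:ev}. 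Standard Lyapunov theory (e.g.~\cite[Ch.~13]{lancaster1985theory}) therefore produces a symmetric positive definite $\mathbb{Q}$ with $L^{\dagger}\mathbb{Q}+\mathbb{Q}L=-\mathbb{I}$, and the Lyapunov functional $\mathcal{V}(t):=\langle w_t,\mathbb{Q}w_t\rangle$ satisfies
$$\frac{\mathrm{d}\mathcal{V}}{\mathrm{d}t}=-|w_t|^2+2\langle\mathbb{Q}w_t,(L_t(x)-L)w_t\rangle+2\langle\mathbb{Q}w_t,\mathrm{vec}(\eta_t(x))\rangle.$$

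For $t\geq T_1(r)\geq T_0(r)$ large enough that $C(r)\mathrm{e}^{-\lambda t/2}$ is dominated by the Lyapunov gap, the first error term is absorbed into half of $-|w_t|^2$, leaving
$$\frac{\mathrm{d}\mathcal{V}}{\mathrm{d}t}\leq-\alpha\,\mathcal{V}(t)+C(r)\mathrm{e}^{-\lambda t/2}\sqrt{\mathcal{V}(t)}$$
for some $\alpha=\alpha(r)>0$. A standard Gronwall argument then yields $\sqrt{\mathcal{V}(t)}\leq C(r)\mathrm{e}^{-\alpha'(r)t}$ for $t\geq T_1(r)$, hence the same exponential bound for $\|R_t(x)\|_{\mathrm{F}}$ uniformly over $x\in\mathcal{D}_r$. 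On the compact initial interval $[0,T_1(r)]$, the triangle inequality together with $\|\Sigma_t(x)\|_{\mathrm{F}}\leq\mathrm{tr}(\Sigma_t(x))=\mathbb{E}[|Y_t(x)|^2]$ (using positive semi-definiteness of $\Sigma_t(x)$) and Lemmas~\ref{lem:mom_YT(x)}--\ref{lem:mom_Yt} provides a uniform bound $\|R_t(x)\|_{\mathrm{F}}\leq C(r)$, which can be folded into~(\ref{eq:convSigma}) after shifting the time origin.

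The main obstacle is the uniform-in-$x$ tracking of the rate $\alpha(r)$: this relies on the uniform boundedness of $T(x)$ on $\mathcal{D}_r$ noted in~(\ref{eq:bddT}) and on uniform Lipschitz control of $\mathrm{D}F$ on a fixed compact neighbourhood of $0$. The argument departs from the classical Lyapunov-equation perturbation theorems because $\mathbb{A}_t(x)$ is genuinely time-dependent and only converges to $\mathbb{A}$ as $t\to\infty$, so the contraction produced by the limiting Lyapunov identity must be exploited together with the quantitative exponential decay of the perturbation $\mathbb{A}_t(x)-\mathbb{A}$.
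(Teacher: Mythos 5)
Your proposal is correct and follows essentially the same route as the paper's proof: the identical error ODE for $\Sigma_t(x)-\Sigma$ with forcing $(\mathbb{A}_t-\mathbb{A})\Sigma+\Sigma(\mathbb{A}_t-\mathbb{A})^{\dagger}$, a quadratic Lyapunov functional attached to the Hurwitz limit matrix $\mathbb{A}$, absorption of the exponentially decaying time-dependent perturbation, a Gronwall estimate for large times, and the moment bounds on $Y_t$ (Lemmas \ref{lem:mom_YT(x)}--\ref{lem:mom_Yt}) together with the uniform bound on the transition time over $\mathcal{D}_r$ to handle the initial compact interval. The only difference is technical rather than conceptual: the paper avoids vectorization by working with $\mathrm{tr}(\Delta_t\Gamma\Delta_t\Gamma)$ for the matrix $\Gamma$ already constructed in (\ref{eq:ricattiGamma}), using Von Neumann's trace inequality and the bound (\ref{eq:A(qT(x))}) valid for $t\ge T(x)$, whereas you solve a fresh Lyapunov equation for the Kronecker sum $L$ — both amount to the same argument.
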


\begin{proof}
Fix $x\in\mathcal{D}_{r}$ and we ignore the dependency of the matrices
to $x$ throughout the proof. We define the symmetric matrix $\Delta_{t}=\Delta(x)$ and the matrix $\mathbb{D}_{t}$ by:
\[
\Delta_{t}:=\Sigma_{t}-\Sigma \;,\qquad \mathbb{D}_{t}=\mathbb{A}_{t}-\mathbb{A}\;.
\]
Then, by (\ref{eq:Sigma}) and Lemma \ref{lem:Sigma_t}, we have
\begin{align*}
\frac{\mathrm{d}\Delta_{t}}{\mathrm{d}t}=\frac{\mathrm{d}\Sigma_{t}}{\mathrm{d}t} &=\mathbb{J}+\mathbb{A}_{t}\Sigma_{t}+\Sigma_{t}\mathbb{A}_{t}^{\dagger}\\
&=\mathbb{A}_{t}\Delta_{t}+\Delta_{t}\mathbb{A}_{t}^{\dagger}+\mathbb{D}_{t}\Sigma+\Sigma\mathbb{D}_{t}^{\dagger}\;.
\end{align*}
Recall the positive definite symmetric matrix $\Gamma$ defined in
(\ref{eq:ricattiGamma}). By the previous computation on $\frac{\mathrm{d}\Delta_{t}}{\mathrm{d}t}$,
we get
\begin{align}
\frac{\mathrm{d}}{\mathrm{d}t}\mathrm{tr}(\Delta_{t}\Gamma\Delta_{t}\Gamma) & =\mathrm{tr}\left((\mathbb{A}_{t}\Delta_{t}+\Delta_{t}\mathbb{A}_{t}^{\dagger})\Gamma\Delta_{t}\Gamma\right)+\mathrm{tr}\left(\Delta_{t}\Gamma(\mathbb{A}_{t}\Delta_{t}+\Delta_{t}\mathbb{A}_{t}^{\dagger})\Gamma\right)\\
&\quad+\mathrm{tr}\left((\mathbb{D}_{t}\Sigma+\Sigma\mathbb{D}_{t}^{\dagger})\Gamma\Delta_t\Gamma\right)+\mathrm{tr}\left(\Delta_t\Gamma(\mathbb{D}_{t}\Sigma+\Sigma\mathbb{D}_{t}^{\dagger})\Gamma\right)\nonumber\\
 & =2\mathrm{tr}\left(\Delta_{t}\Gamma\Delta_{t}(\Gamma\mathbb{A}_{t}+\mathbb{A}_{t}^{\dagger}\Gamma)\right)+2\mathrm{tr}\left(\Delta_t\Gamma(\mathbb{D}_{t}\Sigma+\Sigma\mathbb{D}_{t}^\dagger)\Gamma\right)\;.\label{eq:dzdz1}
\end{align}
Moreover, by Cauchy-Schwarz inequality,
\begin{equation}\label{eq:cauchy schwarz frobenius}
\mathrm{tr}\left(\Delta_t\Gamma(\mathbb{D}_{t}\Sigma+\Sigma\mathbb{D}_{t}^\dagger)\Gamma\right)\leq||\Delta_t||_F||\Gamma(\mathbb{D}_{t}\Sigma
+\Sigma\mathbb{D}_{t}^\dagger)\Gamma||_F\;.
\end{equation}
Denote
by $c_{0}>0$ the smallest eigenvalue of the positive definite symmetric
matrix $\Gamma$. Then since $\Delta_{t}^{2}$ and $\Delta_{t}\Gamma\Delta_{t}$
are positive semi-definite matrices and $\Gamma$ is symmetric positive
definite we obtain from (\ref{eq:VNT2}) that
\begin{equation}
\mathrm{tr}(\Delta_{t}\Gamma\Delta_{t}\Gamma)\geq c_{0}\mathrm{tr}(\Delta_{t}\Gamma\Delta_{t})=c_{0}\mathrm{tr}(\Gamma\Delta_{t}^{2})\geq c_{0}^{2}\mathrm{tr}(\Delta_{t}^{2})=c_0^2||\Delta_t||_F^2\;.\label{eq:dzdz4}
\end{equation}
Furthermore, using the submultiplicativity in~\eqref{eq:submul} and Theorem~\ref{thm:stability} along with the fact that $F$ is $\mathcal{C}^2$ on the set $\mathcal{H}_{r}$, we deduce by Remark~\ref{rem:Thmstat} that there exists a constant $C(r)>0$ such that $$||\Gamma(\mathbb{D}_{t}\Sigma+\Sigma\mathbb{D}_{t}^\dagger)\Gamma||_F\leq C(r)\mathrm{e}^{-\lambda t}.$$
As a result, reinjecting into~\eqref{eq:cauchy schwarz frobenius},
\begin{equation}\label{eq:upperbound remaining term}
\mathrm{tr}\left(\Delta_t\Gamma(\mathbb{D}_{t}\Sigma+\Sigma\mathbb{D}_{t}^\dagger)\Gamma\right)\leq C(r)\mathrm{e}^{-\lambda t}\sqrt{\mathrm{tr}\left(\Delta_{t}\Gamma\Delta_{t}\Gamma\right)}\;.
\end{equation}

Additionally, by~\eqref{eq:ricattiGamma}
\begin{equation}
\mathrm{tr}\left(\Delta_{t}\Gamma\Delta_{t}(\Gamma\mathbb{A}_{t}+\mathbb{A}_{t}^{\dagger}\Gamma)\right)=\mathrm{tr}\left(\Delta_{t}\Gamma\Delta_{t}(\Gamma\mathbb{D}_{t}+\mathbb{D}_{t}^{\dagger}\Gamma)\right)-\mathrm{tr}(\Delta_{t}\Gamma\Delta_{t})\;.\label{eq:dzdz2}
\end{equation}
Since $\Gamma\mathbb{D}_{t}+\mathbb{D}_{t}^{\dagger}\Gamma$ is a
symmetric matrix and since $\Delta_{t}\Gamma\Delta_{t}$ is a positive
semi-definite symmetric matrix, by (\ref{eq:VNT1}), we have
\[
\mathrm{tr}\left(\Delta_{t}\Gamma\Delta_{t}(\Gamma\mathbb{D}_{t}+\mathbb{D}_{t}^{\dagger}\Gamma)\right)\le\mathrm{tr}(\Delta_{t}\Gamma\Delta_{t})\left\Vert \Gamma\mathbb{D}_{t}+\mathbb{D}_{t}^{\dagger}\Gamma\right\Vert \;.
\]
Note that we verified in (\ref{eq:A(qT(x))}) that $\left\Vert \Gamma\mathbb{D}_{t}+\mathbb{D}_{t}^{\dagger}\Gamma\right\Vert \le\frac{1}{2}$
for $t\ge T(x)$ and therefore by (\ref{eq:dzdz1}),~\eqref{eq:upperbound remaining term}, (\ref{eq:dzdz2}),
and applying (\ref{eq:VNT1}) we get
\begin{align*}
\frac{\mathrm{d}}{\mathrm{d}t}\mathrm{tr}(\Delta_{t}\Gamma\Delta_{t}\Gamma)&\le-\frac{1}{2}\mathrm{tr}(\Delta_{t}\Gamma\Delta_{t})+C(r)\mathrm{e}^{-\lambda t}\sqrt{\mathrm{tr}\left(\Delta_{t}\Gamma\Delta_{t}\Gamma\right)}\\
&\le-\frac{1}{2\Vert\Gamma\Vert}\mathrm{tr}(\Delta_{t}\Gamma\Delta_{t}\Gamma)+C(r)\mathrm{e}^{-\lambda t}\sqrt{\mathrm{tr}\left(\Delta_{t}\Gamma\Delta_{t}\Gamma\right)}\\
&\leq -c_1\mathrm{tr}(\Delta_{t}\Gamma\Delta_{t}\Gamma)+c_2\mathrm{e}^{-\lambda t}
\end{align*}
for some constants $c_1=c_1(r),c_2=c_2(r)>0$ and for $t\ge T(x)$. Accordingly, by integrating $\mathrm{tr}(\Delta_{t}\Gamma\Delta_{t}\Gamma)\mathrm{e}^{c_1t}$ for $t\ge T(x)$ we get
$$\mathrm{tr}(\Delta_{t}\Gamma\Delta_{t}\Gamma)\mathrm{e}^{c_1t}\leq\mathrm{tr}(\Delta_{T(x)}\Gamma\Delta_{T(x)}
\Gamma)\mathrm{e}^{c_1T(x)}+c_2\int_{T(x)}^t\mathrm{e}^{(c_1-\lambda)s}\mathrm{d}s\;.$$
Therefore,
\begin{equation}\label{eq:dzdz3}
    \mathrm{tr}(\Delta_{t}\Gamma\Delta_{t}\Gamma)\leq\mathrm{tr}(\Delta_{T(x)}\Gamma\Delta_{T(x)}\Gamma)\mathrm{e}^{-c_1(t-T(x))}+\frac{c_2}{c_1-\lambda}\mathrm{e}^{-\lambda t}\;,
\end{equation}
where the second inequality uses (\ref{eq:VNT1}). Next
Combining (\ref{eq:dzdz3}) and (\ref{eq:dzdz4}) proves that
\begin{equation}
\mathrm{tr}(\Delta_{t}^{2})\le C(r)e^{-\alpha t}\label{eq:dzdz5}
\end{equation}
for all $t\ge T(x)$ for some $\alpha=\alpha(r)>0$. On the other hand, by Lemma \ref{lem:mom_YT(x)},
there exists a constant $C(r)>0$ such that
\begin{equation}
\sup_{x\in\mathcal{D}_{r}}\sup_{t\in[0,\,T(x)]}\mathrm{tr}(\Delta_{t}^{2})\le C(r)\;.\label{eq:dzdz6}
\end{equation}
Thus, the proof is completed by (\ref{eq:dzdz5}), (\ref{eq:dzdz6}),
and the definition of $\Delta_{t}$.
\end{proof}

\subsection{Small-time behavior of the covariance matrix}

In order to get the small-time asymptotics of the inverse $\Sigma_{t}(x)^{-1}$
we investigate the behavior of $\Sigma_{t}(x)$ in the regime $t\rightarrow0$.
This part of the proof is interestingly much more involved than its
overdamped analogous studied in~\cite{cut-off_overdamped}, since
$\left.\frac{\mathrm{d}\Sigma_{t}(x)}{\mathrm{d}t}\right|_{t=0}=\mathbb{J}$
is not invertible. To resolve this issue, we go up to the third order
expansion of $\Sigma_{t}$ around $t\simeq0$.
\begin{lem}
\label{lem_shor_Sigma0}For all $r>0$, there exists $C(r)>0$ such
that
\begin{equation}
\sup_{x\in\mathcal{D}_{r}}\left\Vert \Sigma_{t}(x)-\widehat{\Sigma}_{t}(x)\right\Vert _{\textup{F}}\le C(r)t^{4}\label{eq:Sigmaappr}
\end{equation}
for all small enough $t>0$, where
\begin{equation}
\widehat{\Sigma}_{t}(x)=\begin{pmatrix}\frac{t^{3}}{3}\mathbb{I}_{d} & \left(\frac{t^{2}}{2}-\gamma\frac{t^{3}}{2}\right)\mathbb{I}_{d}\\
\left(\frac{t^{2}}{2}-\gamma\frac{t^{3}}{2}\right)\mathbb{I}_{d} & \left(t-\gamma t^{2}+\frac{2}{3}\gamma^{2}t^{3}\right)\mathbb{I}_{d}-\frac{t^{3}}{6}(\textup{D}F(x)+\textup{D}F(x)^{\dagger})
\end{pmatrix}\;.\label{eq:Sigma_hat}
\end{equation}
\end{lem}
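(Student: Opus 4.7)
The plan is to identify $\widehat{\Sigma}_t(x)$ as the third-order Taylor polynomial of $\Sigma_t(x)$ at $t=0$ and control the remainder. Since $Y_0\equiv 0$ we have $\Sigma_0(x)=0$, and differentiating the ODE $\dot\Sigma_t = \mathbb{J} + \mathbb{A}_t\Sigma_t + \Sigma_t\mathbb{A}_t^\dagger$ from Lemma~\ref{lem:Sigma_t} successively and evaluating at $t=0$ yields
\begin{align*}
\dot\Sigma_0 &= \mathbb{J},\\
\ddot\Sigma_0 &= \mathbb{A}_0\mathbb{J} + \mathbb{J}\mathbb{A}_0^\dagger,\\
\dddot\Sigma_0 &= 2\dot{\mathbb{A}}_0\mathbb{J} + 2\mathbb{J}\dot{\mathbb{A}}_0^\dagger + \mathbb{A}_0^2\mathbb{J} + 2\mathbb{A}_0\mathbb{J}\mathbb{A}_0^\dagger + \mathbb{J}(\mathbb{A}_0^\dagger)^2,
\end{align*}
where $\mathbb{A}_0 = \mathbb{A}(q)$ and $\dot{\mathbb{A}}_0$ denotes the time-derivative of $s\mapsto\mathbb{A}(q_s(x))$ at $s=0$.

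The key observation is the cancellation $\dot{\mathbb{A}}_0\mathbb{J}=0=\mathbb{J}\dot{\mathbb{A}}_0^\dagger$. Since $-\textup{D}F(q)$ is the only $q$-dependent block of $\mathbb{A}(q)$, the matrix $\dot{\mathbb{A}}_0$ is supported in its lower-left block while $\mathbb{J}$ is supported in its lower-right block, and the block-index mismatch makes the product vanish. Thanks to this cancellation, $\dddot\Sigma_0$ depends only on $\mathbb{A}_0$, $\mathbb{A}_0^\dagger$ and $\mathbb{J}$ — in particular only on $\textup{D}F(q)$ and not on $\textup{D}^2F(q)\cdot p$. A direct block-by-block evaluation of $\mathbb{A}_0^2\mathbb{J}$, $\mathbb{A}_0\mathbb{J}\mathbb{A}_0^\dagger$, $\mathbb{J}(\mathbb{A}_0^\dagger)^2$ using the block form of $\mathbb{A}_0$, followed by assembling $t\dot\Sigma_0 + \tfrac{t^2}{2}\ddot\Sigma_0 + \tfrac{t^3}{6}\dddot\Sigma_0$, reproduces exactly the three blocks $\tfrac{t^3}{3}\mathbb{I}_d$, $(\tfrac{t^2}{2} - \tfrac{\gamma t^3}{2})\mathbb{I}_d$ and $(t-\gamma t^2 + \tfrac{2\gamma^2 t^3}{3})\mathbb{I}_d - \tfrac{t^3}{6}(\textup{D}F(x) + \textup{D}F(x)^\dagger)$ displayed in $\widehat{\Sigma}_t(x)$.

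It remains to bound the remainder $R_t(x):=\Sigma_t(x)-\widehat{\Sigma}_t(x)$. By construction $R$ and its first three derivatives vanish at $t=0$, so Taylor's formula with integral remainder gives
\begin{equation*}
R_t(x) = \int_0^t \frac{(t-s)^2}{2}\bigl(\dddot\Sigma_s(x) - \dddot\Sigma_0(x)\bigr)\,\mathrm{d}s.
\end{equation*}
For $x\in\mathcal{D}_r$ and $s\in[0,t_0]$ with $t_0>0$ small and fixed, Remark~\ref{rem:Thmstat} confines the trajectory $(q_s,p_s)$ to the compact set $\mathcal{H}_r$, and the regularity of $F$ on $\mathcal{H}_r$ provides uniform bounds on all the derivatives of $F$ that enter the expression for $\dddot\Sigma_s$. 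Combining this with the smooth dependence of $q_s,p_s,\mathbb{A}_s,\ddot{\mathbb{A}}_s$ on $s$ yields a Lipschitz estimate $\|\dddot\Sigma_s(x)-\dddot\Sigma_0(x)\|_{\mathrm F}\le C(r)\,s$ uniform in $x\in\mathcal{D}_r$. Substituting into the integral gives $\|R_t(x)\|_{\mathrm F}\le C(r)\int_0^t\tfrac{(t-s)^2}{2}\,s\,\mathrm{d}s \le C(r) t^4$, which is the claimed estimate. The main technical obstacle is the Lipschitz control on $\dddot\Sigma_s$, which requires carefully tracking the higher $s$-derivatives of $\mathbb{A}_s$ through the ODE using the regularity of $F$ on $\mathcal{H}_r$.
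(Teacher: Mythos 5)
Your proposal is correct and follows essentially the same route as the paper: differentiate the ODE of Lemma \ref{lem:Sigma_t} at $t=0$, identify $\widehat{\Sigma}_{t}(x)$ as the third-order Taylor polynomial (your computed values of $\dot\Sigma_0$, $\ddot\Sigma_0$, $\dddot\Sigma_0$, including the cancellation $\dot{\mathbb{A}}_{0}\mathbb{J}=\mathbb{J}\dot{\mathbb{A}}_{0}^{\dagger}=0$, match the paper's), and bound the remainder by $C(r)t^{4}$ using the regularity of $F$ and the compact confinement of the trajectories for $x\in\mathcal{D}_{r}$. Your treatment of the remainder via the integral form of Taylor's theorem and a Lipschitz estimate on $\dddot\Sigma_{s}$ is a slightly more explicit version of the paper's appeal to Taylor's theorem with $F\in\mathcal{C}^{3}$, but it is the same argument in substance.
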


\begin{proof}
We fix $x\in\mathcal{D}_{r}$ and ignore the dependency on $x$ in
the notation throughout this proof. By Lemma \ref{lem:Sigma_t},
\[
\frac{\mathrm{d}^{2}\Sigma_{t}}{\mathrm{d}t^{2}}=\frac{\mathrm{d}\mathbb{A}_{t}}{\mathrm{d}t}\Sigma_{t}+\Sigma_{t}\frac{\mathrm{d}\mathbb{A}_{t}^{\dagger}}{\mathrm{d}t}+\mathbb{A}_{t}(\mathbb{J}+\mathbb{A}_{t}\Sigma_{t}+\Sigma_{t}\mathbb{A}_{t}^{\dagger})+(\mathbb{J}+\mathbb{A}_{t}\Sigma_{t}+\Sigma_{t}\mathbb{A}_{t}^{\dagger})\mathbb{A}_{t}^{\dagger}\;,
\]
and consequently (since $\Sigma_{0}=\mathbb{O}_{2d}$), we get
\[
\left.\frac{\mathrm{d}^{2}\Sigma_{t}}{\mathrm{d}t^{2}}\right|_{t=0}=\mathbb{A}\mathbb{J}+\mathbb{J}\mathbb{A}^{\dagger}=\begin{pmatrix}0 & \mathbb{I}_{d}\\
\mathbb{I}_{d} & -2\gamma\mathbb{I}_{d}
\end{pmatrix}\;.
\]
Continuing the computation, we can also obtain
\[
\left.\frac{\mathrm{d}^{3}\Sigma_{t}}{\mathrm{d}t^{3}}\right|_{t=0}=\begin{pmatrix}2\mathbb{I}_{d} & -3\gamma\mathbb{I}_{d}\\
-3\gamma\mathbb{I}_{d} & -(\textup{D}F(x)+\textup{D}F(x)^{\dagger})+4\gamma^{2}\mathbb{I}_{d}
\end{pmatrix}\;.
\]
Then, note that the matrix $\widehat{\Sigma}_{t}=\widehat{\Sigma}_{t}(x)$
defined in (\ref{eq:Sigma_hat}) stands for the third order Taylor
expansion of $\Sigma_{t}$, i.e.,
\[
\widehat{\Sigma}_{t}=t\left.\frac{\mathrm{d}\Sigma_{t}}{\mathrm{d}t}\right|_{t=0}+\frac{t^{2}}{2}\left.\frac{\mathrm{d}^{2}\Sigma_{t}}{\mathrm{d}t^{2}}\right|_{t=0}+\frac{t^{3}}{6}\left.\frac{\mathrm{d}^{3}\Sigma_{t}}{\mathrm{d}t^{3}}\right|_{t=0}\;.
\]
From now on we denote by $\mathbb{O}(t,\,x)$ a $2d\times2d$ matrix
which is continuous in $(t,\,x)\in[0,\,\infty)\times\mathbb{R}^{2d}$.
Then, by the Taylor theorem, we can write
\begin{equation}
\Sigma_{t}(x)-\widehat{\Sigma}_{t}(x)=t^{4}\mathbb{O}(t,\,x)\label{eq:difSig}
\end{equation}
and we can conclude that (\ref{eq:Sigmaappr}) holds since $F\in\mathcal{C}^{3}(\mathbb{R}^{d},\,\mathbb{R}^{d})$
by Assumption~\ref{ass:main}.
\end{proof}
Next we get a bound on the norm of $\Sigma_{t}^{-1/2}$ for small
times.
\begin{lem}
\label{lem:short_Sigma}For all $r>0$, there exists $C(r)>0$ such
that
\[
\sup_{x\in\mathcal{D}_{r}}\left\Vert \Sigma_{t}(x)^{-1/2}\right\Vert _{\textup{F}}\le\frac{C(r)}{t^{3/2}}
\]
for all small enough $t>0$.
\end{lem}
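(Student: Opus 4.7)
The plan is to reduce the problem, via Lemma~\ref{lem_shor_Sigma0}, to a lower bound on the smallest eigenvalue of the explicit matrix $\widehat{\Sigma}_{t}(x)$ given in (\ref{eq:Sigma_hat}). The key observation is that $\widehat{\Sigma}_{t}(x)$ degenerates at $t=0$ in an \emph{anisotropic} way: its position-position block is of order $t^{3}$ while its velocity-velocity block is of order $t$. Consequently, one should expect the smallest eigenvalue of $\Sigma_{t}(x)$ to be of order $t^{3}$, which will precisely produce the $t^{-3/2}$ blow-up of $\Vert\Sigma_{t}^{-1/2}\Vert_{\textup{F}}$.

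To make this rigorous, I would introduce the anisotropic scaling matrix $D_{t}=\textup{diag}(t^{3/2}\mathbb{I}_{d},\,t^{1/2}\mathbb{I}_{d})$ and compute directly from the explicit formula (\ref{eq:Sigma_hat}) that
$$D_{t}^{-1}\widehat{\Sigma}_{t}(x)D_{t}^{-1}=\begin{pmatrix}\tfrac{1}{3}\mathbb{I}_{d} & \tfrac{1}{2}\mathbb{I}_{d}\\ \tfrac{1}{2}\mathbb{I}_{d} & \mathbb{I}_{d}\end{pmatrix}+t\,R_{t}(x)\;,$$
where $R_{t}(x)$ is a $2d\times 2d$ matrix uniformly bounded for $(t,x)\in[0,1]\times\mathcal{D}_{r}$, since every correction in (\ref{eq:Sigma_hat}) either carries an extra power of $t$ or involves $\textup{D}F(x)$, which is bounded on the compact set $\mathcal{D}_{r}$. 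The limit matrix has Schur complement $\tfrac{1}{3}-\tfrac{1}{4}=\tfrac{1}{12}>0$, and is therefore symmetric positive definite with some spectral gap $c_{0}>0$.

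Consequently, for $t$ small enough uniformly in $x\in\mathcal{D}_{r}$, one has $D_{t}^{-1}\widehat{\Sigma}_{t}(x)D_{t}^{-1}\succeq\tfrac{c_{0}}{2}\mathbb{I}_{2d}$, which translates into $\widehat{\Sigma}_{t}(x)\succeq\tfrac{c_{0}}{2}D_{t}^{2}\succeq\tfrac{c_{0}}{2}t^{3}\mathbb{I}_{2d}$, since the smallest eigenvalue of $D_{t}^{2}=\textup{diag}(t^{3}\mathbb{I}_{d},\,t\mathbb{I}_{d})$ equals $t^{3}$ for $t\le 1$. Combining this with the operator-norm bound $\Vert\Sigma_{t}(x)-\widehat{\Sigma}_{t}(x)\Vert\le\Vert\Sigma_{t}(x)-\widehat{\Sigma}_{t}(x)\Vert_{\textup{F}}\le C(r)t^{4}$ from Lemma~\ref{lem_shor_Sigma0}, Weyl's inequality yields $\Sigma_{t}(x)\succeq\tfrac{c_{0}}{4}t^{3}\mathbb{I}_{2d}$ for all sufficiently small $t$ and all $x\in\mathcal{D}_{r}$. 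Each of the $2d$ eigenvalues of $\Sigma_{t}(x)^{-1}$ is then at most $\tfrac{4}{c_{0}t^{3}}$, so $\Vert\Sigma_{t}(x)^{-1/2}\Vert_{\textup{F}}=\sqrt{\textup{tr}(\Sigma_{t}(x)^{-1})}\le C(r)/t^{3/2}$, as desired.

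The main obstacle is identifying the correct anisotropic scaling $D_{t}$, which encodes the hypoelliptic nature of the underdamped dynamics: at $t=0$ the noise acts only on the momentum, and Brownian increments on the momentum only influence the position after an additional time integration, producing the asymmetric $t^{3/2}$ versus $t^{1/2}$ factors. Once this scaling is in place, the rest is a direct computation together with an application of Weyl's inequality.
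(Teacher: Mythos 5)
Your proof is correct, and it reaches the estimate by a genuinely different mechanism than the paper's, although both start from the same input, namely the uniform third-order expansion of Lemma \ref{lem_shor_Sigma0}. The paper does not bound the smallest eigenvalue; instead it guesses an explicit approximate inverse $\Xi_{t}(x)=\frac{12}{t^{3}}\bigl[\mathbb{U}+t\mathbb{O}(t,x)\bigr]$ (essentially the exact inverse of $\widehat{\Sigma}_{t}(x)$ to leading order), verifies by direct block multiplication that $\widehat{\Sigma}_{t}(x)\,\Xi_{t}(x)=\mathbb{I}_{2d}+t\mathbb{O}(t,x)$, uses the $O(t^{4})$ remainder to get $\Sigma_{t}(x)\,\Xi_{t}(x)=\mathbb{I}_{2d}+t\mathbb{O}(t,x)$, and then inverts this small perturbation of the identity to obtain $\Sigma_{t}(x)^{-1}=\frac{12}{t^{3}}\bigl[\mathbb{U}+t\mathbb{O}(t,x)\bigr]$, from which the $t^{-3/2}$ Frobenius bound follows; as a by-product this identifies the leading-order structure of $\Sigma_{t}^{-1}$, concentrated on the position block $\mathbb{U}$. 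Your route — conjugating $\widehat{\Sigma}_{t}(x)$ by the hypoelliptic scaling $D_{t}=\mathrm{diag}(t^{3/2}\mathbb{I}_{d},\,t^{1/2}\mathbb{I}_{d})$, checking positive definiteness of the limiting matrix $\bigl(\begin{smallmatrix}\frac{1}{3}\mathbb{I}_{d} & \frac{1}{2}\mathbb{I}_{d}\\ \frac{1}{2}\mathbb{I}_{d} & \mathbb{I}_{d}\end{smallmatrix}\bigr)$ through its Schur complement, and transferring the resulting bound $\lambda_{\min}(\widehat{\Sigma}_{t}(x))\ge\frac{c_{0}}{2}t^{3}$ to $\Sigma_{t}(x)$ by Weyl's inequality — yields only the spectral lower bound $\Sigma_{t}(x)\succeq\frac{c_{0}}{4}t^{3}\mathbb{I}_{2d}$, but that is all the lemma is ever used for (in Lemma \ref{lem:tvZ_short}), and it buys two things: it makes the anisotropic $t^{3}$ versus $t$ degeneracy, and hence the origin of the exponent $3/2$, conceptually transparent, and it establishes explicitly and uniformly on $\mathcal{D}_{r}$ that $\Sigma_{t}(x)$ is invertible for small $t>0$, which the paper's argument obtains only implicitly through the existence of $\Xi_{t}$. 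All the individual steps you use (congruence preserving the semidefinite order, $D_{t}^{2}\succeq t^{3}\mathbb{I}_{2d}$ for $t\le1$, Weyl with the uniform $C(r)t^{4}$ remainder, and $\Vert\Sigma_{t}^{-1/2}\Vert_{\mathrm{F}}^{2}=\mathrm{tr}(\Sigma_{t}^{-1})\le 2d/\lambda_{\min}(\Sigma_{t})$) are sound, so your argument is a valid, slightly more elementary substitute for the paper's proof.
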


\begin{proof}
For $t>0$, define a $2d\times2d$ matrix $\Xi_{t}(x)$ by
\[
\Xi_{t}(x):=\frac{12}{t^{3}}\begin{pmatrix}\left(1-\gamma t+\frac{2}{3}\gamma^{2}t^{2}\right)\mathbb{I}_{d}-\frac{t^{2}}{6}(\textup{D}F(x)+\textup{D}F(x)^{\dagger}) & -\left(\frac{t}{2}-\gamma\frac{t^{2}}{2}\right)\mathbb{I}_{d}\\
-\left(\frac{t}{2}-\gamma\frac{t^{2}}{2}\right)\mathbb{I}_{d} & \frac{t^{2}}{3}\mathbb{I}_{d}
\end{pmatrix}\;,
\]
so that we can write
\begin{equation}
\Xi_{t}(x)=\frac{12}{t^{3}}\left[\mathbb{U}+t\mathbb{O}(t,\,x)\right]\;\;\;\;\text{where }\mathbb{U}=\begin{pmatrix}\mathbb{I}_{d} & \mathbb{O}_{d}\\
\mathbb{O}_{d} & \mathbb{O}_{d}
\end{pmatrix}\;.\label{eq:Xi_t}
\end{equation}
A direct computation shows that
\[
{\widehat{\Sigma}}_{t}(x) \,\Xi_{t}(x)=\mathbb{I}_{2d}+t\mathbb{O}(t,\,x)\;,
\]
and therefore by (\ref{eq:difSig}) and (\ref{eq:Xi_t}), we get
\[
{\Sigma}_{t}(x)\,\Xi_{t}(x)=\mathbb{I}_{2d}+t\mathbb{O}(t,\,x)\;.
\]
Since $\left[\mathbb{I}_{2d}+t\mathbb{O}(t,\,x)\right]^{-1}=\mathbb{I}_{2d}+t\mathbb{O}(t,\,x)$
for small enough $t\ge0$, we get from (\ref{eq:Xi_t}) that
\[
\Sigma_{t}(x)^{-1}=\Xi_{t}(x)\big[\mathbb{I}_{2d}+t\mathbb{O}(t,\,x)\big]^{-1}=\frac{12}{t^{3}}\big[\mathbb{U}+t\mathbb{O}(t,\,x)\big]
\]
and hence
\[
\Sigma_{t}(x)^{-1/2}=\frac{\sqrt{12}}{t^{3/2}}\big[\mathbb{U}+t\mathbb{O}(t,\,x)\big]\;.
\]
This completes the proof.
\end{proof}

\subsection{Total variation distance between Gaussian processes}

In this subsection, we provide estimates on the total variation distances
between $(Z_{t}^{\epsilon}(x))_{t\ge0}$ and $(Z_{t}^{\epsilon}(y))_{t\ge0}$ in the long
time asymptotics. For $v\in\mathbb{R}^{d}$ and a $d\times d$ symmetric
positive definite matrix $\mathbb{K}$, denote by $\mathcal{N}(v,\,\mathbb{K})$
the Gaussian distribution centered at $v$ with covariance matrix $\mathbb{K}$.
Before stating and proving Lemma~\ref{lem:tvZ_long}, we recall
serveral lemma from \cite{cut-off_overdamped}.
\begin{lem}
\label{lem:cut-off App 1}Let $x,y\in\mathbb{R}^{2d}$ and let $\mathbf{\mathbb{S}},\mathbf{\mathbb{T}}\in\mathbb{R}^{2d\times2d}$ be
two symmetric positive definite matrices. Let $c\neq0$ be a fixed constant. Then, we have 

\begin{enumerate}
    \item $\mathrm{d}_{\mathrm{TV}}(\mathcal{N}(cx,c^{2}\mathbb{S}),\mathcal{N}(cy,c^{2}\mathbb{T}))=\mathrm{d}_{\mathrm{TV}}(\mathcal{N}(x,\mathbb{S}),\mathcal{N}(y,\mathbb{T}))$.
    \item $\mathrm{d}_{\mathrm{TV}}(\mathcal{N}(x,\mathbb{S}),\mathcal{N}(y,\mathbb{T}))=\mathrm{d}_{\mathrm{TV}}(\mathcal{N}(x-y,\mathbb{S}),\mathcal{N}(0,\mathbb{T}))$.
    \item $\mathrm{d}_{\mathrm{TV}}(\mathcal{N}(x,\mathbb{S}),\mathcal{N}(y,\mathbb{S}))=\mathrm{d}_{\mathrm{TV}}(\mathcal{N}(\mathbb{S}^{-1/2}x,\mathbb{I}_{2d}),\mathcal{N}(\mathbb{S}^{-1/2}y,\mathbb{I}_{2d}))$.
    \item $\mathrm{d}_{\mathrm{TV}}(\mathcal{N}(0,\mathbb{S}),\mathcal{N}(0,\mathbb{T}))=\mathrm{d}_{\mathrm{TV}}(\mathcal{N}(0,\mathbb{T}^{-1/2}\mathbb{ST}^{-1/2}),\mathcal{N}(0,\mathbb{I}_{2d}))$.
\end{enumerate}
\end{lem}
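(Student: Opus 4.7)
The plan is to reduce all four identities to a single invariance principle: for any Borel-bi-measurable bijection $\phi:\mathbb{R}^{2d}\to\mathbb{R}^{2d}$ and any pair of $\mathbb{R}^{2d}$-valued random vectors $Z_{1},Z_{2}$, one has
\[
\mathrm{d}_{\mathrm{TV}}(\phi(Z_{1}),\phi(Z_{2}))=\mathrm{d}_{\mathrm{TV}}(Z_{1},Z_{2})\;.
\]
This is immediate from Definition~\ref{tvd}: denoting by $\pi_{i}$ the law of $Z_{i}$, the pushforward satisfies $(\phi_{\#}\pi_{i})(A)=\pi_{i}(\phi^{-1}(A))$, and since $A\mapsto\phi^{-1}(A)$ is a bijection on the Lebesgue $\sigma$-algebra, the supremum defining the total variation distance is left unchanged.

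Each of the four identities is then obtained by pushing forward through an appropriate invertible affine map and using the classical formula that if $Z\sim\mathcal{N}(v,\mathbb{K})$ and $\phi(z)=\mathbb{B}z+b$ with $\mathbb{B}$ invertible, then $\phi(Z)\sim\mathcal{N}(\mathbb{B}v+b,\mathbb{B}\mathbb{K}\mathbb{B}^{\dagger})$. Specifically, for (1) I would apply $\phi(z)=cz$, which is a bijection since $c\neq 0$ and which sends $\mathcal{N}(v,\mathbb{K})$ to $\mathcal{N}(cv,c^{2}\mathbb{K})$. For (2), take the translation $\phi(z)=z-y$, which shifts each mean by $-y$ while preserving covariances. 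For (3), since $\mathbb{S}$ is symmetric positive definite, the matrix $\mathbb{S}^{-1/2}$ is well-defined and invertible, so I would apply $\phi(z)=\mathbb{S}^{-1/2}z$; this whitens the common covariance to $\mathbb{I}_{2d}$ and simultaneously transforms each mean $v$ into $\mathbb{S}^{-1/2}v$. For (4), apply $\phi(z)=\mathbb{T}^{-1/2}z$: this sends $\mathcal{N}(0,\mathbb{T})$ to $\mathcal{N}(0,\mathbb{I}_{2d})$ and sends $\mathcal{N}(0,\mathbb{S})$ to $\mathcal{N}(0,\mathbb{T}^{-1/2}\mathbb{S}\mathbb{T}^{-1/2})$.

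I do not anticipate any genuine obstacle in this proof; each item reduces to one application of the invariance principle together with the Gaussian pushforward formula. The only small verifications needed are the invertibility of the chosen matrices (automatic from $c\neq 0$ and from $\mathbb{S},\mathbb{T}$ being symmetric positive definite) and the Borel bi-measurability of the inverse maps (automatic since affine maps and their inverses are continuous). Since the statement is essentially a reformulation of the well-known affine invariance of the total variation distance on $\mathbb{R}^{2d}$, the entire argument is routine once the general principle is isolated.
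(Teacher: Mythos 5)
Your proposal is correct. The paper itself offers no argument here, simply deferring to \cite[Lemma A.1]{cut-off_overdamped}, and your affine-pushforward invariance principle together with the Gaussian image formula is exactly the standard argument that underlies that cited lemma, so in substance you are taking the same route while making it self-contained. The only point worth tightening is that Definition \ref{tvd} takes the supremum over Lebesgue measurable sets rather than Borel sets: you should either observe that an invertible affine map is bi-Lipschitz, hence maps Lebesgue null sets to null sets and therefore preserves the Lebesgue $\sigma$-algebra, or note that for absolutely continuous laws (as all the Gaussians here are) the suprema over Borel and Lebesgue sets coincide; either remark closes this minor measurability gap and the rest of your argument goes through verbatim.
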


\begin{proof}
We refer to \cite[Lemma A.1]{cut-off_overdamped} for the proof. 
\end{proof}
\begin{lem}
\label{lem:cut-off App 2}For $x\in\mathbb{R}^{2d}$,
\[
\mathrm{d}_{\mathrm{TV}}(\mathcal{N}(x,\mathbb{I}_{2d}),\mathcal{N}(0,\mathbb{I}_{2d}))=\sqrt{\frac{2}{\pi}}\int_{0}^{|x|/2}\mathrm{e}^{-\frac{s^{2}}{2}}\mathrm{d}s\le\frac{1}{\sqrt{2\pi}}|x|.
\]
\end{lem}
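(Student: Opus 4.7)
\smallskip

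The plan is to reduce the computation to a one-dimensional half-space calculation using the rotational symmetry of the standard Gaussian, and then to evaluate the resulting one-dimensional integral explicitly.

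First, I would recall that for two probability measures with Lebesgue densities $f$ and $g$ on $\mathbb{R}^{2d}$, the total variation distance admits the equivalent characterization
\[
\mathrm{d}_{\mathrm{TV}}(f,g)=\frac{1}{2}\int_{\mathbb{R}^{2d}}|f(y)-g(y)|\mathrm{d}y=\int_{\{f>g\}}(f(y)-g(y))\mathrm{d}y\;.
\]
Applying this to $f(y)=(2\pi)^{-d}\mathrm{e}^{-|y-x|^{2}/2}$ and $g(y)=(2\pi)^{-d}\mathrm{e}^{-|y|^{2}/2}$, the set $\{f>g\}$ becomes the half-space $A_{x}=\{y\in\mathbb{R}^{2d}:\langle y,x\rangle>|x|^{2}/2\}$, and hence
\[
\mathrm{d}_{\mathrm{TV}}(\mathcal{N}(x,\mathbb{I}_{2d}),\mathcal{N}(0,\mathbb{I}_{2d}))=\mathbb{P}(Z+x\in A_{x})-\mathbb{P}(Z\in A_{x})\;,
\]
where $Z\sim\mathcal{N}(0,\mathbb{I}_{2d})$.

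Next, I would exploit rotational invariance. Setting $u=x/|x|$ (assuming $x\neq0$, the case $x=0$ being trivial), the random variable $\langle Z,u\rangle$ has the standard one-dimensional normal distribution $\mathcal{N}(0,1)$. Consequently,
\[
\mathbb{P}(Z\in A_{x})=\mathbb{P}(\langle Z,u\rangle>|x|/2)\;,\qquad\mathbb{P}(Z+x\in A_{x})=\mathbb{P}(\langle Z,u\rangle>-|x|/2)\;,
\]
which gives
\[
\mathrm{d}_{\mathrm{TV}}(\mathcal{N}(x,\mathbb{I}_{2d}),\mathcal{N}(0,\mathbb{I}_{2d}))=\int_{-|x|/2}^{|x|/2}\frac{1}{\sqrt{2\pi}}\mathrm{e}^{-s^{2}/2}\mathrm{d}s=\sqrt{\frac{2}{\pi}}\int_{0}^{|x|/2}\mathrm{e}^{-s^{2}/2}\mathrm{d}s\;,
\]
by symmetry of the standard Gaussian density. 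The asserted upper bound $\frac{1}{\sqrt{2\pi}}|x|$ then follows immediately from the trivial estimate $\mathrm{e}^{-s^{2}/2}\leq1$ applied inside the integral.

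There is no real obstacle here; the only point that requires minor care is the identification of the super-level set $\{f>g\}$ as a half-space, which relies on the algebraic simplification $|y|^{2}-|y-x|^{2}=2\langle y,x\rangle-|x|^{2}$. Once that is observed, everything reduces to elementary Gaussian computations.
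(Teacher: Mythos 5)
Your proof is correct and complete: the identification of $\{f>g\}$ as the half-space $\{\langle y,x\rangle>|x|^{2}/2\}$, the reduction by rotational invariance to the one-dimensional variable $\langle Z,x/|x|\rangle$, and the resulting integral $\int_{-|x|/2}^{|x|/2}\frac{1}{\sqrt{2\pi}}\mathrm{e}^{-s^{2}/2}\mathrm{d}s$ are all exactly right, as is the trivial bound $\mathrm{e}^{-s^{2}/2}\le 1$. The paper itself does not reprove this fact but simply cites \cite[Lemma A.2]{cut-off_overdamped}; your argument is the standard self-contained derivation underlying that reference, so there is nothing to add.
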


\begin{proof}
We refer to \cite[Lemma A.2]{cut-off_overdamped} for the proof.
\end{proof}

Using these properties, let us now derive suitable upper-bounds on $\mathrm{d}_{\mathrm{TV}}\left(Z_{t}^{\epsilon}(x),\,Z_{t}^{\epsilon}(y)\right)$ which will be used later on.

\begin{lem}
\label{lem:tvZ_long}For all $r>0$, there exist constants $c_{0}(r)>0$
and $C(r)>0$ such that
\[
\sup_{x,\,y\in\mathcal{D}_{r}}\mathrm{d}_{\mathrm{TV}}\left(Z_{t}^{\epsilon}(x),\,Z_{t}^{\epsilon}(y)\right)\le\frac{C(r)}{\epsilon^{1/2}}e^{-c_{0}t}\;.
\]
\end{lem}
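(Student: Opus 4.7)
The plan is to exploit the Gaussian nature of $Z_t^\epsilon(x)$. Since $X_t(x)$ is deterministic and $Y_t(x)$ is a centered Gaussian with covariance $\Sigma_t(x)$, the law of $Z_t^\epsilon(x) = X_t(x) + \sqrt{2\epsilon}\,Y_t(x)$ is $\mathcal{N}(X_t(x), 2\epsilon\,\Sigma_t(x))$. A triangle inequality then decomposes
$$\mathrm{d}_{\mathrm{TV}}(Z_t^\epsilon(x), Z_t^\epsilon(y)) \le A_t + B_t,$$
where $A_t := \mathrm{d}_{\mathrm{TV}}(\mathcal{N}(X_t(x),2\epsilon\Sigma_t(x)),\mathcal{N}(X_t(y),2\epsilon\Sigma_t(x)))$ compares two Gaussians with a common covariance but different means, and $B_t := \mathrm{d}_{\mathrm{TV}}(\mathcal{N}(X_t(y),2\epsilon\Sigma_t(x)),\mathcal{N}(X_t(y),2\epsilon\Sigma_t(y)))$ compares Gaussians differing only through their covariance.

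For $A_t$ I would apply Lemma~\ref{lem:cut-off App 1} (parts 2 and 3) to reduce the problem to the TV distance between two unit-covariance Gaussians whose means differ by $(2\epsilon\,\Sigma_t(x))^{-1/2}(X_t(x)-X_t(y))$, and then use Lemma~\ref{lem:cut-off App 2} to obtain
$$A_t \le \frac{1}{2\sqrt{\pi\epsilon}}\,\|\Sigma_t(x)^{-1/2}\|\cdot|X_t(x)-X_t(y)|.$$
Theorem~\ref{thm:stability} bounds $|X_t(x)-X_t(y)|\le C(r)\mathrm{e}^{-\lambda t/2}$ uniformly for $x,y\in\mathcal{D}_r$. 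The key input is a uniform bound on $\|\Sigma_t(x)^{-1/2}\|$: by Lemma~\ref{lem:long_Sigma}, $\Sigma_t(x)\to\Sigma$ exponentially and uniformly for $x\in\mathcal{D}_r$, and since $\Sigma$ is positive definite, there exists $T_0(r)>0$ such that $\Sigma_t(x)\succeq\tfrac{1}{2}\lambda_{\min}(\Sigma)\,\mathbb{I}_{2d}$ for all $t\ge T_0(r)$ and $x\in\mathcal{D}_r$. This yields $A_t\le C(r)\epsilon^{-1/2}\mathrm{e}^{-\lambda t/2}$ for $t\ge T_0(r)$.

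For $B_t$ I would first use translation invariance together with the scale-invariance of Lemma~\ref{lem:cut-off App 1}(1) to reduce to $\mathrm{d}_{\mathrm{TV}}(\mathcal{N}(0,\Sigma_t(x)),\mathcal{N}(0,\Sigma_t(y)))$, and then apply the triangle inequality with $\mathcal{N}(0,\Sigma)$ as the intermediate measure. Each resulting term $\mathrm{d}_{\mathrm{TV}}(\mathcal{N}(0,\Sigma_t(z)),\mathcal{N}(0,\Sigma))$ for $z\in\{x,y\}$ is controlled via Pinsker's inequality and the explicit KL formula between centered Gaussians: a straightforward Taylor expansion of $\mathrm{tr}(\Sigma^{-1}\Sigma_t(z))-\log\det(\Sigma^{-1}\Sigma_t(z))$ around $\Sigma_t(z)=\Sigma$ gives a bound of order $\|\Sigma_t(z)-\Sigma\|_F^2$ on the KL divergence, and hence $\mathrm{d}_{\mathrm{TV}}\le C(r)\|\Sigma_t(z)-\Sigma\|_F$. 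Lemma~\ref{lem:long_Sigma} then yields $B_t\le C(r)\mathrm{e}^{-\alpha(r) t}$, which is dominated by the contribution from $A_t$ for small $\epsilon$.

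Setting $c_0 := \tfrac{1}{2}\min(\lambda,\alpha(r))$, the estimates on $A_t$ and $B_t$ combine to give the claimed bound for $t\ge T_0(r)$. For $t<T_0(r)$, the estimate is obtained trivially: since $\epsilon^{-1/2}\mathrm{e}^{-c_0 t}\ge\epsilon^{-1/2}\mathrm{e}^{-c_0 T_0(r)}\to\infty$ as $\epsilon\to 0$, enlarging $C(r)$ if necessary forces the right-hand side to exceed the trivial bound $1$. The main technical obstacle is controlling $\|\Sigma_t(x)^{-1/2}\|$ uniformly: note that for small $t$ the covariance $\Sigma_t(x)$ is nearly singular (by Lemma~\ref{lem_shor_Sigma0} its norm is only of order $t^{3/2}$), so the separation into small-$t$ and large-$t$ regimes is essential. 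It is precisely the combination of the positive-definiteness of the limit $\Sigma$ (which itself ultimately rests on the degeneracy-aware analysis of the Lyapunov equation in Lemma~\ref{lem:riccati}) with the exponential convergence of Lemma~\ref{lem:long_Sigma} that allows us to take $T_0(r)$ uniform in $x\in\mathcal{D}_r$.
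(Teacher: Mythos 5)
Your proposal is correct, but it organizes the triangle inequality differently from the paper. The paper splits $\mathrm{d}_{\mathrm{TV}}(Z_t^{\epsilon}(x),Z_t^{\epsilon}(y))$ into \emph{three} terms, using the limiting covariance $\Sigma$ rather than $\Sigma_t(x)$ in the mean-comparison term: two terms of the form $\mathrm{d}_{\mathrm{TV}}(\mathcal{N}(X_t(z),2\epsilon\Sigma_t(z)),\mathcal{N}(X_t(z),2\epsilon\Sigma))$, bounded by $\tfrac{3}{2}\Vert\Sigma^{-1/2}\Vert_{\mathrm{F}}^2\Vert\Sigma_t(z)-\Sigma\Vert_{\mathrm{F}}$ via the cited Devroye--Mehrabian--Reddad inequality together with Lemma \ref{lem:long_Sigma}, and one term $\mathrm{d}_{\mathrm{TV}}(\mathcal{N}(X_t(x),2\epsilon\Sigma),\mathcal{N}(X_t(y),2\epsilon\Sigma))$ in which the covariance is the \emph{fixed} positive definite matrix $\Sigma$, so that $\Vert\Sigma^{-1/2}\Vert$ is a time-independent constant and no lower bound on $\Sigma_t(x)$ is ever needed; consequently the paper's argument is uniform in $t\ge 0$ with no case distinction. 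Your two-term decomposition instead pairs the mean comparison with the time-dependent covariance $\Sigma_t(x)$, which forces you to control $\Vert\Sigma_t(x)^{-1/2}\Vert$; you handle the resulting small-time degeneracy correctly by extracting $T_0(r)$ from Lemma \ref{lem:long_Sigma} and absorbing $t<T_0(r)$ into the trivial bound $\mathrm{d}_{\mathrm{TV}}\le 1\le C(r)\epsilon^{-1/2}e^{-c_0 t}$, and you replace the Devroye--Mehrabian--Reddad bound by Pinsker plus the Gaussian KL formula, which gives the same order $C(r)\Vert\Sigma_t(z)-\Sigma\Vert_{\mathrm{F}}$. One point you should state explicitly: the Taylor/KL estimate also requires $\Sigma_t(z)$ to be bounded below (otherwise the KL divergence is infinite, e.g. near $t=0$), so that step too must be confined to $t\ge T_0(r)$ — this is consistent with your regime split but deserves a sentence. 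In short, both routes work; the paper's choice of $\Sigma$ as the pivot covariance is the slicker device, since it removes the time-splitting and the inverse-covariance estimate altogether, while your version is self-contained in the covariance-comparison step at the cost of the extra $T_0(r)$ bookkeeping.
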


\begin{proof}
Fix $r>0$ and $x,\,y\in\mathcal{D}_{r}$. Recall from the definition
(\ref{eq:Z_t^eps}) that we have
\begin{equation}
Z_{t}^{\epsilon}(x)\sim\mathcal{N}(X_{t}(x),\,2\epsilon\Sigma_{t}(x))\;.\label{eq:distZ}
\end{equation}
Hence, by the triangle inequality, we get
\begin{align}
\mathrm{d}_{\mathrm{TV}}\left(Z_{t}^{\epsilon}(x),\,Z_{t}^{\epsilon}(y)\right)\le\; & \mathrm{d}_{\mathrm{TV}}\left(\mathcal{N}(X_{t}(x),\,2\epsilon\Sigma_{t}(x)),\,\mathcal{N}(X_{t}(x),\,2\epsilon\Sigma)\right)\nonumber \\
 & +\mathrm{d}_{\mathrm{TV}}\left(\mathcal{N}(X_{t}(y),\,2\epsilon\Sigma),\,\mathcal{N}(X_{t}(y),\,2\epsilon\Sigma_{t}(y))\right)\nonumber \\
 & +\mathrm{d}_{\mathrm{TV}}\left(\mathcal{N}(X_{t}(x),\,2\epsilon\Sigma),\,\mathcal{N}(X_{t}(y),\,2\epsilon\Sigma)\right)\;.\label{eq:tvZ}
\end{align}
By Lemma~\ref{lem:cut-off App 1}-(1) and (2), the first distance in
the right-hand side equals to
\[
\mathrm{d}_{\mathrm{TV}}\left(\mathcal{N}(0,\,2\epsilon\Sigma_{t}(x)),\,\mathcal{N}(0,\,2\epsilon\Sigma)\right)=\mathrm{d}_{\mathrm{TV}}\left(\mathcal{N}(0,\,\Sigma_{t}(x)),\,\mathcal{N}(0,\,\Sigma)\right)\;.
\]
By \cite[Theorem 1.1 and display (2)]{DeMeRe}, we have
\begin{align}
\mathrm{d}_{\mathrm{TV}}\left(\mathcal{N}(0,\,\Sigma_{t}(x)),\,\mathcal{N}(0,\,\Sigma)\right)
&\le\frac{3}{2}\left\Vert \Sigma^{-1/2}\Sigma_{t}(x)\Sigma^{-1/2}-\mathbb{I}_{2d}\right\Vert _{\textup{F}}  \nonumber \\
&\le\frac{3}{2}\left\Vert \Sigma^{-1/2}\right\Vert _{\textup{F}}^{2}\left\Vert \Sigma_{t}(x)-\Sigma\right\Vert _{\textup{F}}\label{eq:tvZ0}
\end{align}
where the second inequality follows from the submultiplicativity of
Frobenius norm, i.e., (\ref{eq:submul}). Hence, by Lemma \ref{lem:long_Sigma},
we get
\begin{equation}
\mathrm{d}_{\mathrm{TV}}\left(\mathcal{N}(X_{t}(x),\,2\epsilon\Sigma_{t}(x)),\,\mathcal{N}(X_{t}(x),\,2\epsilon\Sigma)\right)\le C(r)e^{-\alpha(r)t}\;.\label{eq:tvZ1}
\end{equation}
Note that the second distance at the right-hand side of (\ref{eq:tvZ})
has the same bound. Now we turn to the last distance in (\ref{eq:tvZ}).
By Lemma~\ref{lem:cut-off App 1}-(1), (2) and (3), we get
\begin{align*}
\mathrm{d}_{\mathrm{TV}}\left(\mathcal{N}(X_{t}(x),\,2\epsilon\Sigma),\,\mathcal{N}(X_{t}(y),\,2\epsilon\Sigma)\right) & =\mathrm{d}_{\mathrm{TV}}\left(\mathcal{N}\left(\Sigma^{-1/2}\frac{X_{t}(x)-X_{t}(y)}{\sqrt{2\epsilon}},\,\mathbb{I}_{2d}\right),\,\mathcal{N}(0,\,\mathbb{I}_{2d})\right)\;.
\end{align*}
Hence, by Lemma~\ref{lem:cut-off App 2}, we have
\[
\mathrm{d}_{\mathrm{TV}}\left(\mathcal{N}(X_{t}(x),\,2\epsilon\Sigma),\,\mathcal{N}(X_{t}(y),\,2\epsilon\Sigma)\right)\le\frac{\left\Vert \Sigma^{-1/2}\right\Vert }{\sqrt{4\pi\epsilon}}|X_{t}(x)-X_{t}(y)|\;.
\]
Therefore, by Theorem \ref{thm:stability}, there exists a constant
$C(r)>0$ such that
\begin{equation}
\mathrm{d}_{\mathrm{TV}}\left(\mathcal{N}(X_{t}(x),\,2\epsilon\Sigma),\,\mathcal{N}(X_{t}(y),\,2\epsilon\Sigma)\right)\le\frac{C(r)}{\epsilon^{1/2}}e^{-\lambda t}\;.\label{eq:tvZ2}
\end{equation}
Summing up (\ref{eq:tvZ1}) and (\ref{eq:tvZ2}) complete the proof.
\end{proof}
Next we provide a bound which we shall use for the study of the small-time
asymptotics.
\begin{lem}
\label{lem:tvZ_short}For all $r>0$, there exist constants $c_{1}(r),\,c_{2}(r)>0$
such that
\[
\sup_{u,\,u'\in\mathcal{D}_{r}}\mathrm{d}_{\mathrm{TV}}\left(Z_{t}^{\epsilon}(u),\,Z_{t}^{\epsilon}(u')\right)\leq c_{1}(r)\left[\left(\frac{e^{c_{2}(r)t}}{t^{3/2}\epsilon^{1/2}}+1\right)|u-u'|+t\right]
\]
for all small enough $t>0$.
\end{lem}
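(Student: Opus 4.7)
The plan is to decompose the total variation distance via the triangle inequality by inserting an intermediate Gaussian with the mean of one endpoint and the covariance of the other, and then bound each piece using the small-time asymptotics from Lemmata~\ref{lem_shor_Sigma0} and~\ref{lem:short_Sigma}. Concretely, since $Z_t^\epsilon(u)\sim\mathcal{N}(X_t(u),2\epsilon\Sigma_t(u))$ by~\eqref{eq:distZ}, I would write
\begin{align*}
\mathrm{d}_{\mathrm{TV}}(Z_t^\epsilon(u),Z_t^\epsilon(u'))\le\;&\mathrm{d}_{\mathrm{TV}}\bigl(\mathcal{N}(X_t(u),2\epsilon\Sigma_t(u)),\mathcal{N}(X_t(u'),2\epsilon\Sigma_t(u))\bigr)\\
&+\mathrm{d}_{\mathrm{TV}}\bigl(\mathcal{N}(X_t(u'),2\epsilon\Sigma_t(u)),\mathcal{N}(X_t(u'),2\epsilon\Sigma_t(u'))\bigr).
\end{align*}

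For the first piece (same covariance, different means), I would apply Lemma~\ref{lem:cut-off App 1}-(1),(2),(3) to reduce to $\mathrm{d}_{\mathrm{TV}}(\mathcal{N}((2\epsilon\Sigma_t(u))^{-1/2}(X_t(u)-X_t(u')),\mathbb{I}_{2d}),\mathcal{N}(0,\mathbb{I}_{2d}))$, then use Lemma~\ref{lem:cut-off App 2} to get the upper bound $\frac{1}{\sqrt{4\pi\epsilon}}\|\Sigma_t(u)^{-1/2}\|\,|X_t(u)-X_t(u')|$. Combining Lemma~\ref{lem:short_Sigma} (which gives $\|\Sigma_t(u)^{-1/2}\|\le C(r)t^{-3/2}$ for small $t$) with the contraction estimate of Corollary~\ref{cor:contraction} ($|X_t(u)-X_t(u')|\le e^{C(r)t}|u-u'|$) yields the term $\frac{C(r)e^{C(r)t}}{t^{3/2}\epsilon^{1/2}}|u-u'|$.

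For the second piece (same mean, different covariances), I would use Lemma~\ref{lem:cut-off App 1}-(1),(2) to reduce to $\mathrm{d}_{\mathrm{TV}}(\mathcal{N}(0,\Sigma_t(u)),\mathcal{N}(0,\Sigma_t(u')))$ and then apply the same bound from~\cite{DeMeRe} used in~\eqref{eq:tvZ0}, together with submultiplicativity of the Frobenius norm, to obtain the upper bound $\tfrac{3}{2}\|\Sigma_t(u')^{-1/2}\|_{\mathrm{F}}^{2}\,\|\Sigma_t(u)-\Sigma_t(u')\|_{\mathrm{F}}$. The factor $\|\Sigma_t(u')^{-1/2}\|_{\mathrm F}^2\le C(r)t^{-3}$ again comes from Lemma~\ref{lem:short_Sigma}. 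The main point is to control $\|\Sigma_t(u)-\Sigma_t(u')\|_{\mathrm F}$: using Lemma~\ref{lem_shor_Sigma0} I would write $\Sigma_t=\widehat\Sigma_t+O(t^4)$, and observe that the only $x$-dependent entry of $\widehat\Sigma_t(x)$ is the $\tfrac{t^3}{6}(\mathrm{D}F(q)+\mathrm{D}F(q)^\dagger)$ block. Since $F\in\mathcal{C}^3$, the Jacobian $\mathrm{D}F$ is locally Lipschitz, so $\|\widehat\Sigma_t(u)-\widehat\Sigma_t(u')\|_{\mathrm F}\le C(r)t^3|u-u'|$, and hence $\|\Sigma_t(u)-\Sigma_t(u')\|_{\mathrm F}\le C(r)(t^3|u-u'|+t^4)$. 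Multiplying by $C(r)t^{-3}$ gives a contribution of $C(r)(|u-u'|+t)$.

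Adding the two pieces produces the claimed bound. The main technical obstacle is the second piece, because the degeneracy of $\Sigma_t$ at $t=0$ forces the blow-up $\|\Sigma_t^{-1/2}\|^2\sim t^{-3}$, so the difference $\|\Sigma_t(u)-\Sigma_t(u')\|_{\mathrm F}$ must be shown to vanish at order $t^3$ (times $|u-u'|$) plus $t^4$; this is exactly the information carried by the third-order Taylor expansion of Lemma~\ref{lem_shor_Sigma0}, and is the reason why that lemma had to be pushed to third order in the first place.
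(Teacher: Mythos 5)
Your proposal is correct and follows essentially the same route as the paper's proof: the same triangle-inequality splitting through $\mathcal{N}(X_t(u'),2\epsilon\Sigma_t(u))$, the same reduction via Lemma~\ref{lem:cut-off App 1}, Lemma~\ref{lem:cut-off App 2}, Corollary~\ref{cor:contraction} and Lemma~\ref{lem:short_Sigma} for the mean-shift term, and the same use of the bound from \cite{DeMeRe} together with the third-order expansion of Lemma~\ref{lem_shor_Sigma0} (with the Lipschitz control of $\mathrm{D}F$ in the only $x$-dependent block) for the covariance term.
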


\begin{proof}
We fix $r>0$ and $u,\,u'\in\mathcal{D}_{r}$. By (\ref{eq:distZ})
and the triangle inequality
\begin{align}
  \mathrm{d}_{\mathrm{TV}}\left(Z_{t}^{\epsilon}(u),\,Z_{t}^{\epsilon}(u')\right)\nonumber 
  \le &\;\mathrm{d}_{\mathrm{TV}}\left(\mathcal{N}(X_{t}(u),\,2\epsilon\Sigma_{t}(u)),\,\mathcal{N}(X_{t}(u'),\,2\epsilon\Sigma_{t}(u))\right) \\
  &+\mathrm{d}_{\mathrm{TV}}\left(\mathcal{N}(X_{t}(u'),\,2\epsilon\Sigma_{t}(u)),\,\mathcal{N}(X_{t}(u'),\,2\epsilon\Sigma_{t}(u'))\right)\;.\label{eq:dtv0}
\end{align}
By Lemma~\ref{lem:cut-off App 1}-(2), (3), the first term at the
right-hand side can be written as
\[
\mathrm{d}_{\mathrm{TV}}\left(\mathcal{N}\left(\Sigma_{t}(u)^{-1/2}\frac{X_{t}(u)-X_{t}(u')}{\sqrt{2\epsilon}},\,\mathbb{I}_{2d}\right),\,\mathcal{N}(0,\,\mathbb{I}_{2d})\right)\;.
\]
By Corollary \ref{cor:contraction}, Lemma \ref{lem:short_Sigma}, and Lemma~\ref{lem:cut-off App 2} this is bounded from above by
\begin{equation}
\frac{1}{\sqrt{4\pi\epsilon}}\left\Vert \Sigma_{t}(u)^{-1/2}\right\Vert \left|X_{t}(u)-X_{t}(u')\right|\le c_{1}(r)\frac{e^{c_{2}(r)t}}{t^{3/2}\epsilon^{1/2}}|u-u'|\label{eq:dtv2}
\end{equation}
for some constants $c_{1}(r),\,c_{2}(r)>0$.

On the other hand, by Lemma~\ref{lem:cut-off App 1}-(1), (3), the
second term at the right-hand side of (\ref{eq:dtv0}) equals to
\[
\mathrm{d}_{\mathrm{TV}}\left(\mathcal{N}(0,\,\Sigma_{t}(u)),\,\mathcal{N}(0,\,\Sigma_{t}(u'))\right)\;.
\]
By the same argument with (\ref{eq:tvZ0}) and Lemma \ref{lem:short_Sigma},
this distance is bounded from above by
\begin{equation}
\frac{3}{2}\left\Vert \Sigma_{t}(u)^{-1/2}\right\Vert _{\textup{F}}^{2}\left\Vert \Sigma_{t}(u')-\Sigma_{t}(u)\right\Vert _{\textup{F}}\le\frac{C(r)}{t^{3}}\left\Vert \Sigma_{t}(u')-\Sigma_{t}(u)\right\Vert _{\textup{F}}\;.\label{eq:dtv1}
\end{equation}
By Lemma \ref{lem_shor_Sigma0} and mean-value theorem along with
explicit formular (\ref{eq:Sigma_hat}) for $\widehat{\Sigma}_{t}$,
we get
\begin{align*}
\left\Vert \Sigma_{t}(u')-\Sigma_{t}(u)\right\Vert _{\textup{F}} & \le C(r)t^{4}+\left\Vert \widehat{\Sigma}_{t}(u')-\widehat{\Sigma}_{t}(u)\right\Vert _{\textup{F}}\\
 & =C(r)t^{4}+Ct^{3}\left\Vert \textup{D}F(u')-\textup{D}F(u)\right\Vert _{\textup{F}}\le C(r)\left(t^{4}+t^{3}|u'-u|\right)\;.
\end{align*}
Inserting this to (\ref{eq:dtv1}), we can conclude that the second
term at the right-hand side of (\ref{eq:dtv0}) is bounded by $C(r)\left(t+|u'-u|\right)$.
This bound along with (\ref{eq:dtv0}) and (\ref{eq:dtv2}) complete
the proof.
\end{proof}

\section{\label{sec7}Estimate of Total Variation Distance}

In this section, we estimate the total variation distance $\mathrm{d}_{\mathrm{TV}}\left(X_{t}^{\epsilon}(x),\,\mu^{\epsilon}\right)$
by gathering the results obtained in the previous section. From now
on, we suppose that both Assumptions~\ref{ass:main} and~\ref{ass:DF}
hold.

\subsection{Long-time behavior on zero-noise dynamics}

\label{sec:lem_asym_uld}

Let us start this section with the proof of Lemma \ref{lem:asym_uld} which follows the steps of~\cite[Lemma B.2]{cut-off_overdamped},
since it is one of the crucial tools in the sharp estimate of the
total variation distance.
\begin{proof}[Proof of Lemma \ref{lem:asym_uld}]
By the Hartman-Grobman theorem, there exist neighborhoods $\mathcal{U}$, $\mathcal{V}\subset \mathbb{R}^{2d}$
of the origin in $\mathbb{R}^{2d}$ and a diffeomorphism $\phi:\mathcal{U}\rightarrow\mathcal{V}$
that maps the path $(X_{t}(x))_{t\ge0}$ to the linearized path $(e^{\mathbb{A}t}\phi(x))_{t\ge0}$
for all $x\in\mathcal{U}$ where $\mathbb{A}=-\textup{D}G(0)$) (cf.
(\ref{eq:uld0_2})). 

We first prove the lemma when $x\in\mathcal{U}$. Denote by $-\lambda_{1},\,\dots,\,-\lambda_{N}\in\mathbb{C}$
the eigenvalues of $\mathbb{A}$ so that $\textup{Re}(\lambda_{i})>0$
for all $i\in\llbracket1,\,N\rrbracket$ by Remark \ref{rem:approx}-(3).
Denote by $\{w_{j,\,k}:j\in[1,\,N],\,k\in[1,\,N_{j}]\}$ the Jordan
basis of $\mathbb{A}$, i.e.,
\[
\mathbb{A}w_{j,\,k}=-\lambda_{j}w_{j,\,k}+w_{j,\,k+1}\;\;\;\;;\;j\in[1,\,N],\,k\in[1,\,N_{j}]\;,
\]
where $w_{j,\,N_{j}+1}=0$. Expanding $x$ by using this Jordan basis
in a way that
\begin{equation}
x=\sum_{j=1}^{N}\sum_{k=1}^{N_{j}}x_{j,\,k}w_{j,\,k}\;.\label{eq:expx}
\end{equation}
Here, by the Putzer spectral method (cf. \cite[Proof of Lemma B.1.]{cut-off_overdamped}),
we can write
\[
e^{\mathbb{A}t}x=\sum_{j=1}^{N}\sum_{k=1}^{N_{j}}\sum_{i=1}^{k}\frac{t^{k-i}e^{-\lambda_{j}t}}{(k-i)!}x_{j,\,i}w_{j,\,k}\;.
\]
Then, define
\begin{align*}
\eta & =\eta(x):=\min\left\{ \textup{Re}(\lambda_{j}):x_{j,\,k}\neq0\text{ for some }k\in\llbracket1,\,N_{j}\rrbracket\right\} >0\;,\\
\nu & =\ensuremath{\nu(x):=\max\left\{ N_{j}-k:x_{j,\,k}\neq0\text{ for some }k\in\llbracket1,\,N_{j}\rrbracket\right\} \ge0}\;\text{and}\\
I & =I(x):=\{j\in[1,\,N]:\textrm{Re}(\lambda_{j})=\eta\text{ and }x_{j,\,N_{j}-\nu}\neq0\}\;,
\end{align*}
so that we can write
\begin{equation}
\frac{\mathrm{e}^{\eta t}}{t^{\nu}}e^{\mathbb{A}t}x=\sum_{j\in I}\frac{e^{-(\lambda_{j}-\eta)t}}{\nu!}x_{j,\,N_{j}-\nu}w_{j,\,N_{j}}+\varepsilon_{t}\;,\label{eq:exp-ATx}
\end{equation}
where $\varepsilon_{t}\rightarrow0$ as $t\rightarrow\infty$. Since
$\lambda_{j}-\eta$ is pure imaginary for $j\in I$, by writing
\[
\lambda_{j}-\eta=-i\theta_{j}\;\;\;\text{and}\;\;\;v_{j}=\frac{x_{j,\,N_{j}-\nu}}{\nu!}w_{j,\,N_{j}}\;,
\]
we can deduce from (\ref{eq:exp-ATx}) that
\begin{equation}
\lim_{t\rightarrow\infty}\left|\frac{\mathrm{e}^{\eta t}}{t^{\nu}}e^{\mathbb{A}t}x-\sum_{k\in I}\mathrm{e}^{i\theta_{k}t}v_{k}\right|=0\;.\label{eq:approx2}
\end{equation}
In \cite[Lemma B.2.]{cut-off_overdamped}, using the Hartman-Grobman
theorem mentioned above, it is proven that we can replace $e^{\mathbb{A}t}x$
at (\ref{eq:approx2}) with $X_{t}(x)$. Therefore, we can derive
(\ref{eq:approx1}) with $\tau(x)=0$ in the case $x\in\mathcal{U}$.

For general $x\in\mathbb{R}^{2d}$, define $\tau(x)>0$ as
\[
\tau(x)=\inf\{t>0:\text{For all\;} s\ge t, X_{t}(x)\in\mathcal{U}\}
\]
so that $X_{\tau(x)}(x)\in\mathcal{U}$. Note that $\tau(x)<\infty$
follows from Theorem \ref{thm:stability}. Since $X_{t+\tau(x)}(x)=X_{t}(X_{\tau(x)}(x))$,
we also have (\ref{eq:approx1}) with the constants defined at $X_{\tau(x)}(x)\in\mathcal{U}$
in the previous step.
\end{proof}
\begin{rem}
\label{rem:const}In some special cases, the constants obtained in
the previous lemma can be simplified.
\begin{enumerate}
\item For $x\in\mathcal{U}$ with $x_{j,\,k}\neq0$ for all $j,\,k$ (which
happens almost everywhere in $\mathcal{U}$ with respect to the Lebesgue
measure), $\eta$ is the smallest real part of eigenvalue, and $\nu+1$
is the largest size of Jordan block associated with an eigenvalue
of smallest real part.
\item When $\mathbb{A}$ is diagonalizable, then $\nu=0$.
\item When all eigenvalues of $\mathbb{A}$ are positive real, we have $\theta_{k}=0$
for all $k\in I$ and all vectors $v_{k}$ are real vectors.
\end{enumerate}
\end{rem}

\subsection{Strategy to estimate total-variation distance}

Let $x\in\mathbb{R}^{2d}$ and recall the constants and vectors defined
in Lemma \ref{lem:asym_uld}. The mixing time for the process $(X_{t}^{\epsilon}(x))_{t\ge0}$
is defined as
\[
t_{\mathrm{mix}}^{\epsilon}(x):=\frac{1}{2\eta}\log\left(\frac{1}{2\epsilon}\right)+\frac{\nu}{\eta}\log\log\left(\frac{1}{2\epsilon}\right)+\tau(x)\;.
\]
For $t\ge\tau(x)$ define $v(t,\,x)\in\mathbb{R}^{2d}$ and $D^{\epsilon}(t,\,x)\in[0,\,1]$
as
\begin{align}
v(t,\,x) & :=\frac{(t-\tau(x))^{\nu}}{\mathrm{e}^{\eta(t-\tau(x))}}\Sigma^{-1/2}\sum_{k=1}^{m}\mathrm{e}^{i\theta_{k}(t-\tau(x))}v_{k}\;,\label{eq:def_v(t)}\\
D^{\epsilon}(t,\,x) & :=\mathrm{d}_{\mathrm{TV}}\left(\mathcal{N}\left(\frac{1}{\sqrt{2\epsilon}}v(t,\,x),\,\mathbb{I}_{2d}\right),\,\mathcal{N}(0,\,\mathbb{I}_{2d})\right)\;.\label{eq:def_D(t)}
\end{align}
Our main result regarding the estimate of the total variation distance
to the equilibrium is the following theorem.
\begin{thm}
\label{thm:tv}For all $x\in\mathbb{R}^{2d}$ and $w\in\mathbb{R}$,
it holds that
\[
\lim_{\epsilon\rightarrow0}\left|\mathrm{d}_{\mathrm{TV}}\left(X_{t_{\mathrm{mix}}^{\epsilon}(x)+w}^{\epsilon}(x),\,\mu^{\epsilon}\right)-D^{\epsilon}(t_{\textup{mix}}^{\epsilon}(x)+w,\,x)\right|=0\;.
\]
\end{thm}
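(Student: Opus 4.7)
The strategy is to chain a sequence of total-variation approximations that trade the process $X_t^\epsilon(x)$ and the measure $\mu^\epsilon$ for explicit Gaussians, and then match the resulting quantity to $D^\epsilon(t,x)$ via the long-time asymptotics of $X_t(x)$. Concretely, writing $Z_t^\epsilon(x)\sim\mathcal{N}(X_t(x),2\epsilon\Sigma_t(x))$ for the Gaussian surrogate introduced in Section~\ref{sec5}, the triangle inequality yields
\[
\mathrm{d}_{\mathrm{TV}}(X_t^\epsilon,\mu^\epsilon)=\mathrm{d}_{\mathrm{TV}}\!\bigl(\mathcal{N}(X_t(x),2\epsilon\Sigma),\mathcal{N}(0,2\epsilon\Sigma)\bigr)+\mathcal{E},
\]
where the error $\mathcal{E}$ splits as $\mathcal{E}_1+\mathcal{E}_2+\mathcal{E}_3$ controlled respectively by $\mathrm{d}_{\mathrm{TV}}(X_t^\epsilon,Z_t^\epsilon)$, by $\mathrm{d}_{\mathrm{TV}}(\mathcal{N}(X_t,2\epsilon\Sigma_t(x)),\mathcal{N}(X_t,2\epsilon\Sigma))$, and by $\mathrm{d}_{\mathrm{TV}}(\mu^\epsilon,\mathcal{N}(0,2\epsilon\Sigma))$.

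Each of the three errors vanishes as $\epsilon\to 0$. The term $\mathcal{E}_3$ is exactly Theorem~\ref{thm:asymstat}. The term $\mathcal{E}_2$ is controlled via the estimate $\mathrm{d}_{\mathrm{TV}}(\mathcal{N}(0,\Sigma_t(x)),\mathcal{N}(0,\Sigma))\le \tfrac{3}{2}\|\Sigma^{-1/2}\|_F^2\|\Sigma_t(x)-\Sigma\|_F$ already used in~\eqref{eq:tvZ0}, combined with Lemma~\ref{lem:long_Sigma}: at $t=t_{\mathrm{mix}}^\epsilon(x)+w$, this decays polynomially in $\epsilon$. The genuine new difficulty lies in $\mathcal{E}_1$: Proposition~\ref{prop:main_apprx} with $\nu\in(0,1)$ chosen small enough (so that $e^{-\alpha n t_{\mathrm{mix}}^\epsilon}/\epsilon^{n\nu}\to 0$) delivers $\mathbb{E}|X_t^\epsilon-Z_t^\epsilon|^n=o(\epsilon^{n/2})$, but this $L^n$-bound must be upgraded to a TV-bound. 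I expect the cleanest route to be a Girsanov/relative-entropy argument bounding the Kullback--Leibler divergence between the path laws of $(X_s^\epsilon)_{s\in[0,t]}$ and $(Z_s^\epsilon)_{s\in[0,t]}$: their drifts differ by the Taylor remainder $G(X_s^\epsilon)-G(X_s)-\mathrm{D}G(X_s)(X_s^\epsilon-X_s)=O(|X_s^\epsilon-X_s|^2)=O(\epsilon)$, so after Girsanov normalization by $4\epsilon$ and integration over $[0,t_{\mathrm{mix}}^\epsilon]$ the KL-divergence is $O(\epsilon\log(1/\epsilon))$ and Pinsker yields $\mathrm{d}_{\mathrm{TV}}(X_t^\epsilon,Z_t^\epsilon)=O(\sqrt{\epsilon\log(1/\epsilon)})\to 0$. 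Assumption~\ref{ass:DF} together with the moment estimates of Proposition~\ref{prop:mom_est} and Corollary~\ref{cor_expmom} are needed to handle the exponential growth of $\mathrm{D}F$ at infinity and to verify Novikov's condition.

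For the final identification, Lemma~\ref{lem:cut-off App 1} parts~(2) and~(3) give
\[
\mathrm{d}_{\mathrm{TV}}\!\bigl(\mathcal{N}(X_t(x),2\epsilon\Sigma),\mathcal{N}(0,2\epsilon\Sigma)\bigr)=\mathrm{d}_{\mathrm{TV}}\!\bigl(\mathcal{N}(\Sigma^{-1/2}X_t(x)/\sqrt{2\epsilon},\mathbb{I}_{2d}),\mathcal{N}(0,\mathbb{I}_{2d})\bigr).
\]
Applying Lemma~\ref{lem:asym_uld} with $s:=t-\tau(x)\to\infty$ yields $X_t(x)=\tfrac{s^\nu}{e^{\eta s}}\sum_{k=1}^m e^{i\theta_k s}v_k+o\!\bigl(\tfrac{s^\nu}{e^{\eta s}}\bigr)$. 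Using the explicit form of $t_{\mathrm{mix}}^\epsilon(x)$ from~\eqref{eq:mixingtime}, one directly computes $\tfrac{s^\nu}{e^{\eta s}\sqrt{2\epsilon}}=\tfrac{1}{(2\eta)^\nu e^{\eta w}}(1+o(1))$, which is bounded; multiplying by $\Sigma^{-1/2}$ and comparing with the definition~\eqref{eq:def_v(t)} gives $\Sigma^{-1/2}X_t(x)/\sqrt{2\epsilon}=v(t,x)/\sqrt{2\epsilon}+o(1)$ in $\mathbb{R}^{2d}$. Since the map $u\mapsto\mathrm{d}_{\mathrm{TV}}(\mathcal{N}(u,\mathbb{I}_{2d}),\mathcal{N}(0,\mathbb{I}_{2d}))$ is continuous by the explicit formula of Lemma~\ref{lem:cut-off App 2}, the Gaussian TV-distance converges to $D^\epsilon(t,x)$ up to an $o(1)$ error, closing the chain and yielding Theorem~\ref{thm:tv}.
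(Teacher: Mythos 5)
Your outer architecture coincides with the paper's: the triangle inequality splitting into (i) $\mathrm{d}_{\mathrm{TV}}(X^{\epsilon}_{t},Z^{\epsilon}_{t})$, (ii) the comparison of $\mathcal{N}(X_t(x),2\epsilon\Sigma_t(x))$ with $\mathcal{N}(X_t(x),2\epsilon\Sigma)$ via Lemma \ref{lem:long_Sigma}, (iii) Theorem \ref{thm:asymstat} for $\mu^{\epsilon}$, and the identification of the remaining Gaussian distance with $D^{\epsilon}(t,x)$ through Lemmas \ref{lem:cut-off App 1}, \ref{lem:cut-off App 2} and \ref{lem:asym_uld} is exactly Proposition \ref{prop:tvm2}. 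The genuine gap is in your treatment of (i). You propose a one-shot Girsanov/Pinsker bound on the path laws over the whole interval $[0,t^{\epsilon}_{\mathrm{mix}}(x)]$, claiming the drift difference is the Taylor remainder $=O(|X^{\epsilon}_s-X_s|^2)=O(\epsilon)$ and that Assumption \ref{ass:DF} plus Corollary \ref{cor_expmom} control the exponential growth. This does not go through for a general starting point $x\in\mathbb{R}^{2d}$: the remainder $F(q^{\epsilon}_s)-F(q_s)-\mathrm{D}F(q_s)(q^{\epsilon}_s-q_s)$ is quadratic in $|q^{\epsilon}_s-q_s|$ only up to a factor involving $\mathrm{D}^2F$ (or the oscillation of $\mathrm{D}F$) along a segment whose endpoint $q^{\epsilon}_s$ is unbounded, and the only tool available to integrate the resulting $\mathrm{e}^{c\rho|q^{\epsilon}_s|^2}$-type factors is Corollary \ref{cor_expmom}, whose admissible exponent is at most of order $\bigl(2(d+2)\kappa_0(H(x)\mathrm{e}^{-\lambda s}+d\epsilon/\lambda)\bigr)^{-1}$. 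For $|x|$ large and $s$ of order one this threshold is far below $\rho$, so the early-time portion of your Kullback--Leibler integral cannot be controlled; moreover Proposition \ref{prop:main_apprx}, which you invoke for $\mathbb{E}|X^{\epsilon}_s-Z^{\epsilon}_s|^n$, is only stated on $[T(x),1/\epsilon^{\theta}]$, leaving $[0,T(x)]$ uncovered. (Novikov's condition for the full-interval change of measure is likewise out of reach with these estimates.)

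This is precisely why the paper does not run Girsanov over the long horizon. In Section \ref{sec73} it writes $X^{\epsilon}_{t^{\epsilon}}(x)=X^{\epsilon}_{\delta^{\epsilon}}(X^{\epsilon}_{t^{\epsilon}-\delta^{\epsilon}}(x))$ with a short window $\delta^{\epsilon}=\epsilon^{\sigma}$, applies the Pinsker/Girsanov bound of Lemma \ref{lem:pinsker} only over $[0,\delta^{\epsilon}]$ and only from starting points $u\in\mathcal{D}_{r_0}$ with $H(u)<\frac{1}{16(d+2)\kappa_0\rho}$ (condition \eqref{eq:cond_r}), which is exactly what legitimizes Corollary \ref{cor_expmom} with exponent $8\rho$; the event $\{|X^{\epsilon}_{t^{\epsilon}-\delta^{\epsilon}}(x)|>r_0\}$ is discarded by Chebyshev and Proposition \ref{prop:mom_est} (Proposition \ref{prop:tv1}). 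The second half, comparing $Z^{\epsilon}_{\delta^{\epsilon}}(X^{\epsilon}_{t^{\epsilon}-\delta^{\epsilon}}(x))$ with $Z^{\epsilon}_{t^{\epsilon}}(x)$, then requires the small-time total-variation estimate between Gaussians started at nearby points, i.e.\ Lemma \ref{lem:tvZ_short}, which in turn rests on the third-order expansion of $\Sigma_t$ and the bound $\Vert\Sigma_t^{-1/2}\Vert\lesssim t^{-3/2}$ (Lemmas \ref{lem_shor_Sigma0} and \ref{lem:short_Sigma}) -- ingredients entirely absent from your plan. Without this restart-plus-short-window mechanism (or a substitute supplying exponential moments at early times for arbitrary $x$), your estimate of $\mathcal{E}_1$ does not close, and this is the core difficulty the paper's proof of Theorem \ref{thm:tv} is designed to overcome.
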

Assuming Theorem~\ref{thm:tv} holds, we are now able to conclude the proofs of Theorems \ref{thm:cut-off} and \ref{thm:profilecut-off} which are similar to the proofs provided in~\cite[Corollaries 2.7, 2.9]{cut-off_overdamped} but for the sake of completeness we expose below the main ideas.
\begin{proof}[Proof of Theorems \ref{thm:cut-off} and \ref{thm:profilecut-off}] 

By Lemma~\ref{lem:cut-off App 2},
$$D^{\epsilon}(t,\,x)=\sqrt{\frac{2}{\pi}}\int_0^{|v(t,\,x)|/\sqrt{2\epsilon}}\mathrm{e}^{-x^2/2}\mathrm{d}x\;.$$
Furthermore, by construction of $t_{\textup{mix}}^{\epsilon}(x)$,
$$\lim_{\epsilon\rightarrow0}\frac{1}{\sqrt{2\epsilon}}\frac{(t_{\textup{mix}}^{\epsilon}(x)+w-\tau(x))^{\nu}}{\mathrm{e}^{\eta(t_{\textup{mix}}^{\epsilon}(x)+w-\tau(x))}}=\frac{1}{(2\eta(x))^{\nu(x)}}\mathrm{e}^{-\eta(x) w}\;.$$
Similar arguments as in~\cite[Corollary 2.7]{cut-off_overdamped} then ensure the existence of constants $C_1(x),\,C_2(x)>~0$ such that
$$C_1(x)\mathrm{e}^{-\eta(x) w}\leq\liminf_{\epsilon\rightarrow0}\frac{|v(t_{\textup{mix}}^{\epsilon}(x)+w,\,x)|}{\sqrt{2\epsilon}}\leq\limsup_{\epsilon\rightarrow0}\frac{|v(t_{\textup{mix}}^{\epsilon}(x)+w,\,x)|}{\sqrt{2\epsilon}}\leq C_2(x)\mathrm{e}^{-\eta(x)w}\;.$$
As a result,
$$\lim_{w\rightarrow\infty}\liminf_{\epsilon\rightarrow0}D^{\epsilon}(t_{\textup{mix}}^{\epsilon}(x)-w,\,x)=1\,,$$
and
$$\lim_{w\rightarrow\infty}\limsup_{\epsilon\rightarrow0}D^{\epsilon}(t_{\textup{mix}}^{\epsilon}(x)+w,\,x)=0\;.$$
Combining the behavior of $D^{\epsilon}(t_{\textup{mix}}^{\epsilon}(x)+w,\,x)$ when $\epsilon\rightarrow0$ along with Theorem~\ref{thm:tv} then immediately yields Theorem~\ref{thm:cut-off}. The same argument from~\cite[Corollary 2.9]{cut-off_overdamped} also yields Theorem~\ref{thm:profilecut-off}.
\end{proof}

The basic strategy to prove Theorem \ref{thm:tv} is based on the
coupling argument developed in \cite{cut-off_overdamped}, but due
to the degeneracy and lack of global contraction, we need to considerably
refine the argument. Our scheme of proof allows us to tackle the degeneracy
in~\eqref{eq:uld} and could be extended to other processes with
similar degeneracy. We explain now the structure of this proof.

The first step is to control the total variation distance between
the process $(X_{t}^{\epsilon}(x))_{t\geq0}$ and its Gaussian approximation
$(Z_{t}^{\epsilon}(x))_{t\geq0}$ using the moment estimates obtained
in Proposition \ref{prop:main_apprx}.
\begin{prop}
\label{prop:tvm1}Let $(t^{\epsilon})_{\epsilon>0}$ be a sequence
of positive real numbers such that
\begin{equation}
\alpha_{0}\log\frac{1}{\epsilon}\le t^{\epsilon}\le\frac{1}{\epsilon^{\theta}}\label{eq:tvm1}
\end{equation}
for some constant $\alpha_{0}>0$, where $\theta$ is the constant
defined in Notation \ref{not:T(x)}. Then, for any $r>0$, we have
\begin{equation}
\limsup_{\epsilon\rightarrow0}\sup_{x\in\mathcal{D}_{r}}\mathrm{d}_{\mathrm{TV}}\left(X_{t^{\epsilon}}^{\epsilon}(x),\,Z_{t^{\epsilon}}^{\epsilon}(x)\right)=0\;.\label{eq:tvm1-0}
\end{equation}
\end{prop}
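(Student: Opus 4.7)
The plan is to adapt the strategy of \cite[Lemma 3.2]{cut-off_overdamped}: combine the $L^n$-approximation of Proposition \ref{prop:main_apprx} with a short-time Gaussian smoothing at the end of the trajectory to upgrade the pathwise control into a total variation bound. The new difficulty with respect to the overdamped case is the degeneracy of the underdamped diffusion, which forces us to rely on the hypoelliptic $t^3$-scaling of the position marginal of $\Sigma_t$ (Lemmas \ref{lem_shor_Sigma0}--\ref{lem:short_Sigma}) rather than on the uniform elliptic estimates used in \cite{cut-off_overdamped}.

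Fix a constant $\Delta>0$ and set $s^{\epsilon}:=t^{\epsilon}-\Delta$. For $\epsilon$ small enough, $s^{\epsilon}\in[T(x),\,1/\epsilon^{\theta}]$ uniformly for $x\in\mathcal{D}_r$, so Proposition \ref{prop:main_apprx} applies at $s^\epsilon$. Choosing $n$ large and $\nu\in(0,\alpha\alpha_{0})$, the lower bound $t^{\epsilon}\geq\alpha_{0}\log(1/\epsilon)$ yields $\mathbb{E}[|X_{s^{\epsilon}}^{\epsilon}(x)-Z_{s^{\epsilon}}^{\epsilon}(x)|^{n}]=o(\epsilon^{n/2})$ uniformly for $x\in\mathcal{D}_r$. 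Next, introduce the auxiliary Gaussian $\widetilde{Z}_{t^{\epsilon}}^{\epsilon}$ with law, conditionally on $\mathcal{F}_{s^{\epsilon}}$, equal to $\mathcal{N}(X_{\Delta}(X_{s^{\epsilon}}^{\epsilon}(x)),\,2\epsilon\,\Sigma_{\Delta}(X_{s^{\epsilon}}^{\epsilon}(x)))$, and use the triangle inequality
\[
\mathrm{d}_{\mathrm{TV}}(X_{t^{\epsilon}}^{\epsilon}(x),\,Z_{t^{\epsilon}}^{\epsilon}(x))\leq\mathbb{E}\bigl[\mathrm{d}_{\mathrm{TV}}(\mathcal{L}(X_{t^{\epsilon}}^{\epsilon}\mid\mathcal{F}_{s^{\epsilon}}),\,\mathcal{L}(\widetilde{Z}_{t^{\epsilon}}^{\epsilon}\mid\mathcal{F}_{s^{\epsilon}}))\bigr]+\mathrm{d}_{\mathrm{TV}}(\widetilde{Z}_{t^{\epsilon}}^{\epsilon},\,Z_{t^{\epsilon}}^{\epsilon}).
\]
The first summand is bounded using the Markov property of $X^\epsilon$ together with the short-time version of Proposition \ref{prop:main_apprx} applied on $[0,\Delta]$ from the random starting point $X_{s^{\epsilon}}^{\epsilon}$ (whose tails are controlled by Proposition \ref{prop:mom_est}), combined with the Gaussian total variation bounds of Lemmas \ref{lem:cut-off App 1}--\ref{lem:cut-off App 2} and the invertibility of $\Sigma_{\Delta}$ from Lemma \ref{lem:short_Sigma}. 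The second summand compares two conditionally Gaussian laws whose covariances are close (using the short-time expansion of $\Sigma_{\Delta}$ around $X_{s^\epsilon}^\epsilon$ and $Z_{s^\epsilon}^\epsilon$) and whose means differ by $|X_{\Delta}(X_{s^{\epsilon}}^{\epsilon}(x))-X_{\Delta}(Z_{s^{\epsilon}}^{\epsilon}(x))|$, controlled by $|X_{s^{\epsilon}}^{\epsilon}-Z_{s^{\epsilon}}^{\epsilon}|$ via Corollary \ref{cor:contraction}; after the rescaling by $1/\sqrt{2\epsilon}$ intrinsic to the Gaussian TV estimate, this matches the $o(\epsilon^{1/2})$ control from Step 1.

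The main obstacle is the hypoellipticity: $\|\Sigma_{\Delta}^{-1/2}\|$ blows up like $\Delta^{-3/2}$, so the smoothing window $\Delta$ cannot be allowed to shrink. Taking $\Delta$ as a fixed constant sidesteps this, but forces the short-time Gaussian approximation on $[s^{\epsilon},t^{\epsilon}]$ to be uniform in the random starting point $X_{s^{\epsilon}}^{\epsilon}(x)$, which ranges (with overwhelming probability, cf. \eqref{eq:bdXX}) over a ball $\mathcal{D}_{R(r)}$ independent of $\epsilon$; this uniformity follows from the uniformity of the constants in Proposition \ref{prop:main_apprx} on compact sets together with a standard cutoff argument using the exponential moments of Corollary \ref{cor_expmom}. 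Once this is set up, sending $\epsilon\to0$ drives both summands above to zero uniformly for $x\in\mathcal{D}_{r}$, yielding \eqref{eq:tvm1-0}.
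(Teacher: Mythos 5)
Your overall architecture (condition at a time $s^{\epsilon}$ slightly before $t^{\epsilon}$, then compare over the last window) is the same as the paper's, which splits $\mathrm{d}_{\mathrm{TV}}(X^{\epsilon}_{t^{\epsilon}},Z^{\epsilon}_{t^{\epsilon}})$ via an intermediate law $Z^{\epsilon}_{\delta^{\epsilon}}(X^{\epsilon}_{t^{\epsilon}-\delta^{\epsilon}})$ (Propositions \ref{prop:tv1} and \ref{prop:tv2}). However, there is a genuine gap in your first summand. What you must bound there is the total variation distance between the conditional law of $X^{\epsilon}_{t^{\epsilon}}$ given $\mathcal{F}_{s^{\epsilon}}$ --- i.e.\ the \emph{non-Gaussian} law of $X^{\epsilon}_{\Delta}(u)$ at $u=X^{\epsilon}_{s^{\epsilon}}$ --- and the Gaussian $\mathcal{N}(X_{\Delta}(u),2\epsilon\Sigma_{\Delta}(u))$. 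Proposition \ref{prop:main_apprx} only gives $L^{n}$ bounds for one particular coupling of these two laws, and Lemmas \ref{lem:cut-off App 1}--\ref{lem:cut-off App 2} only compare two Gaussians; neither yields total variation between a diffusion law and a Gaussian, and closeness of a coupling in $L^{n}$ never implies closeness in total variation without an extra regularization or density ingredient. This is precisely where the paper inserts Lemma \ref{lem:pinsker}: by Girsanov and Pinsker,
\[
\mathrm{d}_{\mathrm{TV}}\left(X^{\epsilon}_{t}(u),Z^{\epsilon}_{t}(u)\right)^{2}\le\frac{1}{2\epsilon}\int_{0}^{t}\mathbb{E}\left[\left|F(q^{\epsilon}_{s})-F(q_{s})-\mathrm{D}F(q_{s})(q^{\epsilon}_{s}-q_{s})\right|^{2}\right]\mathrm{d}s\;,
\]
and it is in controlling this Girsanov remainder that Assumption \ref{ass:DF} and the exponential moments of Corollary \ref{cor_expmom} are used. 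Your proposal invokes no change of measure and never uses Assumption \ref{ass:DF}, so the crucial conversion ``pathwise closeness $\Rightarrow$ TV closeness over the last window'' is unsupported.

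Two further quantitative points. First, the paper takes a \emph{shrinking} window $\delta^{\epsilon}=\epsilon^{\sigma}$, and this is not cosmetic: the Pinsker bound produces a factor of order $\epsilon^{-2\nu}\delta^{\epsilon}$ and Lemma \ref{lem:tvZ_short} carries an additive ``$+t$'' covariance-mismatch term, both of which vanish only because $\delta^{\epsilon}\to0$; with your fixed $\Delta$, even after adding the Girsanov step, the error supplied by Proposition \ref{prop:main_apprx} contains the factor $\epsilon^{-\nu}\mathrm{e}^{-\alpha t}$, which is not small for $t$ of order one, so the fixed-window version does not follow from the cited results without a sharper short-time estimate exploiting $W_{0}=0$. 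Second, in your second summand you implicitly identify the conditional law of $Z^{\epsilon}_{t^{\epsilon}}(x)$ given $\mathcal{F}_{s^{\epsilon}}$ with a Gaussian of covariance $2\epsilon\Sigma_{\Delta}(Z^{\epsilon}_{s^{\epsilon}})$ and mean $X_{\Delta}(Z^{\epsilon}_{s^{\epsilon}})$; in fact the mean and covariance of that conditional law come from the linearization along the deterministic flow started at $x$ (cf.\ \eqref{eq:Sig_t}), so this identification requires justification --- the paper handles the analogous point through the semigroup identity $Z^{\epsilon}_{t^{\epsilon}}(x)=Z^{\epsilon}_{\delta^{\epsilon}}(Z^{\epsilon}_{t^{\epsilon}-\delta^{\epsilon}}(x))$ and a disintegration, followed by Lemma \ref{lem:tvZ_short}.
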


This proposition is proven in Section \ref{sec73}. The next step
is to estimate the distribution of the Gaussian process $(Z_{t}^{\epsilon}(x))_{t\geq0}$.
\begin{prop}
\label{prop:tvm2}Let $x\in\mathbb{R}^{2d}$ and let $(t^{\epsilon})_{\epsilon>0}$
be a sequence of positive reals such that
\begin{equation}
t^{\epsilon}\ge t_{\mathrm{mix}}^{\epsilon}(x)-c\label{eq:tvm2}
\end{equation}
for some constant $c>0$ for all small enough $\epsilon$. Then, we
have
\[
\lim_{\epsilon\rightarrow0}\left|\mathrm{d}_{\mathrm{TV}}\left(Z_{t^{\epsilon}}^{\epsilon}(x),\,\mathcal{N}(0,2\epsilon\Sigma)\right)-D^{\epsilon}(t^{\epsilon},\,x)\right|=0\;.
\]
\end{prop}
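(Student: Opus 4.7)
The plan is to split the distance between $Z_{t^\epsilon}^\epsilon(x)\sim\mathcal{N}(X_{t^\epsilon}(x),\,2\epsilon\Sigma_{t^\epsilon}(x))$ and $\mathcal{N}(0,\,2\epsilon\Sigma)$ into a covariance-mismatch part and a mean-mismatch part via the triangle inequality,
\begin{align*}
\mathrm{d}_\mathrm{TV}(Z_{t^\epsilon}^\epsilon(x),\,\mathcal{N}(0,\,2\epsilon\Sigma)) &\le \mathrm{d}_\mathrm{TV}(\mathcal{N}(X_{t^\epsilon}(x),\,2\epsilon\Sigma_{t^\epsilon}(x)),\,\mathcal{N}(X_{t^\epsilon}(x),\,2\epsilon\Sigma))\\
&\quad + \mathrm{d}_\mathrm{TV}(\mathcal{N}(X_{t^\epsilon}(x),\,2\epsilon\Sigma),\,\mathcal{N}(0,\,2\epsilon\Sigma))\,,
\end{align*}
with the reversed inequality applied symmetrically against $D^\epsilon(t^\epsilon,x)$. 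The covariance term equals $\mathrm{d}_\mathrm{TV}(\mathcal{N}(0,\,\Sigma_{t^\epsilon}(x)),\,\mathcal{N}(0,\,\Sigma))$ by Lemma~\ref{lem:cut-off App 1}-(1)(2), and vanishes as $\epsilon\to 0$ by combining the Frobenius-norm estimate already used in~\eqref{eq:tvZ0} with the long-time contraction of $\Sigma_{t^\epsilon}(x)$ to $\Sigma$ from Lemma~\ref{lem:long_Sigma}, since $t^\epsilon\to\infty$.

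For the mean term, Lemma~\ref{lem:cut-off App 1}-(1)(2)(3) rewrites it as $\mathrm{d}_\mathrm{TV}(\mathcal{N}((2\epsilon)^{-1/2}\Sigma^{-1/2}X_{t^\epsilon}(x),\,\mathbb{I}_{2d}),\,\mathcal{N}(0,\,\mathbb{I}_{2d}))$, which has exactly the same form as $D^\epsilon(t^\epsilon,x)$ but with $\Sigma^{-1/2}X_{t^\epsilon}(x)$ in place of $v(t^\epsilon,x)$. Since $r\mapsto\mathrm{d}_\mathrm{TV}(\mathcal{N}(r,\,\mathbb{I}_{2d}),\,\mathcal{N}(0,\,\mathbb{I}_{2d}))$ is $\frac{1}{\sqrt{2\pi}}$-Lipschitz in $|r|$ by Lemma~\ref{lem:cut-off App 2}, it suffices to show $(2\epsilon)^{-1/2}|\Sigma^{-1/2}X_{t^\epsilon}(x) - v(t^\epsilon,x)|\to 0$.

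I invoke Lemma~\ref{lem:asym_uld} after the time shift by $\tau=\tau(x)$ to write $X_{t^\epsilon}(x) = \frac{(t^\epsilon-\tau)^\nu}{\mathrm{e}^{\eta(t^\epsilon-\tau)}}\bigl(\sum_{k=1}^m \mathrm{e}^{i\theta_k(t^\epsilon-\tau)}v_k+\varepsilon_{t^\epsilon-\tau}\bigr)$ with $\varepsilon_s\to 0$. Comparing with the definition~\eqref{eq:def_v(t)} of $v(t^\epsilon,x)$ yields the exact identity
$$\Sigma^{-1/2}X_{t^\epsilon}(x) - v(t^\epsilon,x) = \Sigma^{-1/2}\,\frac{(t^\epsilon-\tau)^\nu}{\mathrm{e}^{\eta(t^\epsilon-\tau)}}\,\varepsilon_{t^\epsilon-\tau}\,.$$
Divided by $\sqrt{2\epsilon}$, it remains to verify that the prefactor $\phi_\epsilon(s):=(2\epsilon)^{-1/2}s^\nu \mathrm{e}^{-\eta s}$ is uniformly bounded at $s=t^\epsilon-\tau$. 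The assumption $t^\epsilon\ge t_\mathrm{mix}^\epsilon(x)-c$ gives $s\ge s^\star_\epsilon := \frac{1}{2\eta}\log(1/2\epsilon) + \frac{\nu}{\eta}\log\log(1/2\epsilon) - c$, which eventually exceeds $\nu/\eta$, the unique critical point of $s\mapsto s^\nu\mathrm{e}^{-\eta s}$; since this map is decreasing beyond $\nu/\eta$, we obtain $\phi_\epsilon(s)\le\phi_\epsilon(s^\star_\epsilon)\to\sqrt{2}\,\mathrm{e}^{c\eta}/(2\eta)^\nu$. Together with $\varepsilon_{t^\epsilon-\tau}\to 0$ (because $t^\epsilon-\tau\to\infty$), this provides the required vanishing.

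The main obstacle will be this uniform control of $\phi_\epsilon(t^\epsilon-\tau)$, since the sequence $(t^\epsilon)$ is only bounded below and could \emph{a priori} be very large; the key point that unlocks the argument is the monotonicity of $s\mapsto s^\nu\mathrm{e}^{-\eta s}$ past $\nu/\eta$, which reduces the uniform bound to the single value $\phi_\epsilon(s^\star_\epsilon)$, and the fact that the definition of $t_\mathrm{mix}^\epsilon(x)$ was precisely calibrated so that $\phi_\epsilon(s^\star_\epsilon)$ converges to a finite limit.
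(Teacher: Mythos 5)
Your proposal is correct and follows essentially the same route as the paper's proof: split off the covariance mismatch via the triangle inequality and kill it with Lemma \ref{lem:long_Sigma}, reduce the mean term to standard Gaussians via Lemma \ref{lem:cut-off App 1} and \ref{lem:cut-off App 2}, and then use Lemma \ref{lem:asym_uld} together with the calibration of $t_{\mathrm{mix}}^{\epsilon}(x)$ to show $\epsilon^{-1/2}|\Sigma^{-1/2}X_{t^{\epsilon}}(x)-v(t^{\epsilon},x)|\to0$ (you spell out the boundedness of $\epsilon^{-1/2}s^{\nu}e^{-\eta s}$ that the paper only asserts). The only blemish is the limiting constant of your prefactor, which is $e^{c\eta}/(2\eta)^{\nu}$ rather than $\sqrt{2}\,e^{c\eta}/(2\eta)^{\nu}$, but since only boundedness is used this is immaterial.
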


The remaining part of this article is devoted to the proof of Propositions
\ref{prop:tvm1}, \ref{prop:tvm2}, and Theorems~\ref{thm:asymstat}
and~\ref{thm:tv}. Let us start with the proof of Theorem \ref{thm:tv}
assuming that Propositions \ref{prop:tvm1}, \ref{prop:tvm2}, and
Theorem \ref{thm:asymstat} hold.
\begin{proof}[Proof of Theorem \ref{thm:tv}]
By the triangle inequality, we have
\begin{align*}
 &  \bigg\vert \, \big\vert\mathrm{d}_{\mathrm{TV}}\left(X_{t_{\mathrm{mix}}^{\epsilon}(x)+w}^{\epsilon}(x),\,\mu^{\epsilon}\right)-D^{\epsilon}(t_{\textup{mix}}^{\epsilon}(x)+w,\,x)
 \big\vert\\
 & \;\;\;-\big\vert\mathrm{d}_{\mathrm{TV}}\left(Z_{t_{\mathrm{mix}}^{\epsilon}(x)+w}^{\epsilon}(x),\,\mathcal{N}(0,2\epsilon\Sigma)\right)
 -D^{\epsilon}(t_{\textup{mix}}^{\epsilon}(x)+w,\,x)\big\vert \bigg\vert \\
 & \le\mathrm{d}_{\mathrm{TV}}\left(X_{t_{\mathrm{mix}}^{\epsilon}(x)+w}^{\epsilon}(x),\,Z_{t_{\mathrm{mix}}^{\epsilon}(x)+w}^{\epsilon}(x)\right)+\mathrm{d}_{\mathrm{TV}}\left(\mathcal{N}(0,2\epsilon\Sigma),\,\mu^{\epsilon}\right)\;.
\end{align*}
Therefore, the proof is completed by Propositions \ref{prop:tvm1}
and \ref{prop:tvm2} with $t^{\epsilon}=t_{\mathrm{mix}}^{\epsilon}(x)+w$
(which satisfies (\ref{eq:tvm1}) and (\ref{eq:tvm2})) and Theorem
\ref{thm:asymstat}.
\end{proof}

\subsection{\label{sec73}Proof of Proposition \ref{prop:tvm1}}

We start by proving the following lemma which provides an upper-bound
on the total variation distance between the random variables $X_{t}^{\epsilon}(x)$
and $Z_{t}^{\epsilon}(x)$ for small times $t>0$
\begin{lem}
\label{lem:pinsker}For all $x\in\mathbb{R}^{2d}$, we have that
\[
\mathrm{d}_{\mathrm{TV}}\left(X_{t}^{\epsilon}(x),Z_{t}^{\epsilon}(x)\right)^{2}\le\frac{1}{2\epsilon}\int_{0}^{t}\mathbb{E}\left[\left|F(q_{s}^{\epsilon}(x))-F(q_{s}(x))-\textup{D}F(q_{s}(x))\left(q_{s}^{\epsilon}(x)-q_{s}(x)\right)\right|^{2}\right]\textup{d}s\;.
\]
\end{lem}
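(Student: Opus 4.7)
\textbf{Proof proposal for Lemma \ref{lem:pinsker}.} The plan is to combine the data-processing inequality, Pinsker's inequality, and Girsanov's theorem after reducing to the non-degenerate momentum marginal. The first key observation is that for both $X^\epsilon_t(x)=(q^\epsilon_t,p^\epsilon_t)$ and $Z^\epsilon_t(x)=(Z^{\epsilon,q}_t,Z^{\epsilon,p}_t)$, the position coordinate is a deterministic functional of the momentum trajectory: indeed $\mathrm{d}q^\epsilon_t=p^\epsilon_t\,\mathrm{d}t$ by \eqref{eq:uld}, and a direct inspection of the first coordinate of $-G(X_t)+\mathbb{A}(q_t)(Z^\epsilon_t-X_t)$ (cf.~\eqref{eq:def A(q)}) shows that $\mathrm{d}Z^{\epsilon,q}_t=Z^{\epsilon,p}_t\,\mathrm{d}t$. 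Thus, writing $\mathbb{P}^\epsilon$ (resp.\ $\mathbb{Q}^\epsilon$) for the law of $(p^\epsilon_s)_{s\in[0,t]}$ (resp.\ $(Z^{\epsilon,p}_s)_{s\in[0,t]}$) on $C([0,t],\mathbb{R}^d)$, the data-processing inequality gives
\[
\mathrm{d}_{\mathrm{TV}}(X^\epsilon_t(x),Z^\epsilon_t(x))\le \mathrm{d}_{\mathrm{TV}}(\mathbb{P}^\epsilon,\mathbb{Q}^\epsilon)\;,
\]
and Pinsker's inequality then yields $\mathrm{d}_{\mathrm{TV}}(\mathbb{P}^\epsilon,\mathbb{Q}^\epsilon)^2\le\tfrac{1}{2}D_{\mathrm{KL}}(\mathbb{P}^\epsilon\|\mathbb{Q}^\epsilon)$.

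Next, I would compute the KL divergence via Girsanov's theorem. The two path measures are both laws of $d$-dimensional Itô diffusions with common non-degenerate diffusion coefficient $\sqrt{2\epsilon}\,\mathbb{I}_d$. Writing $q^\omega_s:=q+\int_0^s\omega_u\,\mathrm{d}u$ for a canonical path $\omega$, the drift functionals are
\[
b^X(s,\omega)=-F(q^\omega_s)-\gamma\omega_s\;,\qquad
b^Z(s,\omega)=-F(q_s)-\mathrm{D}F(q_s)(q^\omega_s-q_s)-\gamma\omega_s\;.
\]
Their difference, evaluated along the $\mathbb{P}^\epsilon$-trajectory $\omega=p^\epsilon$ so that $q^\omega_s=q^\epsilon_s$, equals
\[
b^X(s,p^\epsilon)-b^Z(s,p^\epsilon)=-\bigl[F(q^\epsilon_s)-F(q_s)-\mathrm{D}F(q_s)(q^\epsilon_s-q_s)\bigr]\;.
\]
Girsanov's theorem then identifies
\[
D_{\mathrm{KL}}(\mathbb{P}^\epsilon\|\mathbb{Q}^\epsilon)=\frac{1}{4\epsilon}\int_0^t\mathbb{E}\Bigl[\bigl|F(q^\epsilon_s)-F(q_s)-\mathrm{D}F(q_s)(q^\epsilon_s-q_s)\bigr|^2\Bigr]\mathrm{d}s\;,
\]
which plugged into Pinsker already gives a stronger bound (with constant $\tfrac{1}{8\epsilon}$) than the one stated, so the claim of the lemma follows \emph{a fortiori}.

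The main technical obstacle will be verifying the hypotheses under which Girsanov produces a true exponential martingale, since the drift difference involves the a priori unbounded quantity $F(q^\epsilon_s)-F(q_s)-\mathrm{D}F(q_s)(q^\epsilon_s-q_s)$. By Taylor expansion this quantity is controlled by $|q^\epsilon_s-q_s|^2$ times $\sup|\mathrm{D}^2F|$ on compact sets; to legitimise Girsanov on all of $\mathbb{R}^d$ I would localise with stopping times $\tau_R=\inf\{s\ge 0:|X^\epsilon_s|\vee|X_s|>R\}$, apply Girsanov on $[0,t\wedge\tau_R]$ where the relevant Novikov condition reduces to a bounded integrand, and let $R\to\infty$ using the moment and exponential-moment estimates of Proposition~\ref{prop:mom_est} and Corollary~\ref{cor_expmom} (together with Assumption~\ref{ass:DF} if needed) to pass to the limit via monotone convergence on the right-hand side and lower semicontinuity of total variation on the left.
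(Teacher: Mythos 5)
Your proposal is correct and follows essentially the same route as the paper: bound the time-$t$ total variation by the path-law distance, apply Pinsker, and compute the Kullback--Leibler divergence via Girsanov, with the drift difference being exactly $F(q_s^\epsilon)-F(q_s)-\mathrm{D}F(q_s)(q_s^\epsilon-q_s)$. Your explicit reduction to the momentum marginal (to avoid the degenerate diffusion matrix) and the localization argument justifying Girsanov are just more careful renderings of steps the paper performs implicitly, and your constant $\tfrac{1}{8\epsilon}$ is indeed at least as strong as the stated $\tfrac{1}{2\epsilon}$.
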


\begin{proof}
Write $Z_{t}^{\epsilon}=(\widetilde{q}_{t}^{\epsilon},\,\widetilde{p}_{t}^{\epsilon})$
so that we can rewrite the SDE describing the process $(Z_{t}^{\epsilon})_{t\geq0}$
as
\begin{equation}
\begin{cases}
\textup{d}\widetilde{q}_{t}^{\epsilon}=\widetilde{p}_{t}^{\epsilon}\textup{d}t\\
\textup{d}\widetilde{p}_{t}^{\epsilon}=-(F(q_{t})+\textup{D}F(q_{t})(\widetilde{q}_{t}^{\epsilon}-q_{t})+\gamma\widetilde{p}_{t}^{\epsilon})\textup{d}t+\sqrt{2\epsilon}\,\textup{d}B_{t}\;.
\end{cases}\label{eq:sdeZ}
\end{equation}
Now, denote by $\mathbb{P}_{X}$ and $\mathbb{P}_{Z}$ the law of
the processes $(X_{s}^{\epsilon}(x))_{s\in[0,\,t]}$ and $(Z_{s}^{\epsilon}(x))_{s\in[0,\,t]}$,
respectively. Then, by Pinsker's inequality, we have
\begin{equation}
\mathrm{d}_{\mathrm{TV}}\left(X_{t}^{\epsilon}(x),\,Z_{t}^{\epsilon}(x)\right)^{2}\le-2\mathbb{E}_{\mathbb{P}_{X}}\left[\log\frac{\textup{d}\mathbb{P}_{Z}}{\textup{d}\mathbb{P}_{X}}\right]\label{eq:pinsker}\;.
\end{equation}
By the Girsanov theorem, (\ref{eq:uld}), and (\ref{eq:sdeZ}), we
have
\begin{align*}
\log\frac{\textup{d}\mathbb{P}_{Z}}{\textup{d}\mathbb{P}_{X}}= \;& \frac{1}{\sqrt{2\epsilon}}\int_{0}^{t}\left\langle F(q_{s}^{\epsilon}(x))-F(q_{s}(x))-\textup{D}F(q_{s}(x))\left(q_{s}^{\epsilon}(x)-q_{s}(x)\right),\,\textup{d}B_{s}\right\rangle \\
 & -\frac{1}{4\epsilon}\int_{0}^{t}\left|F(q_{s}^{\epsilon}(x))-F(q_{s}(x))-\textup{D}F(q_{s}(x))\left(q_{s}^{\epsilon}(x)-q_{s}(x)\right)\right|^{2}\textup{d}s\;.
\end{align*}
Inserting this to (\ref{eq:pinsker}), we can complete the proof.
\end{proof}
Our strategy is to first couple the processes $(X_{t}^{\epsilon}(x))_{t\geq0}$
and $(Z_{t}^{\epsilon}(x))_{t\geq0}$ for small times as explained
in Proposition~\ref{prop:tv1}, similarly to what was done in~\cite{cut-off_overdamped}.
To that end, we need to introduce the following small time scales.
Recall the constants $\alpha_{1},\,\alpha_{2}>0$ from Proposition
\ref{prop:main_apprx} and the constant $\alpha_{0}>0$ from the statement
of Proposition \ref{prop:tvm1}. Let us take $\sigma$ small enough
so that
\begin{equation}
0<\sigma<\min\left(\frac{2\alpha_{1}\alpha_{0}}{3},\,\frac{2\alpha_{2}}{3}\right)\;.\label{eq:consig}
\end{equation}
Additionally, let us take $\nu$ small enough from Proposition
\ref{prop:main_apprx} such that
\begin{equation}\label{eq:condition nu}
    0<\nu<\min\left(\frac{\sigma}{2},\alpha_1\alpha_0-\frac{3\sigma}{2}\right).
\end{equation}
Let us now define the time scale $\delta^{\epsilon}$ by $\delta^{\epsilon}:=\epsilon^{\sigma}$.
The following is the first step in the approximation of $X_{t^{\epsilon}}^{\epsilon}(x)$
by $Z_{t^{\epsilon}}^{\epsilon}(x)$ in the total variation distance.
\begin{prop}
\label{prop:tv1}Let $(t^{\epsilon})_{\epsilon>0}$ be a sequence
of positive reals satisfying (\ref{eq:tvm1}). Then, for all $r>0$,
we have
\[
\limsup_{\epsilon\rightarrow0}\sup_{x\in\mathcal{D}_{r}}\mathrm{d}_{\mathrm{TV}}\left(X_{t^{\epsilon}}^{\epsilon}(x),\,Z_{\delta^{\epsilon}}^{\epsilon}(X_{t^{\epsilon}-\delta^{\epsilon}}^{\epsilon}(x))\right)=0.
\]
\end{prop}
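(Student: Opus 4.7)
The plan is to apply the Markov property of $(X^\epsilon_t)_{t\ge 0}$ at the intermediate time $t^\epsilon-\delta^\epsilon$ and then perform a short-time Pinsker analysis on the last window of length $\delta^\epsilon$. Setting $Y := X^\epsilon_{t^\epsilon-\delta^\epsilon}(x)$, the laws of $X^\epsilon_{t^\epsilon}(x)$ and $Z^\epsilon_{\delta^\epsilon}(Y)$ are both mixtures against $\mathrm{Law}(Y)$ of $\mathrm{Law}(X^\epsilon_{\delta^\epsilon}(y))$ and $\mathrm{Law}(Z^\epsilon_{\delta^\epsilon}(y))$ respectively, so by convexity of the total variation distance it suffices to prove that
\[
\mathbb{E}\bigl[\mathrm{d}_{\mathrm{TV}}\bigl(X^\epsilon_{\delta^\epsilon}(y),\,Z^\epsilon_{\delta^\epsilon}(y)\bigr)\big|_{y=Y}\bigr] \longrightarrow 0
\]
uniformly in $x\in\mathcal{D}_r$.

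To this end I would first localize $Y$ to a fixed ball. Define the good event $\mathcal{G}_\epsilon := \{Y\in\mathcal{D}_R\}$ for some fixed $R>0$. By hypothesis $t^\epsilon-\delta^\epsilon \ge \alpha_0\log(1/\epsilon)-\epsilon^\sigma$, so Proposition \ref{prop:mom_est} applied at this time gives $\mathbb{E}[|Y|^{2n}] = O(\epsilon^{n\min(\lambda\alpha_0,1)})$ for every $n\ge1$. Markov's inequality then yields $\mathbb{P}(\mathcal{G}_\epsilon^c) = O(\epsilon^n)$ for arbitrarily large $n$, and the contribution to the expected TV from $\mathcal{G}_\epsilon^c$ is absorbed by bounding $\mathrm{d}_{\mathrm{TV}}\le1$.

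On $\mathcal{G}_\epsilon$, I apply Lemma \ref{lem:pinsker}. A second-order Taylor expansion using $F\in\mathcal{C}^2$ bounds the integrand by
\[
|F(q^\epsilon_s(y))-F(q_s(y))-\mathrm{D}F(q_s(y))(q^\epsilon_s(y)-q_s(y))|^2 \le C(y)\,|q^\epsilon_s(y)-q_s(y)|^4\;,
\]
with $C(y)$ controlled by $\sup|\mathrm{D}^2F|$ along the segment joining the two position vectors. A standard Gronwall argument on the difference of the SDEs \eqref{eq:uld} and \eqref{eq:uld0}, valid as long as the paths remain in a compact set, yields $|X^\epsilon_s(y)-X_s(y)|^2 \le 2\epsilon\,C(R)\sup_{u\le s}|\widetilde B_u|^2$ for $s\in[0,\delta^\epsilon]$ and $y\in\mathcal{D}_R$, hence $\mathbb{E}[|q^\epsilon_s-q_s|^4] \le C(R)\epsilon^2 s^2$. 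Substituting into Lemma \ref{lem:pinsker} produces
\[
\mathrm{d}_{\mathrm{TV}}(X^\epsilon_{\delta^\epsilon}(y),Z^\epsilon_{\delta^\epsilon}(y))^2 \le \frac{C(R)}{2\epsilon}\int_0^{\delta^\epsilon}\epsilon^2 s^2\,\mathrm{d}s \le C(R)\,\epsilon^{1+3\sigma}\longrightarrow 0
\]
uniformly in $y\in\mathcal{D}_R$, which combined with the localization step yields the claim.

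The main obstacle I anticipate is that the Taylor constant $C(y)$ really depends on $\sup|\mathrm{D}^2 F|$ along the \emph{random} path traversed by $q^\epsilon_s(y)$, which may excurse outside any fixed compact with small but nonzero probability. Under Assumption \ref{ass:DF} this sup can grow like $e^{\rho|q|^2}$, so one has to subdivide $\mathcal{G}_\epsilon$ further into sub-events controlling $\sup_{s\le\delta^\epsilon}|X^\epsilon_s|$ and invoke exponential-moment estimates of the type of Corollary \ref{cor_expmom} to keep the expected Taylor constant bounded; the precise admissible range for $\sigma$ and $\nu$ in \eqref{eq:consig}--\eqref{eq:condition nu} is exactly what is needed to balance these exponential-moment corrections against the error rates in Proposition \ref{prop:main_apprx}. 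Tracking these competing contributions carefully is where the bulk of the technical work lies.
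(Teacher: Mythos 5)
Your overall architecture (disintegrate over the law of $Y=X^{\epsilon}_{t^{\epsilon}-\delta^{\epsilon}}(x)$, localize $Y$ to a ball using Proposition \ref{prop:mom_est}, then run a short-time Pinsker estimate via Lemma \ref{lem:pinsker}) matches the paper. However, the core analytic step has a genuine gap: you bound the Girsanov remainder by a second-order Taylor estimate
\[
\left|F(q^{\epsilon}_{s})-F(q_{s})-\mathrm{D}F(q_{s})(q^{\epsilon}_{s}-q_{s})\right|^{2}\le C\,|q^{\epsilon}_{s}-q_{s}|^{4}\;,
\]
where $C$ involves $\sup|\mathrm{D}^{2}F|$ along the \emph{random} segment joining $q_{s}$ and $q^{\epsilon}_{s}$. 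The standing hypotheses give you no handle on this: Assumption \ref{ass:main} only says $F\in\mathcal{C}^{3}$ (so $\mathrm{D}^{2}F$ is locally bounded, with no growth control), and Assumption \ref{ass:DF} bounds $|\mathrm{D}F(q)|\le C\mathrm{e}^{\rho|q|^{2}}$ — \emph{not} $\mathrm{D}^{2}F$, contrary to what your last paragraph asserts. So the proposed rescue (balancing an $\mathrm{e}^{\rho|q|^{2}}$ growth of $\mathrm{D}^{2}F$ against exponential moments \`a la Corollary \ref{cor_expmom}) is not available; and since Lemma \ref{lem:pinsker} produces an \emph{unconditional} expectation, you cannot simply insert an event confining $\sup_{s\le\delta^{\epsilon}}|X^{\epsilon}_{s}|$ to a compact set without reworking the Girsanov/Pinsker step for stopped or conditioned processes. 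Your Gronwall bound $|X^{\epsilon}_{s}-X_{s}|^{2}\le 2\epsilon C(R)\sup_{u\le s}|\widetilde B_{u}|^{2}$ suffers from the same problem: it is only valid while the random path stays where $\mathrm{D}G$ is bounded.

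The paper avoids the second derivative altogether: it splits the remainder into $|F(q^{\epsilon}_{s})-F(q_{s})|^{2}$ plus $\|\mathrm{D}F(q_{s})\|^{2}|q^{\epsilon}_{s}-q_{s}|^{2}$, treats the first term with the mean-value theorem and Assumption \ref{ass:DF} (hence only $\mathrm{D}F$), pays the exponential factor $\mathrm{e}^{8\rho|X^{\epsilon}_{s}|^{2}}$ via Cauchy--Schwarz and Corollary \ref{cor_expmom} — which is exactly why the localization radius $r_{0}$ must be taken \emph{small} (so that $H(u)<\tfrac{1}{16(d+2)\kappa_{0}\rho}$, cf.\ \eqref{eq:cond_r}), not an arbitrary fixed $R$ as in your good event — and then controls $\mathbb{E}\big[|X^{\epsilon}_{s}-X_{s}|^{2}\big]$ not by a pathwise Gronwall argument but through the Gaussian approximation (Proposition \ref{prop:main_apprx}, using $T(u)=0$ on the small ball) together with the moment bounds on $Y_{s}$ (Lemma \ref{lem:mom_Yt}); this yields $A^{\epsilon}(1),A^{\epsilon}(2)\le C\epsilon^{1-2\nu}\delta^{\epsilon}$ and the conclusion since $\sigma>2\nu$. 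To repair your argument you would either have to add a growth hypothesis on $\mathrm{D}^{2}F$ that the paper does not assume, or switch to this first-order splitting.
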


\begin{proof}
Let $r_{0}>0$ be small enough so that we have for all $x\in\mathcal{D}_{r_{0}}$,
\begin{equation}
T(x)=0\;\;\;\text{and\;\;\;}H(x)<\frac{1}{16(d+2)\kappa_{0}\rho}\label{eq:cond_r}\;,
\end{equation}
where $T(x)$ is defined in Notation \ref{not:T(x)}, and the constants
$\kappa_{0}$ and $\rho$ are the ones appearing in Lemma \ref{lem_Hquad}
and Assumption \ref{ass:DF}, respectively.

Since $X_{t^{\epsilon}}^{\epsilon}(x)=X_{\delta^{\epsilon}}^{\epsilon}(X_{t^{\epsilon}-\delta^{\epsilon}}^{\epsilon}(x))$,
we have
\begin{align}
 & \mathrm{d}_{\mathrm{TV}}\left(X_{t^{\epsilon}}^{\epsilon}(x),Z_{\delta^{\epsilon}}^{\epsilon}(X_{t^{\epsilon}-\delta^{\epsilon}}^{\epsilon}(x))\right)\nonumber \\
 & \leq\int_{\mathbb{R}^{d}}\mathrm{d}_{\mathrm{TV}}\left(X_{\delta^{\epsilon}}^{\epsilon}(u),Z_{\delta^{\epsilon}}^{\epsilon}(u)\right)\mathbb{P}(X_{t^{\epsilon}-\delta^{\epsilon}}^{\epsilon}(x)\in\mathrm{d}u)\nonumber \\
 & \leq\int_{\mathcal{D}_{r_{0}}}\mathrm{d}_{\mathrm{TV}}\left(X_{\delta^{\epsilon}}^{\epsilon}(u),Z_{\delta^{\epsilon}}^{\epsilon}(u)\right)\mathbb{P}(X_{t^{\epsilon}-\delta^{\epsilon}}^{\epsilon}(x)\in\mathrm{d}u)
 +\mathbb{P}(|X_{t^{\epsilon}-\delta^{\epsilon}}^{\epsilon}(x)|>r_{0})\;.\label{eq:dec_tv1}
\end{align}
By Chebychev inequality and Proposition \ref{prop:mom_est}, we have
\begin{align}
\mathbb{P}(|X_{t^{\epsilon}-\delta^{\epsilon}}^{\epsilon}(x)|>r_{0}) & \leq\frac{\mathbb{E}\left[|X_{t^{\epsilon}-\delta^{\epsilon}}^{\epsilon}(x)|^{2}\right]}{r_{0}^{2}}\leq\frac{\kappa_{0}}{r_{0}^{2}}\left[H(x)\mathrm{e}^{-\lambda(t^{\epsilon}-\delta^{\epsilon})}+\frac{d\epsilon}{\lambda}\right]\;.\label{eq:tv1-1}
\end{align}

Next we fix $u\in\mathcal{D}_{r_{0}}$. Then, by Lemma \ref{lem:pinsker},
we have
\begin{equation}
\mathrm{d}_{\mathrm{TV}}\left(X_{\delta^{\epsilon}}^{\epsilon}(u),\,Z_{\delta^{\epsilon}}^{\epsilon}(u)\right)^{2}\le\frac{1}{\epsilon}\left[A^{\epsilon}(1)+A^{\epsilon}(2)\right]\label{eq:tv2}\;,
\end{equation}
where
\begin{align*}
 & A^{\epsilon}(1)=\int_{0}^{\delta^{\epsilon}}\mathbb{E}\left[\left|F(q_{s}^{\epsilon}(u))-F(q_{s}(u))\right|^{2}\right]ds\;,\\
 & A^{\epsilon}(2)=\left\Vert \textup{D}F(q_{s}(u))\right\Vert \int_{0}^{\delta^{\epsilon}}\mathbb{E}\left[|X_{s}^{\epsilon}(u)-X_{s}(u)|^{2}\right]ds\;.
\end{align*}
Let us first look $A^{\epsilon}(1)$. By the Cauchy-Schwarz inequality
and Assumption \ref{ass:DF}, we have
\begin{align*}
\big|F(q_{s}^{\epsilon}(u))-F(q_{s}(u))\big|^{2} & =\big|\int_{0}^{1}\textup{D}F(q_{s}(u)+\zeta(q_{s}^{\epsilon}(u)-q_{s}(u)))\,(q_{s}^{\epsilon}(u)-q_{s}(u))\mathrm{d}\zeta\bigg|^{2}\\
 & \leq\big|q_{s}^{\epsilon}(u)-q_{s}(u)\big|^{2}\int_{0}^{1}|\textup{D}F(q_{s}(u)+\zeta(q_{s}^{\epsilon}(u)-q_{s}(u)))|^{2}\mathrm{d}\zeta\\
 & \leq C\,\big|q_{s}^{\epsilon}(u)-q_{s}(u)\big|^{2}\int_{0}^{1}\mathrm{e}^{2\rho|q_{s}(u)+\zeta(q_{s}^{\epsilon}(u)-q_{s}(u))|^{2}}\mathrm{d}\zeta\\
 & \leq C\big|q_{s}^{\epsilon}(u)-q_{s}(u)\big|^{2}\mathrm{e}^{4\rho|q_{s}(u)|^{2}}\mathrm{e}^{4\rho|q_{s}^{\epsilon}(u)|^{2}}\;.
\end{align*}
By Theorem \ref{thm:stability}, we have for $u=(q,p)$ $|q_{s}(u)|^{2}\le\kappa\left(|u|^{2}+U(q)\right)$
for all $s\ge0$. Inserting this bound in the exponential above and
noticing that
\begin{equation}
\big|q_{s}^{\epsilon}(u)-q_{s}(u)\big|^{2}\le\big|X_{s}^{\epsilon}(u)-X_{s}(u)\big|^{2}\le2\left[\big|X_{s}^{\epsilon}(u)-Z_{s}^{\epsilon}(u)\big|^{2}+2\epsilon\big|Y_{s}(u)\big|^{2}\right]\label{eq:difq}\;,
\end{equation}
and then applying Cauchy-Schwarz inequality, we get
\[
A^{\epsilon}(1)\le C\mathrm{e}^{4\rho\kappa\left(|u|^{2}+U(q)\right))}\int_{0}^{\delta^{\epsilon}}\mathbb{E}\left[\mathrm{e}^{8\rho|X_{s}^{\epsilon}(u)|^{2}}\right]^{1/2}\left(\mathbb{E}\left[\big|X_{s}^{\epsilon}(u)-Z_{s}^{\epsilon}(u)\big|^{4}\right]^{1/2}+2\epsilon\mathbb{E}\left[\big|Y_{s}\big|^{4}\right]^{1/2}\right)\mathrm{d}s\;.
\]
Since $u\in\mathcal{D}_{r_{0}}$, by (\ref{eq:cond_r}), for all small
enough $\epsilon>0$,
\[
8\rho<\frac{1}{2(d+2)\kappa_{0}\left(H(u)+\frac{d\epsilon}{\lambda}\right)}\le\frac{1}{2(d+2)\kappa_{0}\left(H(u)\mathrm{e}^{-\lambda s}+\frac{d\epsilon}{\lambda}\right)}\;,
\]
and therefore by Corollary \ref{cor_expmom}, Proposition \ref{prop:main_apprx}
and Lemma \ref{lem:mom_Yt}, we get
\begin{equation}
A^{\epsilon}(1)\le C(n,\,r_{0})\epsilon^{1-2\nu}\delta^{\epsilon}\;.\label{eq:tv5}
\end{equation}
On the other hand, by Theorem \ref{thm:stability}, the second inequality
of (\ref{eq:difq}), Proposition \ref{prop:main_apprx} and Lemma
\ref{lem:mom_Yt}, we also get
\begin{equation}
A^{\epsilon}(2)\le C(n,\,r_{0})\epsilon^{1-2\nu}\delta^{\epsilon}\;.\label{eq:tv6}
\end{equation}
Inserting (\ref{eq:tv1-1}), (\ref{eq:tv2}), (\ref{eq:tv5}) and
(\ref{eq:tv6}) to (\ref{eq:dec_tv1}) yields
\[
\mathrm{d}_{\mathrm{TV}}\left(X_{t^{\epsilon}}^{\epsilon}(x),Z_{\delta^{\epsilon}}^{\epsilon}(X_{t^{\epsilon}-\delta^{\epsilon}}^{\epsilon}(x))\right)\le C(n,\,r_{0})(\delta^{\epsilon}\epsilon^{-2\nu})^{1/2}+\frac{\kappa_{0}}{r_{0}^{2}}\left[H(x)\mathrm{e}^{-\lambda(t^{\epsilon}-\delta^{\epsilon})}+\frac{d\epsilon}{\lambda}\right]\;.
\]
This completes the proof since $\delta^{\epsilon}\epsilon^{-2\nu}=\epsilon^{\sigma-2\nu}$ and $\sigma>2\nu$ by~\eqref{eq:condition nu}. Besides, $\mathrm{e}^{-\lambda(t^{\epsilon}-\delta^{\epsilon})}$ converge
to $0$ as $\epsilon\rightarrow0$.
\end{proof}
Next we control the total variation distance between $Z_{\delta^{\epsilon}}^{\epsilon}(X_{t^{\epsilon}-\delta^{\epsilon}}^{\epsilon}(x))$
and $Z_{t^{\epsilon}}^{\epsilon}(x)$.
\begin{prop}
\label{prop:tv2} Let $(t^{\epsilon})_{\epsilon>0}$ be a sequence
of positive reals satisfying (\ref{eq:tvm1}). Then, for all $r>0$,
it holds that
\[
\limsup_{\epsilon\rightarrow0}\sup_{x\in\mathcal{D}_{r}}\mathrm{d}_{\mathrm{TV}}\left(Z_{\delta^{\epsilon}}^{\epsilon}(X_{t^{\epsilon}-\delta^{\epsilon}}^{\epsilon}(x)),\,Z_{t^{\epsilon}}^{\epsilon}(x)\right)=0\;.
\]
\end{prop}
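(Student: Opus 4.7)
The strategy is to exploit a splitting identity for the Gaussian process $Z^\epsilon_\cdot$ at time $t^\epsilon - \delta^\epsilon$, which allows one to compare $Z^\epsilon_{\delta^\epsilon}(X^\epsilon_{t^\epsilon-\delta^\epsilon}(x))$ and $Z^\epsilon_{t^\epsilon}(x)$ through a common intermediate. Write $W := X_{t^\epsilon - \delta^\epsilon}(x)$ and $U := X^\epsilon_{t^\epsilon - \delta^\epsilon}(x)$, and let $\Psi_{\delta^\epsilon}(W) := \textup{D}X_{\delta^\epsilon}(W)$ denote the Jacobian at $W$ of the deterministic time-$\delta^\epsilon$ flow. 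By splitting the It\^o integral defining $Y_{t^\epsilon}(x)$ at time $t^\epsilon - \delta^\epsilon$ and observing that the fundamental matrix of $\dot{\Phi}_s = \mathbb{A}(q_s(x))\Phi_s$ on $[t^\epsilon - \delta^\epsilon, t^\epsilon]$ coincides with $\Psi_{\delta^\epsilon}(W)$ (by the semigroup property of the zero-noise flow combined with ODE uniqueness), one obtains the almost sure identity
\[
Z^\epsilon_{t^\epsilon}(x) = X_{\delta^\epsilon}(W) + \sqrt{2\epsilon}\,\Psi_{\delta^\epsilon}(W)\,Y_{t^\epsilon - \delta^\epsilon}(x) + \sqrt{2\epsilon}\,R,
\]
where $R \sim \mathcal{N}(0, \Sigma_{\delta^\epsilon}(W))$ is independent of $\mathcal{F}_{t^\epsilon - \delta^\epsilon}$. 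I then introduce the intermediate random vector
\[
\widetilde{Z}^\epsilon := X_{\delta^\epsilon}(W) + \Psi_{\delta^\epsilon}(W)(U - W) + \sqrt{2\epsilon}\,R,
\]
which amounts to linearizing the mean $X_{\delta^\epsilon}(u)$ and freezing the covariance $\Sigma_{\delta^\epsilon}(u)$ of $Z^\epsilon_{\delta^\epsilon}(U)$ at $u = W$, and bound separately the two terms of $\mathrm{d}_{\mathrm{TV}}(Z^\epsilon_{\delta^\epsilon}(U), Z^\epsilon_{t^\epsilon}(x)) \leq \mathrm{d}_{\mathrm{TV}}(Z^\epsilon_{\delta^\epsilon}(U), \widetilde{Z}^\epsilon) + \mathrm{d}_{\mathrm{TV}}(\widetilde{Z}^\epsilon, Z^\epsilon_{t^\epsilon}(x))$.

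For the first summand, conditioning on $U$ reduces matters to the TV distance between two Gaussians whose means differ by $X_{\delta^\epsilon}(U) - X_{\delta^\epsilon}(W) - \Psi_{\delta^\epsilon}(W)(U - W) = O(|U - W|^2)$ by second-order Taylor expansion of the flow, and whose covariances $\Sigma_{\delta^\epsilon}(U)$ and $\Sigma_{\delta^\epsilon}(W)$ differ by $O((\delta^\epsilon)^3|U-W| + (\delta^\epsilon)^4)$ in Frobenius norm (by direct inspection of~\eqref{eq:Sigma_hat} combined with Lemma~\ref{lem_shor_Sigma0}). Combining Lemmas~\ref{lem:cut-off App 1}--\ref{lem:cut-off App 2} with the short-time bound $\|\Sigma_{\delta^\epsilon}(W)^{-1/2}\|_{\textup{F}} \leq C(\delta^\epsilon)^{-3/2}$ of Lemma~\ref{lem:short_Sigma}, the conditional TV is bounded by $C[\delta^\epsilon + |U - W| + |U-W|^2/(\sqrt{\epsilon}(\delta^\epsilon)^{3/2})]$. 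Since $\mathbb{E}|U - W|^2 \leq C\epsilon$ (via Proposition~\ref{prop:main_apprx} and Lemma~\ref{lem:mom_Yt}) and after localizing $U$ to a ball $\mathcal{D}_{r_0}$ whose complement has vanishing probability by Chebyshev and Proposition~\ref{prop:mom_est}, taking expectation yields a bound $C[\epsilon^\sigma + \epsilon^{1/2} + \epsilon^{1/2 - 3\sigma/2}]$, which tends to zero for $\sigma < 1/3$.

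For the second summand, by the displayed identity both $\widetilde{Z}^\epsilon$ and $Z^\epsilon_{t^\epsilon}(x)$ are of the form $A_j + \sqrt{2\epsilon}\,R$ with $A_j$ being $\mathcal{F}_{t^\epsilon - \delta^\epsilon}$-measurable and sharing the fresh Gaussian noise $R$, and one computes
\[
\widetilde{Z}^\epsilon - Z^\epsilon_{t^\epsilon}(x) = \Psi_{\delta^\epsilon}(W)\bigl[X^\epsilon_{t^\epsilon - \delta^\epsilon}(x) - Z^\epsilon_{t^\epsilon - \delta^\epsilon}(x)\bigr].
\]
Conditioning on $\mathcal{F}_{t^\epsilon - \delta^\epsilon}$ reduces the TV distance to comparing two Gaussians with the common covariance $2\epsilon\Sigma_{\delta^\epsilon}(W)$ whose means differ by the above quantity, and by Lemmas~\ref{lem:cut-off App 1}--\ref{lem:cut-off App 2}, Lemma~\ref{lem:short_Sigma}, together with the $L^1$-estimate of Proposition~\ref{prop:main_apprx},
\[
\mathrm{d}_{\mathrm{TV}}(\widetilde{Z}^\epsilon, Z^\epsilon_{t^\epsilon}(x)) \leq \frac{C}{\sqrt{\epsilon}(\delta^\epsilon)^{3/2}}\,\mathbb{E}\bigl|X^\epsilon_{t^\epsilon - \delta^\epsilon}(x) - Z^\epsilon_{t^\epsilon - \delta^\epsilon}(x)\bigr| \leq \frac{C}{(\delta^\epsilon)^{3/2}}\bigl(\mathrm{e}^{-\alpha(t^\epsilon - \delta^\epsilon)}\epsilon^{-\nu} + \epsilon^{1 - 2\theta}\bigr),
\]
which vanishes thanks to~\eqref{eq:tvm1} and the parameter constraints~\eqref{eq:consig}--\eqref{eq:condition nu}.

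The main obstacle is the splitting identity for $Z^\epsilon_{t^\epsilon}(x)$, specifically the identification of the fundamental matrix along $(q_s(x))_{s \in [t^\epsilon - \delta^\epsilon, t^\epsilon]}$ with $\Psi_{\delta^\epsilon}(W)$, which is delicate because of the non-commutativity of the time-dependent family $(\mathbb{A}(q_s(x)))_{s \geq 0}$. Once that identity is available, the subsequent Gaussian comparisons are non-trivial only because the effective covariance $\Sigma_{\delta^\epsilon}(W)$ is degenerate of order $(\delta^\epsilon)^3$ in the position block (Lemma~\ref{lem:short_Sigma}), so the resulting $(\delta^\epsilon)^{-3/2}$ blow-up has to be absorbed precisely by the $L^n$-decay of $X^\epsilon - Z^\epsilon$ granted by Proposition~\ref{prop:main_apprx}; this is exactly what~\eqref{eq:consig}--\eqref{eq:condition nu} have been tailored to guarantee.
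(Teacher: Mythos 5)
Your proof is correct, and it organizes the final $\delta^{\epsilon}$-window differently from the paper. The paper starts from the restart relation $Z^{\epsilon}_{t^{\epsilon}}(x)=Z^{\epsilon}_{\delta^{\epsilon}}(Z^{\epsilon}_{t^{\epsilon}-\delta^{\epsilon}}(x))$ and then only needs to compare the Gaussian approximation restarted at the two nearby points $X^{\epsilon}_{t^{\epsilon}-\delta^{\epsilon}}(x)$ and $Z^{\epsilon}_{t^{\epsilon}-\delta^{\epsilon}}(x)$, which is exactly Lemma \ref{lem:tvZ_short}: since $\mathbb{E}\vert X^{\epsilon}_{t^{\epsilon}-\delta^{\epsilon}}-Z^{\epsilon}_{t^{\epsilon}-\delta^{\epsilon}}\vert$ is $o(\epsilon^{1/2})$ by Proposition \ref{prop:main_apprx}, the factor $\epsilon^{-1/2}(\delta^{\epsilon})^{-3/2}$ is absorbed with no linearization at all. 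You instead prove an exact conditional splitting of $Z^{\epsilon}_{t^{\epsilon}}(x)$ at time $t^{\epsilon}-\delta^{\epsilon}$, identifying the transition matrix of $\dot y=\mathbb{A}(q_{s}(x))y$ over the last window with the variational flow $\mathrm{D}X_{\delta^{\epsilon}}(W)$ (correct, by the time-shift $q_{t^{\epsilon}-\delta^{\epsilon}+s}(x)=q_{s}(W)$ and uniqueness for the variational equation, and the fresh-noise covariance is indeed $\Sigma_{\delta^{\epsilon}}(W)$), and then compare through the linearized-mean, frozen-covariance intermediate $\widetilde Z^{\epsilon}$. What this buys is that you never invoke the restart identity for $Z^{\epsilon}$, which in the paper is only an approximation (the drift matrix and the mean of $Z^{\epsilon}$ are tied to the deterministic flow from the original point $x$, so restarting at the random point $Z^{\epsilon}_{t^{\epsilon}-\delta^{\epsilon}}(x)$ changes both at higher order); your version makes that step exact. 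The price is the extra Taylor-remainder term $O(\vert U-W\vert^{2})=O(\epsilon)$ against the degenerate covariance, which forces the additional requirement $\sigma<1/3$: note this is \emph{not} implied by \eqref{eq:consig} when $\theta$ is small, so you should state explicitly that $\sigma$ is further restricted to be below $1/3$ (harmless, since all constraints on $\sigma$ are upper bounds and $\nu$ is chosen afterwards as in \eqref{eq:condition nu}). Two small bookkeeping points: the covariance-mismatch estimate $\mathrm{d}_{\mathrm{TV}}(\mathcal{N}(0,\Sigma_{\delta^{\epsilon}}(U)),\mathcal{N}(0,\Sigma_{\delta^{\epsilon}}(W)))\le C(\delta^{\epsilon}+\vert U-W\vert)$ needs the Gaussian comparison bound used in \eqref{eq:tvZ0} (the Devroye--Mehrabian--Reddad estimate), not only Lemmas \ref{lem:cut-off App 1}--\ref{lem:cut-off App 2}; and the uniform $O(\vert U-W\vert^{2})$ Taylor bound for $u\mapsto X_{\delta^{\epsilon}}(u)$ requires localizing $U$ to a fixed compact set, which your Chebyshev step via Proposition \ref{prop:mom_est} indeed provides, just as in the paper's treatment of the tail events.
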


\begin{proof}
Since $Z_{t^{\epsilon}}^{\epsilon}(x)=Z_{\delta^{\epsilon}}^{\epsilon}(Z_{t^{\epsilon}-\delta^{\epsilon}}^{\epsilon}(x))$, we have
\begin{align}
 & \mathrm{d}_{\mathrm{TV}}\left(Z_{\delta^{\epsilon}}^{\epsilon}(X_{t^{\epsilon}-\delta^{\epsilon}}^{\epsilon}(x)),\,Z_{t^{\epsilon}}^{\epsilon}(x)\right)\nonumber \\
&\leq  \int_{\mathbb{R}^{d}}\mathrm{d}_{\mathrm{TV}}\left(Z_{\delta^{\epsilon}}^{\epsilon}(u),\,Z_{\delta^{\epsilon}}^{\epsilon}(u')\right)\mathbb{P}(X_{t^{\epsilon}-\delta^{\epsilon}}^{\epsilon}(x)\in\mathrm{d}u,\,Z_{t^{\epsilon}-\delta^{\epsilon}}^{\epsilon}(x)\in\mathrm{d}u')\nonumber \\
&\leq \int_{u,\,u'\in\mathcal{D}_{r}}\mathrm{d}_{\mathrm{TV}}\left(Z_{\delta^{\epsilon}}^{\epsilon}(u),\,Z_{\delta^{\epsilon}}^{\epsilon}(u')\right)\mathbb{P}(X_{t^{\epsilon}-\delta^{\epsilon}}^{\epsilon}(x)\in\mathrm{d}u,\,Z_{t^{\epsilon}-\delta^{\epsilon}}^{\epsilon}(x)\in\mathrm{d}u')\nonumber \\
 &\qquad +\mathbb{P}(|X_{t^{\epsilon}-\delta^{\epsilon}}^{\epsilon}(x)|>r)+\mathbb{P}(|Z_{t^{\epsilon}-\delta^{\epsilon}}^{\epsilon}(x)|>r)\;.\label{eq:02}
\end{align}
By the Markov inquality and the Cauchy-Schwarz inequality, we have
\begin{align*}
\mathbb{P}(|X_{t^{\epsilon}-\delta^{\epsilon}}^{\epsilon}(x)|>r) & \le\frac{\mathbb{E}(|X_{t^{\epsilon}-\delta^{\epsilon}}^{\epsilon}(x)|^{2})}{r^{2}}\le\frac{2}{r^{2}}\left[\mathbb{E}(|X_{t^{\epsilon}-\delta^{\epsilon}}^{\epsilon}(x)-Z_{t^{\epsilon}-\delta^{\epsilon}}^{\epsilon}(x)|^{2})+\mathbb{E}(|Z_{t^{\epsilon}-\delta^{\epsilon}}^{\epsilon}(x)|^{2})\right]\;,\\
\mathbb{P}(|Z_{t^{\epsilon}-\delta^{\epsilon}}^{\epsilon}(x)|>r) & \le\frac{1}{r^{2}}\mathbb{E}(|Z_{t^{\epsilon}-\delta^{\epsilon}}^{\epsilon}(x)|^{2})\;.
\end{align*}
Since by the Cauchy-Schwarz inequality we have
\[
\mathbb{E}(|Z_{t^{\epsilon}-\delta^{\epsilon}}^{\epsilon}(x)|^{2})\le2|X_{t^{\epsilon}-\delta^{\epsilon}}(x)|^{2}+4\epsilon\mathbb{E}(|Y_{t^{\epsilon}-\delta^{\epsilon}}(x)|^{2})\;,
\]
we get from Theorem \ref{thm:stability}, Proposition \ref{prop:main_apprx}
and Lemma \ref{lem:mom_Yt} that
\begin{equation}
\mathbb{P}(|X_{t^{\epsilon}-\delta^{\epsilon}}^{\epsilon}(x)|>r)+\mathbb{P}(|Z_{t^{\epsilon}-\delta^{\epsilon}}^{\epsilon}(x)|>r)\le\frac{C(r)}{r^{2}}\left[\epsilon+\textup{e}^{-\lambda(t^{\epsilon}-\delta^{\epsilon})}\right]\;.\label{eq:03}
\end{equation}

Next we compute the integral at the right-hand side of (\ref{eq:02}).
By Lemma \ref{lem:tvZ_short} and Proposition \ref{prop:main_apprx},
this integral is bounded by
\begin{align*}
 & \int_{u,\,u'\in\mathcal{D}_{r}}\left[\frac{C(r)}{(\delta^{\epsilon})^{3/2}\epsilon^{1/2}}|u-u'|+\delta^{\epsilon}\right]\mathbb{P}(X_{t^{\epsilon}-\delta^{\epsilon}}^{\epsilon}(x)\in\mathrm{d}u,\,Z_{t^{\epsilon}-\delta^{\epsilon}}^{\epsilon}(x)\in\mathrm{d}u')\\
& \le \frac{C(r)}{(\delta^{\epsilon})^{3/2}\epsilon^{1/2}}\mathbb{E}\left[|X_{t^{\epsilon}-\delta^{\epsilon}}^{\epsilon}(x)-Z_{t^{\epsilon}-\delta^{\epsilon}}^{\epsilon}(x)|\right]+\delta^{\epsilon}
 \le \frac{C(r)}{(\delta^{\epsilon})^{3/2}}\left[\frac{\textup{e}^{-\alpha_{1}(t^{\epsilon}-\delta^{\epsilon})}}{\epsilon^\nu}+\epsilon^{\alpha_{2}}\right]+\delta^{\epsilon}\;.
\end{align*}
By this computation, (\ref{eq:02}), and (\ref{eq:03}), we can conclude
that
\[
\mathrm{d}_{\mathrm{TV}}\left(Z_{\delta^{\epsilon}}^{\epsilon}(X_{t^{\epsilon}-\delta^{\epsilon}}^{\epsilon}(x)),\,Z_{t^{\epsilon}}^{\epsilon}(x)\right)\le C(r)\left[\epsilon+e^{-\lambda(t^{\epsilon}-\delta^{\epsilon})}+\frac{1}{(\delta^{\epsilon})^{3/2}}\left(\frac{\textup{e}^{-\alpha_{1}(t^{\epsilon}-\delta^{\epsilon})}}{\epsilon^\nu}+\epsilon^{\alpha_{2}}\right)\right]+\delta^{\epsilon}\;.
\]
The right-hand side tends to $0$ by (\ref{eq:tvm1}),
 (\ref{eq:consig}) and~\eqref{eq:condition nu}.
\end{proof}
Now we are ready to prove Proposition \ref{prop:tvm1}.
\begin{proof}[Proof of Proposition \ref{prop:tvm1}]
By the triangle inequality, we have
\[
\mathrm{d}_{\mathrm{TV}}\left(X_{t^{\epsilon}}^{\epsilon},\,Z_{t^{\epsilon}}^{\epsilon}\right)\le\mathrm{d}_{\mathrm{TV}}\left(X_{t^{\epsilon}}^{\epsilon},\,Z_{\delta^{\epsilon}}^{\epsilon}(X_{t^{\epsilon}-\delta^{\epsilon}}^{\epsilon}(x))\right)+\mathrm{d}_{\mathrm{TV}}\left(Z_{\delta^{\epsilon}}^{\epsilon}(X_{t^{\epsilon}-\delta^{\epsilon}}^{\epsilon}(x)),\,Z_{t^{\epsilon}}^{\epsilon}\right)
\]
and therefore the proof is completed by Propositions \ref{prop:tv1}
and \ref{prop:tv2}.
\end{proof}

\subsection{\label{sec74}Proof of Proposition \ref{prop:tvm2}}
\begin{proof}[Proof of Proposition \ref{prop:tvm2}]
By (\ref{eq:distZ}) and Lemma~\ref{lem:cut-off App 1} (1),
we can write
\begin{align*}
\mathrm{d}_{\mathrm{TV}}\left(Z_{t^{\epsilon}}^{\epsilon}(x),\,\mathcal{N}(0,2\epsilon\Sigma)\right) & =\mathrm{d}_{\mathrm{TV}}\left(\mathcal{N}(X_{t^{\epsilon}}(x),\,2\epsilon\Sigma_{t^{\epsilon}}(x)),\,\mathcal{N}(0,2\epsilon\Sigma)\right)\\
 & =\mathrm{d}_{\mathrm{TV}}\left(\mathcal{N}\left(\frac{X_{t^{\epsilon}}(x)}{\sqrt{2\epsilon}},\,\Sigma_{t^{\epsilon}}(x)\right),\,\mathcal{N}(0,\Sigma)\right)\;.
\end{align*}
Therefore, by the triangle inequality and Lemma~\ref{lem:cut-off App 1} (2),
\begin{align*}
 & \left|\,\mathrm{d}_{\mathrm{TV}}\left(Z_{t^{\epsilon}}^{\epsilon}(x),\mathcal{N}(0,2\epsilon\Sigma)\right)-\mathrm{d}_{\mathrm{TV}}\left(\mathcal{N}\left(\frac{X_{t^{\epsilon}}(x)}{\sqrt{2\epsilon}},\,\Sigma\right),\,\mathcal{N}(0,\Sigma)\right)\right|\\
& \le  \mathrm{d}_{\mathrm{TV}}\left(\mathcal{N}\left(\frac{X_{t^{\epsilon}}(x)}{\sqrt{2\epsilon}},\,\Sigma_{t^{\epsilon}}(x)\right),\,\mathcal{N}\left(\frac{X_{t^{\epsilon}}(x)}{\sqrt{2\epsilon}},\,\Sigma\right)\right)=\mathrm{d}_{\mathrm{TV}}\left(\mathcal{N}(0,\Sigma_{t^{\epsilon}}(x)),\,\mathcal{N}(0,\Sigma)\right)\;.
\end{align*}
By Lemma \ref{lem:long_Sigma} and the fact that $t^{\epsilon}\rightarrow\infty$
(by (\ref{prop:tvm2})), the right-hand side converges to $0$, and
hence, it suffices to prove that
\begin{equation}
\limsup_{\epsilon\rightarrow0}\left|\mathrm{d}_{\mathrm{TV}}\left(\mathcal{N}\left(\frac{X_{t^{\epsilon}}(x)}{\sqrt{2\epsilon}},\,\Sigma\right),\,\mathcal{N}(0,\Sigma)\right)-D^{\epsilon}(t^{\epsilon},\,x)\right|=0\;.\label{eq:pup1}
\end{equation}
By Lemma~\ref{lem:cut-off App 1} (3),
\[
\mathrm{d}_{\mathrm{TV}}\left(\mathcal{N}\left(\frac{X_{t^{\epsilon}}(x)}{\sqrt{2\epsilon}},\,\Sigma\right),\,\mathcal{N}(0,\Sigma)\right)=\mathrm{d}_{\mathrm{TV}}\left(\mathcal{N}\left(\Sigma^{-1/2}\frac{X_{t^{\epsilon}}(x)}{\sqrt{2\epsilon}},\,\mathbb{I}_{2d}\right),\,\mathcal{N}(0,\,\mathbb{I}_{2d})\right)\;.
\]
Hence, recalling the definitions (\ref{eq:def_v(t)}) of $v(t,\,x)$
and (\ref{eq:def_D(t)}) of $D^{\epsilon}(t,\,x)$, and applying the
triangle inequality along with Lemmas~\ref{lem:cut-off App 1} (2) and~\ref{lem:cut-off App 2},
we can bound the absolute value at (\ref{eq:pup1}) from above by
\begin{align*}
 & \left|\,\mathrm{d}_{\mathrm{TV}}\left(\mathcal{N}\left(\Sigma^{-1/2}\frac{X_{t^{\epsilon}}(x)}{\sqrt{2\epsilon}},\,\mathbb{I}_{2d}\right),\,\mathcal{N}(0,\,\mathbb{I}_{2d})\right)-\mathrm{d}_{\mathrm{TV}}\left(\mathcal{N}\left(\frac{v(t^{\epsilon},\,x)}{\sqrt{2\epsilon}},\,\mathbb{I}_{2d}\right),\,\mathcal{N}(0,\,\mathbb{I}_{2d})\right)\right|\\
& \le  \mathrm{d}_{\mathrm{TV}}\left(\mathcal{N}\left(\frac{1}{\sqrt{2\epsilon}}\left(\Sigma^{-1/2}X_{t^{\epsilon}}(x)-v(t^{\epsilon},\,x)\right),\,\mathbb{I}_{2d}\right),\,\mathcal{N}\left(0,\,\mathbb{I}_{2d}\right)\right)\\
&\le  \frac{1}{\sqrt{4\pi\epsilon}}\left|\Sigma^{-1/2}X_{t^{\epsilon}}(x)-v(t^{\epsilon},\,x)\right|\\
&=  \frac{1}{\sqrt{4\pi\epsilon}}\left|\frac{(t^{\epsilon}-\tau(x))^{\nu}}{\mathrm{e}^{\eta(t^{\epsilon}-\tau(x))}}\Sigma^{-1/2}\left(\frac{\mathrm{e}^{\eta(t^{\epsilon}-\tau(x))}}{(t^{\epsilon}-\tau(x))^{\nu}}X_{t^{\epsilon}}-\sum_{k=1}^{m}\mathrm{e}^{i\theta_{k}(t^{\epsilon}-\tau(x))}v_{k}\right)\right|\;.
\end{align*}
The last line converges to $0$ as $\epsilon\rightarrow0$ by
  Lemma \ref{lem:asym_uld} and since by (\ref{eq:tvm2}) we have
\[
\limsup_{\epsilon\rightarrow\infty}\frac{(t^{\epsilon})^{\nu}}{\epsilon^{1/2}\mathrm{e}^{\eta t^{\epsilon}}}<\infty\;.
\]

\end{proof}

\section{\label{sec8}Proof of Theorem \ref{thm:asymstat}}

It only remains to prove Theorem \ref{thm:asymstat} to conclude this
work. The proof of this theorem is also based on Propositions \ref{prop:tvm1}
and \ref{prop:tvm2}. In addition to these propositions, we shall
need the following a priori variance bound on the stationary density
$\mu_{\epsilon}$.
\begin{lem}
\label{lem:control tail mu_epsilon} There exists a constant $c>0$
independent of $\epsilon$ such that
\[
\int_{\mathbb{R}^{2d}}|x|^{2}\mu_{\epsilon}(\mathrm{d}x)\leq c\epsilon\;.
\]
\end{lem}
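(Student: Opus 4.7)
The plan is to exploit the Lyapunov function $H$ constructed in Definition~\ref{def:H}, together with the fact that $H$ dominates $|x|^2$ up to the constant $\kappa_0$ (Lemma~\ref{lem_Hquad}). The key input is the It\^o-based inequality~\eqref{eq:dH_uld} already derived in Section~\ref{sec43} in the course of proving the moment estimates. Taking expectations in~\eqref{eq:dH_uld} and applying Gronwall's lemma yields
\[
\mathbb{E}\bigl[H(X_t^{\epsilon}(x))\bigr]\;\le\;H(x)\,\mathrm{e}^{-\lambda t}+\frac{d\epsilon}{\lambda}\quad\text{for all }x\in\mathbb{R}^{2d},\;t\ge0.
\]
This is the analogue of~\eqref{eq:control moment n H(q,p)} at the level of $H$ rather than $|x|^2$, and it is the only dynamical input needed.

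Next, I would like to integrate this inequality against $\mu_{\epsilon}$ and let $t\to\infty$, but a priori I do not know that $H$ is $\mu_{\epsilon}$-integrable. To bypass this I would use a truncation and the ergodicity of $(X_t^{\epsilon})_{t\geq 0}$. For any $M>0$, the function $H_M:=H\wedge M$ is bounded and continuous; since $\mu_{\epsilon}$ is the unique invariant distribution (Proposition~\ref{prop:unique stat distrib}) and the process is irreducible (Lemma~\ref{lem:irreducibility}) with the Lyapunov function $H$, standard ergodic theory (see e.g.~\cite{zhang2013new} invoked just before) ensures that $\mathbb{E}[f(X_t^{\epsilon}(x_0))]\to\int f\,\mathrm{d}\mu_{\epsilon}$ for any bounded continuous $f$ and any fixed $x_0\in\mathbb{R}^{2d}$. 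Applying this to $f=H_M$ and combining with the Gronwall bound above gives
\[
\int_{\mathbb{R}^{2d}}H_M\,\mathrm{d}\mu_{\epsilon}
=\lim_{t\to\infty}\mathbb{E}\bigl[H_M(X_t^{\epsilon}(x_0))\bigr]
\le\limsup_{t\to\infty}\mathbb{E}\bigl[H(X_t^{\epsilon}(x_0))\bigr]
\le\frac{d\epsilon}{\lambda}.
\]

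Letting $M\to\infty$ and invoking monotone convergence yields the key uniform bound $\int H\,\mathrm{d}\mu_{\epsilon}\le d\epsilon/\lambda$. Combining with the lower bound $|x|^2\le\kappa_0 H(x)$ from Lemma~\ref{lem_Hquad}, we conclude
\[
\int_{\mathbb{R}^{2d}}|x|^{2}\,\mu_{\epsilon}(\mathrm{d}x)\;\le\;\kappa_0\int_{\mathbb{R}^{2d}}H\,\mathrm{d}\mu_{\epsilon}\;\le\;\frac{\kappa_0 d}{\lambda}\,\epsilon,
\]
which is the desired estimate with $c:=\kappa_0 d/\lambda$.

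The only delicate point is the a priori step: one cannot directly write $\int H\,\mathrm{d}\mu_{\epsilon}=\int\mathbb{E}[H(X_t^{\epsilon})]\,\mathrm{d}\mu_{\epsilon}$ and apply the Gronwall bound, since this would require the integrability of $H$ under $\mu_{\epsilon}$, which is precisely what we want to establish. The truncation-and-ergodicity route above sidesteps this cleanly; an alternative would be to apply It\^o's formula to $H$ up to a stopping time $\tau_M=\inf\{t\geq 0:H(X_t^{\epsilon})\ge M\}$, take expectations, and then let $M\to\infty$ via Fatou, but the ergodic approach is shorter and uses tools already invoked in the proof of Proposition~\ref{prop:unique stat distrib}.
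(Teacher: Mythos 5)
Your overall strategy (Lyapunov bound at the level of $H$, truncation, pass to the limit, then $|x|^2\le\kappa_0 H$) is sound, but the step you lean on to transfer the time-$t$ bound to $\mu_\epsilon$ is exactly where the argument is not justified by what is available at this point of the paper. You assert that $\mathbb{E}[f(X_t^{\epsilon}(x_0))]\to\int f\,\mathrm{d}\mu_{\epsilon}$ for every bounded continuous $f$ and every fixed starting point $x_0$, citing the uniqueness of the invariant measure, irreducibility and the Lyapunov function. Those three facts alone do not yield convergence in distribution from a fixed initial condition; one additionally needs a regularity/aperiodicity input (e.g.\ the strong Feller property coming from hypoellipticity, so that a Harris-type theorem applies). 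This is true for the underdamped Langevin dynamics, but it is nowhere established in the paper, and within the paper's architecture it would be circular to invoke qualitative convergence to equilibrium here: the rough mixing statement (Lemma \ref{lem:roughmix}) is itself proved \emph{using} Lemma \ref{lem:control tail mu_epsilon}. So as written there is a genuine gap, repairable either by importing a Harris/Meyn--Tweedie theorem explicitly, or --- much more simply --- by not using convergence at all.

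The paper's proof shows how to avoid this entirely, and also shows that the ``delicate point'' you identify is not an obstruction. Instead of starting from a fixed $x_0$, integrate against $\mu_\epsilon$ from the outset and use only the \emph{definition} of stationarity applied to the bounded test function $|x|^2\wedge L$: for every $t$,
\[
\int(|x|^{2}\wedge L)\,\mu_{\epsilon}(\mathrm{d}x)=\int\mathbb{E}\bigl[|X_{t}^{\epsilon}(x)|^{2}\wedge L\bigr]\mu_{\epsilon}(\mathrm{d}x)\le\int\Bigl(\mathbb{E}\bigl[|X_{t}^{\epsilon}(x)|^{2}\bigr]\wedge L\Bigr)\mu_{\epsilon}(\mathrm{d}x)\;,
\]
by Jensen; then the moment bound of Proposition \ref{prop:mom_est} (your Gronwall estimate is its $n=1$ case) gives an integrand bounded by $\bigl(\kappa_{0}(H(x)\mathrm{e}^{-\lambda t}+d\epsilon/\lambda)\bigr)\wedge L$, which converges pointwise as $t\to\infty$ and is dominated by the constant $L$, so dominated convergence and then monotone convergence in $L$ finish the proof. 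Because the truncated function is bounded, no a priori $\mu_\epsilon$-integrability of $H$ or $|x|^2$ is needed, and no ergodic theorem is invoked. If you replace your ergodicity step by this stationarity identity (truncating either $H$ or $|x|^2$; the choice is immaterial), your proof becomes essentially the paper's.
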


\begin{proof}
This proof is similar to the proof provided in \cite{cut-off_overdamped}
and is inspired from \cite[p.122, Section 5]{priola2012exponential}.
Let $L>0$. Then, for all $t\geq0$, by the stationarity of $\mu_{\epsilon}$
and Jensen's inequality we get
\begin{equation}
\int_{\mathbb{R}^{2d}}(|x|^{2}\land L)\mu_{\epsilon}(\mathrm{d}x)=\int_{\mathbb{R}^{2d}}\mathbb{E}\left[|X_{t}^{\epsilon}(x)|^{2}\land L\right]\mu_{\epsilon}(\mathrm{d}x)\leq\int_{\mathbb{R}^{2d}}\left(\mathbb{E}\left[|X_{t}^{\epsilon}(x)|^{2}\right]\land L\right)\mu_{\epsilon}(\mathrm{d}x)\;.\label{eq:bdsq1}
\end{equation}
We bound the last expectation using Proposition \ref{prop:mom_est}.
Then letting $t\rightarrow\infty$ and applying the dominated convergence
theorem, we get
\[
\int_{\mathbb{R}^{2d}}(|x|^{2}\land L)\mu_{\epsilon}(\mathrm{d}x)\le\int_{\mathbb{R}^{2d}}\left(c_{3}\epsilon\land L\right)\mu_{\epsilon}(\mathrm{d}x)\;.
\]
Finally, letting $L\rightarrow\infty$ and applying the monotone convergence
theorem completes the proof.
\end{proof}
The main ingredients in the proof of Theorem \ref{thm:asymstat} are
Propositions \ref{prop:tvm1}, \ref{prop:tvm2}, and the following
lemma providing a rough a priori bound $\frac{1}{\epsilon^{\theta}}$
on the mixing time.
\begin{lem}
\label{lem:roughmix}Let $t_{\epsilon}=\frac{1}{\epsilon^{\theta}}$
where $\theta$ is the constant defined in Notation \ref{not:T(x)}.
Then, for all $x\in\mathbb{R}^{d}$, we have that
\[
\lim_{\epsilon\rightarrow0}\mathrm{d}_{\mathrm{TV}}\left(X_{t^{\epsilon}}^{\epsilon}(x),\,\mu^{\epsilon}\right)=0\;.
\]
\end{lem}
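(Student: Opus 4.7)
The plan is to use the stationarity of $\mu^\epsilon$ together with the convexity of the total variation distance to reduce the problem to controlling $\mathrm{d}_\mathrm{TV}(X^\epsilon_{t^\epsilon}(x),X^\epsilon_{t^\epsilon}(y))$ averaged over $y\sim\mu^\epsilon$, and then to combine the Gaussian approximation of Proposition~\ref{prop:tvm1} with the contraction estimate of Lemma~\ref{lem:tvZ_long}. The a priori tail bound of Lemma~\ref{lem:control tail mu_epsilon} will handle the part of the phase space where $y$ is far from the equilibrium.

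First I would fix $r>0$ large enough so that $x\in\mathcal{D}_r$. Since $\mu^\epsilon$ is invariant, for the semigroup $P^\epsilon_t$ we have $\mu^\epsilon=\int P^\epsilon_{t^\epsilon}(y,\cdot)\,\mu^\epsilon(\mathrm{d}y)$, and the convexity of $\mathrm{d}_\mathrm{TV}$ under mixtures yields
\[
\mathrm{d}_\mathrm{TV}\!\left(X^\epsilon_{t^\epsilon}(x),\mu^\epsilon\right)
\;\le\;\int_{\mathcal{D}_r}\mathrm{d}_\mathrm{TV}\!\left(X^\epsilon_{t^\epsilon}(x),X^\epsilon_{t^\epsilon}(y)\right)\mu^\epsilon(\mathrm{d}y)+\mu^\epsilon(\mathbb{R}^{2d}\setminus\mathcal{D}_r).
\]
By Chebyshev's inequality and Lemma~\ref{lem:control tail mu_epsilon}, $\mu^\epsilon(\mathbb{R}^{2d}\setminus\mathcal{D}_r)\le c\epsilon/r^2$, which vanishes as $\epsilon\to 0$.

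Next, for $y\in\mathcal{D}_r$ I would insert the Gaussian approximations $Z^\epsilon_{t^\epsilon}(x)$ and $Z^\epsilon_{t^\epsilon}(y)$ via the triangle inequality:
\[
\mathrm{d}_\mathrm{TV}\!\left(X^\epsilon_{t^\epsilon}(x),X^\epsilon_{t^\epsilon}(y)\right)
\le \mathrm{d}_\mathrm{TV}\!\left(X^\epsilon_{t^\epsilon}(x),Z^\epsilon_{t^\epsilon}(x)\right)
+\mathrm{d}_\mathrm{TV}\!\left(Z^\epsilon_{t^\epsilon}(x),Z^\epsilon_{t^\epsilon}(y)\right)
+\mathrm{d}_\mathrm{TV}\!\left(Z^\epsilon_{t^\epsilon}(y),X^\epsilon_{t^\epsilon}(y)\right).
\]
The choice $t^\epsilon=1/\epsilon^\theta$ satisfies the hypothesis~\eqref{eq:tvm1} of Proposition~\ref{prop:tvm1} for any $\alpha_0>0$ once $\epsilon$ is small enough, so the outer two terms tend to $0$ uniformly in $y\in\mathcal{D}_r$ (and also at $x$, since $x\in\mathcal{D}_r$). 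The middle term is bounded via Lemma~\ref{lem:tvZ_long} by $C(r)\epsilon^{-1/2}\mathrm{e}^{-c_0/\epsilon^\theta}$, which clearly vanishes as $\epsilon\to 0$ because $1/\epsilon^\theta$ dominates $\tfrac{1}{2}\log(1/\epsilon)$.

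Putting these estimates together, sending $\epsilon\to 0$ gives $\limsup_{\epsilon\to 0}\mathrm{d}_\mathrm{TV}(X^\epsilon_{t^\epsilon}(x),\mu^\epsilon)=0$. The main subtlety is not in any single estimate but in the interplay: one must ensure that the contraction rate obtained from Lemma~\ref{lem:tvZ_long} beats the $\epsilon^{-1/2}$ prefactor on the time scale $1/\epsilon^\theta$, and that the tail of $\mu^\epsilon$ outside $\mathcal{D}_r$ contributes negligibly uniformly in $\epsilon$; both are guaranteed by the polynomial-in-$\epsilon$ mass concentration provided by Lemma~\ref{lem:control tail mu_epsilon} and by the super-logarithmic growth of $t^\epsilon$. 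Note also that we do not need $r$ to depend on $\epsilon$, which keeps the constants $C(r)$ uniformly bounded throughout.
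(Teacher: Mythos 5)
Your proposal is correct and follows essentially the same route as the paper: stationarity of $\mu^\epsilon$ plus convexity of the total variation distance to reduce to $\int\mathrm{d}_{\mathrm{TV}}(X^\epsilon_{t^\epsilon}(x),X^\epsilon_{t^\epsilon}(y))\,\mu^\epsilon(\mathrm{d}y)$, the tail bound of Lemma \ref{lem:control tail mu_epsilon} to restrict to a ball, Proposition \ref{prop:tvm1} to pass from $X^\epsilon$ to $Z^\epsilon$ uniformly over that ball, and Lemma \ref{lem:tvZ_long} to make the Gaussian-to-Gaussian distance vanish at $t^\epsilon=1/\epsilon^\theta$. The only cosmetic difference is that you spell out the triangle inequality inside the integral while the paper bounds the difference of the two TV distances directly; the content is identical.
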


\begin{proof}
We fix $x\in\mathbb{R}^{2d}$. Denote by $p_{t}^{\epsilon}(\cdot,\,\cdot)$
the transition kernel of the process $(X_{t}^{\epsilon})_{t\geq0}$.
Then, we have
\begin{align*}
\mathrm{d}_{\mathrm{TV}}\left(X_{t^{\epsilon}}^{\epsilon}(x),\,\mu^{\epsilon}\right) & =\frac{1}{2}\int_{\mathbb{R}^{2d}}\left|p_{t^{\epsilon}}^{\epsilon}(x,\,z)-\mu^{\epsilon}(z)\right|\textup{d}z\\
 & =\frac{1}{2}\int_{\mathbb{R}^{2d}}\left|\int_{\mathbb{R}^{2d}}\left(p_{t^{\epsilon}}^{\epsilon}(x,\,z)-p_{t^{\epsilon}}^{\epsilon}(y,\,z)\right)\mu^{\epsilon}(y)\,\textup{d}y\right|\textup{d}z\\
 & \le\frac{1}{2}\int_{\mathbb{R}^{2d}}\int_{\mathbb{R}^{2d}}\left|p_{t^{\epsilon}}^{\epsilon}(x,\,z)-p_{t^{\epsilon}}^{\epsilon}(y,\,z)\right|\mu^{\epsilon}(y)\,\textup{d}y\textup{d}z\\
 & =\int_{\mathbb{R}^{2d}}\mathrm{d}_{\mathrm{TV}}\left(X_{t^{\epsilon}}^{\epsilon}(x),\,X_{t^{\epsilon}}^{\epsilon}(y)\right)\mu^{\epsilon}(\mathrm{d}y)\;.
\end{align*}
Take $R$ large enough so that $x\in\mathcal{D}_{R}$. By Lemma \ref{lem:control tail mu_epsilon}
and the Markov inequality, there exists $c>0$ such that
\[
\mu^{\epsilon}(\mathcal{D}_{R}^{c})\leq\frac{\int_{\mathbb{R}^{2d}}|x|^{2}\mu_{\epsilon}(\mathrm{d}x)}{R^{2}}\leq\frac{c\epsilon}{R^{2}}
\]
and therefore we get
\[
\mathrm{d}_{\mathrm{TV}}\left(X_{t^{\epsilon}}^{\epsilon}(x),\,\mu^{\epsilon}\right)\le\int_{\mathcal{D}_{R}}\mathrm{d}_{\mathrm{TV}}\left(X_{t^{\epsilon}}^{\epsilon}(x),\,X_{t^{\epsilon}}^{\epsilon}(y)\right)\mu^{\epsilon}(\mathrm{d}y)+\frac{c\epsilon}{R^{2}}\;.
\]
By Proposition \ref{prop:tvm1},
\[
\limsup_{\epsilon\rightarrow0}\int_{\mathcal{D}_{R}}\left|\mathrm{d}_{\mathrm{TV}}\left(X_{t^{\epsilon}}^{\epsilon}(x),\,X_{t^{\epsilon}}^{\epsilon}(y)\right)-\mathrm{d}_{\mathrm{TV}}\left(Z_{t^{\epsilon}}^{\epsilon}(x),\,Z_{t^{\epsilon}}^{\epsilon}(y)\right)\right|\mu^{\epsilon}(\mathrm{d}y)=0
\]
since both $x$ and $y$ at the integrand belong to the set $\mathcal{D}_{R}$.
On the other hand, by Lemma \ref{lem:tvZ_long}, we have that (note
that $x\in\mathcal{D}_{R}$)
\[
\lim_{\epsilon\rightarrow0}\int_{\mathcal{D}_{R}}\mathrm{d}_{\mathrm{TV}}\left(Z_{t^{\epsilon}}^{\epsilon}(x),\,Z_{t^{\epsilon}}^{\epsilon}(y)\right)\mu^{\epsilon}(\mathrm{d}y)=0
\]
and the proof is completed.
\end{proof}
Now we prove Theorem \ref{thm:asymstat}.
\begin{proof}[Proof of Theorem \ref{thm:asymstat}]
Let $t^{\epsilon}=\frac{1}{\epsilon^{\theta}}$ where $\theta$ is
the constant defined in Notation \ref{not:T(x)}. Then, by the triangle
inequality, for any $x\in\mathbb{R}^{2d}$
\[
\mathrm{d}_{\mathrm{TV}}\left(\mathcal{N}(0,2\epsilon\Sigma),\,\mu^{\epsilon}\right)\le\mathrm{d}_{\mathrm{TV}}\left(\mathcal{N}(0,2\epsilon\Sigma),\,Z_{t^{\epsilon}}^{\epsilon}(x)\right)+\mathrm{d}_{\mathrm{TV}}\left(Z_{t^{\epsilon}}^{\epsilon}(x),\,X_{t^{\epsilon}}^{\epsilon}(x)\right)+\mathrm{d}_{\mathrm{TV}}\left(X_{t^{\epsilon}}^{\epsilon}(x),\,\mu^{\epsilon}\right)\;.
\]
By Proposition \ref{prop:tvm1} and Lemma \ref{lem:roughmix}, the
second and the third term at the right-hand side converge to $0$
as $\epsilon\rightarrow0$. On the other hand, by the triangle inequality,
the first term at the right-hand side is bounded from above by
\[
\left|\mathrm{d}_{\mathrm{TV}}\left(\mathcal{N}(0,2\epsilon\Sigma),\,Z_{t^{\epsilon}}^{\epsilon}(x)\right)-D^{\epsilon}(t^{\epsilon})\right|+D^{\epsilon}(t^{\epsilon})\;.
\]
By Proposition \ref{prop:tvm1}, the first term tends to $0$ as $\epsilon\rightarrow0$,
while it is easy to check that $D^{\epsilon}(t^{\epsilon})\rightarrow0$
for $t^{\epsilon}=\frac{1}{\epsilon^{\theta}}$ as $\epsilon\rightarrow0$.
This completes the proof.
\end{proof}
 





 \begin{acks}[Acknowledgments]
 S. Lee, M. Ramil, and I. Seo were supported by the National Research Foundation of Korea (NRF) grant funded by the Korean government (MEST) No. 2023R1A2C100517311 and Samsung Science and Technology Foundation (Project Number SSTF-BA1901-03). S. Lee and I. Seo were also supported by NRF grant funded by the Korean government No. 2016K2A9A2A1300381525. I. Seo was supported by the NRF grant funded by the Korean government No. 2019R1A6A1A10073437 and a Seoul National University Research Grant in 2023. I. Seo thanks KIAS for their support as a visiting scholar from March 2024 to February 2025. 
 \end{acks}

\begin{funding}
M. Ramil and S. Lee were supported
by the Samsung Science and Technology Foundation (Project Number SSTF-BA1901-03).
I. Seo was supported by the Samsung Science and Technology Foundation
(Project Number SSTF-BA1901-03) and National Research Foundation(NRF)
of Korea grant funded by the Korea government(MSIT) (No. 2022R1F1A106366811,
2022R1A5A600084012, and 2023R1A2C100517311). The authors are also supported by BK21 SNU Mathematical Sciences Division. The auhors are grateful to the two referees for their careful reading and valuable suggestions.
\end{funding}

\bibliographystyle{imsart-number} 
\bibliography{bibliography}       

\begin{thebibliography}{36}

\bibitem{Barrera_inv}
\begin{barticle}[author]
\bauthor{\bsnm{Barrera},~\bfnm{G.}\binits{G.}}
(\byear{2022}).
\btitle{Limit Behavior of the invariant measure for {L}angevin dynamics}.
\bjournal{Probability and Mathematical Statistics}
\bvolume{42}
\bpages{143-162}.
\end{barticle}
\endbibitem

\bibitem{cut-off_overdamped}
\begin{barticle}[author]
\bauthor{\bsnm{Barrera},~\bfnm{G.}\binits{G.}} \AND \bauthor{\bsnm{Jara},~\bfnm{M.}\binits{M.}}
(\byear{2020}).
\btitle{Thermalisation for small random perturbations of dynamical systems}.
\bjournal{The Annals of Applied Probability}
\bvolume{30}
\bpages{1164 -- 1208}.
\end{barticle}
\endbibitem

\bibitem{cuff2}
\begin{barticle}[author]
\bauthor{\bsnm{Cuff},~\bfnm{Paul}\binits{P.}}, \bauthor{\bsnm{Ding},~\bfnm{Jian}\binits{J.}}, \bauthor{\bsnm{Louidor},~\bfnm{Oren}\binits{O.}}, \bauthor{\bsnm{Lubetzky},~\bfnm{Eyal}\binits{E.}}, \bauthor{\bsnm{Peres},~\bfnm{Yuval}\binits{Y.}} \AND \bauthor{\bsnm{Sly},~\bfnm{Allan}\binits{A.}}
(\byear{2012}).
\btitle{Glauber dynamics for the mean-field {P}otts model}.
\bjournal{Journal of Statistical Physics}
\bvolume{149}
\bpages{432--477}.
\end{barticle}
\endbibitem

\bibitem{DeMeRe}
\begin{barticle}[author]
\bauthor{\bsnm{Devroye},~\bfnm{L.}\binits{L.}}, \bauthor{\bsnm{Mehrabian},~\bfnm{A.}\binits{A.}} \AND \bauthor{\bsnm{Reddad},~\bfnm{T.}\binits{T.}}
(\byear{2018}).
\btitle{The total variation distance between high-dimensional {G}aussians with the same mean}.
\bjournal{arXiv:1810.08693}.
\end{barticle}
\endbibitem

\bibitem{elboim2022mixing}
\begin{barticle}[author]
\bauthor{\bsnm{Elboim},~\bfnm{Dor}\binits{D.}} \AND \bauthor{\bsnm{Schmid},~\bfnm{Dominik}\binits{D.}}
(\byear{2022}).
\btitle{Mixing times and cutoff for the TASEP in the high and low density phase}.
\bjournal{arXiv preprint arXiv:2208.08306}.
\end{barticle}
\endbibitem

\bibitem{Freidlin-Wentzell}
\begin{barticle}[author]
\bauthor{\bsnm{Freidlin},~\bfnm{M.~I.}\binits{M.~I.}} \AND \bauthor{\bsnm{Wentzel},~\bfnm{A.~D.}\binits{A.~D.}}
(\byear{1984}).
\btitle{Random Perturbations of Dynamical Systems}.
\bjournal{Grundlehren der mathematischen Wissenschaften. Springer}.
\end{barticle}
\endbibitem

\bibitem{ganguly2015cut-off}
\begin{barticle}[author]
\bauthor{\bsnm{Ganguly},~\bfnm{Shirshendu}\binits{S.}}, \bauthor{\bsnm{Lubetzky},~\bfnm{Eyal}\binits{E.}} \AND \bauthor{\bsnm{Martinelli},~\bfnm{Fabio}\binits{F.}}
(\byear{2015}).
\btitle{Cutoff for the East process}.
\bjournal{Communications in mathematical physics}
\bvolume{335}
\bpages{1287--1322}.
\end{barticle}
\endbibitem

\bibitem{ganguly2020information}
\begin{barticle}[author]
\bauthor{\bsnm{Ganguly},~\bfnm{Shirshendu}\binits{S.}} \AND \bauthor{\bsnm{Seo},~\bfnm{Insuk}\binits{I.}}
(\byear{2020}).
\btitle{Information percolation and cutoff for the random-cluster model}.
\bjournal{Random Structures \& Algorithms}
\bvolume{57}
\bpages{770--822}.
\end{barticle}
\endbibitem

\bibitem{kalman1969topics}
\begin{bbook}[author]
\bauthor{\bsnm{Kalman},~\bfnm{R.~E.}\binits{R.~E.}}, \bauthor{\bsnm{Falb},~\bfnm{P.~L.}\binits{P.~L.}} \AND \bauthor{\bsnm{Arbib},~\bfnm{M.~A.}\binits{M.~A.}}
(\byear{1969}).
\btitle{Topics in mathematical system theory}
\bvolume{1}.
\bpublisher{McGraw-Hill New York}.
\end{bbook}
\endbibitem

\bibitem{lacoin2016cut-off}
\begin{barticle}[author]
\bauthor{\bsnm{Lacoin},~\bfnm{Hubert}\binits{H.}}
(\byear{2016}).
\btitle{The cutoff profile for the simple exclusion process on the circle}.
\end{barticle}
\endbibitem

\bibitem{lancaster1985theory}
\begin{barticle}[author]
\bauthor{\bsnm{Lancaster},~\bfnm{P.}\binits{P.}} \AND \bauthor{\bsnm{Tismenetsky},~\bfnm{M.}\binits{M.}}
(\byear{1985}).
\btitle{The Theory of Matrices Second Edition, with Applications.'1 Academic Press}.
\bjournal{Inc.. San Diego}.
\end{barticle}
\endbibitem

\bibitem{Lee-Seo1}
\begin{barticle}[author]
\bauthor{\bsnm{Lee},~\bfnm{J.}\binits{J.}} \AND \bauthor{\bsnm{Seo},~\bfnm{I.}\binits{I.}}
(\byear{2022}).
\btitle{Non-reversible metastable diffusions with {G}ibbs invariant measure I: Eyring-{K}ramers formula}.
\bjournal{Probability Theory and Related Fields}
\bvolume{182}
\bpages{849-903}.
\end{barticle}
\endbibitem

\bibitem{Lee-Seo2}
\begin{barticle}[author]
\bauthor{\bsnm{Lee},~\bfnm{J.}\binits{J.}} \AND \bauthor{\bsnm{Seo},~\bfnm{I.}\binits{I.}}
(\byear{2023}).
\btitle{Non-reversible Metastable Diffusions with {G}ibbs Invariant Measure II: Markov Chain Convergence}.
\bjournal{Journal of Statistical Physics}
\bvolume{189}
\bpages{1-34}.
\end{barticle}
\endbibitem

\bibitem{LRS1}
\begin{barticle}[author]
\bauthor{\bsnm{Lee},~\bfnm{S.}\binits{S.}}, \bauthor{\bsnm{Ramil},~\bfnm{M.}\binits{M.}} \AND \bauthor{\bsnm{Seo},~\bfnm{I.}\binits{I.}}
(\byear{2025}).
\btitle{Eyring-{K}ramers Law for the Underdamped {L}angevin Process}.
\bjournal{https://arxiv.org/abs/2503.12610}.
\end{barticle}
\endbibitem

\bibitem{kFP}
\begin{barticle}[author]
\bauthor{\bsnm{Leli{\`e}vre},~\bfnm{T.}\binits{T.}}, \bauthor{\bsnm{Ramil},~\bfnm{M.}\binits{M.}} \AND \bauthor{\bsnm{Reygner},~\bfnm{J.}\binits{J.}}
(\byear{2022}).
\btitle{A probabilistic study of the kinetic {F}okker--{P}lanck equation in cylindrical domains}.
\bjournal{Journal of Evolution Equations}
\bvolume{22}
\bpages{38}.
\end{barticle}
\endbibitem

\bibitem{LelRouSto10}
\begin{bbook}[author]
\bauthor{\bsnm{Leli\`evre},~\bfnm{T.}\binits{T.}}, \bauthor{\bsnm{Rousset},~\bfnm{M.}\binits{M.}} \AND \bauthor{\bsnm{Stoltz},~\bfnm{G.}\binits{G.}}
(\byear{2010}).
\btitle{Free energy computations}.
\bpublisher{Imperial College Press, London}
\bnote{A mathematical perspective}.
\end{bbook}
\endbibitem

\bibitem{levin2010glauber}
\begin{barticle}[author]
\bauthor{\bsnm{Levin},~\bfnm{David~A}\binits{D.~A.}}, \bauthor{\bsnm{Luczak},~\bfnm{Malwina~J}\binits{M.~J.}} \AND \bauthor{\bsnm{Peres},~\bfnm{Yuval}\binits{Y.}}
(\byear{2010}).
\btitle{Glauber dynamics for the mean-field {I}sing model: cut-off, critical power law, and metastability}.
\bjournal{Probability Theory and Related Fields}
\bvolume{146}
\bpages{223--265}.
\end{barticle}
\endbibitem

\bibitem{LPW}
\begin{bbook}[author]
\bauthor{\bsnm{Levin},~\bfnm{David~Asher}\binits{D.~A.}}, \bauthor{\bsnm{Peres},~\bfnm{Yuval}\binits{Y.}} \AND \bauthor{\bsnm{Wilmer},~\bfnm{Elizabeth~Lee}\binits{E.~L.}}
(\byear{2009}).
\btitle{Markov chains and mixing times}.
\bpublisher{American Mathematical Soc.}
\end{bbook}
\endbibitem

\bibitem{lubetzky2016cut-off}
\begin{barticle}[author]
\bauthor{\bsnm{Lubetzky},~\bfnm{Eyal}\binits{E.}} \AND \bauthor{\bsnm{Peres},~\bfnm{Yuval}\binits{Y.}}
(\byear{2016}).
\btitle{Cutoff on all {R}amanujan graphs}.
\bjournal{Geometric and Functional Analysis}
\bvolume{26}
\bpages{1190--1216}.
\end{barticle}
\endbibitem

\bibitem{lubetzky2010cut-off}
\begin{barticle}[author]
\bauthor{\bsnm{Lubetzky},~\bfnm{Eyal}\binits{E.}} \AND \bauthor{\bsnm{Sly},~\bfnm{Allan}\binits{A.}}
(\byear{2010}).
\btitle{Cutoff phenomena for random walks on random regular graphs}.
\end{barticle}
\endbibitem

\bibitem{lubetzky2011explicit}
\begin{barticle}[author]
\bauthor{\bsnm{Lubetzky},~\bfnm{Eyal}\binits{E.}} \AND \bauthor{\bsnm{Sly},~\bfnm{Allan}\binits{A.}}
(\byear{2011}).
\btitle{Explicit expanders with cutoff phenomena}.
\end{barticle}
\endbibitem

\bibitem{LS_Ising1}
\begin{barticle}[author]
\bauthor{\bsnm{Lubetzky},~\bfnm{Eyal}\binits{E.}} \AND \bauthor{\bsnm{Sly},~\bfnm{Allan}\binits{A.}}
(\byear{2013}).
\btitle{Cutoff for the {I}sing model on the lattice}.
\bjournal{Inventiones mathematicae}
\bvolume{191}
\bpages{719--755}.
\end{barticle}
\endbibitem

\bibitem{LS_Ising2}
\begin{barticle}[author]
\bauthor{\bsnm{Lubetzky},~\bfnm{Eyal}\binits{E.}} \AND \bauthor{\bsnm{Sly},~\bfnm{Allan}\binits{A.}}
(\byear{2014}).
\btitle{Cutoff for general spin systems with arbitrary boundary conditions}.
\bjournal{Communications on Pure and Applied Mathematics}
\bvolume{67}
\bpages{982--1027}.
\end{barticle}
\endbibitem

\bibitem{LS_Ising4}
\begin{barticle}[author]
\bauthor{\bsnm{Lubetzky},~\bfnm{Eyal}\binits{E.}} \AND \bauthor{\bsnm{Sly},~\bfnm{Allan}\binits{A.}}
(\byear{2016}).
\btitle{Information percolation and cutoff for the stochastic {I}sing model}.
\bjournal{Journal of the American Mathematical Society}
\bvolume{29}
\bpages{729--774}.
\end{barticle}
\endbibitem

\bibitem{LS_Ising3}
\begin{barticle}[author]
\bauthor{\bsnm{Lubetzky},~\bfnm{Eyal}\binits{E.}} \AND \bauthor{\bsnm{Sly},~\bfnm{Allan}\binits{A.}}
(\byear{2017}).
\btitle{Universality of cutoff for the {I}sing model}.
\bjournal{The Annals of Probability}
\bvolume{45}
\bpages{3664--3696}.
\end{barticle}
\endbibitem

\bibitem{mattingly2002ergodicity}
\begin{barticle}[author]
\bauthor{\bsnm{Mattingly},~\bfnm{J.~C}\binits{J.~C.}}, \bauthor{\bsnm{Stuart},~\bfnm{A.~M}\binits{A.~M.}} \AND \bauthor{\bsnm{Higham},~\bfnm{D.~J}\binits{D.~J.}}
(\byear{2002}).
\btitle{Ergodicity for {SDE}s and approximations: locally {L}ipschitz vector fields and degenerate noise}.
\bjournal{Stochastic processes and their applications}
\bvolume{101}
\bpages{185--232}.
\end{barticle}
\endbibitem

\bibitem{merle2019cut-off}
\begin{barticle}[author]
\bauthor{\bsnm{Merle},~\bfnm{Mathieu}\binits{M.}} \AND \bauthor{\bsnm{Salez},~\bfnm{Justin}\binits{J.}}
(\byear{2019}).
\btitle{Cutoff for the mean-field zero-range process}.
\end{barticle}
\endbibitem

\bibitem{Monmarche-Ramil}
\begin{barticle}[author]
\bauthor{\bsnm{Monmarche},~\bfnm{P.}\binits{P.}} \AND \bauthor{\bsnm{Ramil},~\bfnm{M.}\binits{M.}}
(\byear{2022}).
\btitle{Overdamped limit at stationarity for non-equilibrium {L}angevin diffusions}.
\bjournal{Electron. Commun. Probab.}
\bvolume{27}
\bpages{1-8}.
\end{barticle}
\endbibitem

\bibitem{NamSly}
\begin{barticle}[author]
\bauthor{\bsnm{Nam},~\bfnm{Danny}\binits{D.}} \AND \bauthor{\bsnm{Sly},~\bfnm{Allan}\binits{A.}}
(\byear{2018}).
\btitle{Cutoff for the {S}wendsen-{W}ang dynamics on the lattice}.
\bjournal{arXiv preprint arXiv:1805.04227}.
\end{barticle}
\endbibitem

\bibitem{LPeu-Michel}
\begin{barticle}[author]
\bauthor{\bsnm{Peutrec},~\bfnm{D.~Le}\binits{D.~L.}} \AND \bauthor{\bsnm{Michel},~\bfnm{L.}\binits{L.}}
(\byear{2020}).
\btitle{Sharp spectral asymptotics for nonreversible metastable diffusion processes}.
\bjournal{Probability and Mathematical Physics}
\bvolume{1}
\bpages{3-53}.
\end{barticle}
\endbibitem

\bibitem{priola2012exponential}
\begin{barticle}[author]
\bauthor{\bsnm{Priola},~\bfnm{E.}\binits{E.}}, \bauthor{\bsnm{Shirikyan},~\bfnm{A.}\binits{A.}}, \bauthor{\bsnm{Xu},~\bfnm{L.}\binits{L.}} \AND \bauthor{\bsnm{Zabczyk},~\bfnm{J.}\binits{J.}}
(\byear{2012}).
\btitle{Exponential ergodicity and regularity for equations with {L}{\'e}vy noise}.
\bjournal{Stochastic Processes and their Applications}
\bvolume{122}
\bpages{106--133}.
\end{barticle}
\endbibitem

\bibitem{salez2023universality}
\begin{barticle}[author]
\bauthor{\bsnm{Salez},~\bfnm{Justin}\binits{J.}}
(\byear{2023}).
\btitle{Universality of cutoff for exclusion with reservoirs}.
\bjournal{The Annals of Probability}
\bvolume{51}
\bpages{478--494}.
\end{barticle}
\endbibitem

\bibitem{schuh2024global}
\begin{binproceedings}[author]
\bauthor{\bsnm{Schuh},~\bfnm{K.}\binits{K.}}
(\byear{2024}).
\btitle{Global contractivity for {L}angevin dynamics with distribution-dependent forces and uniform in time propagation of chaos}.
In \bbooktitle{Annales de l'Institut Henri Poincare (B) Probabilites et statistiques}
\bvolume{60}
\bpages{753--789}.
\bpublisher{Institut Henri Poincar{\'e}}.
\end{binproceedings}
\endbibitem

\bibitem{stroock1982lectures}
\begin{bbook}[author]
\bauthor{\bsnm{Stroock},~\bfnm{D.~W}\binits{D.~W.}} \AND \bauthor{\bsnm{Karmakar},~\bfnm{S.}\binits{S.}}
(\byear{1982}).
\btitle{Lectures on topics in stochastic differential equations}
\bvolume{68}.
\bpublisher{Springer Berlin}.
\end{bbook}
\endbibitem

\bibitem{yang2023cut-off}
\begin{barticle}[author]
\bauthor{\bsnm{Yang},~\bfnm{Seoyeon}\binits{S.}}
(\byear{2023}).
\btitle{Cutoff and dynamical phase transition for the general multi-component {i}sing model}.
\bjournal{Journal of Statistical Physics}
\bvolume{190}
\bpages{151}.
\end{barticle}
\endbibitem

\bibitem{zhang2013new}
\begin{barticle}[author]
\bauthor{\bsnm{Zhang},~\bfnm{Z.}\binits{Z.}} \AND \bauthor{\bsnm{Chen},~\bfnm{D.}\binits{D.}}
(\byear{2013}).
\btitle{A new criterion on existence and uniqueness of stationary distribution for diffusion processes}.
\bjournal{Advances in Difference Equations}
\bvolume{2013}
\bpages{1--6}.
\end{barticle}
\endbibitem

\end{thebibliography}





\end{document}